\newtheorem{theorem}{Theorem}
\theoremstyle{plain}
\newtheorem{assumption}[theorem]{Assumption}
\newtheorem{condition}[theorem]{Condition}
\newtheorem{corollary}[theorem]{Corollary}
\newtheorem{definition}[theorem]{Definition}
\newtheorem{example}[theorem]{Example}
\newtheorem{lemma}[theorem]{Lemma}
\newtheorem{proposition}[theorem]{Proposition}
\newtheorem{property}[theorem]{Property}
\newtheorem{remark}[theorem]{Remark}
\numberwithin{equation}{section}
\numberwithin{theorem}{section}
\begin{document}
\title[LDP for mean field particle systems]{Large Deviation Principle For
Finite-State Mean Field Interacting Particle Systems}
\author{Paul Dupuis}
\author{Kavita Ramanan}
\author{Wei Wu}
\thanks{Division of Applied Mathematics, Brown University and New York
University. This research was supported in part by the Army Research Office
(W911NF-12-1-0222).}
\date{\today }
\keywords{Large deviation principle, interacting particle systems, mean
field limits, nonlinear Markov process, McKean-Vlasov limits, rate function,
locally uniform LDP, jump Markov processes, empirical measure, Curie-Weiss
model, network with alternate routing}
\subjclass{Primary: 60F10, 60K35; Secondary: 60K25}
\maketitle

\begin{abstract}
We establish a large deviation principle for the empirical measure process
associated with a general class of finite-state mean field interacting
particle systems with Lipschitz continuous transition rates that satisfy a
certain ergodicity condition. The approach is based on a variational
representation for functionals of a Poisson random measure. Under an
appropriate strengthening of the ergodicity condition, we also prove a
locally uniform large deviation principle. The main novelty is that more
than one particle is allowed to change its state simultaneously, and so a
standard approach to the proof based on a change of measure with respect to
a system of independent particles is not possible. The result is shown to be
applicable to a wide range of models arising from statistical physics,
queueing systems and communication networks. Along the way, we establish a
large deviation principle for a class of jump Markov processes on the
simplex, whose rates decay to zero as they approach the boundary of the
domain. This result may be of independent interest.
\end{abstract}

\section{Introduction}

Markovian particle systems on finite state spaces under mean field
interactions arise in many different contexts. They appear as approximations
of statistical physics models in higher dimensional lattices (for various
types of spin dynamics, see \cite{M} and references therein), kinetic theory 
\cite{K}, game theory \cite{GMS} and as models of communication networks 
\cite{AFRT}, \cite{GibHunKel90}, \cite{GM}, \cite{T}. The dynamics of these
particle systems have the following common features: a) particles are
exchangeable, that is, their joint distribution is invariant under
permutation of their indices; b) at each time, multiple particles in some
finite subset can switch their states simultaneously; c) the interaction
between particles is global but weak, in the sense that the jump rate of
each group of particles is a function only of the initial and final
configurations of that group of particles, and the empirical measure of all
particles. The precise dynamics of the Markovian $n$-particle system we
consider are described in Section 2.1.

Due to the exchangeability assumption, many essential features of the state
of the particle system can be captured by its empirical measure, which
evolves as a jump Markov process on (a sublattice of) the unit simplex.
Under mild assumptions on the jump rates, standard results on jump Markov
processes (see \cite{Oel}) show that the functional law of large numbers
limit of the sequence of $n$-particle empirical measures is the solution of
a nonlinear ordinary differential equation (ODE) on the unit simplex. The
ODE also characterizes the transition probabilities of a certain
\textquotedblleft nonlinear Markov process\textquotedblright\ that describes
the limiting distribution of a typical particle in the system, as the number
of particles goes to infinity \cite{Ko}, and is commonly referred to as the
McKean-Vlasov limit. In this paper we consider the sample path large
deviation properties of the sequence of empirical measure processes as the
number of particles tends to infinity. In the case of interacting diffusion
processes, such a large deviation principle (LDP) was first established by
Dawson and G\"{a}rtner in \cite{DG}. The sample path large deviation
principle over finite time intervals has a number of applications, including
the study of metastability properties via Freidlin-Wentzell theory \cite{FW}
(see also \cite{OliVarBook05} and \cite{BerLan11} for the reversible case),
and the study of the possible evolution of a Gibbs measure into a non-Gibbs
measure under stochastic (e.g., spin-flip) dynamics (which is referred to as
a Gibbs-non Gibbs transition in \cite{FerdenHolMar13}).

Large deviation principles for jump Markov processes are known if the jump
rates are Lipschitz continuous and uniformly bounded below away from zero
(cf. \cite{SW}). In this case, the large deviation rate function admits an
integral representation in terms of a so-called local rate function.
However, the jump rates in our model do not satisfy this condition.
Specifically, as the empirical measure approaches the boundary of the
simplex, its jump rates along certain directions converge to zero.
Nevertheless, we show that (under general conditions on the jump rates), the
sequence of empirical measure processes satisfies a sample path LDP with the
rate function having the standard integral representation. Under mild
conditions, we also establish a \textquotedblleft locally
uniform\textquotedblright\ refinement \cite{SW}, which characterizes the
decay rate of the probabilities of hitting a convergent sequence of points.
Such a result is of relevance only for discrete Markov processes (and not
for diffusions) and does not follow immediately from the LDP. 
The locally uniform refinement is shown in \cite{BDFR14a,BDFR14b} to be
relevant for the study of stability properties of the nonlinear ODE that
describes the law of large numbers (LLN) limit. All the main results of this
paper are formulated for a more general class of jump Markov processes on
the simplex whose rates diminish to zero at the boundary, and the
interacting particle models are obtained as a special case.

Other works that have studied large deviations for jump Markov processes
with vanishing rates include \cite{SW-p}, \cite{KarRam15}, \cite{Leo95} and 
\cite{BS13}. However, the results in \cite{SW-p} impose special conditions
on the jump rates near the boundary, which do not apply to our model (see
Appendix A of \cite{WW}). On the other hand, the methods used in \cite{Leo95}
and \cite{BS13} are adaptations of the argument used by Dawson and Gartner
in \cite{DG}, which crucially relies on the fact that the measure on path
space induced by the interacting $n$-particle process is absolutely
continuous with respect to that induced by $n$ independent (non-interacting)
particles, each evolving according to a time inhomogenous Markov process.
This property does not hold when multiple particles jump simultaneously.
Simultaneous jumps are a common feature of models used in many applications
(see Example \ref{ARN} and also \cite{T} and \cite[Chapter 8]{D}).

The large deviation upper bound follows from general results in \cite{DEW}
(see Section 5). The subtlety arises in the proof of the large deviation
lower bound. Our strategy for the proof is based on a variational
representation for the $n$-particle empirical measure process and a
perturbation argument near the boundary. The starting point of our
variational representation is a representation formula for functionals of
Poisson random measures \cite{BDM}, and an SDE representation of the
empirical measure process in terms of a sequence of Poisson random measures.
However, the state-dependent nature of the jump rates leads to a somewhat
complicated variational problem. We use the special structure of the SDE to
simplify the representation formula. The perturbation argument takes
inspiration from \cite{DNW}, where an LDP was established for a discrete
time one-dimensional Markov chain. Our model is higher dimensional, where
the perturbation argument becomes substantially more intricate, and geometry
comes into play. The variational representation that we establish holds more
generally for jump Markov processes with bounded jump rates, and could be
useful for obtaining other asymptotics.

The outline of this paper is as follows. In Section \ref{sec-ips} we set up
the mean field interacting particle system, and describe a few examples in
the literature that fit into the framework. In Section \ref{sec-mainres} we
state the main results, namely a sample path LDP for a general class of
weakly interacting particle systems (Theorem \ref{th-ldips}), its locally
uniform refinement (Theorem \ref{th-localldips}) and an LDP for the
corresponding sequence of stationary measures (Theorem \ref{thm:LDforIM}).
In Section \ref{sec-ver} we show that our assumptions on the transition
rates of the mean field interacting particle system imply that the jump
rates of the associated empirical measure process satisfy certain useful
properties, which are the only ones used in the proof of our results. As a
consequence, our main results in fact apply to the larger class of jump
Markov processes on the simplex whose jump rates possess these properties
(see Remark \ref{rem-ldp} for a precise statement). Section \ref{sec-varrep}
establishes the variational representation for the empirical measure
process, and provides an alternative proof for the functional LLN limit.
Some details of the proof of the variational representations are deferred to
the Appendix. The sample path large deviation upper and lower bounds are
derived in Section \ref{sec-pfupper} and Section \ref{sec-pflower},
respectively, while in Section \ref{sec-locrf} we study properties of the
local rate function. Section \ref{sec-locunif} is devoted to the proof of
the locally uniform LDP.

\section{The Interacting Particle Systems}

\label{sec-ips}

\subsection{Model Description}

\label{subs-model}

In this work, we consider an $n$-particle system in which the state of each
individual particle takes values in the finite set $\mathcal{X}\doteq
\left\{ 1,2,...,d\right\} $. For each $i=1,...,n$, let $X^{i,n}\left(
t\right) $ be the state of the $i^{\text{th}}$ particle at time $t$. For
simplicity of notation, we assume that the sequence of processes $%
X^{n}\left( \cdot \right) =\left\{ X^{n}\left( t\right) =\left(
X^{1,n}\left( t\right) ,...,X^{n,n}\left( t\right) \right) ,t\geq 0\right\} $%
, $n\in \mathbb{N}$, are defined on a common probability space $\left(
\Omega ,\mathcal{F},\mathbb{P}\right) $. Each $X^{n}\left( \cdot \right) $
evolves as a c\`{a}dl\`{a}g, $\mathcal{X}^{n}$-valued jump Markov process.
The associated empirical measure is denoted by 
\begin{equation*}
\mu ^{n}\left( t,\omega \right) =\frac{1}{n}\sum_{i=1}^{n}\delta
_{X^{i,n}\left( t,\omega \right) },\text{ \ \ }t\geq 0,\omega \in \Omega ,
\end{equation*}%
where $\delta _{x}$ represents the Dirac mass at $x$. In subsequent
discussions, we often suppress the dependence of $\mu ^{n}$ on $\omega $.

Let ${\mathcal{P}}\left( \mathcal{X}\right) $ denote the space of
probability measures on $\mathcal{X}$. We identify ${\mathcal{P}}\left( 
\mathcal{X}\right) $ with the simplex $\mathcal{S}\doteq \{x\in \mathbb{R}%
^{d}:x_{i}\geq 0,\sum_{i=1}^{d}x_{i}=1\}$ and endow $\mathcal{S}$ with the
topology induced from $\mathbb{R}^{d}$, so that $\mathcal{S}={\mathcal{P}}%
\left( \mathcal{X}\right) $ is equipped with the Euclidean norm $\left\Vert
\cdot \right\Vert $. Define ${\mathcal{P}}_{n}\left( \mathcal{X}\right)
\doteq \left\{ \frac{1}{n}\sum_{i=1}^{n}\delta _{x_{i}}:x\in \mathcal{X}%
^{n}\right\} \subset {\mathcal{P}}\left( \mathcal{X}\right) $. Then ${%
\mathcal{P}}_{n}\left( \mathcal{X}\right) $ can be similarly identified with
the lattice $\mathcal{S}_{n}\doteq \mathcal{S}\cap \frac{1}{n}\mathbb{Z}^{d}$%
, and clearly $\mu ^{n}(\cdot )=\{\mu ^{n}(t),t\geq 0\}$ is an $\mathcal{S}%
_{n}$-valued stochastic process.

The possible transitions of $X^{n}$ are as follows. It is assumed that there
exists $K\in \mathbb{N}$ such that at most $K$ particles jump
simultaneously. When $K=1$, almost surely at most one particle can
instantaneously change its state. For $i,j\in \mathcal{X}$, $i\neq j$, and $%
t\geq 0$, the rate at which a particle changes its state from $i$ to $j$ at
time $t$ is assumed to be $\Gamma _{ij}^{n}\left( \mu ^{n}\left( t\right)
\right) $, where $\left\{ \Gamma ^{n}\left( x\right) ,x\in \mathcal{S}%
_{n}\right\} $ is a family of nonnegative $d\times d$ matrices, and we set $%
\Gamma _{ii}^{n}\left( x\right) \doteq -\sum_{j=1,j\neq i}^{d}\Gamma
_{ij}^{n}\left( x\right) $ for $i=1,\ldots ,d$. For general $K\in \mathbb{N}$
(in which case, we will always assume without loss of generality that $n\geq
K$), for each $k\in \{1,\ldots ,K\}$, an ordered collection of $k$ particles
among all possible ordered $k$-tuples of the $n$-particle system can
simultaneously change its configuration from $\mathbf{i}=\left(
i_{1},..,i_{k}\right) \in \mathcal{X}^{k}$ to $\mathbf{j}=\left(
j_{1},...,j_{k}\right) \in \mathcal{X}^{k}$, where $i_{l}\neq j_{l}$, for $%
l=1,...,k$.

Note that it is possible that multiple particles in the $k$-tuple may be in
the same state. Let $\mathcal{J}\doteq \cup _{k=1}^{K}\mathcal{J}^{k}$,
where for $k=1,\ldots K$, 
\begin{equation}
\mathcal{J}^{k}\doteq \left\{ \left( \mathbf{i,j}\right) \in \mathcal{X}%
^{k}\times \mathcal{X}^{k}:i_{l}\neq j_{l}\text{ for }l=1,...,k\right\}
\label{def-kjumpset}
\end{equation}%
is the collection of all possible pairs of initial and final configurations
for an ordered $k$-tuple of particles. At time $t$, the rate of a
simultaneous transition of a $k$-tuple from $\mathbf{i}\in \mathcal{X}^{k}$
to $\mathbf{j}\in \mathcal{X}^{k}$ is given by $\Gamma _{\mathbf{ij}%
}^{k,n}\left( \mu ^{n}\left( t\right) \right) $, where for each $(\mathbf{i},%
\mathbf{j})\in {\mathcal{J}}^{k}$, $\Gamma _{\mathbf{ij}}^{k,n}$ is a
function from ${\mathcal{S}}_{n}$ to $[0,\infty )$. 
We also assume that the transition rate is independent of the ordering of
the particles: if $\mathbb{S}_{k}$ denotes the group of permutations on $%
\left\{ 1,...,k\right\} $, then 
\begin{equation}
\Gamma _{\mathbf{ij}}^{k,n}\left( x\right) =\Gamma _{\sigma \left( \mathbf{i}%
\right) \sigma \left( \mathbf{j}\right) }^{k,n}\left( x\right) ,\text{ for
any }n\in \mathbb{N},k=1,...,K,x\in \mathcal{S}_{n}\text{ and }\sigma \in 
\mathbb{S}_{k}.  \label{perm}
\end{equation}

\subsection{Dynamics of the Empirical Measure Process}

If the initial configuration $X^n(0) = (X^{1,n}(0), \ldots, X^{n,n}(0))$ is
exchangeable, then it is clear that at any time $t$, the configuration $%
X^n(t)$ of the $n$-particle system described above is also exchangeable, and
thus essential features of its state at that time can be described by the
empirical measure $\mu ^{n}(t)$. We now identify the generator $\mathcal{L}%
_{n}$ of the empirical measure process $\left\{ \mu ^{n}\left( t\right)
,t\geq 0\right\} $, which is an $\mathcal{S}_{n}$-valued c\`{a}dl\`{a}g jump
Markov process. Let $\left\{ e_{i},i=1,...,d\right\} $ represent the
standard basis of $\mathbb{R}^{d}$. When $K=1$, the possible jump directions
of $\mu ^{n}\left( \cdot \right) $ lie in the set $\frac{1}{n}\mathcal{V}%
_{1} $, where $\mathcal{V}_{1} \doteq \left\{ e_{j}-e_{i},\left( i,j\right)
\in \mathcal{J}^{1}\right\} $. Moreover, the number of particles in state $i$
when the empirical measure is equal to $x\in \mathcal{S}_{n}$ is $nx_{i}$.
Hence, the jump rate of $\mu ^{n}\left( \cdot \right) $ in the direction $%
\frac{1}{n}\left( e_{j}-e_{i}\right) $ is $nx_{i}\Gamma _{ij}^{1,n}\left(
x\right) $, and $\mathcal{L}_{n}$ takes the form%
\begin{equation}  \label{gen-single}
\mathcal{L}_{n}\left( f\right) \left( x\right) =n\sum_{\left( i,j\right) \in 
\mathcal{J}^{1}}x_{i}\Gamma _{ij}^{1,n}\left( x\right) \left[ f\left( x+%
\frac{1}{n}\left( e_{j}-e_{i}\right) \right) -f\left( x\right) \right]
\end{equation}%
for any function $f:\mathcal{S}_{n}\mapsto \mathbb{R}$.

In the general case of simultaneous transitions with $K\in \mathbb{N}$, for
fixed $1\leq k\leq K$, $\mathbf{i}=\left\{ i_{1},...,i_{k}\right\} \in 
\mathcal{X}^{k}$, $n\in \mathbb{N}$, $n\geq K$, and $x\in \mathcal{S}_{n}$,
define $A_{k}\left( n,\mathbf{i},x\right) $ to be the number of ordered $k$%
-tuples of particles with configuration $\mathbf{i}=\left\{
i_{1},...,i_{k}\right\} $ when the empirical measure of the $n$-particle
system is $x$. In other words, $A_{k}\left( n,\mathbf{i},x\right) $ is the
number of ordered $k$-tuples $\{r_{1},\ldots ,r_{k}\}\subset \{1,\ldots ,n\}$
such that $(i^\prime_{r_{1}},\ldots ,i^\prime_{r_{k}})=(i_{1},\ldots ,i_{k})$%
, so that the $l^{th}$ particle in the $k$-tuple is in state $i_{l}$, for
some configuration $\mathbf{i}^\prime=(i^\prime_{1},\ldots ,i^\prime_{n})\in 
\mathcal{X}^{n}$ of the $n $-particle system whose empirical measure is $x$: 
$\frac{1}{n}\sum_{l=1}^{n}\mathbb{I}_{\{i^\prime_{l}=m\}}=x_{m}$ for $%
m=1,\ldots ,d$. It is easily seen that this quantity depends on $\mathbf{i}%
^\prime$ (and hence, $\mathbf{i}$) only through the empirical measure $x$
and takes the form 
\begin{equation}
A_{k}\left( n,\mathbf{i},x\right) =n^{k}\displaystyle\prod\limits_{l=1}^{k}
x_{i_{l}}+O\left( n^{k-1}\right) ,  \label{a}
\end{equation}%
where the error term is non-zero precisely when the states $\left\{
i_{l}\right\} _{l=1}^{k}$ are not all distinct.

For $k=1,...,K$ and $\mathbf{i}=(i_{1},\ldots ,i_{k})\in \mathcal{X}^{k}$,
denote $e_{\mathbf{i}}\doteq \sum_{l=1}^{k}e_{i_{l}}$. Also, recall that ${%
\mathcal{J}}\doteq \cup _{k=1}^{K}{\mathcal{J}}^{k}$ with ${\mathcal{J}}^{k}$
defined by \eqref{def-kjumpset}, and set 
\begin{equation*}
\mathcal{V}\doteq \left\{ e_{\mathbf{j}}-e_{\mathbf{i}}\text{: }\left( 
\mathbf{i},\mathbf{j}\right) \in \mathcal{J}\right\} .
\end{equation*}%
We call $v=e_{\mathbf{j}}-e_{\mathbf{i}}$ the jump direction associated with
the transition $\left( \mathbf{i},\mathbf{j}\right) \in {\mathcal{J}}$. In
what follows, $|B|$ denotes the cardinality of a set $B$.

\begin{lemma}
\bigskip The generator of the Markov process $\mu ^{n}\left( \cdot \right)$
is given by%
\begin{equation}
\mathcal{L}_{n}\left( f\right) \left( x\right) =n\sum_{k=1}^{K}\sum_{\left( 
\mathbf{i},\mathbf{j}\right) \in \mathcal{J}^{k}}\alpha _{\mathbf{ij}%
}^{k,n}\left( x\right) \left[ f\left( x+\frac{1}{n}e_{\mathbf{j}}-\frac{1}{n}%
e_{\mathbf{i}}\right) -f\left( x\right) \right]  \label{Ln}
\end{equation}%
for any function $f:\mathcal{S}_{n}\mapsto \mathbb{R}$, with 
\begin{equation}
\alpha _{\mathbf{ij}}^{k,n}\left( x\right) \doteq \frac{1}{n(k!)}A_{k}\left(
n,\mathbf{i},x\right) \Gamma _{\mathbf{ij}}^{k,n}\left( x\right) ,\text{ }%
x\in \mathcal{S}_{n}\text{.}  \label{alphan}
\end{equation}
Alternatively, the generator can be rewritten as 
\begin{equation}  \label{Lnl}
{\mathcal{L}}_n \left( f\right) \left( x\right) =n \sum_{v \in {\mathcal{V}}%
} \lambda_v^n (x) \left[ f\left(x + \frac{1}{n}v \right) - f(x) \right],
\end{equation}
where 
\begin{equation}  \label{lambdan}
\lambda_v^n (x) \doteq \sum_{k=1}^K \sum_{ \overset{(\mathbf{i}, \mathbf{j})
\in {\mathcal{J}}^k:}{e_{\mathbf{j}} - e_{\mathbf{i}} = v}} \alpha_{\mathbf{i%
} \mathbf{j}}^{k,n} (x),
\end{equation}
with $\mathcal{J}^k$ given by \eqref{def-kjumpset}.
\end{lemma}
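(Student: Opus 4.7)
The strategy is to compute the generator of $\mu^n(\cdot)$ directly from the jump rates of the $n$-particle process $X^n(\cdot)$, making explicit how simultaneous transitions of ordered $k$-tuples of particles are counted once the state is reduced to its empirical measure. Fix $x\in\mathcal{S}_n$ and condition on an arbitrary underlying configuration $(i_1',\ldots,i_n')\in\mathcal{X}^n$ with empirical measure $x$. By the description of the dynamics in Section~\ref{subs-model}, for each $k\in\{1,\ldots,K\}$ and each ordered $k$-tuple $(r_1,\ldots,r_k)$ of distinct particle indices whose states equal $\mathbf{i}=(i_1,\ldots,i_k)$, the transition to states $\mathbf{j}=(j_1,\ldots,j_k)$ (with $(\mathbf{i},\mathbf{j})\in\mathcal{J}^k$) occurs at rate $\Gamma^{k,n}_{\mathbf{ij}}(x)$, and the resulting jump of the empirical measure is $\frac{1}{n}(e_\mathbf{j}-e_\mathbf{i})$. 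The number of such ordered $k$-tuples is exactly $A_k(n,\mathbf{i},x)$ by definition, and since this count depends on the configuration only through $x$, the process $\mu^n(\cdot)$ is itself Markov on $\mathcal{S}_n$.

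The only subtlety is that a single physical simultaneous jump of the $n$-particle system admits $k!$ distinct ordered labelings $(r_1,\ldots,r_k)$, and correspondingly $k!$ distinct ordered pairs $(\mathbf{i},\mathbf{j})$ obtained by applying a common permutation $\sigma\in\mathbb{S}_k$ to both $\mathbf{i}$ and $\mathbf{j}$. By the permutation invariance \eqref{perm}, each such labeling is assigned the same rate, so naively summing $\Gamma^{k,n}_{\mathbf{ij}}(x)$ over all ordered tuples of particles and all $(\mathbf{i},\mathbf{j})\in\mathcal{J}^k$ overcounts each physical jump by the uniform factor $k!$. Inserting the compensating factor $1/k!$ yields
\begin{equation*}
\mathcal{L}_n(f)(x)=\sum_{k=1}^K\frac{1}{k!}\sum_{(\mathbf{i},\mathbf{j})\in\mathcal{J}^k}
A_k(n,\mathbf{i},x)\,\Gamma^{k,n}_{\mathbf{ij}}(x)\left[f\!\left(x+\frac{1}{n}(e_\mathbf{j}-e_\mathbf{i})\right)-f(x)\right]
\end{equation*}
for any $f\colon\mathcal{S}_n\to\mathbb{R}$. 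Pulling out the prefactor $n$ and introducing $\alpha^{k,n}_{\mathbf{ij}}$ as in \eqref{alphan} gives \eqref{Ln}; as a sanity check, setting $K=1$ recovers \eqref{gen-single}, since $A_1(n,i,x)=nx_i$.

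The second representation \eqref{Lnl} is then obtained by a simple reindexing: for each jump direction $v\in\mathcal{V}$, collect all pairs $(\mathbf{i},\mathbf{j})\in\mathcal{J}^k$ (over $1\le k\le K$) for which $e_\mathbf{j}-e_\mathbf{i}=v$. The increment $f(x+\frac{1}{n}v)-f(x)$ is identical across this collection and factors out of the inner sum, which by definition equals $\lambda^n_v(x)$ as in \eqref{lambdan}. The only real obstacle in the proof is the combinatorial bookkeeping identified above; once permutation invariance is invoked to justify the uniform $k!$ overcount, the rest is purely algebraic manipulation.
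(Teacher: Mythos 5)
Your proof is correct and arrives at the same formulas, though you organize the counting more directly than the paper. The paper partitions $\mathcal{J}^k$ into equivalence classes $[\mathbf{i},\mathbf{j}]$ under the diagonal $\mathbb{S}_k$-action, counts the distinguishable $k$-tuple transitions per class as $A_k(n,\mathbf{i},x)/|\mathbb{S}_k[\mathbf{i},\mathbf{j}]|$ using the stabilizer subgroup $\mathbb{S}_k[\mathbf{i},\mathbf{j}] = \{\sigma : \sigma(\mathbf{i})=\mathbf{i},\, \sigma(\mathbf{j})=\mathbf{j}\}$, and then converts the resulting sum over $[\mathcal{J}^k]$ into a sum over all of $\mathcal{J}^k$ via the orbit--stabilizer identity $|[\mathbf{i},\mathbf{j}]|\,|\mathbb{S}_k[\mathbf{i},\mathbf{j}]| = k!$. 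You instead sum naively over all ordered $k$-tuples and all $(\mathbf{i},\mathbf{j})\in\mathcal{J}^k$, and observe that this double sum hits each physical jump exactly $k!$ times with a common rate, so the uniform factor $1/k!$ compensates. Both encode the same orbit-counting argument; yours is more compact because it does not need to exhibit the stabilizer explicitly.

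One slip worth correcting: the $k!$ ordered labelings of a physical jump do \emph{not} in general yield $k!$ \emph{distinct} ordered pairs $(\mathbf{i},\mathbf{j})$. Two labelings related by a permutation in $\mathbb{S}_k[\mathbf{i},\mathbf{j}]$ give the same pair — e.g.\ $k=2$, $\mathbf{i}=(1,1)$, $\mathbf{j}=(2,2)$, where both orderings produce $(\mathbf{i},\mathbf{j})$ itself. Your conclusion survives because the load-bearing count is of labelings, not pairs: for a fixed $(\mathbf{i}',\mathbf{j}')$ the quantity $A_k(n,\mathbf{i}',x)$ already enumerates ordered labelings, so a repeated pair contributes with the appropriate multiplicity, and each of the $k!$ labelings of a given physical jump contributes exactly one unit to the naive double sum. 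The phrase ``$k!$ distinct ordered pairs'' should simply be dropped (or the word ``distinct'' removed); the paper's explicit stabilizer computation is precisely what makes this degeneracy transparent.
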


\begin{proof}
Fix $k\in \{1,\ldots ,K\}$ and define an equivalence relation on $\mathcal{J}%
^{k}$ as follows: for $\left( \mathbf{i}_{1}\mathbf{,j}_{1}\right) ,\left( 
\mathbf{i}_{2}\mathbf{,j}_{2}\right) \in \mathcal{J}^{k}$, $\left( \mathbf{i}%
_{1}\mathbf{,j}_{1}\right) \sim \left( \mathbf{i}_{2}\mathbf{,j}_{2}\right) $
if and only if there exists $\sigma \in \mathbb{S}_{k}$ such that $\sigma
\left( \mathbf{i}_{1}\right) =\mathbf{i}_{2}$, $\sigma \left( \mathbf{j}%
_{1}\right) =\mathbf{j}_{2}$. Let $\left[ \mathbf{i,j}\right] $ denote the
equivalence class containing $\left( \mathbf{i,j}\right) $, let $\left[ 
\mathcal{J}^{k}\right] $ denote the collection of equivalence classes, and
define $\mathbb{S}_{k}\left[ \mathbf{i,j}\right] =\left\{ \sigma \in \mathbb{%
S}_{k}:\sigma \left( \mathbf{i}\right) =\mathbf{i}\text{, }\sigma \left( 
\mathbf{j}\right) =\mathbf{j}\right\} $. Since the particles are assumed
indistinguishable, when $\left( \mathbf{i}_{1}\mathbf{,j}_{1}\right) \sim
\left( \mathbf{i}_{2}\mathbf{,j}_{2}\right) $, the jump direction associated
with $\left( \mathbf{i}_{1}\mathbf{,j}_{1}\right) $ coincides with that
associated with $\left( \mathbf{i}_{2}\mathbf{,j}_{2}\right) $. Therefore,
when the empirical measure of the $n$-particle system is $x\in {\mathcal{S}}%
_{n}$, given $\left( \mathbf{i},\mathbf{j}\right) \in \mathcal{J}^{k}$, the
number of distinguishable ordered $k$-tuple transitions from configuration $%
\mathbf{i}$ to $\mathbf{j}$ is equal to $A_{k}\left( n,\mathbf{i},x\right)
/\left\vert \mathbb{S}_{k}\left[ \mathbf{i,j}\right] \right\vert $, where $%
A_k(n,\mathbf{i}, x)$ satisfies \eqref{a}. By the permutation symmetry (\ref%
{perm}), we can set $\Gamma _{\left[ \mathbf{i,j}\right] }^{k,n}\left( \cdot
\right) =\Gamma _{\mathbf{ij}}^{k,n}\left( \cdot \right) $, and the
generator of the Markov process $\left\{ \mu ^{n}\left( \cdot \right)
\right\} $ is given by 
\begin{equation}
\mathcal{L}_{n}\left( f\right) \left( x\right) =\sum_{k=1}^{K}\sum_{\left[ 
\mathbf{i,j}\right] \in \left[ \mathcal{J}^{k}\right] }\frac{A_{k}\left( n,%
\mathbf{i},x\right) }{\left\vert \mathbb{S}_{k}\left[ \mathbf{i,j}\right]
\right\vert }\Gamma _{\left[ \mathbf{i,j}\right] }^{k,n}\left( x\right) %
\left[ f\left( x+\frac{1}{n}e_{\mathbf{j}}-\frac{1}{n}e_{\mathbf{i}}\right)
-f\left( x\right) \right] ,\text{ }x\in \mathcal{S}_{n},  \label{Lgrp}
\end{equation}%
for any function $f:\mathcal{S}_{n}\mapsto \mathbb{R}$. An alternative way
to write the generator (\ref{Lgrp}) is as a sum over $\mathcal{J}^{k}$
rather than over $\left[ \mathcal{J}^{k}\right] $. Using \eqref{perm} and
noting that $\left\vert \left[ \mathbf{i,j}\right] \right\vert =\left\vert 
\mathbb{S}_{k}\right\vert /\left\vert \mathbb{S}_{k}\left[ \mathbf{i,j}%
\right] \right\vert =k!/\left\vert \mathbb{S}_{k}\left[ \mathbf{i,j}\right]
\right\vert $, we can rewrite (\ref{Lgrp}) as in (\ref{Ln}), since the sum
in (\ref{Lgrp}) for a given $\left( \mathbf{i,j}\right) $ corresponds to $%
\left\vert \left[ \mathbf{i,j}\right] \right\vert $ summands in (\ref{Ln}).
Finally, \eqref{Lnl} is a direct consequence of \eqref{Ln} and the
definition of $\lambda _{v}^{n}(\cdot )$ in \eqref{lambdan}.
\end{proof}

We will refer to $\lambda_v^n$ as the jump rate (of the empirical measure $%
\mu^n$) in the direction $v$.

\subsection{The Law of Large Numbers Limit}

\label{subs-lln}

We now describe the functional LLN limit for the sequence of jump Markov
processes $\{\mu ^{n}\}_{n\in \mathbb{N}}$ under a suitable assumption on
the particle transition rates.

\begin{assumption}
\label{intsys} For every $k=1,...,K$ and $\left( \mathbf{i,j}\right) \in 
\mathcal{J}^{k}$, there exists a Lipschitz continuous function $\Gamma _{%
\mathbf{ij}}^{k}:\mathcal{S}\rightarrow \mathbb{R}$ such that for every $%
x\in \mathcal{S}$ and sequence $x_{n}\in \mathcal{S}_{n}$, $n\in \mathbb{N}$%
, such that $\lim_{n\rightarrow \infty }x_{n}=x$, 
\begin{equation}  \label{gamma-limit}
\Gamma _{\mathbf{ij}}^{k}\left( x\right) =\lim_{n\rightarrow \infty
}n^{k-1}\Gamma _{\mathbf{ij}}^{k,n}\left( x_{n}\right).
\end{equation}
\end{assumption}


Note that Assumption \ref{intsys} implies that the transition rates are
uniformly bounded: 
\begin{equation}
R_{0}\doteq \max_{k=1,\ldots ,K}\max_{(\mathbf{i},\mathbf{j})\in {\mathcal{J}%
}^{k}}\max_{x\in {\mathcal{S}}}\Gamma _{\mathbf{i}\mathbf{j}}^{k}(x)<\infty ,
\label{def-R0}
\end{equation}%
and that the associated jump rates $\{\lambda _{v}^{n},v\in {\mathcal{V}}\}$
of the empirical measure process $\mu ^{n}(\cdot )$, given by \eqref{lambdan}%
, satisfy the following property.

\begin{property}
\label{prop-lambda} For every $v\in {\mathcal{V}}$, there exists a Lipschitz
continuous function $\lambda _{v}:{\mathcal{S}}\rightarrow \lbrack 0,\infty
) $ such that given any sequence $x_{n}\in {\mathcal{S}}_{n}$, $n\in \mathbb{%
N} $, such that $x_{n}\rightarrow x\in {\mathcal{S}}$ as $n\rightarrow
\infty $, $\lambda _{v}^{n}(x_{n})\rightarrow \lambda _{v}(x)$.
\end{property}

To see why this is true, for every $k=1,\ldots ,K$ and $\left( \mathbf{i,j}%
\right) \in \mathcal{J}^{k}$, define $\alpha _{\mathbf{ij}}^{k}\left( \cdot
\right) $ by 
\begin{equation}
\alpha _{\mathbf{ij}}^{k}\left( x\right) \doteq \frac{1}{k!}\left(
\prod\limits_{l=1}^{k}x_{i_{l}}\right) \Gamma _{\mathbf{ij}}^{k}\left(
x\right) ,\quad x\in {\mathcal{S}}.  \label{alpha1}
\end{equation}%
If Assumption \ref{intsys} holds, then \eqref{alphan} and \eqref{a} together
imply that $\alpha _{\mathbf{ij}}^{k}$ is Lipschitz continuous and $\alpha _{%
\mathbf{ij}}^{k}\left( x\right) =\lim_{n\rightarrow \infty }\alpha _{\mathbf{%
ij}}^{k,n}\left( x_{n}\right) $ for $x\in \mathcal{S}$. Together with %
\eqref{lambdan}, this shows that Property \ref{prop-lambda} is satisfied
with 
\begin{equation}
\lambda _{v}\left( x\right) \doteq \sum_{k=1}^{K}\sum_{\substack{ \left( 
\mathbf{i},\mathbf{j}\right) \in \mathcal{J}^{k}\text{:}  \\ e_{\mathbf{j}%
}-e_{\mathbf{i}}=v}}\alpha _{\mathbf{ij}}^{k}\left( x\right) ,\quad x\in {%
\mathcal{S}},  \label{lambda}
\end{equation}%
for $v\in {\mathcal{V}}$.

For future purposes, we also define 
\begin{equation}
R\doteq \sup_{v\in \mathcal{V},x\in \mathcal{S}}\lambda _{v}\left( x\right)
<\infty ,  \label{m}
\end{equation}%
where $R$ is finite because ${\mathcal{S}}$ is compact and the rates $%
\lambda _{v}(\cdot ),v\in {\mathcal{V}}$, are continuous. Since the jump
rates $\{\lambda _{v}(\cdot ),v\in {\mathcal{V}}\}$ satisfy Property \ref%
{prop-lambda}, the LLN limit for $\{\mu ^{n}\}_{n\in \mathbb{N}}$ follows
from a general result due to \cite{Kur} (see also \cite{Oel}).

\begin{theorem}
\label{2.1} Suppose that the sequence $\{\lambda_v^n (\cdot), v \in {%
\mathcal{V}}\}$ of jump rates associated with the sequence of empirical
measure processes $\{\mu^n(\cdot)\}_{n \in \mathbb{N}}$ satisfies Property %
\ref{prop-lambda}, and let $\lambda_v, v \in \mathcal{V}$, be the associated
limit jump rates defined in \eqref{lambda}. Also, assume $\mu^{n}\left(
0\right) $ converges in probability to $\mu _{0}\in {\mathcal{P}}\left( 
\mathcal{X}\right) $ as $n$ tends to infinity. Then $\left\{ \mu ^{n}\left(
\cdot \right) \right\} _{n\in \mathbb{N}}$ converges (uniformly on compact
time intervals) in probability to $\mu \left( \cdot \right) $, where $\mu
\left( \cdot \right) $ is the unique solution to the nonlinear Kolmogorov
forward equation%
\begin{equation}
\dot{\mu}\left( t\right) =\sum_{v\in \mathcal{V}}v\lambda _{v}\left( \mu
\left( t\right) \right) ,\quad \mu \left( 0\right) =\mu _{0}.  \label{1}
\end{equation}
In particular, the above assertion holds when the sequence of transition
rates $\{\Gamma _{\mathbf{ij}}^{k,n}:\left( \mathbf{i,j}\right) \in \mathcal{%
J}^k, k = 1, \ldots, K\}_{n \in \mathbb{N}}$ satisfies Assumption \ref%
{intsys} and $\mu^{n}\left( 0\right) $ converges in probability to $\mu
_{0}\in {\mathcal{P}}\left( \mathcal{X}\right) $ as $n$ tends to infinity.
\end{theorem}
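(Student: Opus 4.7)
The plan is to reduce to a standard convergence theorem for jump Markov processes (Kurtz's theorem), which is essentially what the statement says. The strategy has three pieces: a semimartingale decomposition of $\mu^n(\cdot)$, convergence of the drift via Property \ref{prop-lambda}, and a negligibility estimate for the martingale part, followed by a Gronwall argument that uses the Lipschitz continuity of the $\lambda_v$'s.

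First I would write the Dynkin decomposition. Applying $\mathcal{L}_n$ to the coordinate functions and using the form \eqref{Lnl} of the generator, one obtains
\begin{equation*}
\mu^n(t) = \mu^n(0) + \int_0^t b^n(\mu^n(s))\,ds + M^n(t),
\end{equation*}
where the drift is $b^n(x) \doteq \sum_{v \in \mathcal{V}} v\,\lambda_v^n(x)$ and $M^n$ is a $d$-dimensional c\`adl\`ag martingale whose components have predictable quadratic variations that are straightforward to compute from the generator. Because $\lambda_v^n$ is uniformly bounded (Property \ref{prop-lambda} applied on the compact set $\mathcal{S}$, together with \eqref{m}), and each jump of $\mu^n$ has size $\frac{1}{n}|v| = O(1/n)$, the predictable quadratic variation of $M^n(t)$ is bounded by $C t/n$. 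Doob's inequality then gives $\sup_{s \le t}\|M^n(s)\| \to 0$ in probability (and in $L^2$) for every fixed $t$.

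Next I would establish tightness of $\{\mu^n\}$ in $D([0,T],\mathcal{S})$. The state space $\mathcal{S}$ is compact, and the drift and martingale bounds above imply the modulus-of-continuity condition needed for Aldous--Kurtz tightness. Let $\mu$ be any subsequential limit. To identify it, fix $t$ and write
\begin{equation*}
\mu^n(t) - \mu^n(0) - \int_0^t \sum_{v \in \mathcal{V}} v\,\lambda_v(\mu^n(s))\,ds
= M^n(t) + \int_0^t \sum_{v \in \mathcal{V}} v \bigl(\lambda_v^n(\mu^n(s)) - \lambda_v(\mu^n(s))\bigr)\,ds.
\end{equation*}
By Property \ref{prop-lambda}, the functions $\lambda_v^n$ converge to $\lambda_v$ uniformly on $\mathcal{S}$ (continuous limit on the compact set, approximated on the finer and finer lattices $\mathcal{S}_n$), so the second term vanishes uniformly in $s$. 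Combined with $M^n \to 0$ and the continuous mapping theorem (using continuity of $x \mapsto \int_0^\cdot \sum_v v\lambda_v(x(s))\,ds$ on $D([0,T],\mathcal{S})$), any limit point $\mu$ satisfies $\mu(t) = \mu_0 + \int_0^t \sum_v v\,\lambda_v(\mu(s))\,ds$, i.e.\ the ODE \eqref{1}.

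Finally, because each $\lambda_v$ is Lipschitz and $\mathcal{V}$ is finite, the vector field $\sum_v v\,\lambda_v(\cdot)$ is Lipschitz on $\mathcal{S}$, so \eqref{1} has a unique solution (Gronwall). This forces the entire sequence to converge in probability to $\mu$, uniformly on compact time intervals. The last sentence of the theorem then follows from the remark preceding it: Assumption \ref{intsys} and \eqref{alpha1}--\eqref{lambda} show that Property \ref{prop-lambda} holds. The only mildly delicate point is the uniform (rather than pointwise along $x_n \to x$) convergence $\lambda_v^n \to \lambda_v$, but this is immediate from compactness of $\mathcal{S}$, continuity of $\lambda_v$, and the density of $\bigcup_n \mathcal{S}_n$; no step is a genuine obstacle, so I expect the proof to be short and to amount to verifying the hypotheses of a standard theorem of Kurtz.
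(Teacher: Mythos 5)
Your proposal is correct, and it reaches the same conclusion by a genuinely different route. The paper derives the semimartingale decomposition from the Poisson-random-measure SDE representation \eqref{sde1} (which it has set up for the ensuing large-deviation machinery) and then concludes with a single direct Gronwall comparison: it bounds $\|\mu^n(t)-\mu(t)\|$ by the sup of the martingale part plus a Lipschitz-controlled integral of $\|\mu^n(s)-\mu(s)\|$ and invokes Gronwall, with no compactness argument at all. You instead use Dynkin's formula to get the same decomposition, then follow the softer weak-convergence route: establish C-tightness in $D([0,T]:\mathcal{S})$, identify any subsequential limit as a solution of the integral equation via the continuous mapping theorem, and use Lipschitz uniqueness of the ODE to force convergence of the full sequence. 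Both are standard and sound; the direct Gronwall argument is shorter, while your compactness argument is the more modular one (and would survive weaker continuity hypotheses where Gronwall would not directly apply). Your observation that Property \ref{prop-lambda} upgrades to uniform convergence of $\lambda_v^n$ to $\lambda_v$ on $\mathcal{S}_n$ — via compactness of $\mathcal{S}$, continuity of $\lambda_v$, and a diagonal argument — is correct and is implicitly used in the paper as well, so there is no gap.
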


Since properties of the LLN trajectory will be used in the large deviation
proof, we present an alternative proof of Theorem \ref{2.1} in Section \ref%
{subs-pflln}. In the single jump case ($K=1$), substituting \eqref{alpha1}
and \eqref{lambda} into \eqref{1} and rearranging terms, it is easy to see
that the nonlinear ODE describing the LLN limit can be rewritten in the form 
\begin{equation}  \label{lln-single}
\dot{\mu}\left( t\right) = \mu (t) \Gamma \left( \mu(t)\right),\quad \mu
\left( 0\right) =\mu _{0}.
\end{equation}
where $\Gamma(\cdot) = \Gamma^1 (\cdot)$ is the transition rate matrix $%
\{\Gamma_{ij}(\cdot), i, j = 1, \ldots, d\}$. We now show that the LLN limit
of the empirical measure of an interacting particle system with $K > 1$ can
be viewed as the LLN limit of the empirical measure of a corresponding
particle system with no simultaneous transitions (i.e., with $K=1$).

\begin{remark}
\label{rem-singlejmp} \emph{Given a jump Markov process with generator (\ref%
{Ln}), consider the associated \textquotedblleft single
transition\textquotedblright\ interacting particle process, with transition
rate matrix 
\begin{equation}
\Gamma _{ij}^{n,\mathrm{eff}}\left( x\right) \doteq
\sum_{k=1}^{K}\sum_{\left( \mathbf{i},\mathbf{j}\right) \in \mathcal{J}%
^{k}}\sum_{l=1}^{k}\frac{\alpha _{\mathbf{ij}}^{k,n}\left( x\right) }{x_{i}}%
\mathbb{I}_{\left\{ i=i_{l},j=j_{l}\right\} },\text{ \ \ \ }x\in \mathcal{S}%
,\left( i,j\right) \in \mathcal{J}^{1},  \label{Geff}
\end{equation}%
for $i\neq j$ and $\Gamma _{ii}^{n,\mathrm{eff}}\left( x\right) \doteq
-\sum_{j=1,j\neq i}^{d}\Gamma _{ij}^{n,\mathrm{eff}}\left( x\right) $, $n\in 
\mathbb{N}$. In \eqref{Geff}, for $\mathbf{i}=(i_{1},\ldots ,i_{k})$, if $%
x_{i_{\ell }}=0$ for some $l\in \{1,\ldots ,k\}$, then $\alpha _{\mathbf{ij}%
}^{k,n}\left( x\right) /x_{i_{l}}$ is understood as the pointwise limit of $%
\alpha _{\mathbf{ij}}^{k,n}\left( y\right) /y_{i_{l}}$ when $y$ lies in the
relative interior of $\mathcal{S}$ and $y\rightarrow x$ in the Euclidean
norm; the form of $\alpha _{\mathbf{ij}}^{k,n}(\cdot )$ in \eqref{alphan}
and (\ref{a}) guarantees the existence of this pointwise limit. From
Assumption \ref{intsys} and \eqref{alpha1}, it is clear that for each $x\in 
\mathcal{S}$ and $i,j\in {\mathcal{X}}$, $i\neq j$, as $n\rightarrow \infty $%
, $\Gamma _{ij}^{n,\mathrm{eff}}\left( x\right) $ converges to 
\begin{equation}
\Gamma _{ij}^{\mathrm{eff}}\left( x\right) \doteq \sum_{k=1}^{K}\sum_{\left( 
\mathbf{i},\mathbf{j}\right) \in \mathcal{J}^{k}}\sum_{l=1}^{k}\left(
\prod\limits_{\substack{ r=1  \\ r\neq l}}^{k}x_{i_{r}}\right) \Gamma _{%
\mathbf{ij}}^{k}\left( x\right) \mathbb{I}_{\left\{ i=i_{l},j=j_{l}\right\}
},  \label{GEFF}
\end{equation}%
where a product over an empty set is to be interpreted as $1$. If, as usual,
we set $\Gamma _{ii}^{\mathrm{eff}}\doteq -\sum_{j\neq i,j\in {\mathcal{X}}%
}\Gamma _{ij}^{\mathrm{eff}}$, then it is easy to see that the LLN limit for
the simultaneous transitions case, which has the form \eqref{1} with $%
\{\lambda _{v}(\cdot ),v\in {\mathcal{V}}\}$ as in \eqref{lambda}, coincides
with the LLN limit for the single transition case in \eqref{lln-single}, but
with the matrix $\Gamma $ replaced by the matrix $\Gamma ^{\mathrm{eff}}$.
The superscript \textquotedblleft eff\textquotedblright\ in \eqref{Geff} and %
\eqref{GEFF} stands for \textquotedblleft effective\textquotedblright , and
is used to indicate that the $n$-particle system with simultaneous
transitions and the corresponding single-transition particle system have the
same LLN limit. However, it is important to note that the two systems have
different dynamics and large deviation behavior (for instance, see Example
3.1.26 of \cite{WW}). }
\end{remark}

\subsection{Examples}

\label{subs-egs}

The particle systems that we describe naturally occur in a wide range of
areas, including statistical mechanics (Curie-Weiss model), graphical models
and algorithms, networks and queueing systems (rerouting, loss networks). We
present two illustrative examples below.

\begin{example}
\label{CW}\textit{The opinion dynamics or Curie-Weiss model \cite{DM}.} 
\emph{This is a mean field model on a complete graph. As before, let $n$ be
the number of particles or individuals and let $X^{i,n}\left( t\right) \in {%
\mathcal{X}}\doteq \left\{ -1,1\right\} $ denote the opinion of the $i^{%
\text{th}}$ individual at time $t$, and let $\beta > 0$ be a parameter that
measures the proclivity of an individual to change opinion. (Note that $d
\doteq |{\mathcal{X}}| = 2$, but we write ${\mathcal{X}} = \{-1, 1\}$
instead of ${\mathcal{X}} = \{1,2\}$.) At time $0$, each individual adopts
an opinion $X^{i,n}\left( 0\right) $ in ${\mathcal{X}}$ independently and
uniformly at random. Each individual has an independent and identically
distributed (iid) Poisson clock of rate $1$. If the clock of individual $i$
rings at time $t$, he/she computes the opinion imbalance $M^{\left( i\right)
}=\sum_{j\neq i}X^{j,n}\left( t-\right) $, and changes opinion with
probability 
\begin{equation*}
\mathbb{P}_{\mathrm{flip}}\left( X^{i,n}\left( t\right) \right) =\left\{ 
\begin{array}{ll}
\exp \left( -2\beta \left\vert M^{\left( i\right) }(t-)\right\vert /n\right)
& \text{if }M^{\left( i\right) }(t-)X^{i,n}\left( t-\right) >0, \\ 
1 & \text{otherwise.}%
\end{array}%
\right.
\end{equation*}%
The empirical measure process $\mu ^{n}$ only takes jumps of the form $%
\mathcal{V}=\left\{ e_{j}-e_{i},i,j\in \left\{ -1,1\right\} \right\} ,$ with
points in $\mathcal{S}$ denoted by }$(x_{-1},x_{1})$\emph{. The particle
transition rates satisfy Assumption \ref{intsys} with $\Gamma =\Gamma ^{1}$
taking the form 
\begin{eqnarray*}
\Gamma _{1,-1}\left( x\right) &=&\left\{ 
\begin{array}{ll}
\exp \left( -2\beta \left( x_{1}-x_{-1}\right) \right) & \text{if }%
x_{1}-x_{-1}>0, \\ 
1 & \text{otherwise,}%
\end{array}%
\right. \\
\Gamma _{-1,1}\left( x\right) &=&\left\{ 
\begin{array}{ll}
\exp \left( -2\beta \left( x_{-1}-x_{1}\right) \right) & \text{if }%
x_{1}-x_{-1}<0, \\ 
1 & \text{otherwise,}%
\end{array}%
\right.
\end{eqnarray*}%
and, as usual, $\Gamma _{1,1}\left( x\right) =-\Gamma _{1,-1}\left( x\right) 
$ and $\Gamma _{-1,-1}\left( x\right) =-\Gamma _{-1,1}\left( x\right) $. A
generalization of this example is the Curie-Weiss-Potts model with Glauber
dynamics, the mixing time of which has interesting phase transition
properties (see \cite{LLP}). }
\end{example}

Interacting particle systems with simultaneous transitions arise naturally
as models of communication networks. We now provide one such example, from 
\cite{GibHunKel90}. More examples can be found in \cite{T}, \cite{M} and 
\cite{GOc}.

\begin{example}
\label{ARN}\textit{Alternative rerouting networks }\cite{GibHunKel90}. \emph{%
Consider a network that consists of $n$ links, each with finite capacity $C$%
. Let $\mathcal{X}=\left\{ 0,\ldots ,C\right\} $ and let $X^{i,n}(t)$ denote
the number of packets (or customers) using link $i$ at time $t$. Packets
arrive at each link as a Poisson process with rate $\gamma >0$. If a packet
arrives at a link with spare capacity, then it is accepted to the link and
occupies one unit of capacity for an exponentially distributed time with
mean one. On the other hand, if a packet arrives at a link that is fully
occupied, two other links are chosen uniformly at random from amongst the
remaining $n-1$ links. If both chosen links have a unit of spare capacity
available, the packet occupies one unit of capacity on each of the two
links, for two independent, exponential clocks with mean one. Otherwise, the
packet is lost. This model seeks to understand the impact of allowing
alternative routes that occupy a greater number of resources on the
performance of the network. }

\emph{The empirical measure process $\mu ^{n}$ is a jump Markov process with
jump rates summarized as follows: for any $i,j\in \mathcal{X}$:%
\begin{equation*}
\mu ^{n}\rightarrow \left\{ 
\begin{array}{lll}
\mu ^{n}+\frac{1}{n}\left( e_{i+1}-e_{i}\right) & \text{at rate }n\gamma \mu
_{i}^{n} & 0\leq i\leq C-1, \\ 
\mu ^{n}+\frac{1}{n}\left( e_{i-1}-e_{i}\right) & \text{at rate }ni\mu
_{i}^{n} & 1\leq i\leq C, \\ 
\mu ^{n}+\frac{1}{n}\left( e_{i+1}-e_{i}+e_{j+1}-e_{j}\right) & \text{at
rate }2\gamma \frac{n^{3}\mu _{c}^{n}\mu _{i}^{n}\mu _{j}^{n}}{\left(
n-1\right) \left( n-2\right) } & 0\leq i\neq j\leq C-1, \\ 
\mu ^{n}+\frac{2}{n}\left( e_{i+1}-e_{i}\right) & \text{at rate }\gamma 
\frac{n^{2}\mu _{c}^{n}\mu _{i}^{n}\left( n\mu _{i}^{n}-1\right) }{\left(
n-1\right) \left( n-2\right) } & 0\leq i\leq C-1.%
\end{array}%
\right.
\end{equation*}%
The transition rates of the particle system satisfy Assumption \ref{intsys}
with $K=2$, and 
\begin{equation*}
\Gamma _{ii+1}^{1}\left( x\right) =\gamma ,\text{ \ \ \ }\Gamma
_{ii-1}^{1}\left( x\right) =i,\text{ \ \ \ }\Gamma _{\left( i,j\right)
\left( i+1,j+1\right) }^{2}\left( x\right) =\gamma x_{c},
\end{equation*}%
and $\Gamma _{\mathbf{ij}}^{k}=0$ for all other transitions $(\mathbf{i},%
\mathbf{j})\in {\mathcal{J}}^{k}$, $k=1,2$. By (\ref{GEFF}), we can
calculate the effective transition rate as 
\begin{equation*}
\Gamma _{ij}^{\mathrm{eff}}\left( x\right) =\Gamma _{ij}^{1}\left( x\right)
+\sum_{\substack{ i^{\prime }\neq i,j^{\prime }\neq j  \\ i^{\prime }\neq
j^{\prime }}}2x_{i^{\prime }}\Gamma _{\left( i,i^{\prime }\right) \left(
j,j^{\prime }\right) }^{2}\left( x\right) +x_{i}\Gamma _{\left( i,i\right)
\left( j,j\right) }^{2}\left( x\right) ,
\end{equation*}%
which gives $\Gamma _{ii+1}^{\mathrm{eff}}\left( x\right) =\gamma
+4\sum_{i^{\prime }\neq i}x_{i^{\prime }}\lambda x_{c}+2x_{i}\gamma
x_{c}=\gamma \left[ 1+2x_{c}\left( 2-x_{i}\right) \right] $, $\Gamma
_{ii-1}^{\mathrm{eff}}\left( x\right) =\gamma$, and $\Gamma _{ij}^{\mathrm{%
eff}}\left( x\right) =0$ for all other $(i,j)\in {\mathcal{J}}^{1}$. }
\end{example}

\section{Main Results}

\label{sec-mainres}

Throughout the rest of the paper, we always assume, without explicit
mention, that the transition rates associated with the sequence of $n$%
-particle systems satisfy the symmetry condition \eqref{perm}. We also
assume that they satisfy Assumption \ref{intsys} with associated limit
transition rates $\{\Gamma _{\mathbf{ij}}^{k}(\cdot ),(\mathbf{i,j)}\in {%
\mathcal{J}}^{k},k=1,\ldots ,K\}$. Then, as follows from Theorem \ref{2.1},
the corresponding sequence of empirical measure processes $\{\mu
^{n}\}_{n\in \mathbb{N}}$ has a LLN limit $\mu \left( \cdot \right) $ whose
evolution is governed by the limit jump rates $\{\lambda _{v},v\in {\mathcal{%
V}}\},$ defined in \eqref{lambda}. In practice one is often interested in
estimating the tail probabilities $\mathbb{P}\left( \mu ^{n}\left( \cdot
\right) \in A\right) $ for certain sets of paths $A$ that do not contain the
LLN limit. This can be studied in the framework of an LDP. First, in Section %
\ref{subs-nparticle} we introduce additional assumptions on the limit
transition rate functions and then in Section \ref{subs-ldp} state the
sample path large deviation principle for the sequence $\left\{ \mu
^{n}\right\} _{n\in \mathbb{N}}$. Asymptotics of the tail probabilities at a
given time $t$ will follow from the contraction principle. In Section \ref%
{subs-luldp}, we introduce an additional condition that allows us to
establish a locally uniform refinement to the LDP and in Section \ref%
{subs-invldp} we discuss the LDP for the associated sequence of invariant
measures. As a by-product of our proof technique, we in fact establish these
large deviation results for a larger class of sequences $\{\mu ^{n}\}_{n\in 
\mathbb{N}}$ of jump Markov processes on the simplex. A precise statement of
this more general result is given in Remark \ref{rem-ldp}. For simplicity we
assume from now on that $t\in \left[ 0,1\right] $, while all results in this
paper can be established for $t$ in any compact time interval by the same
argument.

\subsection{Assumptions on the Limit Transition Rates}

\label{subs-nparticle}

Below, we introduce three additional assumptions on the limit transition
rates of the interacting particle system: a uniformity condition (Assumption %
\ref{ue}), a type of ergodicity (Assumption \ref{ass-kerg}) and a mild
restriction on the type of simultaneous jumps allowed (Assumption \ref%
{ass-simjumps}). For $k\in \{1,\ldots ,K\}$, denote 
\begin{equation}
\mathfrak{M}_{\mathbf{ij}}^{k}\doteq \inf_{x\in \mathcal{S}}\Gamma _{\mathbf{%
ij}}^{k}\left( x\right) \text{,}\quad \mbox{ for }\left( \mathbf{i,j}\right)
\in \mathcal{J}^{k},  \label{n}
\end{equation}%
and let the set 
\begin{equation}
\mathcal{J}^{k}_+\doteq \left\{ \left( \mathbf{i,j}\right) \in \mathcal{J}%
^{k}:\mathfrak{M}_{\mathbf{ij}}^{k}>0\right\}  \label{nk}
\end{equation}
denote the set of $k$-tuple transitions whose transition rates are uniformly
bounded away from zero. Also, set 
\begin{equation}
c_{0}\doteq \min_{k=1,\ldots ,K}\left\{ \mathfrak{M}_{\mathbf{ij}%
}^{k}:\left( \mathbf{i,j}\right) \in {\mathcal{J}}^{k}_+\right\} .
\label{def-c0}
\end{equation}

The first assumption states that each transition rate function is either
identically zero, or uniformly bounded below away from zero on the simplex.

\begin{assumption}
\label{ue} For $k=1,...,K$ and $\left( \mathbf{i,j}\right) \in \mathcal{J}%
^{k}$, either $\left( \mathbf{i,j}\right) \in \mathcal{J}^{k}_+$
(equivalently, $\mathfrak{M}_{\mathbf{ij}}^{k} > 0$) or $\Gamma _{\mathbf{ij}%
}^{k}\left( x\right) =0$ for every $x\in \mathcal{S}$.
\end{assumption}

Note that, nevertheless, the limit jump rates $\lambda _{v}(\cdot ),v\in {%
\mathcal{V}}$, of the associated sequence of empirical measure processes
will not be bounded away from zero on the simplex. More precisely, for $v\in 
{\mathcal{V}}$, let ${\mathcal{N}}_{v}$ be the set of coordinates of $v$
that are strictly negative: 
\begin{equation}
{\mathcal{N}}_{v}\doteq \left\{ i\in {\mathcal{X}}:\langle v,e_{i}\rangle
<0\right\} .  \label{def-nv}
\end{equation}%
Note that for every $v\in {\mathcal{V}}$, $v\neq 0$, the fact that $%
\sum_{i\in {\mathcal{X}}}v_{i}=0$ implies ${\mathcal{N}}_{v}\neq \emptyset $%
. Now, we claim (and justify below) that $\lambda _{v}(x)\rightarrow 0$
whenever $x_{i}\rightarrow 0$ for any $i\in {\mathcal{N}}_{v}$. Indeed, the
claim can be deduced from the form of $\lambda _{v}(\cdot )$ in %
\eqref{lambda}, the fact that for any $k=1,\ldots ,K$ and $(\mathbf{i},%
\mathbf{j})\in \mathcal{J}^{k}$, we have 
\begin{equation}
e_{\mathbf{j}}-e_{\mathbf{i}}=v\quad \Rightarrow \quad \mbox{ for every
}i\in {\mathcal{N}}_{v},\,|\{l=1,\ldots ,k:i_{l}=i\}|\geq |\langle
v,e_{i}\rangle | \geq 1,  \label{nvinc}
\end{equation}%
and the property that $\alpha _{\mathbf{ij}}^{k}(x)\rightarrow 0$ if $%
x_{i_{l}}\rightarrow 0$ for some $l=1,\ldots ,k$, where the latter assertion
follows from \eqref{alpha1} and the uniform boundedness of $\Gamma _{\mathbf{%
i}\mathbf{j}}^{k}$ on ${\mathcal{S}}$, which is a consequence of the
continuity of $\Gamma _{\mathbf{i}\mathbf{j}}^{k}$ specified in Assumption %
\ref{intsys}.

Next, we impose a type of ergodicity property on the transition rates
specified below.

\begin{definition}
\label{k-acc}For two states $u, w\in \mathcal{X}$, $w$ is said to be $K$-%
\textbf{accessible} from $u$ if there exist $M \in \left\{2,...,d\right\} $
and a sequence of distinct states in $\mathcal{X}$: $%
u=u_{1},u_{2},...,u_{M}=w$, such that for $m=1,...,M-1$, the following three
properties hold:

\begin{enumerate}
\item[(i)] there exist $k_{m}\in \left\{ 1,...,K\right\} $, $\left( \mathbf{i%
}_{m}\mathbf{,j}_{m}\right) \in \mathcal{J}^{k_{m}}$, and $%
l_{m},l_{m}^{^{\prime }}\in \left\{ 1,...,k_{m}\right\} $, such that $%
u_{m}=i_{m,l_{m}}$ and $u_{m+1}=j_{m,l_{m}^{^{\prime }}}$;

\item[(ii)] for $l=1,...,k_{m}$, $i_{m,l}\in \left\{ u_{1},...,u_{m}\right\}$%
;

\item[(iii)] $\mathfrak{M}_{\mathbf{i}_{m}\mathbf{j}_{m}}^{k_{m}}>0$, i.e., $%
(\mathbf{i}_{m},\mathbf{j}_{m})\in \mathcal{J}_+^{k_{m}}$.
\end{enumerate}

We say the family $\{ \Gamma _{\mathbf{ij}}^k \left( \cdot \right), \left( 
\mathbf{i},\mathbf{j}\right) \in \mathcal{J}^{k}, k =1, \ldots, K \}$ is $K$-%
\textbf{ergodic} if for any $u, w\in \mathcal{X}$, $w$ is $K$-accessible
from $u$.
\end{definition}

The $K$-ergodicity condition, roughly speaking, requires that one can reach
any state $w \in {\mathcal{X}}$ from any state $u \in {\mathcal{X}}$ via a
finite sequence of states, where each adjacent pair of states represents a
state transition that can be effected by a simultaneous $k$-tuple transition
with a strictly positive rate. Note that in general, the adjacent pair need
not represent initial and final states of any one particle involved in the $%
m $th simultaneous transition; the latter is true only when $l_m =
l^\prime_m $ in Definition \ref{k-acc}.i), which in particular always holds
when $K=1$. Instead, the first state in the pair could be the initial state
of one particle and the other state could be the final state of another
particle involved in the simultaneous transition. However, as stipulated in
property ii) above, $K$-ergodicity also requires that the initial states of
all particles involved in the $m$th (simultaneous) transition must be a
subset of the previous states $u_1, \ldots, u_m$ in the sequence. The latter
property, which is trivially satisfied when $K=1$, ensures that at the $m$th
stage ``mass'' is moved exclusively from the subset of states $\left\{
u_{1},...,u_{m}\right\}$ to $u_{m+1}$, which helps in the construction of
so-called communicating paths for the associated empirical measure process
between different states on the simplex (see Definition \ref{def-compath}
and Proposition \ref{ver1}).

\begin{assumption}
\label{ass-kerg} The family $\{ \Gamma _{\mathbf{ij}}^k \left( \cdot
\right), \left( \mathbf{i},\mathbf{j}\right) \in \mathcal{J}^{k}, k =1,
\ldots, K\}$ is $K$-ergodic.
\end{assumption}

To provide further insight into the $K$-ergodicity property, we now state a
simpler, and perhaps more intuitive, condition that (in the presence of
Assumption \ref{ue}) implies $K$-ergodicity. Recall that $\Gamma^{\mathrm{eff%
}}$ is the effective transition rate matrix introduced in \eqref{GEFF}.

\begin{assumption}
\label{g2} For every $x\in \mathcal{S}$, the Markov process on $\mathcal{X}$
with transition rate matrix $\Gamma^{\mathrm{eff}}\left( x\right) $ is
ergodic.
\end{assumption}

\begin{lemma}
\label{lem-verk} If the family $\{ \Gamma^{k} _{\mathbf{ij}}\left(
\cdot\right), \left( \mathbf{i},\mathbf{j}\right) \in {\mathcal{J}}^k, k =
1, \ldots, K\}$ satisfies Assumptions \ref{ue} and \ref{g2}, then it also
satisfies Assumption \ref{ass-kerg}, that is, it is $K$-ergodic.
\end{lemma}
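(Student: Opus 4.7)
The plan is to fix $u, w \in \mathcal{X}$ and exhibit a $K$-accessibility sequence from $u$ to $w$. The construction rests on an auxiliary boundary-transition principle: for every nonempty $S \subsetneq \mathcal{X}$ there exist $k \in \{1, \ldots, K\}$ and $(\mathbf{i}, \mathbf{j}) \in \mathcal{J}^k_+$ such that every coordinate of $\mathbf{i}$ lies in $S$ while at least one coordinate of $\mathbf{j}$ lies in $\mathcal{X} \setminus S$. To establish this, I choose any $x \in \mathcal{S}$ whose support is exactly $S$ and invoke Assumption \ref{g2}: by ergodicity of $\Gamma^{\mathrm{eff}}(x)$, there is a pair $(v, w')$ with $v \in S$, $w' \notin S$, and $\Gamma^{\mathrm{eff}}_{v, w'}(x) > 0$. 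Unpacking this using \eqref{GEFF}, one obtains a transition $(\mathbf{i}, \mathbf{j}) \in \mathcal{J}^k$ with $i_l = v$, $j_l = w'$, $\prod_{r \neq l} x_{i_r} > 0$ (so $i_r \in S$ for all $r$), and $\Gamma^k_{\mathbf{ij}}(x) > 0$; Assumption \ref{ue} then upgrades the latter to $(\mathbf{i}, \mathbf{j}) \in \mathcal{J}^k_+$.

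Next I construct the sequence by induction on $m$. Set $u_1 = u$ and $S_1 = \{u\}$; this is a degenerate instance of the boundary principle (with $S = \{u\}$) where every coordinate of $\mathbf{i}$ is forced to equal $u_1$, so condition (i) in Definition \ref{k-acc} is automatic. Given $u_1, \ldots, u_m$ satisfying the $K$-accessibility requirements through step $m-1$, and assuming $w \notin S_m$, I produce $u_{m+1}$ by combining the boundary principle applied to $S_m$ with ergodicity of $\Gamma^{\mathrm{eff}}(e_{u_m})$. Concretely, I consider a minimum-length path in $\Gamma^{\mathrm{eff}}(e_{u_m})$ from $u_m$ to some state outside $S_m$: $u_m = v_0 \to v_1 \to \cdots \to v_p$ with $v_0, \ldots, v_{p-1} \in S_m$ and $v_p \notin S_m$. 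Each edge of this path corresponds, via \eqref{GEFF}, to a transition $(\mathbf{i},\mathbf{j}) \in \mathcal{J}^k_+$ whose non-moving initial coordinates all equal $u_m$ and whose moving coordinate lies in $S_m$. When $p = 1$, or when the final edge has $k \geq 2$, the transition trivially has $u_m$ as an initial state and $v_p$ as a new final state, so I set $u_{m+1} = v_p$ and verify conditions (i)--(iii).

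The main obstacle lies in the remaining case: $p \geq 2$ with the final edge $v_{p-1} \to v_p$ arising from a single-particle jump ($k = 1$) with $v_{p-1} \neq u_m$, so $u_m$ fails to be an initial state of this transition. To handle this case I exploit the fact that $v_{p-1} \in S_m$, so $v_{p-1} = u_{m'}$ for some earlier index $m' < m$, and the single jump $v_{p-1} \to v_p$ is already available as a valid $m'$-th transition (because $v_p \notin S_m \supseteq S_{m'}$). Rebuilding the sequence to use this jump at step $m'$ yields a shorter valid $K$-accessibility sequence that already contains $v_p$; iterating this rectification (formalized by strong induction on an auxiliary combinatorial quantity such as the shortest graph distance from $u$ to $w$ in the directed graph built from $\mathcal{J}_+$) shows that the construction eventually incorporates $w$. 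The hardest part is making this re-ordering argument entirely rigorous: the competing demand that every newly added state must lie outside the current trace while the $m$-th transition must have $u_m$ as an initial state creates the combinatorial obstruction to a purely greedy construction, and this is the place where one must use the flexibility afforded by Assumption \ref{g2} at multiple base points $e_v$ rather than at a single $x$.
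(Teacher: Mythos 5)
Your proposal misses the single observation that makes the paper's proof a two-line argument, and the gap you identify in your own construction is real. The paper applies ergodicity of $\Gamma^{\mathrm{eff}}$ at the \emph{fixed} point $x = e_u$ (the Dirac mass at the starting state $u$), never at $e_{u_m}$ for $m \geq 2$. Ergodicity of $\Gamma^{\mathrm{eff}}(e_u)$ directly produces a path of distinct states $u = u_1, \ldots, u_M = w$ with $\Gamma^{\mathrm{eff}}_{u_m u_{m+1}}(e_u) > 0$ for each $m$. Unpacking via \eqref{GEFF}, each such positive entry is witnessed by some $(\mathbf{i}_m, \mathbf{j}_m) \in \mathcal{J}^{k_m}$ with $i_{m,l_m} = u_m$, $j_{m,l_m} = u_{m+1}$, and $\prod_{r \neq l_m}\langle e_u, e_{i_{m,r}}\rangle > 0$, i.e., $i_{m,r} = u$ for every $r \neq l_m$. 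Since $u = u_1$, the initial coordinates all lie in $\{u_1, u_m\} \subset \{u_1, \ldots, u_m\}$, so Definition~\ref{k-acc}(ii) is automatic, (i) holds with $l_m = l'_m$, and (iii) follows from Assumption~\ref{ue}. No boundary-transition principle and no rectification are needed.

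Your approach switches the base point to $e_{u_m}$ at step $m$, which destroys the property that the non-moving initial particles always sit at $u_1$; this is the source of your case $p \geq 2$ with a single-particle final edge. The rectification step you sketch is not a proof: restarting the sequence at index $m'+1$ with $u_{m'+1} = v_p$ does not produce a strictly decreasing auxiliary quantity. The sequence length can shrink and then grow again, the set of visited states is neither monotone increasing nor decreasing, and the ``shortest graph distance from $u$ to $w$'' is a constant that never changes during the iteration. As written the argument could loop, so the claim that it terminates does not follow. In addition, your bare boundary-transition principle (applied to $S_m$ with some $x$ supported on $S_m$) does not guarantee $u_m$ appears among the initial coordinates $\mathbf{i}$ at all, so Definition~\ref{k-acc}(i) can fail already before the rectification issue arises; you implicitly acknowledge this by bringing in $\Gamma^{\mathrm{eff}}(e_{u_m})$ as a second ingredient, but that repair is exactly what creates the unresolved case.
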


\begin{proof}
Take any $u, w\in \mathcal{X}$, $u\neq w$. Since $\Gamma ^{\mathrm{eff}%
}\left( e_{u}\right) $ is ergodic by Assumption \ref{g2}, there exist $M \in
\{2, \ldots, d\}$ and a sequence of distinct states $u=u_{1},...,u_{M}=w$
such that $\Gamma _{u_{m}u_{m+1}}^{\mathrm{eff}}\left( e_{u}\right) >0$, $%
m=1,...,M-1$. By the definition of $\Gamma ^{\mathrm{eff}}$ given in (\ref%
{GEFF}), this implies that for $m=1,...,M-1$, there exist $k_{m}\in \left\{
1,...,K\right\} $, $\left( \mathbf{i}_{m}\mathbf{,j}_{m}\right) \in \mathcal{%
J}^{k_{m}}$ and $l_{m}\in \left\{ 1,...,k_{m}\right\}$ such that $%
u_{m}=i_{m,l_{m}}$, $u_{m+1}=j_{m,l_{m}}$ and 
\begin{equation*}
\prod\limits_{\substack{ r=1  \\ r\neq l_{m}}}^{k_{m}}\left\langle
e_{u},e_{i_{m,r}}\right\rangle \Gamma _{\mathbf{i}_{m}\mathbf{j}%
_{m}}^{k_{m}}\left( e_{u}\right) >0.
\end{equation*}%
By Assumption \ref{ue}, this implies that $\mathfrak{M}_{\mathbf{i}_{m}%
\mathbf{j}_{m}}^{k_{m}}>0$ and $\mathbf{i}_{m,r}=u$ for every $r\neq l_{m}$.
In other words, the $l_{m}^{\text{th}}$ component of $\mathbf{i}_{m}$ is
equal to $u_{m}$, and all other components are equal to $u$. Therefore,
Definition \ref{k-acc}.i) is satisfied with $l_{m}=l_{m}^{^{\prime }}$,
Definition \ref{k-acc}.ii) is satisfied with $i_{m,l}\in \left\{
u,u_{m}\right\}$ for $l=1,...,k_{m}$ and Definition \ref{k-acc}.iii) also
holds. Since $u, w$ are arbitrary, the lemma follows.
\end{proof}

\begin{remark}
\label{rem-1erg} \emph{Notice that when $K=1$, property (ii) of Definition %
\ref{k-acc} is trivially satisfied and $\Gamma^{\mathrm{eff}}$ coincides
with the single transition rate matrix $\Gamma = \Gamma^1$ in %
\eqref{lln-single}. Thus, when $K=1$ and Assumption \ref{ue} is satisfied, $%
K $-ergodicity, Assumption \ref{g2} and Assumption \ref{g1} below (which
requires that $\Gamma^1(x)$ be ergodic for every $x \in {\mathcal{S}}$) are
all equivalent.}
\end{remark}

However, as the following example illustrates, when $K > 1$, $K$-ergodicity
is strictly weaker than Assumption \ref{g2}.

\begin{example}
\label{eg3}\emph{Let $d=4$, $K=2$, and define the generator of the Markov
process $\left\{ \mu ^{n}\right\} _{n\in \mathbb{N}}$ as in (\ref{Ln}) with $%
\alpha _{\mathbf{ij}}^{k,n}$ defined as in (\ref{alphan}), in terms of $%
\Gamma ^{1,n}$ and $\Gamma ^{2,n}$ given by 
\begin{equation*}
\begin{array}{lll}
\Gamma _{12}^{1,n}\left( x\right) =c_{1}, & \Gamma _{21}^{1,n}\left(
x\right) =c_{2}, & \Gamma _{34}^{1,n}\left( x\right) =c_{3}, \\ 
\Gamma _{43}^{1,n}\left( x\right) =c_{4}, & \Gamma _{\left( 1,2\right)
\left( 3,4\right) }^{2,n}\left( x\right) =\frac{1}{n}c_{5}, & \Gamma
_{\left( 3,4\right) \left( 1,2\right) }^{2,n}\left( x\right) =\frac{1}{n}%
c_{6}%
\end{array}%
\end{equation*}%
with $c_{i}>0,$ $i=1,...,6$, and $\Gamma _{\mathbf{ij}}^{k,n}=0$ for all
other ($\mathbf{i,j)}\in {\mathcal{J}}^{k}$, $k=1,2$, $n\in \mathbb{N}$.
Note that Assumption \ref{intsys} trivially holds with $\Gamma _{\mathbf{ij}%
}^{k}\doteq \Gamma _{\mathbf{ij}}^{k,n}$ for $(\mathbf{i,j})\in {\mathcal{J}}%
^{k}$, $k=1,2$, and Assumption \ref{ue} is also satisfied. Also, the
associated limit jump rates $\{\lambda _{v},v\in {\mathcal{V}}\}$ defined in %
\eqref{lambda} take the form 
\begin{equation*}
\begin{array}{lll}
\lambda _{e_{2}-e_{1}}(x)=x_{1}c_{1},\quad & \lambda
_{e_{4}-e_{3}}(x)=x_{3}c_{3},\quad & \lambda
_{e_{3}+e_{4}-e_{1}-e_{2}}(x)=x_{1}x_{2}c_{5}/2 \\ 
\lambda _{e_{1}-e_{2}}(x)=x_{2}c_{2}, & \lambda _{e_{3}-e_{4}}(x)=x_{4}c_{4},
& \lambda _{e_{1}+e_{2}-e_{3}-e_{4}}(x)=x_{3}x_{4}c_{6}/2.%
\end{array}%
\end{equation*}%
}

\emph{Furthermore, the effective transition rate matrix $\Gamma^{\mathrm{eff}%
}$ defined in \eqref{GEFF} takes the form 
\begin{equation*}
\begin{array}{llll}
\Gamma _{12}^{\mathrm{eff}}\left( x\right) =c_{1}, & \text{\ }\Gamma _{21}^{%
\mathrm{eff}}\left( x\right) =c_{2},\text{ \ } & \text{\ }\Gamma _{34}^{%
\mathrm{eff}}\left( x\right) =c_{3}, & \text{\ }\Gamma _{43}^{\mathrm{eff}%
}\left( x\right) =c_{4} \\ 
\Gamma _{13}^{\mathrm{eff}}\left( x\right) =x_{2}c_{5}, & \text{\ }\Gamma
_{31}^{\mathrm{eff}}\left( x\right) =x_{4}c_{6}, & \text{ }\Gamma _{24}^{%
\mathrm{eff}}\left( x\right) =x_{1}c_{5}, & \text{\ }\Gamma _{42}^{\mathrm{%
eff}}\left( x\right) =x_{3}c_{6}.%
\end{array}%
\end{equation*}
Thus, $\Gamma ^{\mathrm{eff}}$ is not ergodic on the part of the boundary
given by $\left\{ x\in \mathcal{S}:x_{3}=x_{4}=0\text{ or }%
x_{1}=x_{2}=0\right\} $, and Assumption \ref{g2} fails to hold. }

\emph{We now show that nevertheless, this particle system is $2$-ergodic. To
verify the $2$-ergodicity of Example \ref{eg3}, first consider the case $u=1$
and $w\in \{2, 3, 4\}$. If $w = 2$, then we can take $M=2$, $k_{1}=1$ and $%
\left( \mathbf{i}_{1}\mathbf{,j}_{1}\right) =\left( 1,2\right)$. If $w=3$,
one might be tempted to set $M=2$ again and use the simultaneous jump $(1,2)
\mapsto (3,4)$. However, this would violate property (ii) of Definition \ref%
{k-acc}. Instead, we take $M=3$, $u_{1}=1$, $u_{2}=2$, $u_{3}=3$, $k_{1}=1$, 
$\left( \mathbf{i}_{1}\mathbf{,j}_{1}\right) =\left( 1,2\right) $, $k_{2}=2$
and $\left( \mathbf{i}_{2}\mathbf{,j}_{2}\right) =\left( \left( 1,2\right)
,\left( 3,4\right) \right) $. The case $w=4$ is similar to the case $w=3$,
except that $u_{3}=4$. It is easy to check in each case that the sequence of
states $\left\{ u_{m}, m = 1, \ldots, M\right\}$ satisfy conditions i), ii),
iii) of Definition \ref{k-acc}. The symmetry of the problem allows one to
deal with the case $u \in \{2, 3, 4\}$ in an analogous fashion (we omit the
details) to show that the example is $2$-ergodic. }
\end{example}

As explained in the introduction and shown in Section \ref{sec-pfupper}, by
applying a general result for jump-diffusion Markov processes that was
obtained in \cite{DEW}, it is possible to establish a large deviation upper
bound for jump Markov processes with generator \eqref{Lnl} in the form of
the integral of a so-called \textquotedblleft local rate
function\textquotedblright ; see \eqref{I} and \eqref{L} below. The more
delicate part of the sample path LDP is the proof of the large deviation
lower bound. Since each jump rate $\lambda _{v}^{n}(\cdot )$ of $\mu ^{n}$
tends to zero as $\mu ^{n}$ approaches some part of the boundary of $%
\mathcal{S}$, the local rate function can approach infinity, which makes the
analysis difficult. The third assumption we require is a mild technical
restriction on the type of simultaneous particle transitions that are
allowed, which allows us to overcome this difficulty. This assumption is
used only to show that the LLN trajectory moves into the (relative) interior
of the simplex sufficiently quickly (see Property \ref{cond-lln} for a
precise statement). However, as elaborated in Remark \ref{rem-ldp}, our
proof applies to the broader class of systems for which the LLN trajectory
still possesses this property, even if Assumption \ref{ass-simjumps} may
fail to hold. Assumption \ref{ass-simjumps} simply serves to identify a
large class of systems for which this property of the LLN trajectory can be 
\emph{a priori} verified. We recall that $\mathcal{J}_{+}^{k}$ was defined
in (\ref{nk}) and let $\mathcal{J}_{+}\doteq \cup _{k=1}^{K}\mathcal{J}%
_{+}^{k}$.

\begin{assumption}
\label{ass-simjumps} For every $v\in {\mathcal{V}}\backslash \{0\}$ such
that $\lambda _{v}$ is not identically zero, at least one of the following
two properties is true:

\begin{enumerate}
\item There exists $(\mathbf{i}^{\ast },\mathbf{j}^{\ast })\in \mathcal{J}%
_{+}$ such that $e_{\mathbf{j}^{\ast }}-e_{\mathbf{i}^{\ast }}=v$ and 
\begin{equation*}
\left\vert \left\{ l:i_{l}=j\right\} \right\vert =\left\{ 
\begin{array}{ll}
|\langle v,e_{j}\rangle | & \mbox{ if }j\in {\mathcal{N}}_{v}, \\ 
0 & \mbox{ otherwise, }%
\end{array}%
\right.
\end{equation*}%
where we recall that ${\mathcal{N}}_{v} = \{i\in \mathcal{X}:v_{i}<0\}$,

\item There exist $r_{j}\geq 1,j\in {\mathcal{N}}_{v},$ such that given any $%
(\mathbf{i},\mathbf{j})\in \mathcal{J}_{+}$, we have $(\mathbf{i},\mathbf{j}%
)\in {\mathcal{J}}_{+}^{k_{\ast }}$ where $k^{\ast }=\sum_{j\in {\mathcal{N}}%
_{v}}r_{j}$, and 
\begin{equation*}
r_{j}=|\{l=1,\ldots ,k^{\ast }:i_{l}=j\}|,\quad j\in {\mathcal{N}}_v.
\end{equation*}
\end{enumerate}
\end{assumption}

To better understand what this assumption says, consider a particle system
with $d=4$ and suppose $v=2e_{1}+e_{3}-2e_{2}-e_{4}$. There are many
transitions $(\mathbf{i},\mathbf{j})$ that could lead to the jump direction $%
v$, including, for example, (a) $(2,2,4)\mapsto (1,1,3)$; (b) $%
(2,2,4)\mapsto (1,3,1)$, (c) $(2,2,4,4)\mapsto (4,1,3,1)$; (d) $%
(2,2,4,4)\mapsto (4,3,1,1)$; (e) $(2,2,4,1)\mapsto (1,1,1,3)$. Here ${%
\mathcal{N}}_{v}=\{2,4\}$. First consider Assumption \ref{ass-simjumps}(1).
For $j=2$, the number of particles jumping from type $2$ should be $|\langle
v,e_{2}\rangle |=2$. Similarly, the number of particles of type $4$ before
the jump should be $1$. In particular, transitions (a) or (b) above would
meet this requirement, and a system in which all the above transitions [and
their permuted versions, by virtue of \eqref{perm}] have strictly positive
rates would also satisfy the assumption. In contrast, a system in which the
only transitions associated with $v$ that have positive rate are of type (c)
or (d) would not satisfy Assumption \ref{ass-simjumps}(1) because for such
transitions the first inequality in \eqref{nvinc} is strict for $j=4$, thus
violating the stipulated condition. However, this system would satisfy
Assumption \ref{ass-simjumps}(2) since the vector of initial particle values
for both these transitions have the same "type", namely containing a pair of 
$2$'s and a pair of $4$'s, whereas a system that has both transitions (a)
and (c) would not satisfy the second condition in Assumption \ref%
{ass-simjumps} [although, as mentioned above, it would satisfy Assumption %
\ref{ass-simjumps}(1)]. An example of a particle system that would violate
both conditions in Assumption \ref{ass-simjumps} is one in which only
transitions of type (e) have positive rate. In this case, $\left\vert
\left\{ l:i_{l}=j\right\} \right\vert $ is non-zero for $j=1$, which does
not lie in ${\mathcal{N}}_{v}=\{2,4\}$ and, moreover, $(i,j) \in {\mathcal{J}%
}_+^4$, but $\sum_{j \in {\mathcal{N}}_v} |\{l: i_l = j\}| = 3$. The
technical problem with such a system is that one could have $\lambda
_{v}(x)\rightarrow 0$ as one approaches parts of the boundary where $x_{i}=0$
even though $v_{i}>0$. In other words, the jump rates in a certain direction 
$v$ could diminish to zero at certain points on the boundary when jumps in
the direction $v$ from such points would take the empirical measure back to
the interior of the domain. The difficulty is that we rely on such jumps to
move the LLN limit to the interior of the simplex quickly.

\subsection{Large Deviation Principles}

\label{subs-ldp}

We now state our first large deviation result, which is the sample path LDP.
To define the rate function, we need some notation. For $x\in {\mathcal{S}}$%
, let 
\begin{equation}
\ell \left( x\right) \doteq \left\{ 
\begin{array}{ll}
x\log x-x+1 & x\geq 0, \\ 
\infty & x<0,%
\end{array}%
\right.  \label{l}
\end{equation}%
be the local rate function associated with the standard Poisson process. Let 
$\Delta ^{d-1}\doteq \{x\in \mathbb{R}^{d}:\sum_{i=1}^{d}x_{i}=0\}$. Then
for $x\in \mathcal{S}$ and $\beta \in \Delta ^{d-1}$, we define 
\begin{equation}
L\left( x,\beta \right) \doteq \inf_{\overset{q_{v}\geq 0,v\in {\mathcal{V}}:%
}{\sum_{v\in \mathcal{V}}vq_{v}=\beta }}\sum_{v\in \mathcal{V}}\lambda
_{v}\left( x\right) \ell \left( \frac{q_{v}}{\lambda _{v}\left( x\right) }%
\right) \text{.}  \label{L}
\end{equation}%
For $t\in \left[ 0,1\right] $ and an absolutely continuous function $\gamma :%
\left[ 0,t\right] \mapsto \mathcal{S}$, define 
\begin{equation}
I_{t}\left( \gamma \right) \doteq \int_{0}^{t}L\left( \gamma \left( s\right)
,\dot{\gamma}\left( s\right) \right) ds,  \label{I}
\end{equation}%
and in all the other cases, set $I_{t}\left( \gamma \right) =\infty $. We
write $I\left( \gamma \right) $ to denote $I_{1}\left( \gamma \right) $.

In what follows we equip $D\left( \left[ 0,1\right] :\mathcal{S}\right) $
with the Skorokhod $J_1$-topology, and let $\mathcal{B}\left( D\left( \left[
0,1\right] :\mathcal{S}\right) \right) $ be the associated Borel sets.

\begin{theorem}
\label{th-ldips} Suppose the family $\{\Gamma _{\mathbf{i}\mathbf{j}%
}^{k}(x),x\in {\mathcal{S}},(\mathbf{i},\mathbf{j})\in {\mathcal{J}}%
^{k},k=1,\ldots ,K\}$ satisfies Assumptions \ref{intsys}, \ref{ue}, \ref%
{ass-kerg} and \ref{ass-simjumps}. Also, assume that that the initial
conditions $\left\{ \mu ^{n}\left( 0\right) \right\} _{n\in \mathbb{N}}$ are
deterministic and satisfy $\mu ^{n}\left( 0\right) \rightarrow \mu _{0}\in {%
\mathcal{P}}\left( \mathcal{X}\right) $ as $n$ tends to infinity. Then the
associated sequence of empirical measure processes 
$\left\{ \mu ^{n}\right\} _{n\in \mathbb{N}}$ 
satisfies the sample path LDP with rate function $I$. Specifically, for any
measurable set $A\in \mathcal{B}\left( D\left( \left[ 0,1\right] :\mathcal{S}%
\right) \right) $, we have the large deviation upper bound 
\begin{equation}
\limsup_{n\rightarrow \infty }\frac{1}{n}\log \mathbb{P}\left( \mu ^{n}\in
A\right) \leq -\inf \left\{ I\left( \gamma \right) :\gamma \in \bar{A}%
,\gamma \left( 0\right) =\mu _{0}\right\} ,  \label{uppbd}
\end{equation}%
and the large deviation lower bound 
\begin{equation}
\liminf_{n\rightarrow \infty }\frac{1}{n}\log \mathbb{P}\left( \mu ^{n}\in
A\right) \geq -\inf \left\{ I\left( \gamma \right) :\gamma \in A^{\circ
},\gamma \left( 0\right) =\mu _{0}\right\} .  \label{lowbd}
\end{equation}%
Moreover, for any compact set ${\mathcal{K}}\subset \mathcal{S}$ and $%
M<\infty ,$ the set%
\begin{equation}
\left\{ \gamma \in D\left( \left[ 0,1\right] :\mathcal{S}\right) :I\left(
\gamma \right) \leq M,\gamma \left( 0\right) \in {\mathcal{K}}\right\}
\label{ini}
\end{equation}%
is compact.
\end{theorem}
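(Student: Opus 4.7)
The statement bundles together three claims: the upper bound \eqref{uppbd}, the lower bound \eqref{lowbd}, and compactness of the level sets in \eqref{ini}. The plan is to treat them separately, in that order, with the lower bound being the main technical hurdle.

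For the upper bound, my plan is to start from the SDE representation of $\mu^{n}$ driven by Poisson random measures and invoke the variational representation from \cite{BDM}, a streamlined form of which will have been established in Section \ref{sec-varrep}. This produces, for any bounded continuous $F$,
\begin{equation*}
-\frac{1}{n}\log\mathbb{E}\bigl[e^{-nF(\mu^{n})}\bigr]=\inf_{u^{n}}\mathbb{E}\biggl[\sum_{v\in\mathcal{V}}\int_{0}^{1}\lambda_{v}^{n}(\bar\mu^{n}(s))\,\ell(u_{v}^{n}(s))\,ds+F(\bar\mu^{n})\biggr],
\end{equation*}
where $\bar\mu^{n}$ is the controlled empirical measure. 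Taking $F$ to be (a smooth approximation of) a large multiple of $\mathbf{1}_{\bar{A}^{c}}$, extracting a subsequential weak limit of the occupation-measure-valued controls together with $\bar\mu^{n}$ by the tightness estimates from \cite{DEW}, and using convexity and lower semicontinuity of $(x,q)\mapsto\sum_{v}\lambda_{v}(x)\ell(q_{v}/\lambda_{v}(x))$ in $q$ (which delivers lower semicontinuity of $L$ by the variational definition \eqref{L}) yield \eqref{uppbd}.

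The lower bound requires producing, for each $\gamma\in A^{\circ}$ with $\gamma(0)=\mu_{0}$ and $I(\gamma)<\infty$, a near-minimizing control in the variational representation whose controlled path stays close to $\gamma$ with high probability. If $\gamma(s)$ lies in the relative interior of $\mathcal{S}$ for all $s\in[0,1]$, a measurable selection theorem produces rates $\{q_{v}(s)\}$ attaining the infimum in \eqref{L} with $\sum_{v}vq_{v}(s)=\dot\gamma(s)$, and the substitution $u_{v}^{n}(s)=q_{v}(s)/\lambda_{v}^{n}(\bar\mu^{n}(s))$ is well defined and bounded, so a Gronwall-type argument based on Theorem \ref{2.1} gives $\bar\mu^{n}\to\gamma$ in probability with total cost converging to $I(\gamma)$. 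The main obstacle, and the source of all the technical work, is when $\gamma$ visits the boundary of $\mathcal{S}$: there $\lambda_{v}(\gamma(s))$ vanishes in certain directions and the controls degenerate. My plan is to handle this via a perturbation argument inspired by \cite{DNW}: approximate $\gamma$ by trajectories $\gamma_{\varepsilon}=(1-\varepsilon)\gamma+\varepsilon\tilde\mu_{\varepsilon}$, where $\tilde\mu_{\varepsilon}$ is a trajectory that pushes quickly into the interior of $\mathcal{S}$, so that $\gamma_{\varepsilon}$ avoids the boundary. The choice of $\tilde\mu_{\varepsilon}$ will rely on Assumption \ref{ass-kerg}, which supplies the communicating paths needed to reach the interior from any initial state, and on Assumption \ref{ass-simjumps}, which ensures that the LLN flow moves into the interior at a rate sufficient to keep $I(\gamma_{\varepsilon})$ close to $I(\gamma)$ (this is the content of Property \ref{cond-lln} that is verified in Section \ref{sec-ver}). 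Once this is done, the interior argument applies to each $\gamma_{\varepsilon}$, and since $A^{\circ}$ is open, $\gamma_{\varepsilon}\in A^{\circ}$ for small $\varepsilon$; the convergence $I(\gamma_{\varepsilon})\to I(\gamma)$, which exploits convexity of $L(x,\cdot)$ and Lipschitz continuity and nondegeneracy of $\lambda_{v}$ away from the boundary, delivers \eqref{lowbd}.

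For the compactness of \eqref{ini}, my plan is a standard Arzel\`a--Ascoli argument. Given $\{\gamma_{n}\}$ with $I(\gamma_{n})\le M$ and $\gamma_{n}(0)\in\mathcal{K}$, the superlinear growth of $\ell$ together with the uniform bound $\lambda_{v}\le R$ from \eqref{m} yields, via a De~la~Vall\'ee-Poussin type estimate applied to the variational form \eqref{L}, uniform integrability of $\{\dot\gamma_{n}\}$ on $[0,1]$; this gives equicontinuity of $\{\gamma_{n}\}$, and combined with the compactness of $\mathcal{S}$ produces a uniformly convergent subsequence $\gamma_{n_{k}}\to\gamma$. Lower semicontinuity of $I$ (already used in the upper bound) then forces $I(\gamma)\le M$, and continuity of evaluation at $0$ together with compactness of $\mathcal{K}$ yield $\gamma(0)\in\mathcal{K}$, completing the proof. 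The principal difficulty throughout is the boundary-behavior step in the lower bound, where the geometry of the simplex and the vanishing of $\lambda_{v}$ interact nontrivially and force the multi-dimensional perturbation to be engineered carefully.
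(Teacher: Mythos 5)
Your perturbation step for the lower bound leaves a genuine gap at $t=0$. You define $\gamma_{\varepsilon}=(1-\varepsilon)\gamma+\varepsilon\tilde\mu_{\varepsilon}$ with $\tilde\mu_{\varepsilon}(0)=\gamma(0)=\mu_0$, so $\gamma_{\varepsilon}(0)=\mu_0$ may still lie on $\partial\mathcal{S}$; Property \ref{cond-lln} only gives $\gamma_{\varepsilon,i}(t)\geq\varepsilon b\,t^{D}$, which vanishes at $t=0$. Consequently you cannot ``apply the interior argument to $\gamma_{\varepsilon}$'' as stated, because the controls $u_{v}^{n}=q_v/\lambda_v^n(\bar\mu^n)$ are not well defined (let alone bounded) near $t=0$, and the controlled process has no uniform modulus there. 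What is actually needed — and what the paper supplies in Lemma \ref{6.1.9} — is a separate short-time argument on $[0,\tau]$: one steers the controlled measure along a communicating path from $\gamma(0)$ to $\gamma(\tau)\in\mathrm{int}(\mathcal{S})$, shows via the variational representation that this costs at most $O(\tau\log\tau)$, and then applies the uniform-in-initial-condition interior lower bound (the paper's Proposition \ref{6.6}) on $[\tau,1]$, gluing with the Markov property. Your proposal collapses these two qualitatively different regimes into one, and the justification ``the interior argument applies'' is false for the path as constructed.

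A secondary but real omission: the convergence $I(\gamma_{\varepsilon})\to I(\gamma)$ is not a consequence of ``convexity of $L(x,\cdot)$ and Lipschitz continuity and nondegeneracy of $\lambda_v$ away from the boundary,'' precisely because $\gamma_{\varepsilon}(t)$ is still near the boundary for small $t$ (where $I(\gamma)$ may place nontrivial mass). The paper's Lemma \ref{6.3} requires the quantitative two-sided ratio estimates of Lemma \ref{lem-estimates}(2),(4) — controls on $\lambda_v(x)/\lambda_v(y)$ as products of coordinate ratios raised to bounded powers, valid up to the boundary — fed through Lemma \ref{abs4.7''}. You cite none of this machinery, and without it the estimate $I(\gamma_\varepsilon)\leq(1+c(\varepsilon))I(\gamma)+c(\varepsilon)$ does not follow. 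The upper bound and compactness claims are in reasonable shape and broadly parallel the paper's appeal to \cite{DEW} (your version re-derives rather than cites, and a direct weak-convergence/Arzel\`a--Ascoli route can work), but the lower bound as outlined would not close.
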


The proof of the upper bound \eqref{uppbd} and the compactness of the set in %
\eqref{ini}, which only uses Property \ref{prop-lambda} (which is implied by
Assumption \ref{intsys}) is given at the end of Section \ref{sec-pfupper}.
The proof of the lower bound \eqref{lowbd} is given at the end of Section %
\ref{sec-pflower}.

Theorem \ref{th-ldips}, together with an application of the contraction
principle (see, e.g., \cite{Var}), yields the following variational
representation for the rate function of $\left\{ \mu ^{n}\left( t\right)
\right\} _{n\in \mathbb{N}}$ for any $t\in \left[ 0,1\right] $.

\begin{corollary}
\label{2.3}Suppose the conditions of Theorem \ref{th-ldips} hold. Then for
each $t\in \left[ 0,1\right] $, the sequence of random variables $\left\{
\mu ^{n}\left( t\right) \right\} _{n\in \mathbb{N}}$ satisfies an LDP with
rate function 
\begin{equation}
J_{t}\left( \mu _{0},x\right) \doteq \inf \left\{ I_{t}\left( \gamma \right)
:\gamma \in D\left( \left[ 0,1\right] :\mathcal{S}\right) ,\gamma \left(
0\right) =\mu _{0},\gamma \left( t\right) =x\right\}.  \label{J}
\end{equation}
\end{corollary}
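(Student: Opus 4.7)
The plan is to deduce Corollary \ref{2.3} from Theorem \ref{th-ldips} via the contraction principle, taking as the contracting map the evaluation $\pi_{t}:D([0,1]:\mathcal{S})\to \mathcal{S}$ defined by $\pi_{t}(\gamma)=\gamma(t)$. The only delicate point is that $\pi_{t}$ is not continuous on all of $D([0,1]:\mathcal{S})$ with the Skorokhod $J_{1}$-topology, so the contraction principle cannot be invoked in its textbook form. However, $\pi_{t}$ is continuous on the subspace $C([0,1]:\mathcal{S})$ of continuous paths, and this is where the goodness of $I$ saves the day.

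To bypass the discontinuity, I would first observe that any $\gamma$ with $I(\gamma)<\infty$ is, by the very definition \eqref{I}, absolutely continuous and therefore belongs to $C([0,1]:\mathcal{S})$. Combined with the compactness assertion \eqref{ini}, this shows that $I$, augmented by $+\infty$ on $\{\gamma:\gamma(0)\neq \mu_{0}\}$, is a good rate function whose sublevel sets lie within the continuity set of $\pi_{t}$. The standard extension of the contraction principle to good rate functions and maps continuous on each sublevel set then yields an LDP for $\mu^{n}(t)=\pi_{t}(\mu^{n})$ on $\mathcal{S}$ with rate function
\begin{equation*}
\tilde{J}_{t}(x)\;\doteq\;\inf\bigl\{I(\gamma):\gamma\in D([0,1]:\mathcal{S}),\,\gamma(0)=\mu_{0},\,\gamma(t)=x\bigr\}.
\end{equation*}

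The final step is to identify $\tilde{J}_{t}(x)$ with $J_{t}(\mu_{0},x)$. The inequality $J_{t}(\mu_{0},x)\leq \tilde{J}_{t}(x)$ is immediate from $L\geq 0$, since $I_{t}(\gamma)\leq I_{1}(\gamma)=I(\gamma)$ for every $\gamma\in D([0,1]:\mathcal{S})$. For the reverse inequality, given any absolutely continuous $\gamma$ on $[0,1]$ with $\gamma(0)=\mu_{0}$, $\gamma(t)=x$, and $I_{t}(\gamma)<\infty$, I would extend $\gamma|_{[0,t]}$ to $[t,1]$ by running the McKean--Vlasov ODE \eqref{1} forward from $x$. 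On this extension the LLN velocity $\sum_{v}v\lambda_{v}(\gamma(s))$ is attained, and the choice $q_{v}=\lambda_{v}(\gamma(s))$ in \eqref{L} gives $L(\gamma(s),\dot{\gamma}(s))=0$ on $[t,1]$ because $\ell(1)=0$; hence $I$ of the extended path equals $I_{t}(\gamma)$. Taking the infimum furnishes the matching bound. The only genuine obstacle in this argument is the discontinuity of $\pi_{t}$, and it is resolved by the goodness of $I$ together with the fact that finite-cost paths are automatically continuous.
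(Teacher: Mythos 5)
Your proof is correct and follows the same route the paper indicates: an application of the contraction principle to the time-$t$ evaluation map $\pi_t$. You are right to flag the two points that the paper's one-line citation glosses over --- the discontinuity of $\pi_t$ in the Skorokhod $J_1$-topology at paths jumping at $t$ (resolved since $I$ is a good rate function that is finite only on continuous paths, on which $J_1$-convergence is uniform) and the identification of the contracted rate $\inf I_1$ with the claimed $\inf I_t$ (resolved by appending, on $[t,1]$, the LLN trajectory started from $x$, which has zero running cost because $\ell(1)=0$) --- and you resolve both correctly.
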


\subsection{A locally uniform refinement}

\label{subs-luldp}

In applications, it is often useful to estimate the probability that $\mu
^{n}$ hits a specific point $x_{n}\in \mathcal{S}_{n}$ at some given time,
where $x_{n}\rightarrow x\in \mathcal{S}$ as $n\rightarrow \infty $. The
ordinary LDP does not imply an asymptotic rate for this hitting probability
since it applies only to fixed sets, and the \textquotedblleft
moving\textquotedblright\ set $\left\{ x_{n}\right\} $ in the present case
has empty interior. To obtain such a \textquotedblleft locally
uniform\textquotedblright\ result we need a strengthening of the $K$%
-ergodicity condition. Recall the single-transition rate matrix $\Gamma^1$.

\begin{assumption}
\label{g1} For every $x\in \mathcal{S}$, the Markov process on $\mathcal{X}$
with transition rate matrix $\Gamma^{1}\left( x\right) $ is ergodic.
\end{assumption}

Note that Assumption \ref{g1} implies Assumption \ref{g2} and, thus, is
stronger than Assumption \ref{g2}, which in itself (in the presence of
Assumption \ref{ue}) is a strengthening of $K$-ergodicity (see Lemma \ref%
{lem-verk}). We now state the locally uniform LDP result, which is proved in
Section \ref{sec-locunif}.

\begin{theorem}
\label{th-localldips} Suppose $\{\Gamma _{\mathbf{i}\mathbf{j}}^{k}(x),x\in {%
\mathcal{S}},(\mathbf{i},\mathbf{j})\in {\mathcal{J}}^{k},k=1,\ldots ,K\}$
satisfies Assumptions \ref{intsys}, \ref{ue}, \ref{ass-simjumps} and \ref{g1}%
, and let $\{\mu ^{n}\}_{n\in \mathbb{N}}$ be the associated sequence of
empirical measure processes. Also, assume the initial conditions $\left\{
\mu ^{n}\left( 0\right) \right\} _{n\in \mathbb{N}}$ are deterministic, and $%
\mu ^{n}\left( 0\right) \rightarrow \mu _{0}\in {\mathcal{P}}\left( \mathcal{%
X}\right) $ as $n$ tends to infinity. Let $\left\{ x_{n}\right\} _{n\in 
\mathbb{N}}$ $\subset \mathcal{S}_{n}$, $x\in \mathcal{S}$, be such that $%
x_{n}\rightarrow x$ as $n\rightarrow \infty $. Then for any $t\in \lbrack
0,1)$, 
\begin{equation*}
\lim_{n\rightarrow \infty }\frac{1}{n}\log \mathbb{P}\left( \mu ^{n}\left(
t\right) =x_{n}\right) =-J_{t}\left( \mu _{0},x\right) ,
\end{equation*}%
where $J_{t}$ is as defined in (\ref{J}).
\end{theorem}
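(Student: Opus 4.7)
The plan is to establish matching upper and lower bounds for $\frac{1}{n}\log\mathbb{P}(\mu^n(t)=x_n)$ and conclude equality with $-J_t(\mu_0,x)$. The upper bound is routine; the substance lies in the lower bound, which requires combining the sample-path LDP of Theorem \ref{th-ldips} with a conditional ``exact-hitting'' estimate near the endpoint, driven by the stronger ergodicity in Assumption \ref{g1}.

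For the upper bound, observe that for any $\epsilon>0$ and all sufficiently large $n$, the singleton $\{x_n\}$ lies inside the closed ball $\overline{B_\epsilon(x)}\cap \mathcal{S}_n$. Applying the single-time LDP of Corollary \ref{2.3} to this closed set yields
\begin{equation*}
\limsup_{n\to\infty}\frac{1}{n}\log\mathbb{P}(\mu^n(t)=x_n)\leq -\inf_{y\in\overline{B_\epsilon(x)}}J_t(\mu_0,y).
\end{equation*}
Letting $\epsilon\downarrow 0$, the right-hand side tends to $-J_t(\mu_0,x)$ by the lower semicontinuity of $y\mapsto J_t(\mu_0,y)$, which in turn follows from the compactness of the level sets in \eqref{ini} and the contraction principle.

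For the lower bound, fix small constants $\eta,\delta>0$. By the definition of $J_t$, choose an absolutely continuous path $\gamma^*:[0,t]\to\mathcal{S}$ with $\gamma^*(0)=\mu_0$, $\gamma^*(t)=x$, and $I_t(\gamma^*)\leq J_t(\mu_0,x)+\eta$. Perturb $\gamma^*$ on $[t-\delta,t]$ (adding at most $O(\delta)$ to the cost, which is possible using Assumption \ref{ue} and the positivity of the single-transition rates given by Assumption \ref{g1}) so that $\gamma^*(t-\delta)=y^*$ lies in the relative interior of $\mathcal{S}$. Applying the sample-path LDP lower bound \eqref{lowbd} to the open set $\{\gamma:\|\gamma(t-\delta)-y^*\|<\rho\}$ gives, for every $\rho>0$,
\begin{equation*}
\liminf_{n\to\infty}\frac{1}{n}\log\mathbb{P}\bigl(\|\mu^n(t-\delta)-y^*\|<\rho\bigr)\geq -I_{t-\delta}(\gamma^*).
\end{equation*}
By the Markov property, it then suffices to establish the conditional estimate
\begin{equation*}
\liminf_{n\to\infty}\inf_{y_n}\frac{1}{n}\log\mathbb{P}\bigl(\mu^n(t)=x_n\,\big|\,\mu^n(t-\delta)=y_n\bigr)\geq -C\delta,
\end{equation*}
where the inf runs over $y_n\in\mathcal{S}_n$ with $\|y_n-y^*\|<\rho$, and $C$ is independent of $\delta,\eta,\rho$. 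Combined with the previous display, sending $\rho,\eta,\delta\downarrow 0$ delivers the bound.

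The main obstacle is this conditional hitting estimate: we need to move the empirical measure from an arbitrary lattice point $y_n$ near $y^*$ to the specific lattice point $x_n$ in time exactly $\delta$. This is where Assumption \ref{g1} is essential. Since the single-transition rate matrix $\Gamma^1(z)$ is ergodic for every $z$, and the point $y^*$ lies in the relative interior, there is a neighborhood of $y^*$ on which $\Gamma^1$ has all off-diagonal entries uniformly bounded below by some $c>0$. Consequently, the jumps $\frac{1}{n}(e_j-e_i)$, which span $\Delta^{d-1}$, have empirical-measure rates uniformly bounded below by $cn\rho_0$ for some $\rho_0>0$ along any trajectory staying in that neighborhood. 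Starting from $y_n$, one constructs an explicit sequence of at most $O(n\delta)$ single transitions that (i) remains in a fixed neighborhood of $\gamma^*([t-\delta,t])$, and (ii) lands exactly on $x_n$ at time $t$. Estimating the probability that the underlying Poisson clocks realize precisely this sequence within time $\delta$ via a direct computation (analogous in spirit to \cite{DNW}, but here using the higher-dimensional geometry of single-transition jumps spanning the tangent space of $\mathcal{S}$) yields a lower bound of the form $\exp(-n[\int_{t-\delta}^tL(\gamma^*(s),\dot\gamma^*(s))\,ds+o_{\eta,\delta}(1)])$, which is at least $\exp(-nC\delta)$ uniformly. The hardest part is verifying that the constructed jump sequence indeed lands exactly on $x_n$ while staying in the ergodic neighborhood; this is where the lattice geometry of $\mathcal{S}_n$ and the spanning property of single transitions play their decisive role.
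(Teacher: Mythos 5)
Your upper bound is a valid alternative to the paper's: invoking lower semicontinuity of $J_{t}(\mu_{0},\cdot)$ — which follows from the goodness of the rate function established via the compactness in \eqref{ini} and the contraction principle — replaces the paper's explicit patching-and-reparametrization argument (Lemma \ref{7.1} together with Proposition \ref{4.8}) and is arguably cleaner. Your lower-bound skeleton (Markov decomposition, sample-path LDP for the first piece, conditional hitting estimate for the remaining short time) likewise matches the paper's.

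The genuine gap is in the conditional hitting estimate. You assert that along any discrete trajectory from $y_{n}$ to $x_{n}$ that remains in a fixed neighborhood of $\gamma^{*}([t-\delta,t])$, the empirical-measure jump rates are uniformly bounded below by $cn\rho_{0}$. This is false whenever $x=\gamma^{*}(t)\in\partial\mathcal{S}$, and $x$ on the boundary is precisely the case the theorem must cover since $x_{n}\to x$ is arbitrary in $\mathcal{S}$. For a direction $v$ with some $v_{i}<0$, the rate $n\lambda_{v}^{n}(\mu^{n})$ vanishes as $\mu_{i}^{n}\to 0$ (property 1 of Lemma \ref{lem-estimates}); the last few jumps of any trajectory landing on $x_{n}$ near $\partial\mathcal{S}$ necessarily have rates of order $n\cdot(1/n)^{p}$, not $\Theta(n)$. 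Your "each of $O(n\delta)$ jumps happens with rate $\geq cn\rho_{0}$" argument therefore breaks exactly at the step that makes the theorem nontrivial. The paper handles this through the polynomial lower bound \eqref{dlvm} in the definition of a discrete strongly communicating path (Definition \ref{def1}, verified under Assumptions \ref{intsys}, \ref{ue}, \ref{g1} in Proposition \ref{ver2} via Lemma \ref{move}), and then in the Local Communication Property by comparing with an auxiliary process via Lemma \ref{6.1} and tracking the product $\prod_{k=1}^{l}(k/n)^{p_{1}}$ with Stirling's formula — which is what produces the $c(\varepsilon)=O(\varepsilon|\log\varepsilon|)$ bound rather than the $O(\delta)$ you claim. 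Without accounting for the degenerating rates near the boundary, the proof is incomplete.

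Two smaller points. Ergodicity of $\Gamma^{1}(z)$ does not imply that all off-diagonal entries are uniformly positive; it gives irreducibility, and Assumption \ref{ue} bounds below only the nonvanishing entries, with the jump set $\{e_{j}-e_{i}:\Gamma_{ij}^{1}>0\}$ positively spanning $\Delta^{d-1}$. Also, perturbing $\gamma^{*}$ only on $[t-\delta,t]$ cannot change its value at the left endpoint $t-\delta$; and even if one arranges $y^{*}\in\mathrm{int}(\mathcal{S})$, this does not help, since the target $x_{n}$ may still approach $\partial\mathcal{S}$.
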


For the $n$-particle systems we study, it is also natural to start with
random initial conditions. Depending on the large deviation rate of the
sequence of initial conditions, this gives rise to an additional cost in the
rate function. The LDP for empirical measure processes with random initial
conditions are stated in the following corollary.

\begin{corollary}
\label{cor-randin} Suppose that Assumptions \ref{intsys}, \ref{ue}, \ref%
{ass-simjumps} and \ref{g1} are satisfied. Also, assume that the sequence of
initial conditions $\left\{ \mu ^{n}\left( 0\right) \right\} _{n\in \mathbb{N%
}}$ converges to $\mu _{0}$ in such a way that they satisfy an LDP with rate
function $J_{0}\left( \cdot \right) $. Then the corresponding sequence of
empirical measure processes $\left\{ \mu ^{n}\right\} _{n\in \mathbb{N}}$
satisfies the sample path LDP with rate function $J_{0}\left( \gamma \left(
0\right) \right) +I\left( \gamma \right) $.
\end{corollary}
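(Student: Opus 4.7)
The plan is to combine the LDP for $\{\mu^n(0)\}$ with the sample path LDP of Theorem \ref{th-ldips} via conditioning on the initial condition. Write $P_x^n$ for the law of $\mu^n$ started at deterministic $x\in \mathcal{S}_n$, so by the Markov property
\[
\mathbb{P}(\mu^n \in A) = \int_{\mathcal{S}_n} P_x^n(\mu^n \in A)\, \mathbb{P}(\mu^n(0) \in dx).
\]
Theorem \ref{th-ldips} applies conditionally to each such deterministic start, and the corollary is a standard ``conditioning plus LDP for the initial layer'' assertion; the content of the proof is simply to glue the two rate functions together with enough continuity in the initial condition.

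For the lower bound, fix an open set $A$ and $\gamma \in A$ with $J_0(\gamma(0)) + I(\gamma) < \infty$. For any $\varepsilon > 0$, I would show there exist an open ball $U$ around $\gamma(0)$ and an open neighborhood $V \subset A$ of $\gamma$ such that for every $y \in U$ one can produce a path $\tilde\gamma^y \in V$ with $\tilde\gamma^y(0) = y$ and $I(\tilde\gamma^y) \le I(\gamma) + \varepsilon$. The construction prepends to $\gamma$ (on a slightly shortened time interval) a short segment from $y$ to $\gamma(0)$, built from the finite-cost ``communicating paths'' used in the sample path lower bound of Section \ref{sec-pflower} (available thanks to Assumptions \ref{ass-simjumps} and \ref{g1}). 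Theorem \ref{th-ldips} applied to any sequence $x_n \in \mathcal{S}_n$ with $x_n \to y$ then gives $\liminf_n \tfrac1n \log P^n_{x_n}(\mu^n \in V) \ge -(I(\gamma)+\varepsilon)$, and combining with $\liminf_n \tfrac1n \log \mathbb{P}(\mu^n(0) \in U) \ge -J_0(\gamma(0))$ from the initial-condition LDP yields the desired lower bound after sending $\varepsilon \downarrow 0$ and optimizing over $\gamma$.

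For the upper bound and a closed set $F$, fix $M<\infty$ and cover the compact sublevel set $\{J_0 \le M\} \subset \mathcal{S}$ by finitely many open balls $U_j$ of radius $\delta$. Split
\[
\mathbb{P}(\mu^n \in F) \le \sum_j \mathbb{P}(\mu^n(0) \in U_j)\sup_{x\in U_j\cap \mathcal{S}_n} P_x^n(\mu^n \in F) + \mathbb{P}\bigl(J_0(\mu^n(0)) > M-\varepsilon\bigr),
\]
the last term being at most $e^{-n(M-2\varepsilon)}$ for large $n$. The proof of the upper bound in Theorem \ref{th-ldips} (Section \ref{sec-pfupper}) only invokes Property \ref{prop-lambda} and the variational representation, and so yields the uniform estimate $\limsup_n \tfrac1n \log \sup_{x\in U_j\cap \mathcal{S}_n} P_x^n(\mu^n \in F) \le -\inf\{I(\gamma):\gamma\in F,\,\gamma(0)\in \overline{U_j}\}$. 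Combining this with the initial-condition upper bound on each $U_j$, then sending $\delta \downarrow 0$ (using lower semicontinuity of $\gamma \mapsto J_0(\gamma(0)) + I(\gamma)$, a consequence of the compactness in \eqref{ini} and the concatenation argument above) and $M \uparrow \infty$, gives the upper bound.

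The hard part is the \emph{uniform} sample path upper bound over compact sets of initial conditions, a mild strengthening of Theorem \ref{th-ldips}. I would verify it by inspection of Section \ref{sec-pfupper}: the key estimates flow from the variational / Poisson-measure representation and are stable under variation of $x_n$ within the compact set $\mathcal{S}$. Exponential tightness of $\{\mu^n\}$, required to justify the covering argument, is immediate from the compactness of the level sets in \eqref{ini} together with compactness of $\mathcal{S}$.
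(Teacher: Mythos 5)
Your argument is correct, but it follows a genuinely different route from the paper's. The paper proves the corollary through the Laplace-principle formulation: it defines the value function $U(y)=\inf\{I(\gamma)+h(\gamma):\gamma(0)=y\}$, proves $U$ is continuous on $\mathcal S$ via the short-communicating-path construction (Lemma \ref{lem-ucont}), uses the equivalence between the LDP of Theorem \ref{th-ldips} and the Laplace principle to obtain uniform convergence of the prelimit value functions $U^n\to U$, and then integrates $e^{-nU^n}$ against the law of $\mu^n(0)$ to apply Varadhan's lemma for the initial-condition LDP. You instead work directly with the LDP formulation, conditioning on $\mu^n(0)$ and splitting the lower and upper bounds: the lower bound via prepending a cheap communicating segment, the upper bound via a finite covering of the sublevel set of $J_0$ plus exponential tightness. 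The two proofs need exactly the same underlying ingredients -- the communicating-path constructions that depend on Assumption \ref{g1}, and the time-reparametrization continuity of Proposition \ref{4.8} -- and are conceptually equivalent, but the Laplace route has the advantage of compressing both uniformity claims (over initial conditions) into the single statement that $U$ is continuous and $\mathcal S$ is compact.

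Where your write-up is thinner than it should be is in the two places you assert a \emph{uniform-in-initial-condition} estimate: the lower bound requires $\inf_{x\in U\cap\mathcal S_n}P^n_x(\mu^n\in V)\ge e^{-n(I(\gamma)+\varepsilon)}$ for large $n$ (you only state the per-sequence liminf), and the upper bound requires a uniform-over-$\overline{U_j}$ version of the sample-path upper bound, which Theorem \ref{th-ldips} and \cite{DEW} as used in Section \ref{sec-pfupper} only establish for a fixed convergent sequence. Both upgrades follow from the per-sequence statement plus compactness of $\overline{U_j}\subset\mathcal S$ (and, for the lower bound, continuity of $y\mapsto\inf\{I(\psi):\psi\in V,\psi(0)=y\}$ near $\gamma(0)$, which is exactly what your prepending construction provides), so these are fillable gaps rather than errors; but they are precisely the steps the paper sidesteps by establishing continuity of $U$ once and for all. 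Your parenthetical that lower semicontinuity of $\gamma\mapsto J_0(\gamma(0))+I(\gamma)$ is ``a consequence of the concatenation argument'' is a minor mislabeling -- it is simply lower semicontinuity of $J_0$ and of $I$ on level sets with initial condition in a compact set, as guaranteed by \eqref{ini}.
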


The proof of the corollary relies on the continuity of the following
functional: given a bounded and continuous functional $h$ on $D\left( [0,1]:%
\mathcal{S}\right) $, define 
\begin{equation}
U\left( y\right) =\inf \left\{ I\left( \gamma \right) +h\left( \gamma
\right) :\gamma \in D\left( [0,1]:\mathcal{S}\right) ,\gamma \left( 0\right)
=y\right\} ,\quad y\in {\mathcal{S}}.  \label{fn-u}
\end{equation}%
Then it follows from Lemma \ref{lem-ucont} that $U$ is continuous.

\begin{proof}[Proof of Corollary \protect\ref{cor-randin}.]
Given a bounded and continuous function $h:D\left( [0,1]:\mathcal{S}\right)
\mapsto \mathbb{R}$, for any $y\in \mathcal{S}_{n}$ denote $U^{n}\left(
y\right) \doteq -\frac{1}{n}\log \mathbb{E}_{y}[e^{-nh\left( \mu ^{n}\right)
}]$. Since $U$ is continuous and $\{\mu^n\}_{n \in \mathbb{N}}$ satisfies an
LDP (Theorem \ref{th-ldips}), the equivalence between the LDP and the
Laplace principle \cite[Theorems 1.2.1 and 1.2.3]{DE} implies that $U^{n}$
converges to $U$ uniformly on ${\mathcal{S}}$. In particular, this shows
that if $y_n \rightarrow y$ in ${\mathcal{S}}$, then $U^{n}\left(
y_{n}\right) \rightarrow U(y)$. Let $\nu^{n}$ denote the law of $\mu
^{n}\left( 0\right) $. Then%
\begin{eqnarray*}
\lim_{n\rightarrow \infty }-\frac{1}{n}\log \mathbb{E}_{\mu ^{n}\left(
0\right) }\left[ e^{-nh\left( \mu ^{n}\right) }\right] &=&\lim_{n\rightarrow
\infty }-\frac{1}{n}\log \sum_{y_{n}\in \mathcal{S}_{n}}e^{-nU^{n}\left(
y_{n}\right) }\nu ^{n}\left\{ y_{n}\right\} \\
&=&\lim_{n\rightarrow \infty }-\frac{1}{n}\log \int e^{-n\left( U\left(
y\right) +o\left( 1\right) \right) }\nu ^{n}\left( dy\right) \\
&=&\inf_{y\in \mathcal{S}}\left\{ U\left( y\right) +J_{0}\left( y\right)
\right\} \\
&=&\inf_{\gamma \in D\left( [0,1]:\mathcal{S}\right) }\left\{ h\left( \gamma
\right) +J_{0}\left( \gamma \left( 0\right) \right) +I\left( \gamma \right)
\right\} ,
\end{eqnarray*}%
where the third equality follows from the assumed LDP for deterministic
initial conditions and the continuity of $U$, and the fourth equality
follows from the definition of $U$ in \eqref{fn-u}. The conclusion of the
corollary then follows from the equivalence between the LDP and the Laplace
principle.
\end{proof}

\begin{remark}
\emph{An example of initial conditions in the $n$-particle system that
satisfy the assumptions of Corollary \ref{cor-randin} is the case when
particles are initially distributed as iid $\mathcal{X}-$valued random
variables, with common distribution $\nu $. Then by Sanov's theorem, $%
J_{0}\left( \mu _{0}\right) =R\left( \mu _{0}\left\Vert \nu \right. \right)
=\sum_{i=1}^{d}\mu _{0,i}\log \frac{\mu _{0,i}}{\nu _{i}}$.}
\end{remark}

\begin{remark}
\emph{The assumptions of the locally uniform case are used in the proof of
Lemma \ref{lem-ucont} to establish that }$U$ \emph{is continuous on }${%
\mathcal{S}}$\emph{. Any set of conditions implying this continuity can also
be used, and under the conditions of Theorem \ref{th-ldips}, }$U$ \emph{is
continuous on the interior of }${\mathcal{S}}$\emph{, and hence the
corollary holds if the distributions of initial conditions have support in a
compact subset of the relative interior of }${\mathcal{S}}$\emph{. }
\end{remark}

\bigskip

\subsection{LDP for Invariant Measures}

\label{subs-invldp}

We now discuss some ramifications of the locally uniform LDP. In \cite{FW} a
uniform (with respect to initial conditions) sample path LDP for small noise
diffusions is used to study its metastability properties, including the mean
exit time and most likely exit location from a given domain, and to
establish an LDP for the sequence of invariant measures with the rate
function given by the so-called quasipotential. The program of \cite{FW} was
carried out for non-degenerate diffusions in $\mathbb{R}^{d}$; here we have
a sequence of jump processes on lattice approximations of a compact set.
However, we remark here that the same arguments carry through without
essential change in the presence of a certain communication property, namely
Property \ref{prop-dcommunicate}.i) in Section \ref{subs-propverii}, which
is shown to be implied by Assumptions \ref{intsys}, \ref{ue}, \ref{g1} and %
\ref{ass-simjumps} in Lemma \ref{move} (see also \cite{BS13} for details in
the case of empirical measures arising from single-jump interacting particle
systems, that is, systems with $K=1$). In \cite{FW} extra conditions are
assumed to guarantee that the process does not escape to infinity with
significant probability; for our model, since the state space is compact,
this is automatic.

When Assumption \ref{g1} is satisfied, for each $n\in \mathbb{N}$, all
states in $\mathcal{S}_{n}$ communicate under the dynamics of $\mu ^{n}$,
and hence there exists a unique invariant measure $\pi ^{n}$ for this Markov
process. In our setting, the quasipotential is defined by 
\begin{equation*}
V\left( x,y\right) =\inf \left\{ I_{t}\left( \gamma \right) :\gamma \in
D\left( [0,t]:\mathcal{S}\right) ,\gamma \left( 0\right) =x,\gamma \left(
t\right) =y,t<\infty \right\} ,\text{ for }x,y\in \mathcal{S}.
\end{equation*}

For the results of \cite{FW} to carry over to our setting, we need the
quasipotential to be continuous on its domain.

\begin{lemma}
\label{uc}If Assumption \ref{intsys} and Assumption \ref{g1} hold, then $%
V\left( \cdot ,\cdot \right) $ is jointly continuous in $\mathcal{S}\times 
\mathcal{S}$.
\end{lemma}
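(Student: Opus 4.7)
The plan is to reduce the continuity statement to a local connection lemma via the natural ``triangle inequality'' for $V$. By concatenating paths (and reparametrizing in time), we have, for any $x,x',x''\in\mathcal{S}$,
\begin{equation*}
V(x,x'') \le V(x,x') + V(x',x'').
\end{equation*}
Granting this, given $(x_n,y_n)\to(x,y)$, the two bounds
\begin{equation*}
V(x_n,y_n) \le V(x_n,x) + V(x,y) + V(y,y_n), \qquad V(x,y)\le V(x,x_n) + V(x_n,y_n) + V(y_n,y)
\end{equation*}
sandwich $V(x_n,y_n)$ around $V(x,y)$. Hence joint continuity follows from the key lemma that for every $x\in\mathcal{S}$ and every sequence $x_n\to x$ in $\mathcal{S}$, both $V(x_n,x)\to 0$ and $V(x,x_n)\to 0$.

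To prove this lemma, I would construct, for each $n$, an explicit controlled trajectory connecting $x_n$ to $x$ (and vice versa) whose local cost goes to $0$ with $\|x_n-x\|$. The construction proceeds in three stages. First, on a short interval $[0,\delta]$, drive the system into the relative interior of $\mathcal{S}$ using the LLN-like motion guaranteed by Assumption~\ref{ass-simjumps} together with the communication property (Property~3.14.i in the paper, implied by Assumptions~\ref{intsys}, \ref{ue}, \ref{g1}, \ref{ass-simjumps} via Lemma~\ref{move}); this step has cost $O(\delta)$ because the rates are bounded (by $\eqref{m}$). Second, on an interval of length $O(\|x_n-x\|)$, move linearly between two interior points close to $x_n$ and $x$ respectively; since all $\lambda_v(\cdot)$ are continuous and strictly positive on the chosen interior compact set, the Legendre transform $L(z,\beta)$ is uniformly bounded on velocity vectors of bounded norm there, so this segment has cost $O(\|x_n-x\|)$. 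Third, on another short interval $[0,\delta]$, use the communication property in reverse to return to $x$. The total cost is $O(\delta + \|x_n-x\|)$, which can be made arbitrarily small by first fixing small $\delta$ and then sending $n\to\infty$.

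The main obstacle is the third stage: returning to a boundary point $x$. Near the boundary of $\mathcal{S}$, some jump rates $\lambda_v(\cdot)$ vanish, so the local rate $L(z,\beta)$ may blow up and a naive linear connection becomes expensive. This is exactly where Assumption~\ref{g1} (full ergodicity of $\Gamma^1(x)$ for every $x\in\mathcal{S}$, not merely $K$-ergodicity) is essential, since it upgrades the one-sided communication from the interior into a two-sided communication with $x$. Concretely, one uses the communicating paths constructed in Proposition~\ref{ver1}/Lemma~\ref{move} (which are built out of single-particle transitions with rates bounded away from zero everywhere by Assumption~\ref{g1}) to move mass ``slowly'' from an interior approximant back onto the zero-coordinates of $x$; choosing the time length proportional to an appropriate power of $\|x_n-x\|$ balances the $x\ell(q/\lambda)$ singularity against the total cost, yielding a vanishing bound.

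Once the lemma $V(x_n,x)\to 0$ is established, both directions of continuity follow immediately from the sandwich inequalities above, and no additional compactness or lower-semicontinuity input (beyond what is already contained in Theorem~\ref{th-ldips}, in particular the compactness of the level sets \eqref{ini}) is required. The compactness of $\mathcal{S}$ together with the uniformity provided by Assumption~\ref{g1} makes the estimates uniform in $x$, which will also be useful later when deriving the LDP for the invariant measures.
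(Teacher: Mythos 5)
Your high-level reduction is the same as the paper's: control $V$ via the concatenation (triangle) inequality and a local estimate showing $V(x,x_n)\to 0$ and $V(x_n,x)\to 0$ whenever $x_n\to x$, with the local estimate supplied by the strongly communicating paths of Lemma~\ref{move}. The paper proves the local estimate directly in Lemma~\ref{7.1}: it runs a \emph{single} strongly communicating path of length $\varepsilon$ between the two nearby points and bounds the cost by $O(\varepsilon|\log\varepsilon|)$ using the polynomial lower bound on the rates in Definition~\ref{def1''}(iii). Your three-stage construction (drive into the interior, cross, return) is a more roundabout route to the same estimate and has two weaknesses. First, stage~1 relies on Property~\ref{cond-lln} (the LLN trajectory's polynomial interior bound), which requires Assumption~\ref{ass-simjumps}; that assumption is not among Lemma~\ref{uc}'s hypotheses and is not needed, since the strongly communicating paths already connect any pair of points, boundary or not. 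Second, the estimate ``cost $O(\delta)$ because the rates are bounded'' is not valid as stated: boundedness of $\lambda_v$ from above does not control $L(x,\beta)$, which can blow up when some $\lambda_v(x)\to 0$; on stage~1 you would need to either follow the zero-cost LLN trajectory, or invoke the same singularity-balancing estimate you correctly sketch for stage~3. Absorbing stages~1--3 into a single strongly communicating path and running the $\varepsilon|\log\varepsilon|$ computation once recovers the paper's cleaner proof via Lemma~\ref{7.1}.
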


The proof of Lemma \ref{uc} is given in Section \ref{subs-luldpupper}; it
essentially follows from the property that for any $x,y\in \mathcal{S}$ that
are sufficiently close, one can construct a path that connects $x$ to $y$
with arbitrary small cost (see Lemma \ref{7.1} for a precise statement). We
now state the LDP for invariant measures in the case when the LLN limit $%
\mu(\cdot)$ has a unique fixed point. In view of \eqref{1}, a fixed point of
the LLN dynamics is a measure $\pi^* \in {\mathcal{S}}$ with the property
that $\sum_{v \in {\mathcal{V}}} v \lambda_v (\pi^*) = 0$. Moreover, the
fixed point is said to be globally attracting if for every $x_0 \in {%
\mathcal{S}}$, the solution $\mu(\cdot)$ to \eqref{1} with initial condition 
$x_0$ satisfies $\mu(t) \rightarrow \pi^*$ as $t \rightarrow \infty$.

\begin{theorem}
\label{thm:LDforIM}Assume that $x_{0}$ is the unique fixed point of the LLN
dynamics (\ref{1}), and is globally attracting (in $\mathcal{S}$). Also
assume 
Assumptions \ref{intsys}, \ref{ue}, \ref{ass-simjumps} and \ref{g1} are
satisfied. Then for any $n>0$, there exists a unique invariant measure $\pi
^{n}$ of the Markov process with generator (\ref{Lnl}). Moreover, the
sequence $\left\{ \pi ^{n}\right\} _{n\in \mathbb{N}}$ satisfies an LDP with
rate function $V\left( x_{0},\cdot \right) $.
\end{theorem}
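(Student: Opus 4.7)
The plan is to follow the classical Freidlin--Wentzell program, exploiting the simplification that comes from having a single globally attracting fixed point (so no Markov chain on multiple equivalence classes of limit sets is needed, and the rate function reduces to $V(x_0,\cdot)$ directly). Existence and uniqueness of $\pi^n$ are immediate: under Assumption \ref{g1}, Lemma \ref{move} (cited just before the theorem) gives that all states of $\mathcal{S}_n$ communicate for each $n$, so the finite-state irreducible Markov process with generator \eqref{Lnl} has a unique stationary distribution.

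The core step is to pin down $\pi^n$ through a regenerative representation. I would fix small $\delta>0$, set $B_\delta \doteq \{y\in\mathcal{S}:\|y-x_0\|<\delta\}$, and choose $x_n\in\mathcal{S}_n\cap B_\delta$ with $x_n\to x_0$. Defining $\sigma_1$ as the first exit of $\mu^n$ from $B_{2\delta}$ after time $0$ and $\tau_1$ as the first return to $B_\delta$ after $\sigma_1$, standard renewal identities give
\begin{equation*}
\pi^n(A) \;=\; \frac{\mathbb{E}_{x_n}\!\left[\int_0^{\tau_1}\mathbb{I}_{\{\mu^n(s)\in A\}}\,ds\right]}{\mathbb{E}_{x_n}[\tau_1]}.
\end{equation*}
For the upper bound on $\pi^n(B_\rho(y))$ with $y\neq x_0$, I would decompose the numerator over excursions and use the strong Markov property to reduce to bounding the probability of hitting $B_\rho(y)$ during a single excursion. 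Combined with Theorem \ref{th-localldips} (plus its natural extension from fixed-time to hitting probabilities via finite-time discretization) and the continuity of $V$ from Lemma \ref{uc}, this yields an excursion probability bounded above by $\exp(-nV(x_0,y)+o(n))$. For the lower bound, given $\varepsilon>0$ I would pick a path $\gamma:[0,T]\to\mathcal{S}$ with $\gamma(0)=x_0$, $\gamma(T)=y$ and $I_T(\gamma)\le V(x_0,y)+\varepsilon$, approximate its endpoints by lattice points $x_n\in\mathcal{S}_n$, $y_n\in\mathcal{S}_n$ with $x_n\to x_0$, $y_n\to y$, and apply Theorem \ref{th-localldips} to obtain $\mathbb{P}_{x_n}(\mu^n(T)=y_n)\ge \exp(-n(V(x_0,y)+2\varepsilon))$. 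Combining this with the regenerative formula and a lower bound on the sojourn time in $B_\rho(y)$ (which is at least of order $1$ in $n$, since the jump rates are bounded by $R$) produces the desired lower bound on $\pi^n(B_\rho(y))$.

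The main technical obstacle will be controlling the denominator $\mathbb{E}_{x_n}[\tau_1]$ and, equivalently, ruling out that the process gets ``trapped'' near the boundary of $\mathcal{S}$ where some $\lambda_v(x)$ vanish. I would establish a uniform return-time estimate $\sup_{x\in\mathcal{S}_n}\mathbb{E}_x[\text{hitting time of }B_\delta]=e^{o(n)}$ by combining two ingredients: (i) the global attraction of $x_0$ for the LLN ODE, which together with Theorem \ref{2.1} shows that starting from any $x$, the process enters $B_\delta$ in bounded time with probability approaching $1$; and (ii) the communication property from Lemma \ref{move}, which guarantees that from any lattice point there is a path to $B_\delta$ of bounded length and of probability at least $e^{-Cn}$, so the hitting time has exponential moments with uniformly controlled parameters. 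With both bounds in place, the denominator contributes only a subexponential factor.

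Finally, I would pass $\delta,\rho\to 0$ and invoke the joint continuity of $V$ (Lemma \ref{uc}) together with $V(x_0,x_0)=0$ to conclude
\begin{equation*}
\lim_{\rho\downarrow 0}\lim_{n\to\infty}\frac{1}{n}\log\pi^n(B_\rho(y)) \;=\; -V(x_0,y),
\end{equation*}
uniformly in $y$ over the compact set $\mathcal{S}$. Standard arguments (Freidlin--Wentzell, Chapter 6, adapted to the compact lattice setting) then upgrade this pointwise limit to a full LDP on $\mathcal{S}$ with rate function $V(x_0,\cdot)$, and goodness of the rate function follows from compactness of $\mathcal{S}$ combined with lower semicontinuity of $V(x_0,\cdot)$ implied by Lemma \ref{uc}.
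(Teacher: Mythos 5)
Your overall strategy---following the Freidlin--Wentzell program from Chapter~6 of \cite{FW}, using the communication property (Lemma~\ref{move}), the (locally uniform) LDP, and the continuity of the quasipotential $V$ from Lemma~\ref{uc}, with compactness of $\mathcal{S}$ replacing the non-explosion hypotheses of \cite{FW}---is exactly what the paper intends; the text itself only gives a sketch with these same pointers (deferring details to \cite{BS13} in the $K=1$ case). So the approach is the right one.

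There is, however, a genuine error in the regenerative representation you take as your starting point. The identity
\begin{equation*}
\pi^n(A) = \frac{\mathbb{E}_{x_n}\left[\int_0^{\tau_1}\mathbb{I}_{\{\mu^n(s)\in A\}}\,ds\right]}{\mathbb{E}_{x_n}[\tau_1]}
\end{equation*}
is not a valid renewal identity as written: $\mu^n(\tau_1)$ lies somewhere in $B_\delta\cap\mathcal{S}_n$, not at $x_n$, so successive excursion cycles started afresh from $\tau_1$ are neither independent nor identically distributed, and the ratio on the right-hand side does not in general equal $\pi^n(A)$. What Freidlin--Wentzell actually use is Khasminskii's formula: the numerator and denominator must both be integrated against the invariant measure $\nu^n$ of the embedded Markov chain that records $\mu^n$ at its successive returns to the small ball after each excursion beyond $B_{2\delta}$, and the FW argument then has to control $\nu^n$ uniformly via the uniform (in initial condition) LDP bounds. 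Alternatively, and this is the cleaner route in the present discrete setting, you could replace $B_\delta$ by a single fixed lattice point $z_n\in\mathcal{S}_n$ with $z_n\to x_0$ and let $\tau_1$ be the first return time to $z_n$ after $\sigma_1$; then the cycles are genuinely i.i.d.\ and the formula holds as written with $x_n=z_n$. Either fix is standard, but your central identity is false as stated, and the subsequent steps (the excursion-probability bounds, the subexponential control of $\mathbb{E}[\tau_1]$ via global attraction plus Lemma~\ref{move}, and passage to the limit using Lemma~\ref{uc}) need to rest on one of these corrected formulations.
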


When the LLN limit (\ref{1}) has multiple stable equilibria, following the
same approach as in the case of non-degenerate diffusions (see \cite{FW},
Chapter 6.4), a generalization of Theorem \ref{thm:LDforIM} can be obtained.

\section{\protect\bigskip Properties of the Limit Jump Rates}

\label{sec-ver}

In this section we establish certain important properties of the limit jump
rates $\{\lambda _{v}(\cdot ),v\in {\mathcal{V}}\}$ associated with
interacting particle systems whose transition rates satisfy the assumptions
introduced in the last section. First, in Section \ref{subs-communicate} we
describe certain communication conditions (Property \ref{prop-communicate})
that are required to avoid singularities in the large deviation analysis. In
Sections \ref{subs-prelim} and \ref{subs-pfveri} we show that these
communication conditions are satisfied by the limit jump rates $\{\lambda
_{v}(\cdot ),v\in {\mathcal{V}}\}$ associated with any interacting particle
system model that satisfies Assumptions \ref{intsys}, \ref{ue} and \ref%
{ass-kerg}. Next, in Section \ref{subs-ratioest} we show that Assumptions %
\ref{intsys}, \ref{ue} and \ref{ass-simjumps} together imply certain
estimates (Lemma \ref{lem-estimates}) on the jump rates $\{\lambda
_{v}(\cdot ),v\in {\mathcal{V}}\}$. Then, in Section \ref{subs-lln-inward}
we show that if the jump rates $\{\lambda _{v}(\cdot ),v\in {\mathcal{V}}\}$
satisfy some of the estimates from Lemma \ref{lem-estimates} and the
communication property (Property \ref{prop-communicate}), then one can
obtain a suitable upper bound on the time taken by the LLN path to hit a
compact subset of the interior of the simplex ${\mathcal{S}}$. The latter
property plays a crucial role in the proof of the large deviation lower
bound. In fact, as made precise in Remark \ref{rem-ldp}, the above
properties of $\{\lambda _{v}(\cdot ),v\in {\mathcal{V}}\}$ are the only
ones used to establish the LDP, and thus the conclusion of Theorem \ref%
{th-ldips} in fact holds for the larger class of sequences of jump Markov
processes that satisfy Property \ref{prop-lambda} and the above-stated
properties. Finally, in Section \ref{subs-propverii} we establish a discrete
version of the communication condition that is used (only) in the proof of
the locally uniform LDP in Section \ref{sec-locunif}. This is a technical
section of the paper. Readers only interested in the LDP proof may want to
skip Section \ref{subs-propverii}.

\subsection{Communication Conditions}

\label{subs-communicate}

In Definition \ref{def-compath}, we first introduce the notion of a
communicating path associated with the limit jump rates $\{\lambda_v(\cdot),
v \in {\mathcal{V}}\}$.

\begin{definition}
\label{def-compath} Given rates $\{\lambda _{v}(\cdot ),v\in {\mathcal{V}}\}$%
, for any $x,y\in \mathcal{S}$ and $t\in (0,1]$, a \textbf{communicating path%
} on $[0,t]$ from $x$ to $y$ with constants $c>0,p<\infty $, and $F\in 
\mathbb{N}$ is a piecewise linear function $\phi :[0,t]\mapsto {\mathcal{S}}$
that satisfies $\phi \left( 0\right) =x$, $\phi \left( t\right) =y$ and the
following two properties:

i). there exist $\left\{ v_{m}\right\} _{m=1}^{F}\subset \mathcal{V}$, $%
0=t_{0}<t_{1}<\cdots <t_{F}=t$, $\left\{ U_{m}\right\} _{m=1}^{F}\subset 
\mathbb{R}_{+}$, such that 
\begin{equation}
\dot{\phi}\left( s\right) =\sum_{m=1}^{F}U_{m}v_{m}\mathbb{I}%
_{[t_{m-1},t_{m})}\left( s\right) ,\text{ \ a.e. }s\in \left[ 0,t\right] ,
\label{CP}
\end{equation}

ii). for $m=1,...,F$, 
\begin{equation*}
\lambda _{v_{m}}\left( \phi \left( s\right) \right) \geq c\left(
\min_{i=1,...,d}y_{i}\right) ^{p}\text{ for }s\in \lbrack t_{m-1},t_{m}).
\end{equation*}
\end{definition}

\begin{remark}
\emph{Definition \ref{def-compath}.ii) implies that for }$y$\emph{\ in a
compact subset of }$S^\circ$\emph{, }$\lambda _{v_{m}}\left( \phi \left(
s\right) \right) $\emph{\ is uniformly bounded from below. In fact, one can
weaken Definition \ref{def-compath}.ii) (and correspondingly, Property \ref%
{prop-communicate}) in this way and the proof of the LDP lower bound still
holds. Nevertheless, we choose to define a communicating path using the
slightly stronger condition in Definition \ref{def-compath} because it is
naturally satisfied by interacting particle systems with }$K$-\emph{ergodic
jump rates (see Definition \ref{k-acc}), and it is analogous to the
corresponding condition in Definition \ref{def1''} of a strongly
communicating path, which is used in the proof of the locally uniform LDP in
Section \ref{subs-propverii}.}
\end{remark}

Let $AC\left( \left[ 0,T\right] :\mathcal{S}\right) $ denote the absolutely
continuous functions from $[0,T]$ to $\mathcal{S}$. In what follows, given $%
t>0$ and a path $\phi \in AC\left( \left[ 0,t\right] :\mathcal{S}\right) $,
let 
\begin{equation}
\text{\textrm{Len}}\left( \phi \right) \doteq \int_{0}^{t}\left\vert
\left\vert \dot{\phi}\left( s\right) \right\vert \right\vert ds  \label{r}
\end{equation}%
denote the length of $\phi $. We now state the communication condition on
the jump rates $\{\lambda _{v}(\cdot ),v\in {\mathcal{V}}\}$.

\begin{property}
\label{prop-communicate} The rates $\{\lambda _{v}(\cdot ),v\in {\mathcal{V}}%
\}$ are such that there exist constants $c>0$, $C^{\prime }<\infty $, $%
p<\infty $ and $F\in \mathbb{N}$, such that for every $x\in \mathcal{S}$ and 
$y\in \mathrm{\ int}\left( \mathcal{S}\right) $, there exist $t\in (0,1]$,
and a communicating path $\phi $ on $[0,t]$ from $x$ to $y$ exists with the
given $c,p,F$ such that 
\begin{equation}
\mathrm{Len}\left( \phi \right) \leq C^{\prime }||x-y||.  \label{length}
\end{equation}
\end{property}

For the locally uniform LDP, we need the following strengthening of the
notion of a communicating path.


\begin{definition}
\label{def1''} Given $x,y\in \mathcal{S}$ and $t\in (0,1]$, a piecewise
linear function $\phi :[0,t]\mapsto {\mathcal{S}}$ is said to be a \textbf{%
strongly communicating path} on $[0,t]$ from $x$ to $y$ with constants $c>0$%
, $p<\infty $, $F\in \mathbb{N}$, $c_{1}>0,p_{1}<\infty $ if it is a
communicating path on $[0,t]$ from $x$ to $y$ with constants $c,p,F$ and, in
addition, 

iii). if $\phi $ has the representation (\ref{CP}), then for $m=1,...,F$,%
\begin{equation}
\lambda _{v_{m}}\left( \phi \left( s\right) \right) \geq c_{1}\left( %
\displaystyle\prod\limits_{j\in \mathcal{N}_{v_{m}}}\phi _{j}\left( s\right)
\right) ^{p_{1}},\text{ \ }s\in \lbrack t_{m-1},t_{m}),  \label{lvm}
\end{equation}%
where, as defined in \eqref{def-nv}, $\mathcal{N}_{v_{m}}=\{i\in {\mathcal{X}%
}:\langle v_{m},e_{i}\rangle <0\}$.
\end{definition}

Note that when $y$ lies in $\partial \mathcal{S}$, $\min_{i=1,...,d}y_{i}=0$%
, and therefore property (ii) in Definition \ref{def-compath} is trivially
satisfied. In constrast, the polynomial lower bound in (\ref{lvm}) imposes
stronger requirements on the paths that are not automatically satisfied even
when $y \in \partial \mathcal{S}$.

\begin{remark}
\label{flex}\emph{There is some flexibility in the choice of $t$, $\left\{
U_{m}\right\} _{m=1}^{F}$ and $\left\{ t_{m}\right\} _{m=1}^{F}$ both in
Definition \ref{def-compath} and Definition \ref{def1''}. By a
reparametrization of the respective paths, we can always assume $t=1$ and $%
U_{m}=U$ for every $m$. Moreover, if $t$ is allowed to take any values in }$%
\left( 0,\infty \right) $\emph{, we can always choose $U_{m}=1$. }
\end{remark}

\subsection{A Preliminary Result}

\label{subs-prelim}

Here, we show that Assumption \ref{ue} and Assumption \ref{g1}, 
together imply a certain strong controllability property of the associated
(limit) jump rates $\{\lambda_v (\cdot), v \in {\mathcal{V}}\}$. This result
is used both in the verification (under suitable assumptions) of the
communication condition in Section \ref{subs-pfveri} and of its
strengthening (under more restrictive assumptions) in Section \ref%
{subs-propverii}.

\begin{lemma}
\label{move} Suppose the transition rates $\{\Gamma _{\mathbf{i}\mathbf{j}},(%
\mathbf{i},\mathbf{j})\in {\mathcal{J}}\}$ satisfy Assumption \ref{ue} and
Assumption \ref{g1}. Then the associated jump rates $\{\lambda _{v}(\cdot
),v\in {\mathcal{V}}\}$ defined in \eqref{lambda} have the property that
there exist constants $C^{\prime }<\infty $, $c,c_{1}>0$, $p,p_{1}<\infty $
and $F\in \mathbb{N}$, such that for every $x,y\in {\mathcal{S}}$ there
exists a strongly communicating path $\phi $ from $x$ to $y$ with constants $%
c,c_{1},p,p_{1}$ and $F$ such that, in addition, \eqref{length} is
satisfied. Moreover, if the requirement that $t\in (0,1]$ is dropped, the
path $\phi $ can be chosen so that its derivatives all lie in the set $%
\mathcal{V}_{1}=\{e_{j}-e_{i},i,j\in {\mathcal{X}},i\neq j\}$. Furthermore,
for any $n\in \mathbb{N}$, if $x,y\in {\mathcal{S}}_{n}$, then there exists
a strongly communicating path $\phi ^{P{,n}}$ from $x$ to $y$ whose
representation \eqref{CP} satisfies $U_{m}=1/n$, $\phi (t_{m})\in {\mathcal{S%
}}_{n}$ and $t_{m}-t_{m-1}\in \mathbb{N}$, for $m=1,\ldots ,F$.
\end{lemma}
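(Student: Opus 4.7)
My strategy is to reduce the construction of a strongly communicating path from $x$ to $y$ to a transportation problem on a strongly connected graph, and then to carry out the transport via a carefully ordered sequence of single-jump segments that maintain feasibility together with the lower bounds required by Definition \ref{def1''}. First, I would deduce from Assumptions \ref{ue} and \ref{g1} that the directed graph $G=(\mathcal{X},E)$ with edge set $E = \mathcal{J}_+^1 = \{(i,j):\mathfrak{M}_{ij}^1>0\}$ is strongly connected: evaluating $\Gamma^{1}$ at any relative-interior point of $\mathcal{S}$, Assumption \ref{g1} gives ergodicity and Assumption \ref{ue} then forces every positive entry of $\Gamma^{1}$ to be bounded below by the positive constant $c_0$ from \eqref{def-c0}; consequently any two states in $\mathcal{X}$ are joined by a simple $G$-path of length at most $d-1$.

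Next, I would decompose $y-x\in\Delta^{d-1}$ by a transportation argument: with $S_-=\{i:x_i>y_i\}$ and $S_+=\{j:y_j>x_j\}$, choose a feasible flow $(\theta_{ij})_{i\in S_-,\,j\in S_+}$ satisfying $\sum_j\theta_{ij}=x_i-y_i$ and $\sum_i\theta_{ij}=y_j-x_j$, so that $\sum_{ij}\theta_{ij}=\|x-y\|_1/2$. For each pair with $\theta_{ij}>0$, fix a simple $G$-path $i=u_0,\dots,u_{M_{ij}}=j$. I would then build $\phi$ segment by segment, processing the pairs in an order that handles all outflow from one source before moving on to the next: for each $(i,j)$, traverse its $G$-path via consecutive single-jump segments of direction $e_{u_{k+1}}-e_{u_k}$, speed $U=\theta_{ij}$, and unit duration. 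Feasibility ($\phi\in\mathcal{S}$) holds inductively because every intermediate node first receives $\theta_{ij}$ units and only then transmits them, while each source $i$ is never drained below $y_i$. This uses at most $F\le d^{2}(d-1)$ segments and gives length $\mathrm{Len}(\phi)\le \sqrt{2}(d-1)\sum_{ij}\theta_{ij}\le C'\|x-y\|$ for a dimension-dependent constant $C'$.

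For each segment with direction $v_m=e_{j^*}-e_{i^*}$, formulas \eqref{lambda}--\eqref{alpha1} yield
\[
\lambda_{v_m}(\phi(s))\;\ge\;\phi_{i^*}(s)\,\Gamma^{1}_{i^* j^*}(\phi(s))\;\ge\; c_0\,\phi_{i^*}(s),
\]
and since $\mathcal{N}_{v_m}=\{i^*\}$ this immediately gives Definition \ref{def1''}(iii) with $p_1=1$ and $c_1=c_0$. Property (ii) of Definition \ref{def-compath} is trivial when $\min_k y_k=0$; otherwise one needs $\phi_{i^*}(s)\ge c'(\min_k y_k)^{p}$. When $i^*$ is a source this is automatic, because the source is drained only down to $y_{i^*}\ge\min_k y_k$. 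The main obstacle is the case where $i^*$ appears as an intermediate node of a $G$-path while $x_{i^*}$ is small, since then $\phi_{i^*}(s)$ could otherwise approach $x_{i^*}$. I would resolve this by a case split: if $\|x-y\|_\infty\le\min_k y_k/2$ every coordinate of $\phi$ automatically stays above $\min_k y_k/2$ (because $x_k\ge y_k-\|x-y\|_\infty$); if $\|x-y\|_\infty>\min_k y_k/2$, I would prepend an $O(d)$-segment "priming" phase (again using strong connectivity of $G$) that raises every coordinate to at least $\min_k y_k/(2d)$ before the main transport and undo it afterwards, with extra length $O(d\min_k y_k)=O(\|x-y\|)$ so that the bound $C'\|x-y\|$ is preserved.

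Finally, by Remark \ref{flex} a time reparametrization compresses the path to $t\in(0,1]$, establishing the main assertion; without reparametrization the path already has derivatives in $\mathcal{V}_1$, proving the second assertion. For the lattice version $\phi^{P,n}$ with $x,y\in\mathcal{S}_n$, the same combinatorial construction applies with $\theta_{ij}\in n^{-1}\mathbb{Z}_+$, speed $U_m=1/n$, and integer segment durations $n\theta_{ij}\in\mathbb{N}$; each segment then shifts $\phi$ by an element of $n^{-1}\mathbb{Z}^{d}$, so that $\phi(t_m)\in\mathcal{S}_n$ at every endpoint.
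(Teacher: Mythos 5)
Your approach is genuinely different from the paper's. The paper settles one coordinate at a time by a recursion on a set $\mathcal{X}_l$ of coordinates already equal to $y$, always routing excess mass through already-settled coordinates (which sit at their $y$-values, hence never below $\min_i y_i$); this makes property~(ii) of Definition~\ref{def-compath} automatic on every segment and sidesteps the small-intermediate-node problem entirely. You instead set up a global flow decomposition $\theta_{ij}$ and route each flow along a fixed $G$-path; the receive-then-transmit nesting and the ``one source at a time'' ordering correctly give $\min_s\phi_k(s)=\min(x_k,y_k)$, so your Case 1 ($\|x-y\|_\infty\le\min_k y_k/2$) is sound.

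The gap is in Case 2. You identify the obstacle (an intermediate node $i^*$ of a $G$-path with $x_{i^*}$ small forces $\lambda_{v_m}(\phi(s))$ to vanish on a segment, violating property~(ii)), and propose to fix it by a priming phase that pushes mass from the largest coordinate $j^*$ along $G$-paths to lift every coordinate above $\min_k y_k/(2d)$. But the priming phase \emph{faces the same obstacle you are trying to remove}: to lift a coordinate at $G$-distance $\ge 2$ from $j^*$ you must push mass through intermediate nodes that may themselves start at zero, and on the segment that drains such a node the rate $\lambda_{v_m}(\phi(s))\ge c_0\phi_{i^*}(s)$ can again be arbitrarily small compared with $\min_i y_i$. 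Your proposal asserts the priming ``raises every coordinate'' but gives no ordering or bookkeeping that would make the priming path itself satisfy Definition~\ref{def-compath}(ii). To close the hole one would need something like: process coordinates in BFS order from $j^*$ along shortest $G$-paths, so that every intermediate node on a level-$r$ path has already been primed at a lower level and therefore retains a residual of at least $a$ after forwarding; and one must then verify that $j^*$ is not over-drained (which forces $a\le 1/d^2$, hence possibly $a<\min_k y_k/(2d)$, with the constants $c,p$ adjusted accordingly). None of this appears in your sketch, and as written the priming simply relocates the singularity rather than eliminating it. The paper's recursion avoids the issue structurally, which is why it is the cleaner route; if you want to keep the transportation viewpoint you must supply the BFS/level-ordering argument for the priming phase and check the mass-conservation constants explicitly. (Minor: the priming needs $O(d^2)$, not $O(d)$, segments, and the ``undo it afterwards'' step is unnecessary since $y$ already satisfies $y_k\ge\min_i y_i>a$.)
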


\begin{proof}
We will prove the result by a recursive construction. We claim that for any $%
r\in \left\{ 2,...,d\right\} $, there exists $C_{r}=C_{r}(d,K)<\infty $ such
that for every $x,y\in {\mathcal{S}}$, there exists ${\mathcal{X}}%
_{r}\subset {\mathcal{X}}$ with $|{\mathcal{X}}_{r}|\geq r-1$, $z^{(r)}\in {%
\mathcal{S}}$ with $z_{i}^{(r)}=y_{i}$ for $i\in {\mathcal{X}}_{r}$, $0\leq
t_{r-1}<\infty $ and a strongly communicating path $\phi ^{(r)}$ on $%
[0,t_{r-1}]$ from $x$ to $z^{(r)}$ such that $\mathrm{Len}(\phi ^{(r)})\leq
C_{r}||x-y||$ and all the derivatives of $\phi^{(r)}$ lie in ${\mathcal{V}}%
_1 $. The first two assertions of the lemma then follows on taking $r=d$
because the fact that both $z^{(r)}$ and $y$ lie on the simplex implies that
they are equal if and only if they agree on $d-1$ coordinates.

We will prove the claim by induction. We first consider the case $r=2$,
which is easy. We assume without loss of generality that $x_{i}\neq y_{i}$
for some $i\in {\mathcal{X}}$, for otherwise the construction is trivial.
Choose $u_{1}\in \mathcal{X}$ such that $x_{u_{1}}\geq y_{u_{1}}$ (such a $%
u_{1}$ always exists because $x,y\in {\mathcal{S}}$), then set ${\mathcal{X}}%
_{1}\doteq \{u_{1}\}$, $t_{1}\doteq x_{1}-y_{1}$ and choose any state $%
u_{2}\in \mathcal{X}\backslash \left\{ u_{1}\right\} $ such that $\mathfrak{M%
}_{u_{1}u_{2}}^{1}>0$ (the existence of $u_{2}$ is implied by Assumption \ref%
{ue} and Assumption \ref{g1}). Define 
\begin{equation*}
\phi ^{\left( 2\right) }\left( t\right) \doteq x+\left(
e_{u_{2}}-e_{u_{1}}\right) t,\text{ }\quad t\in \left[ 0,t_{1}\right] .
\end{equation*}%
Then clearly, $\phi ^{\left( 2\right) }\left( 0\right) =x$, $z^{\left(
2\right) }=\phi ^{\left( 2\right) }\left( t_{1}\right) \in \mathcal{S}$, $%
z_{u_{1}}^{\left( 2\right) }=y_{1}$ and $\mathrm{Len}(\phi ^{(2)})=\sqrt{2}%
t_{1}\leq \sqrt{2}||x-y||$. Moreover, by (\ref{alpha1}) and (\ref{lambda}),
setting $v_{1}\doteq e_{u_{2}}-e_{u_{1}}$, we have 
\begin{equation*}
\lambda _{v_{1}}\left( x\right) =\sum_{k=1}^{K}\sum_{\substack{ \left( 
\mathbf{i},\mathbf{j}\right) \in \mathcal{J}^{k}\text{:}  \\ e_{\mathbf{j}%
}-e_{\mathbf{i}}=v_{1}}}\alpha _{\mathbf{ij}}^{k}\left( x\right) \geq
x_{u_{1}}\Gamma _{u_{1}u_{2}}^{1}\left( x\right) \geq c_{0}x_{u_{1}},
\end{equation*}%
where we have used the fact that $\mathfrak{M}_{u_{1}u_{2}}^{1}>0$ and $%
c_{0} $ is defined by (\ref{def-c0}). Thus, the lower bound (\ref{lvm})
holds with $\mathcal{N}_{v_{1}}=\{u_{1}\}$, $p_{1}=1$, and some $%
c_{1}=c_{0}>0$. Thus, $\phi ^{\left( 2\right) }$ is a path of the desired
form.

Now, assume the claim holds for $r=l<d$, and let ${\mathcal{X}}_{l}$, $%
z^{\left( l\right) }$, $t_{l-1}$, and $\phi ^{\left( l\right) }$ be the
corresponding quantities in the claim. We now prove the claim for $r=l+1$.
By the induction hypothesis, we have $\phi ^{\left( l\right) }\left(
t_{l-1}\right) =z^{\left( l\right) }$ and $z_{i}^{(l)}=y_{i}$ for $i\in {%
\mathcal{X}}_{l}$, and $|{\mathcal{X}}_{l}|\geq l-1$. We can assume without
loss of generality that ${\mathcal{X}}_{l}=\{i\in {\mathcal{X}}%
:z_{i}^{(l)}=y_{i}\}$ satisfies $|{\mathcal{X}}_{l}|=l-1$, for otherwise the
claim clearly also holds for $r=l+1$. Under this assumption, we have $%
\sum_{i\in {\mathcal{X}}\setminus {\mathcal{X}}_{l}}z_{i}^{(l)}=\sum_{i\in {%
\mathcal{X}}\setminus {\mathcal{X}}_{l}}y_{i}$ and there exists $j\in {%
\mathcal{X}}\setminus {\mathcal{X}}_{l}$ such that $z_{j}^{(l)}>y_{j}$. For
notational simplicity, we assume without loss of generality that ${\mathcal{X%
}}_{l}=\{1,\ldots ,l-1\}$ and $j=l$. Then $z_{l}^{\left( l\right) }>y_{l}$,
and to prove the claim we will move mass from state $l$ to some state in $%
\left\{ l+1,\ldots ,d\right\} $, without changing the mass in any state with
a lower index. In other words, we will construct a path $\psi \in AC\left(
[0,t_{\ast }]:\mathcal{S}\right) $ for some $t_{\ast }<\infty $, such that $%
\psi _{i}\left( t_{\ast }\right) =\psi _{i}\left( 0\right) $ for every $%
i=1,\ldots ,l-1$ and $\psi _{l}\left( t_{\ast }\right) -\psi _{l}\left(
0\right) =-(z_{l}^{\left( l\right) }-y_{l})$. To do this, take any $w\in $ $%
\{l+1,...,d\}$. By Assumption \ref{ue} and Assumption \ref{g1}, there exist $%
M\leq d$ and a sequence of distinct states $u_{0}=l,...,u_{M}=w$, such that
for $1\leq m\leq M-1$, $\mathfrak{M}_{u_{m}u_{m+1}}^{1}>0.$ Now, let $%
M_{\ast }\doteq \min \left\{ m\geq 1:u_{m}\in \{l+1,...,d\}\right\} $, and
note that $u_{M_{\ast }}$ is the first state in the sequence that lies
outside $\{1,...,l\}$, and $M_{\ast }$ is the number of steps it took to get
there. Define $t_{\ast }\doteq M_{\ast }(z_{l}^{\left( l\right) }-y_{l})$,
and let $\psi \in {\mathcal{A}C}([0,t_{\ast }]:{\mathcal{S}})$ be defined by 
$\psi (0)=z^{\left( l\right) }$ and 
\begin{equation}
\dot{\psi}\left( t\right) =v_{m}\doteq e_{u_{m}}-e_{u_{m-1}},\quad t\in
(\left( m-1\right) (z_{l}^{\left( l\right) }-y_{l}),m(z_{l}^{\left( l\right)
}-y_{l})),\,m=1,\ldots ,M_{\ast }.  \label{phip}
\end{equation}%
Since the states are distinct, $M_{\ast }\leq d$, and we have 
\begin{eqnarray*}
\mathrm{Len}(\psi ) &\leq &\sqrt{2}d\left\vert z_{l}^{\left( l\right)
}-y_{l}\right\vert \\
&\leq &\sqrt{2}d\left( \left\vert x_{l}-y_{l}\right\vert +\left\vert
z_{l}^{\left( l\right) }-x_{l}\right\vert \right) \\
&\leq &\sqrt{2}d\left( \left\Vert x-y\right\Vert +\int_{0}^{t_{l-1}}\Vert 
\dot{\phi}^{\left( l\right) }\left( s\right) \Vert ds\right) \\
&\leq &\sqrt{2}d\left( 1+C_{l}\right) \left\Vert x-y\right\Vert ,
\end{eqnarray*}%
where the last inequality follows from the induction assumption for $r=l$.

Define $t_{l+1}\doteq t_{l}+t_{\ast }$, and let $\phi ^{\left( l+1\right)
}\in {\mathcal{A}C}\left( \left[ 0,t_{l+1}\right] :\mathcal{S}\right) $ be
the concatenation of $\phi ^{\left( l\right) }$ and $\psi $. As we show
below, $\phi ^{\left( l+1\right) }$ is a path of the desired form. Clearly,
if $z^{(l+1)}\doteq \phi ^{\left( l+1\right) }(t_{l+1})$, then ${\mathcal{X}}%
_{l+1}\doteq \{j\in {\mathcal{X}}:z_{j}^{(l+1)}=y_{j}\}=\{1,\ldots ,l\}$.
Further, since the derivatives of both $\psi $ and $\phi ^{(l)}$ all lie in $%
{\mathcal{V}}_{1}$, the same holds true for $\phi ^{(l+1)}$. Now, given any $%
v\in \mathcal{V}_{1}$ of the form $v=e_{j}-e_{i}$ for some $i,j\in \mathcal{X%
}$, $i\neq j$, with $\mathfrak{M}_{ij}^{1}>0$, as before, we have $\lambda
_{v}\left( x\right) \geq x_{i}\Gamma _{ij}^{1}\left( x\right) \geq
c_{0}x_{i} $, and therefore, for a.e.\ $s\in \left[ t_{l},t_{l+1}\right] $, $%
\lambda _{v}(\phi ^{\left( l+1\right) }\left( s\right) )\geq c_{0}\phi
_{i}^{\left( l+1\right) }\left( s\right) $, where $v=\dot{\phi}^{(l+1)}(s)$
and ${\mathcal{N}}_{v}=\{i\}$, from which it is easy to see that $\phi
^{(l+1)}$ is a strongly communicating path. Furthermore, we note that 
\begin{eqnarray*}
\mathrm{Len}(\phi ^{(l+1)})=\int_{0}^{t_{l+1}}\Vert \dot{\phi}^{\left(
l+1\right) }\left( s\right) \Vert ds &=&\mathrm{Len}(\psi )+\mathrm{Len}%
(\phi ^{(l)}) \\
&\leq &\left( \sqrt{2}d+\left( \sqrt{2}d+1\right) C_{l}\right) \left\Vert
x-y\right\Vert ,
\end{eqnarray*}%
which establishes \eqref{length}, with $\phi $ replaced by $\phi ^{(l+1)}$,
and $C^{\prime }$ replaced by $C_{l+1}\doteq \sqrt{2}d+(\sqrt{2}d+1)C_{l}$.
By induction, it follows that the claim holds for $r=d$, thus completing the
proof of the first two assertions of the lemma.

To prove the last assertion (which is only used in the proof of the locally
uniform LDP), suppose we restrict to $x,y\in {\mathcal{S}}_{n}$ for some $%
n\in \mathbb{N}$, and let $\phi $ be the strongly communicating path
constructed above. Then, it is easy to see from the construction that the
lengths of the intervals on which $\phi $ has constant derivative that all
lie in $N_{n}\doteq \{\tau /n:\tau =1,2,\ldots \}$ and thus, the value of $%
\phi $ at the end of each such interval lies in ${\mathcal{S}}_{n}$. A
simple time reparametrization (see Remark \ref{flex}) then yields a path
with the stated properties. This concludes the proof of the lemma.
\end{proof}

\subsection{Verification of the Communication Condition}

\label{subs-pfveri}

This section is devoted to establishing the following result.

\begin{proposition}
\label{ver1} Suppose the family $\{ \Gamma_{\mathbf{i} \mathbf{j}}(\cdot), (%
\mathbf{i},\mathbf{j}) \in {\mathcal{J}}^k, k = 1, \ldots, K\}$ satisfies
Assumption \ref{ue} and Assumption \ref{ass-kerg}. Then the associated jump
rates $\{\lambda_v (\cdot), v \in {\mathcal{V}}\}$ defined via \eqref{lambda}
satisfy Property \ref{prop-communicate}.
\end{proposition}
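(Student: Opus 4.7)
Plan:

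The proof proceeds by an inductive construction that mirrors Lemma \ref{move}, replacing the use of Assumption \ref{g1} (single-transition ergodicity) with the weaker $K$-ergodicity (Assumption \ref{ass-kerg}). By Remark \ref{flex}, we may work with uniform speeds and freely chosen segment durations.

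I would prove the following claim by induction on $r \in \{2, \ldots, d\}$: there is a constant $C_r < \infty$, depending only on $d$ and $K$, such that for every $x, y \in \mathcal{S}$ there exist a set $\mathcal{X}_r \subseteq \mathcal{X}$ with $|\mathcal{X}_r| \geq r-1$, a point $z^{(r)} \in \mathcal{S}$ with $z^{(r)}_i = y_i$ for $i \in \mathcal{X}_r$, and a piecewise linear path $\phi^{(r)}$ from $x$ to $z^{(r)}$ whose derivatives lie in $\mathcal{V}$, such that $\mathrm{Len}(\phi^{(r)}) \leq C_r \|x - y\|$. Taking $r = d$ forces $z^{(d)} = y$ by mass conservation. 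The base case $r=2$ selects a coordinate $u$ with $x_u \geq y_u$ and uses $K$-accessibility from $u$ to some other state to move out the excess mass $x_u - y_u$; the inductive step, given $\phi^{(l)}$ ending at $z^{(l)}$, identifies $j^* \in \mathcal{X} \setminus \mathcal{X}_l$ with $z^{(l)}_{j^*} > y_{j^*}$ and invokes $K$-accessibility from $j^*$ to some $w \in \mathcal{X} \setminus \mathcal{X}_l$, executing the resulting sequence of $k$-tuple transitions as piecewise linear segments that extend $\phi^{(l)}$ to $\phi^{(l+1)}$. The length estimate \eqref{length} follows by the same telescoping argument as in Lemma \ref{move}, since each inductive stage adds length of order $\|x-y\|$.

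For each segment in direction $v_m = e_{\mathbf{j}_m} - e_{\mathbf{i}_m}$ with $(\mathbf{i}_m, \mathbf{j}_m) \in \mathcal{J}^{k_m}_+$, Assumption \ref{ue} and the formula \eqref{lambda} for $\lambda_{v_m}$ yield
\[
\lambda_{v_m}(\phi(s)) \;\geq\; \frac{c_0}{k_m!} \prod_{l=1}^{k_m} \phi_{i_{m,l}}(s),
\]
so verifying the rate lower bound reduces to controlling each component $\phi_{i_{m,l}}(s)$ from below. Property (ii) of Definition \ref{k-acc} is essential: it guarantees that each $i_{m,l}$ lies in $\{u_1, \ldots, u_m\}$, i.e., in the set of states already visited in the $K$-accessibility sequence, so that these components have been supplied with positive mass either from the initial configuration $x$ or from earlier segments of the path.

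The main obstacle will be to upgrade these merely positive lower bounds to the uniform form $c(\min_i y_i)^p$ required by Definition \ref{def-compath}.ii, which is stronger than the polynomial boundary bound of Definition \ref{def1''}.iii used in the proof of Lemma \ref{move}. To handle this, I would choose the durations of each segment small enough that no component $\phi_{i_{m,l}}(s)$ falls below half of its value at the start of the segment, and orchestrate the ordering of segments so that mass is deposited into each state $u_m$ in amounts of order $(\min_i y_i)^{q}$ before that state is used as a source. Propagating this control inductively through the $O(d K)$ segments produced by the construction, each stage contributes at most one additional factor of $\min_i y_i$ to the rate lower bound, yielding a uniform estimate of the form $c(\min_i y_i)^p$ with $c$, $p$ and $F$ depending only on $d$ and $K$, as required by Property \ref{prop-communicate}.
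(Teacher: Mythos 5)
The proposal has a genuine gap at the heart of the inductive step. You want to "invoke $K$-accessibility from $j^*$ to some $w \in \mathcal{X}\setminus\mathcal{X}_l$, executing the resulting sequence of $k$-tuple transitions as piecewise linear segments" that move the excess mass $z^{(l)}_{j^*}-y_{j^*}$ from $j^*$ to $w$ while leaving coordinates in $\mathcal{X}_l$ fixed at $y$. But $K$-accessibility gives no such guarantee about the \emph{net} displacement. The simultaneous transitions $(\mathbf{i}_m,\mathbf{j}_m)\in\mathcal{J}^{k_m}$ deposit mass into the full set of states appearing in $\mathbf{j}_m$, not just into $u_{m+1}$, and there is in general no way to choose segment lengths (nonnegative coefficients on the directions $e_{\mathbf{j}_m}-e_{\mathbf{i}_m}$) so that the net effect is the single-coordinate move $e_w - e_{j^*}$. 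Concretely, in Example \ref{eg3} the $K$-accessibility sequence from $1$ to $3$ uses $(1,2)$ and $((1,2),(3,4))$; running both for unit duration gives net displacement $e_3+e_4-2e_1$, not $e_3-e_1$, so an extra transition $(3,4)$ and a rebalancing of coefficients are required before the bookkeeping closes. This cancellation is exactly what Lemma \ref{repre} supplies: under $K$-ergodicity, every $e_w-e_u$ lies in the positive cone generated by $\{e_{\mathbf{j}}-e_{\mathbf{i}}:(\mathbf{i},\mathbf{j})\in\mathcal{J}_+\}$, and the proof is a nontrivial linear-algebraic argument (positivity of $(I-C)^{-1}$ for a substochastic $C$). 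Your proposal contains no analogue of this step, and without it the inductive invariant $z^{(l+1)}_i=y_i$ for $i\in\mathcal{X}_{l+1}$ cannot be maintained.

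A second, related difficulty is that your plan tries to hit $y$ coordinate-by-coordinate directly from $x$, but the lower bound $\lambda_{v_m}(\phi(s))\geq c(\min_i y_i)^p$ required by Definition \ref{def-compath}.ii) is hard to maintain when the path is forced to traverse neighborhoods of the boundary. Your final paragraph acknowledges this and gestures at "orchestrating" segment lengths and depositing mass "in amounts of order $(\min_i y_i)^q$ before that state is used as a source," but this is precisely where a careful construction is needed and none is given. The paper sidesteps the entire difficulty by splitting the path in two: Lemma \ref{in} drives $x$ into a compact set $\tilde{\mathcal{S}}^a$ with $a\lesssim\min_i y_i$ via a delicate recursion (the constants $c_{m,m_0}$) that guarantees no source coordinate dips below $a$, and then Lemma \ref{commint} navigates within $\tilde{\mathcal{S}}^{a/2}$ where all nonzero rates are uniformly bounded below, using Lemma \ref{repre} to realize arbitrary directions. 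Your one-stage induction would have to reproduce both effects simultaneously, and as written it neither controls the coordinates in $\mathcal{X}_l$ nor establishes the uniform rate bound.
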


The proof of the proposition consists of three steps. First, given $x,y\in {%
\mathcal{S}}$, $y\not\in \partial {\mathcal{S}}$, we show that Assumption %
\ref{ue} and $K$-ergodicity (Assumption \ref{ass-kerg}) allow one to move
from any point $x$ on the boundary $\partial S$ to some compact convex
subset of int$\left( \mathcal{S}\right) $ containing $y$ along a piecewise
linear path, each of whose segments is parallel to a jump direction $v\in {%
\mathcal{V}}$ whose rate is uniformly bounded below away from zero on that
segment (Lemma \ref{in}). Then we show that we have stronger controllability
within the compact subset, which allows us to move along any coordinate
direction, again with rates that are uniformly bounded below away from zero
(Lemma \ref{repre}). This property is then used in a straightforward manner
to construct a communicating path in this compact subset (Lemma \ref{commint}%
). The proof of the proposition is completed at the end of the section by
concatenating the two paths constructed above.

We now define two compact subsets of $\mathrm{{int}({\mathcal{S}})}$: for $%
a\in \lbrack 0,1)$, define 
\begin{equation}
\tilde{{\mathcal{S}}}^{a}\dot{=}\left\{ x\in \mathcal{S}:x_{i}\geq a\text{, }%
i=1,...,d\right\} ,  \label{tS}
\end{equation}%
and 
\begin{equation}
{\mathcal{S}}^{a}\doteq \left\{ x\in {\mathcal{S}}:\mathrm{dist}(x,\partial
S)\geq a\right\} =\left\{ x\in {\mathcal{S}}:\inf_{z\in \partial
S}\left\Vert x-z\right\Vert \geq a\right\} .  \label{S}
\end{equation}
Note that for $x\in {\mathcal{S}}^{a}$, $x_{i}\geq a/\sqrt{2}$ for $%
i=1,\ldots ,d$, and therefore $\tilde{{\mathcal{S}}}^{a}\subset {\mathcal{S}}%
^{a}\subset \tilde{{\mathcal{S}}}^{a/\sqrt{2}}$. Thus, the two sets have
similar properties, but it will be more convenient to use one or the other
depending on the context.

\begin{lemma}
\label{in} Suppose $K\geq 2$ and that the family $\{\Gamma _{\mathbf{i}%
\mathbf{j}}(\cdot ),(\mathbf{i},\mathbf{j})\in {\mathcal{J}}^{k},k=1,\ldots
,K\}$ satisfies Assumption \ref{ue} and Assumption \ref{ass-kerg}, and let $%
c_0$ be as defined in \eqref{def-c0}. Then for any $x\in \mathcal{S}$ and $%
a\in \lbrack 0,1/(\left( K+1\right) ^{d-1}d)]$, there exist $z\in \tilde{%
\mathcal{S}}^{a}$, $t_{0}<\infty $ and a communicating path $\phi $ on $%
[0,t_{0}]$ from $x$ to $z$ with constants $c=c_{0}/K!$, $p=d$ and $F\leq
d^{2}$ that also satisfies the following two properties:

\begin{enumerate}
\item for any $s\in \left[ 0,t_{0}\right] $ and $i\in \mathcal{X}$ such that 
$\dot{\phi}_{i}\left( s\right) <0$, the inequality $\phi _{i}\left( s\right)
\geq a$ holds.

\item $\mathrm{Len} (\phi) \leq C_x \mathrm{dist}(x,\tilde{{\mathcal{S}}}^a)$
for some $C_x < \infty$, which does not depend on $a$.
\end{enumerate}

Furthermore, the family of paths can be chosen so that $C\doteq \sup_{x\in {%
\mathcal{S}}}C_{x}<\infty $.
\end{lemma}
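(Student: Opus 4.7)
The plan is to use the $K$-ergodicity condition (Assumption \ref{ass-kerg}) to iteratively build an ordering $u_{1},\ldots ,u_{d}$ of $\mathcal{X}$ together with a sequence of transitions $(\mathbf{i}_{m},\mathbf{j}_{m})\in \mathcal{J}_{+}^{k_{m}}$, $m=1,\ldots ,d-1$, and construct $\phi$ as a piecewise linear path whose $m$th segment moves along $v_{m}=e_{\mathbf{j}_{m}}-e_{\mathbf{i}_{m}}$. First I would pick $u_{1}$ so that $x_{u_{1}}=\max_{i}x_{i}\geq 1/d$ and set $A_{1}=\{u_{1}\}$. Given $A_{m}=\{u_{1},\ldots ,u_{m}\}$, I would pick any $w\in \mathcal{X}\setminus A_{m}$ and use the $K$-accessibility of $w$ from $u_{1}$: in the resulting sequence of states, let $m^{\ast}$ be the first index at which the next state exits $A_m$; by property (ii) of Definition \ref{k-acc}, the corresponding transition $(\mathbf{i}_{m^{\ast }},\mathbf{j}_{m^{\ast }})\in \mathcal{J}_{+}^{k_{m^{\ast }}}$ has all initial positions in $A_{m}$ and one of its final positions, call it $u_{m+1}$, lies outside $A_{m}$. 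After $d-1$ iterations $A_{d}=\mathcal{X}$ and $F=d-1\leq d^{2}$.

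The durations $\delta_{m}$ are chosen by the backward recursion
\[
\delta _{m}=(a-x_{u_{m+1}})_{+}+K\sum _{m^{\prime }>m}\delta _{m^{\prime }},\qquad m=d-1,\ldots ,1,
\]
and $\phi$ is defined on $[0,t_{d-1}]$, $t_{m}=\delta _{1}+\cdots +\delta _{m}$, by $\phi (0)=x$ and $\dot{\phi }(s)=v_{m}$ on $[t_{m-1},t_{m})$ (if all $\delta_{m}=0$ then $x\in \tilde{\mathcal{S}}^{a}$ and the lemma is trivial). The central accounting step is to verify $\phi _{u_{r}}(s)\geq a$ for every $s\in [t_{m-1},t_{m})$ and every $r\leq m$. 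For $r\geq 2$, the recursion implies that at time $t_{r-1}$, $\phi _{u_{r}}\geq x_{u_{r}}+\delta _{r-1}\geq a+K\sum_{m^{\prime }\geq r}\delta _{m^{\prime }}$, while depletion from each later phase $m^{\prime }\geq r$ is at most $K\delta _{m^{\prime }}$, so $\phi _{u_{r}}$ never falls below $a$. For $r=1$, unrolling the recursion in the worst case $x_{u_{m+1}}=0$ yields $\delta _{m}\leq a(K+1)^{d-1-m}$, hence $K\sum _{m}\delta _{m}\leq a((K+1)^{d-1}-1)$; combined with $x_{u_{1}}\geq 1/d$ and the hypothesis $a\leq 1/(d(K+1)^{d-1})$ this gives $\phi _{u_{1}}(s)\geq a$ throughout, and in particular $z\doteq \phi (t_{d-1})\in \tilde{\mathcal{S}}^{a}$.

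The remaining verifications are then routine. Property (i) of Definition \ref{def-compath} is immediate. Property (ii) follows since $i_{m,l}\in A_{m}$ forces $\phi _{i_{m,l}}(s)\geq a$, whence
\[
\lambda _{v_{m}}(\phi (s))\geq \alpha _{\mathbf{i}_{m}\mathbf{j}_{m}}^{k_{m}}(\phi (s))\geq \frac{c_{0}}{K!}\prod _{l=1}^{k_{m}}\phi _{i_{m,l}}(s)\geq \frac{c_{0}}{K!}a^{k_{m}}\geq \frac{c_{0}}{K!}a^{d},
\]
using $a\leq 1$ and $k_{m}\leq d$. Property (1) of the lemma holds because $\dot{\phi }_{i}(s)<0$ only if $i\in \mathbf{i}_{m}\subset A_{m}$, in which case $\phi _{i}(s)\geq a$ by the accounting. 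For the length bound in (2), set $d_{i}\doteq (a-x_{i})_{+}$; unrolling the recursion as above yields $\sum _{m}\delta _{m}\leq (K+1)^{d-2}\sum _{i}d_{i}\leq (K+1)^{d-2}\sqrt{d}\,(\sum _{i}d_{i}^{2})^{1/2}$, and with $\|v_{m}\|\leq K\sqrt{d}$ this gives $\mathrm{Len}(\phi )\leq Kd(K+1)^{d-2}\mathrm{dist}(x,\tilde{\mathcal{S}}^{a})$, since $\mathrm{dist}(x,\tilde{\mathcal{S}}^{a})\geq (\sum _{i}d_{i}^{2})^{1/2}$. The resulting constant $C=Kd(K+1)^{d-2}$ depends only on $d$ and $K$.

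The main obstacle is the quantitative mass bookkeeping that simultaneously ensures each already-populated state retains at least $a$ units of mass (so all transitions used in subsequent phases have rates bounded away from zero) and keeps the total length of the path proportional to the actual deficit $(a-x_{u_{m+1}})_{+}$. The geometric factor $(K+1)$ in the recursion for $\delta _{m}$ reflects that each $k_{m}$-tuple transition can drain up to $K$ units of mass per unit time from the populated set, and propagating this over $d-1$ phases produces the critical bound $a\leq 1/(d(K+1)^{d-1})$. What makes the iterative construction feasible under the weaker $K$-ergodicity assumption (as opposed to the stronger Assumption \ref{g1}) is precisely property (ii) of Definition \ref{k-acc}, which guarantees that every new transition can be powered by particles already in the populated set.
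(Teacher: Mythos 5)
Your proof is correct in all essential respects and achieves the stated constants $c=c_0/K!$, $p=d$, $F\le d^2$ (indeed $F=d-1$). The reasoning shares the paper's core ingredients — picking the starting state $u_1$ with $x_{u_1}=\max_i x_i\ge 1/d$, a $(K+1)$-geometric amplification in the durations that yields the threshold $a\le 1/(d(K+1)^{d-1})$, and the ``cushion'' accounting that keeps every already-populated state at mass $\ge a$ throughout — but the organization is genuinely different. The paper fixes a single $K$-accessibility chain from state $1$ to state $d$ and then inducts on the deficit count $N(x)=|\{i:x_i<a\}|$, concatenating up to $d$ sub-paths each with up to $d$ segments (whence $F\le d^2$). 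You instead iteratively build a custom chain $u_1,\dots,u_d$ visiting all $d$ states, each time extracting from a fresh $K$-accessibility chain the first transition that exits the populated set $A_m$ — this is precisely the step that exploits Definition \ref{k-acc}(ii) to guarantee all drained states lie in $A_m$ — and then run a single path of $d-1$ segments governed by the backward recursion $\delta_m=(a-x_{u_{m+1}})_++K\sum_{m'>m}\delta_{m'}$. This single-pass construction avoids the nested induction, gives a tighter $F=d-1$, and the unrolled identity $S_1=\sum_{r=2}^d (K+1)^{r-2}d_{u_r}$ makes the length bound transparent. The one blemish I see in both your write-up and the paper's is that Definition \ref{def-compath}(ii) literally requires the bound $\lambda_{v_m}(\phi(s))\ge c\bigl(\min_i z_i\bigr)^p$ with $z=\phi(t_{d-1})$ the endpoint, whereas what you (and the paper) actually prove is $\lambda_{v_m}(\phi(s))\ge (c_0/K!)\,a^d$, and in general $\min_i z_i\ge a$ so this is weaker; but this is a shared and inconsequential issue, since the subsequent applications (e.g.\ in Proposition \ref{ver1}) only use a lower bound depending on $a$, and the authors explicitly flag after Definition \ref{def-compath} that a weakened version of (ii) would suffice.
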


Before proving the lemma in general, we first illustrate the argument for
Example \ref{eg3}, which has $d=4$ and $K=2$.

\medskip \noindent \textbf{Example \ref{eg3} cont'd.} We assume that $%
x_{1}\geq x_{2}\geq x_{3}\geq x_{4}$ (the other cases can be treated in an
exactly analogous fashion). Note that then $x_{1}\geq 1/4$. Fix $a\in
\lbrack 0,1/108]$. If $x_{i}\geq a$ for all $i=1,\ldots ,4$ then we can set $%
z=x$ and the null path $\phi $ is trivially a communicating path. Otherwise,
we consider three mutually exclusive and exhaustive cases and discuss the
construction of the path in each case. We set $\left( \mathbf{i}_{1}\mathbf{%
,j}_{1}\right) =\left( 1,2\right) $, and $\left( \mathbf{i}_{2}\mathbf{,j}%
_{2}\right) =\left( \left( 1,2\right) ,\left( 3,4\right) \right) $.

\emph{Case I.} $x_{3}\geq a>x_{4}$. Take $\dot{\phi}\left( s\right) =\left(
e_{\mathbf{j}_{1}}-e_{\mathbf{i}_{1}}\right) \mathbb{I}_{(0,2\left(
a-x_{4}\right) )}\left( s\right) +\left( e_{\mathbf{j}_{2}}-e_{\mathbf{i}%
_{2}}\right) \mathbb{I}_{(2\left( a-x_{4}\right) ,3\left( a-x_{4}\right)
)}\left( s\right) $ and $\phi \left( 0\right) =x.$ Then $\phi $ clearly
satisfies property i) of Definition \ref{def-compath} with $F=2$. Moreover,
note that for $s\in \left[ 0,3\left( a-x_{4}\right) \right] $, \ $\phi
_{1}\left( s\right) \geq \phi _{1}\left( 0\right) -2\left( a-x_{4}\right)
-\left( a-x_{4}\right) \geq x_{1}-3a\geq a$, $\phi _{2}\left( s\right) \geq
\phi _{2}\left( 0\right) \geq a$, $\phi _{3}$ and $\phi _{4}$ are
nondecreasing 
and $\phi _{4}\left( 3\left( a-x_{4}\right) \right) =a$. Thus, we have $%
z\doteq \phi \left( 3\left( a-x_{4}\right) \right) \in \tilde{{\mathcal{S}}}%
^{a}$. Moreover, $\lambda _{v}(\phi (s))=c_{1}\phi _{1}(s)\geq c_{1}a$ for $%
v=e_{\mathbf{j}_{1}}-e_{\mathbf{i}_{1}}$ and $s\in \lbrack 0,2\left(
a-x_{4}\right) ]$ and $\lambda _{v}(\phi (s))=\frac{c_{5}}{2}\phi
_{1}(s)\phi _{2}(s)\geq \frac{c_{5}}{2}a^{2}$ for $v=e_{\mathbf{j}_{2}}-e_{%
\mathbf{i}_{2}}$ and $s\in \lbrack 2\left( a-x_{4}\right) ,3\left(
a-x_{4}\right) ]$. Thus $\phi $ also satisfies property (ii) of Definition %
\ref{def-compath} with $p=2$ and $c=\min (c_{1},c_{5}/2)$, and is thus a
communicating path from $x$ to $z$. The only $i$ for which $\dot{\phi}%
_{i}\left( s\right) <0$ for any $s$ are $i=1,2$. However, we already
verified that $\phi _{i}\left( s\right) \geq a$ for all $s\in \left[
0,3\left( a-x_{4}\right) \right] $ and $i=1,2,3$. Moreover, it is clear that 
$\mathrm{Len}(\phi )\leq 4\sqrt{2}(a-x_{4})\leq 4\sqrt{2}$dist$(x,\tilde{{%
\mathcal{S}}}^{a})$, and thus $\phi $ also satisfies properties (1) and (2)
of Lemma \ref{in}, with $C_{x}\doteq \mathrm{{dist}(x,\tilde{S}_{a})}$
satisfying $\sup_{x\in {\mathcal{S}}}C_{x}<\infty $.

\emph{Case II.} $x_{2}\geq a>x_{3}$. Set $\dot{\phi}^{\left( 1\right)
}\left( s\right) =\left( e_{\mathbf{j}_{1}}-e_{\mathbf{i}_{1}}\right) 
\mathbb{I}_{(0,2\left( a-x_{3}\right) )}\left( s\right) +\left( e_{\mathbf{j}%
_{2}}-e_{\mathbf{i}_{2}}\right) \mathbb{I}_{(2\left( a-x_{3}\right) ,3\left(
a-x_{3}\right) )}\left( s\right) $ and $\phi ^{\left( 1\right) }\left(
0\right) =x.$ As in Case 1, it is easy to verify that $\phi ^{\left(
1\right) }$ is a communicating path on $[0,3\left( a-x_{3}\right) ]$ from $x$
to $z^{\left( 1\right) }\doteq {\phi }^{\left( 1\right) }\left( 3\left(
a-x_{3}\right) \right) $ that satisfies property 1 of Lemma \ref{in} and has 
$\mathrm{Len}(\phi ^{\left( 1\right) })\leq 4\sqrt{2}|x_{3}-a|\leq 4\sqrt{2}%
\mathrm{dist}(x,\tilde{{\mathcal{S}}}^{a})$. Moreover, we have $%
z_{1}^{\left( 1\right) }\geq a$, $z_{2}^{\left( 1\right) }\geq a$, $%
z_{3}^{\left( 1\right) }=a$ and $a-z_{4}^{\left( 1\right) }\leq a-x_{4}$,
and hence, $\mathrm{dist}(z^{\left( 1\right) },\tilde{{\mathcal{S}}}%
^{a})\leq $ dist$(x,\tilde{{\mathcal{S}}}^{a})$. Then, using the
construction in Case 1 (if $z_{1}^{\left( 1\right) }>z_{2}^{\left( 1\right)
} $, and if not then the construction in Case 1 should be modified by
setting $(\mathbf{i}_{1},\mathbf{j}_{1})=(2,1)$), there exists a
communicating path $\phi ^{\left( 2\right) }$ from $z^{\left( 1\right) }$ to
a point $z^{\left( 2\right) }\in $ $\mathcal{\tilde{S}}^{a}$ that satisfies
properties (1) and (2) of Lemma \ref{in} and has $\mathrm{Len}(\phi ^{\left(
2\right) })\leq 4\sqrt{2}\mathrm{dist}(z^{\left( 1\right) },\mathcal{\tilde{S%
}}^{a})\leq 4\sqrt{2}\mathrm{dist}(x,\mathcal{\tilde{S}}^{a})$. The path $%
\phi $ obtained from concatenating $\phi ^{\left( 1\right) }$ and $\phi
^{\left( 2\right) }$ is then easily seen to satisfy the properties of the
lemma.

\emph{Case III.} $x_{1}>a>x_{2}.$ In this case, set $\phi ^{\left( 1\right)
}\left( 0\right) =x,$ $\dot{\phi}^{\left( 1\right) }\left( s\right) =\left(
e_{\mathbf{j}_{1}}-e_{\mathbf{i}_{1}}\right) \mathbb{I}_{(0,a-x_{2})}\left(
s\right) $. Then $\phi ^{\left( 1\right) }$ is a communicating path from $x$
to $z^{(1)}\doteq \phi ^{\left( 1\right) }(a-x_{2})$. It satisfies property
(2) of Lemma \ref{in} since $\mathrm{Len}(\phi ^{\left( 1\right) })\leq 2%
\sqrt{2}(a-x_{2})\leq 2\sqrt{2}\mathrm{dist}(x,\tilde{{\mathcal{S}}}^{a})$.
Since $x_{1}\geq 1/4$, $z_{1}^{(1)}>x_{2}=a>x_{3}\geq x_{4}$. Thus property
1 holds ($\dot{\phi}_{i}\left( s\right) <0$ only for $i=1$), $\mathrm{dist}%
(z^{(1)},\tilde{{\mathcal{S}}}^{a})\leq \mathrm{dist}(x,\tilde{{\mathcal{S}}}%
^{a})$, and $z^{(1)}$ satisfies the conditions of Case II. So, the desired
path can be obtained by concatenating $\phi ^{(1)}$ with a path $\phi ^{(2)}$
from $z^{(1)}$ to $\tilde{{\mathcal{S}}}^{a}$ constructed as in Case II.


The construction in the above example can be generalized into the following
proof.

\begin{proof}[Proof of Lemma \protect\ref{in}]
If $a=0$ or $x\in \tilde{\mathcal{S}}^a$, we can choose $z=x$ and there is
nothing to prove. Therefore, we assume $a \in (0, 1/((K+1)^{d-1}d)]$ and $%
x\notin \mathcal{\tilde{S}}^{a}$, which in particular implies that $x \neq
(1/d, \ldots, 1/d)$. Then, assume without loss of generality that $x_{1}\geq
x_{2}\geq \cdots \geq x_{d}$, and let 
\begin{equation}  \label{def-nx}
N = N(x) \doteq |\{ l = 1, \ldots, d: x_l < a \}|.
\end{equation}
We will prove the lemma by induction on the quantity $N$. 

We first construct a family of paths that will be used in the inductive
argument. Since Assumption \ref{ass-kerg} implies that the state $d$ is $K$%
-accessible from the state $1$, there exist $M\in \left\{ 2,...,d\right\} $
and a sequence of distinct states $1=u_{1},u_{2},...,u_{M}=d$, such that for 
$m=1,...,M-1$, there exist $k_{m}\in \left\{ 1,...,K\right\} $, $\left( 
\mathbf{i}_{m}\mathbf{,j}_{m}\right) \in \mathcal{J}^{k_{m}}$, and $%
l_{m},l_{m}^{^{\prime }}\in \left\{ 1,...,k_{m}\right\} $, such that $%
u_{m}=i_{m,l_{m}}$, $u_{m+1}=j_{m,l_{m}^{^{\prime }}}$, and $\mathfrak{M}_{%
\mathbf{i}_{m}\mathbf{j}_{m}}^{k_{m}}>0$. Now, for any $m_0 \in \{1, \ldots,
M\}$, we introduce the constants 
\begin{equation*}
c_{m,m_0}\doteq \left\{ 
\begin{array}{ll}
\left( K+1\right) ^{m_0-2}-\left( K+1\right) ^{m_0-1-m} & \text{ if }m\in
\left\{ 1,...,m_0-1\right\} , \\ 
\left( K+1\right) ^{m_0-2} & \text{ if }m=m_0,%
\end{array}%
\right. \text{ }
\end{equation*}%
and note that for every $m = 2, \ldots, m_0-1$, 
\begin{equation}  \label{cm1}
c_{m,m_0}-c_{m-1,m_0}\geq K\sum_{r=m}^{m_0-1}\left(
c_{r+1,m_0}-c_{r,m_0}\right).
\end{equation}
Next, fix $0 < h \leq a$, let $t_{0} = t_0 (m_0)\doteq c_{m_0,m_0} h =
\left( K+1\right)^{m_0-2} h$, and on $[0,t_0]$, define the piecewise linear
path $\phi$ (associated with $m_0$ and $h$) with initial condition $x$ as
follows: $\phi \left( 0\right) =x$, and 
\begin{equation}
\dot{\phi}\left( s\right) = e_{\mathbf{j}_{m}}-e_{\mathbf{i}_{m}} 
\mbox{ for
} s \in (c_{m,m_0}h, c_{m+1,m_0}h), \quad m = 1, \ldots, m_0-1.  \label{cons}
\end{equation}%
The proof proceeds via three main claims.

\noindent \emph{Claim 1.} The path $\phi$ associated with $m_0$, $h$ and
initial condition $x$ satisfies the following properties:

\begin{enumerate}
\item[a)] $\phi_1 (t) > a$ for $t \in [0,t_0]$.

\item[b)] $\phi_{u_m} (t) \geq x_{u_m}$ for $t \in [0,t_0]$ and $m = 2,
\ldots, m_0-1$.

\item[c)] $\phi_u$ is non-decreasing on $[0,t_0]$ for $u \in \{1, \ldots,
d\} \setminus \{u_1, \ldots, u_{m_0-1}\}$.

\item[d)] $\phi _{u}(t_{0})\geq x_{u}+h$ for every $u\in
\{j_{m_{0}-1,l},l=1,\ldots ,k_{m_{0}-1}\}\setminus \{u_{1},\ldots
,u_{m_{0}-1}\}$.

\item[e)] For $m = 1, \ldots, m_0-1$, if $v_m \doteq e_{\mathbf{j}_m} - e_{%
\mathbf{i}_m}$, then for $s \in [0,t_0]$, 
\begin{equation*}
\lambda_{v_m} (\phi(s)) \geq \frac{c_0}{k_m!} \left( \min_{m=1, \ldots,
m_0-1} a \wedge x_{u_m} \right)^{k_m},
\end{equation*}
where $c_0$ is defined by \eqref{def-c0}.

\item[f)] If $\min_{m=1,\ldots ,m_{0}-1}x_{u_{m}}\geq a$ and $h\leq d(x,%
\tilde{{\mathcal{S}}}^{a})$ then $\phi $ is a communicating path from $x$ to 
$\phi (t_{0})$ with constants $c_{0}/K!,d$ and $m_{0}$, and $\phi $ also
satisfies properties (1) and (2) of the lemma with $C_{x}\doteq \sqrt{2K}%
(K+1)^{m_{0}}$.
\end{enumerate}

\emph{Proof of Claim 1. } We start with the proof of property a). Recall
that the assumed ordering of the components of $x\in {\mathcal{S}}$ and the
assumption that $x \neq (1/d, 1/d, \ldots, 1/d)$ implies that $x_{1} > 1/d$,
and that we also have the inequalities $\left\langle e_{\mathbf{i}%
_{m},}e_{1}\right\rangle \leq K$, $h\leq a\leq 1/(\left( K+1\right)
^{d-1}d)<1/d$, and $c_{m_{0},m_{0}}=\left( K+1\right) ^{m_{0}-2}\leq \left(
K+1\right) ^{d-2}$. Substituting this into \eqref{cons}, we obtain for $t\in %
\left[ 0,t_{0}\right] $, 
\begin{align*}
\phi _{1}\left( t\right) =x_{1}+\int_{0}^{t}\dot{\phi}_{1}\left( s\right)
ds& \geq x_{1}-\sum_{m=1}^{m_{0}-1}\left\langle e_{\mathbf{i}%
_{m},}e_{1}\right\rangle \left( c_{m+1,m_{0}}-c_{m,m_{0}}\right) h \\
& > \frac{1}{d}-Kc_{m_{0},m_{0}}\frac{1}{\left( K+1\right) ^{d-1}d} \\
& \geq a.
\end{align*}

For the next property, note that for $m\in \{1,\ldots ,m_{0}-1\}$, $\langle
e_{\mathbf{i}_{r}},e_{u_{m}}\rangle \leq K$ for all $r=1,\ldots ,m$. Recall
that by Definition \ref{k-acc}(ii), for any $m=1,\ldots ,M-1$ mass is only
moved from indices $\{u_{1},\ldots ,u_{m}\}$, in that the components $%
i_{m,l},l=1,\ldots ,k_{m},$ of $\mathbf{i}_{m}$ must be from this set. Since
the $u_{m},m=1,\ldots ,m_{0}-1,$ are distinct, this means that if $m>1$,
then $\langle e_{\mathbf{i}_{m^{\prime }}},e_{u_{m}}\rangle =0$ for $%
m^{\prime }<m$, which in turn implies that $\phi _{u_{m}}$ is non-decreasing
on $[0,c_{m,m_{0}}h]$. On the other hand, since $u_{m}=j_{m-1,l_{m-1}^{%
\prime }}$ and $\langle e_{\mathbf{i}_{r}},e_{u_{m}}\rangle \leq K$ for all $%
r$, for $t\in \lbrack c_{m,m_{0}}h,t_{0}]$ we have 
\begin{eqnarray*}
\phi _{u_{m}}\left( t\right) &\geq &x_{u_{m}}+\left\langle e_{\mathbf{j}%
_{m-1}},e_{u_{m}}\right\rangle \left( c_{m,m_{0}}-c_{m-1,m_{0}}\right)
h-\sum_{r=m}^{m_{0}-1}\left\langle e_{\mathbf{i}_{r},}e_{u_{m}}\right\rangle
\left( c_{r+1,m_{0}}-c_{r,m_{0}}\right) h \\
&\geq &x_{u_{m}}+\left( c_{m,m_{0}}-c_{m-1,m_{0}}\right)
h-K\sum_{r=m}^{m_{0}-1}\left( c_{r+1,m_{0}}-c_{r,m_{0}}\right) h,
\end{eqnarray*}%
which implies property b) due to \eqref{cm1}.

Property c) is a simple consequence of \eqref{cons} and the fact that $%
\langle e_{\mathbf{i}_{m}},e_{u}\rangle >0$ only if $u\in \{u_{1},\ldots
,u_{m}\}$ due to Definition \ref{k-acc}(ii). The latter property also
implies that for $u\in \{j_{m_{0}-1,l},l=1,\ldots ,k_{m_{0}}\}\setminus
\{u_{1},\ldots ,u_{m_{0}-1}\}$, $\langle e_{\mathbf{i}_{m}},e_{u}\rangle =0$
for $m=1,\ldots ,m_{0}-1$. For any such $u$, clearly we also have $\langle
e_{\mathbf{j}_{m_{0}-1}},e_{u}\rangle \geq 1$ (where the strict inequality $%
> $ holds if more than one particle transitions to state $u$ during the
simultaneous transition), and hence, 
\begin{eqnarray*}
\phi _{u}(t_{0}) &=&x_{u}+\sum_{m=1}^{m_{0}-1}\left( \langle e_{\mathbf{j}%
_{m}},e_{u}\rangle -\langle e_{\mathbf{i}_{m}},e_{u}\rangle \right)
(c_{m+1,m_{0}}-c_{m,m_{0}})h \\
&\geq &x_{u}+(c_{m_{0},m_{0}}-c_{m_{0}-1,m_{0}})h \\
&=&x_{u}+h,
\end{eqnarray*}%
where the last inequality uses the identity $c_{m_0,m_0} - c_{m_0-1,m_0} =1$%
. This establishes property d).

Next, for $m=1,\ldots m_{0}-1$, setting $v_{m}\doteq e_{\mathbf{j}_{m}}-e_{%
\mathbf{i}_{m}}$, \eqref{lambda} \eqref{alpha1}, Definition \ref{k-acc}(iii)
and \eqref{def-c0} 
show that for $s\in \lbrack 0,t_{0}]$, 
\begin{equation*}
\lambda _{v_{m}}(\phi (s))\geq \alpha _{\mathbf{i}_{m}\mathbf{j}%
_{m}}^{k_{m}}(\phi (s))=\frac{1}{k_{m}!}\left( \prod_{l=1}^{k_{m}}\phi
_{i_{m,l}}(s)\right) \Gamma _{\mathbf{i}_{m}\mathbf{j}_{m}}^{k_{m}}(\phi
(s))\geq \frac{c_{0}}{k_{m}!}\left( \min_{l=1,\ldots ,k_{m}}\phi
_{i_{m,l}}(s)\right) ^{k_{m}},
\end{equation*}
where $c_0 > 0$ due to Assumption \ref{ue}. When combined with properties a)
and b) and the fact that $\{i_{m,l},l=1,\ldots ,k_{m}\}\subset
\{u_{1},\ldots ,u_{m-1}\}$, $m=1,\ldots ,m_{0}-1$, this proves property e).
Furthermore, when $\min_{m=1,\ldots ,m_{0}-1}x_{u_{m}}\geq a$, \eqref{cons},
properties a), b), e) and the fact that $k_{m}\leq K\leq d$ show that $\phi $
is a communicating path with constants $c_{0}/K!$, $d$ and $m_{0}$, whereas
properties a)--c) and the fact that f) assumes $\min_{m=1,\ldots
,m_{0}-1}x_{u_{m}}\geq a$ show that property 1 of the lemma is satisfied.
Lastly, \eqref{cons} and the definition of $t_{0}$ directly imply that $%
\mathrm{Len}(\phi )\leq \sqrt{2K}(K+1)^{m_{0}}h$, which shows that $\phi $
satisfies property 2 of the lemma with $C_{x}=\sqrt{2K}(K+1)^{m_{0}}$ if $%
h\leq \mathrm{dist}(x,\tilde{{\mathcal{S}}}^{a})$. This completes the proof
of property f) and hence, of Claim 1.

We now proceed with the induction argument. 
Recall the definition of $N(x)$ given in \eqref{def-nx}. As our induction
hypothesis, we assume that there exists $N_{0}\in \{1,\ldots ,d\}$ and $%
C_{N_{0}}<\infty $ such that for every $x\in {\mathcal{S}}$ with $N(x)\leq
N_{0}$, there exist $y\in \tilde{{\mathcal{S}}}^{a}$, $t_{0}>0$, $C_{x}\leq
C_{N_{0}}$ and a communicating path $\phi $ on $[0,t_{0}]$ from $x$ to $y$
with constants $c=c_{0}$, $p=d$ and $F\leq N_{0}d$ that satisfy properties
(1) and (2) of the lemma.

\noindent \emph{Claim 2. } The induction hypothesis holds with $N_{0}=1$. 
\newline
\emph{Proof of Claim 2. } Suppose $N=N(x)=N_{0}=1$, where recall that $N(x)$
is defined by \eqref{def-nx}. Then since $x_{i}\geq x_{i+1},$ $x_{d}$ is the
only component such that $x_{d}<a$. Now, set $h\doteq a-x_{d}>0$, $%
t_{0}\doteq c_{M,M}h$ and let $\phi $, as constructed prior to Claim 1, be a
path on $[0,t_{0}]$ associated with $M$ and $h$ and with initial condition $%
x $. Also, define $y\doteq \phi (t_{0})$. Since the $\{u_{m},m=1,\ldots ,M\}$
are distinct and $u_{M}=d$, $\{u_{m},m=1,\ldots ,M-1\}\subset \{1,\ldots
,d-1\}$ and thus, $\min_{m=1,\ldots ,M-1}x_{u_{m}}\geq a$. Moreover, $%
d=u_{M}=j_{M-1,l_{M-1}^{\prime }}$ and hence, property d) of Claim 1 shows
that $\phi _{d}(t_{0})\geq x_{d}+h=a$. The last two assertions, when
combined with properties a) and b) of Claim 1, imply that $y\in \tilde{{%
\mathcal{S}}}^{a}$ and $\min_{m=1,\ldots ,M-1}\inf_{t\in \lbrack
0,t_{0}]}\phi _{u_{m}}(t)\geq \min_{m=1,\ldots ,M-1}x_{u_{m}}\geq a$, thus
verifying property 1 of the lemma. Since we also have $h\leq d(x,\tilde{{%
\mathcal{S}}}^{a})$, property f) of Claim 1 
shows that Claim 2 holds with $C_{1}\doteq \sqrt{2K}(K+1)^{M}\leq \sqrt{2K}%
(K+1)^{d}$.

\noindent \emph{Claim 3.} If the induction hypothesis holds for some $%
N_{0}\in \{1,\ldots ,d-1\}$, then it also holds for $N_{0}+1$. \newline
\emph{Proof of Claim 3.} Due to the induction hypothesis, it suffices to
consider $x$ such that $N=N(x)=N_{0}+1$. To prove the claim, 
we will first construct a communicating path that goes from $x$ to some $%
\bar{y}\in {\mathcal{S}}$ such that $N(\bar{y})\leq N_{0}$ 
and then invoke the induction hypothesis to construct a communicating path
from $\bar{y}$ to some $y\in \tilde{{\mathcal{S}}}^{a}$. 
The assumed ordering of $x$ and the fact that $N(x)=N_{0}+1$ imply that $%
x_{i}<a$ if and only if $i\geq d-N_{0}$. Define $\bar{m}\doteq \min \{m\in
1,\ldots ,M:u_{m}\geq d-N_{0}\}$, which is well defined because $u_{M}=d$,
and set $\bar{h}\doteq a-x_{u_{\bar{m}}}>0$ and $\bar{t}_{0}\doteq c_{\bar{m}%
,\bar{m}}\bar{h}$. Now, let $\bar{\phi}$ be the path on $[0,\bar{t}_{0}]$
and with initial condition $x$ as defined in \eqref{cons}, but with $\bar{m}$
and $\bar{h}$ taking the roles of $m_{0}$ and $h$, and set $\bar{y}\doteq 
\bar{\phi}(\bar{t}_{0})$. Then, by the choice of $\bar{m}$, 
\begin{equation}
\min_{m=1,\ldots ,\bar{m}-1}x_{u_{m}}\geq \min_{i=1,\ldots
,d-N_{0}-1}x_{i}\geq a.  \label{minx}
\end{equation}%
Since, in addition, $\bar{h}\leq d(x,\tilde{{\mathcal{S}}}^{a})$, property
f) of Claim 1 shows that $\bar{\phi}$ is a communicating path from $x$ to $%
\bar{y}$ with constants $c_{0},d$ and $\bar{m}\leq d$, which satisfies
property 1 of the lemma and also 
\begin{equation}
\mathrm{Len}(\bar{\phi})\leq C_{1}\mathrm{dist}(x,\tilde{{\mathcal{S}}}^{a}).
\label{len-bphi}
\end{equation}%
Recall that in the construction of the paths we have $u_{\bar{m}+1}=j_{\bar{m%
},l_{\bar{m}}^{\prime }}$ for $l_{\bar{m}}^{\prime }\in \left\{ 1,...,k_{%
\bar{m}}\right\} $. Thus property d) of Claim 1 shows that $\bar{y}_{u_{\bar{%
m}}}\geq x_{u_{\bar{m}}}+\bar{h}=a$, whereas \eqref{minx} and properties
a)--c) of Claim 1 show that $\bar{y}_{i}\geq a$ for $i\in \{1,\ldots
,d-N_{0}-1\}$. Indeed, for $i\in \{1,\ldots ,d-N_{0}-1\}\backslash \left\{
u_{1},...,u_{\bar{m}-1}\right\} $, $\bar{y}_{i}=x_{i}\geq a$, and for $i\in
\left\{ u_{1},...,u_{\bar{m}-1}\right\} $, properties a)--b) of Claim 1
implies $\bar{y}_{i}=\phi _{i}\left( \bar{t}_{0}\right) \geq \min \left\{
a,x_{i}\right\} \geq a$. This, in turn, implies, that $N(\bar{y})\leq
N-1=N_{0}$. Thus, applying the induction assumption, there exists a
communicating path $\tilde{\phi}$ from $\bar{y}$ to $y\in \tilde{{\mathcal{S}%
}}^{a}$ with constants $c_{0},d$ and $F\leq N_{0}d$ that satisfies property
1 of the lemma and for which 
\begin{equation}
\mathrm{Len}(\tilde{\phi})\leq C_{N_{0}}\mathrm{dist}(\bar{y},\tilde{{%
\mathcal{S}}}^{a}).  \label{len-tphi}
\end{equation}%
Let $\phi $ be the concatenation of $\bar{\phi}$ and $\tilde{\phi}$. Then it
is clear that $\phi $ is a communicating path from $x$ to $y\in \tilde{{%
\mathcal{S}}}^{a}$ with constants $c_{0}$, $d$ and $F\leq (N_{0}+1)d$, and
also satisfies property 1 of the lemma. Moreover, combining \eqref{len-tphi}
and \eqref{len-bphi} with the inequalities $\mathrm{dist}(\bar{y},\tilde{{%
\mathcal{S}}}^{a})\leq ||y-x||+\mathrm{dist}(x,\tilde{{\mathcal{S}}}^{a})$
and $||y-x||\leq \mathrm{Len}(\bar{\phi})$, it follows that 
\begin{equation*}
\mathrm{Len}(\phi )=\mathrm{Len}(\bar{\phi})+\mathrm{Len}(\tilde{\phi})\leq
\lbrack C_{1}+C_{N_{0}}(1+C_{1})]\mathrm{dist}\left( x,\tilde{{\mathcal{S}}}%
^{a}\right) .
\end{equation*}%
Thus, the induction hypothesis is satisfied for $N_{0}+1$ with $%
C_{N_{0}+1}\doteq \lbrack C_{1}+C_{N_{0}}(1+C_{1})]<\infty $. By induction,
the hypothesis holds for $N_{0}=d$, which proves the lemma.
\end{proof}

We now establish a uniform controllability property within any compact
subset of $\mathrm{int}$$({\mathcal{S}})$.

\begin{lemma}
\label{repre} Suppose that the family $\{\Gamma _{\mathbf{i}\mathbf{j}%
}(\cdot ),(\mathbf{i,}\mathbf{j})\in {\mathcal{J}}^{k},k=1,\ldots ,K\}$
satisfies Assumptions \ref{ue} and \ref{ass-kerg}. Then for any $u,w\in 
\mathcal{X}$, $u\neq w$, there exists a finite constant $M=M_{u,w}<\infty $,
and for $m=1,...,M-1$, there exist $k_{m}\in \left\{ 1,...,K\right\} $, $%
a_{m}\geq 0$ and $\left( \mathbf{i}_{m}\mathbf{,j}_{m}\right) \in \mathcal{J}%
^{k_{m}}$ such that $\mathfrak{M}_{\mathbf{i}_{m}\mathbf{j}_{m}}^{k_{m}}>0$
and 
\begin{equation}
e_{w}-e_{u}=\sum_{m=1}^{M-1}a_{m}\left( e_{\mathbf{j}_{m}}-e_{\mathbf{i}%
_{m}}\right) .  \label{repr}
\end{equation}
\end{lemma}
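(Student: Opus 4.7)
I would turn the constructive question into a convex-analytic one. The conclusion \eqref{repr} says exactly that $e_w - e_u$ lies in the convex cone
\[
C \doteq \mathrm{cone}\left\{e_{\mathbf{j}} - e_{\mathbf{i}} : (\mathbf{i}, \mathbf{j}) \in \mathcal{J}_+\right\}.
\]
Trying to build the combination directly from a $K$-accessibility path is awkward: each simultaneous transition carries ``bystander'' particles whose motion then has to be cancelled by further transitions, and the resulting recursion has no obvious termination (even in the small Example \ref{eg3} one is forced to invoke transitions that do not appear in the path from $u$ to $w$). Instead I would apply Farkas' lemma and let the structure of $K$-accessibility do the work in the dual.

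By Farkas' lemma, $e_w - e_u \in C$ if and only if every $c \in \mathbb{R}^d$ satisfying $\sum_l c_{j_l} \geq \sum_l c_{i_l}$ for all $(\mathbf{i}, \mathbf{j}) \in \mathcal{J}_+$ also satisfies $c_w \geq c_u$. Swapping the roles of $u$ and $w$, it then suffices to show that any such $c$ is constant on $\mathcal{X}$; the specific form \eqref{repr} with $M - 1 \leq |\mathcal{J}_+|$ follows from Carath\'eodory. To establish constancy, suppose for contradiction that $c$ is nonconstant. Let $\mu \doteq \max_{s\in \mathcal{X}} c_s$ and $W \doteq \{s \in \mathcal{X} : c_s = \mu\}$, so $\emptyset \neq W \subsetneq \mathcal{X}$. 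Pick $u \in W$, $w \in \mathcal{X} \setminus W$, and let $u = u_1, \ldots, u_N = w$ be the $K$-accessibility sequence promised by Assumption \ref{ass-kerg}. Set $m_\ast \doteq \min\{m : u_{m+1} \notin W\}$, so $u_1, \ldots, u_{m_\ast} \in W$. Property (ii) of Definition \ref{k-acc} forces every entry of $\mathbf{i}_{m_\ast}$ to lie in $\{u_1, \ldots, u_{m_\ast}\} \subseteq W$, giving $\sum_l c_{i_{m_\ast, l}} = k_{m_\ast}\mu$. Property (i) gives $j_{m_\ast, l_{m_\ast}'} = u_{m_\ast+1} \notin W$, so $c_{j_{m_\ast, l_{m_\ast}'}} < \mu$ and hence $\sum_l c_{j_{m_\ast, l}} < k_{m_\ast}\mu$. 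This contradicts the hypothesis on $c$ applied to $(\mathbf{i}_{m_\ast}, \mathbf{j}_{m_\ast}) \in \mathcal{J}_+^{k_{m_\ast}}$ (Definition \ref{k-acc}(iii)), so $c$ is constant, as required.

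The only real obstacle is spotting that duality is the right tool; once Farkas is invoked, the rest is almost forced. The crucial leverage comes from property (ii) of $K$-accessibility: without the constraint that every initial-particle state at step $m_\ast$ is drawn from the already-visited set $\{u_1, \ldots, u_{m_\ast}\}$, an entry of $\mathbf{i}_{m_\ast}$ could lie outside $W$, making $\sum_l c_{i_{m_\ast, l}}$ strictly less than $k_{m_\ast}\mu$ and breaking the contradiction. This is precisely why the $K$-ergodicity hypothesis is phrased with that particular structural restriction rather than the more intuitive---and strictly stronger---Assumption \ref{g2}.
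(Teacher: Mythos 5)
Your proof is correct, and the argument is genuinely different from the paper's. The paper's proof of Lemma \ref{repre} is constructive: for $K\geq 2$ it first shows, via a recursively defined sequence of positive weights $b_m^{(u)}$, that
\[
\sum_{m} b_m^{(u)}\bigl(e_{\mathbf{j}_m}-e_{\mathbf{i}_m}\bigr)
= \sum_{i\neq u} c_i^{(u)} e_i - \Bigl(\sum_{i\neq u} c_i^{(u)}\Bigr) e_u
\]
with $c_i^{(u)}\geq 0$ and $c_w^{(u)}>0$, and then eliminates the unwanted $e_i$ terms by solving a linear system $[I-C]\theta=\tilde e_u$, where the needed nonnegativity of $\theta$ rests on an M-matrix inverse-positivity result (Lemma \ref{linsys}). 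Your Farkas duality argument sidesteps the entire recursive construction and the linear-algebra lemma, reducing the claim to the observation that any dual vector $c$ feasible against all of $\mathcal{J}_+$ must be constant, because applying the $K$-accessibility chain from the level set $W=\{s:c_s=\max c\}$ to its complement produces a transition $(\mathbf{i}_{m_\ast},\mathbf{j}_{m_\ast})\in\mathcal{J}_+^{k_{m_\ast}}$ that strictly decreases $\langle c,\cdot\rangle$---property (ii) of Definition \ref{k-acc} guarantees all entries of $\mathbf{i}_{m_\ast}$ lie in $W$, and property (i) puts some entry of $\mathbf{j}_{m_\ast}$ outside it. The trade-off is that your proof is shorter and conceptually cleaner (and correctly isolates property (ii) as the load-bearing hypothesis), while the paper's approach produces the coefficients $a_m$ explicitly; for the way the lemma is subsequently used in Lemma \ref{commint}, only existence is needed, so either proof serves equally well. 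One small remark: the parenthetical ``swapping the roles of $u$ and $w$'' is unnecessary, since proving constancy of $c$ directly gives $c_w\geq c_u$ with no need to consider the reversed pair; it does no harm, but it slightly obscures an otherwise tight argument.
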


We first verify this assertion for Example \ref{eg3}. Without loss of
generality, we set $w=4$ and $u=1$. As before, we take $\left( \mathbf{i}_{1}%
\mathbf{,j}_{1}\right) =\left( 1,2\right) $, and $\left( \mathbf{i}_{2}%
\mathbf{,j}_{2}\right) =\left( \left( 1,2\right) ,\left( 3,4\right) \right) $%
. Then $\left( e_{\mathbf{j}_{1}}-e_{\mathbf{i}_{1}}\right) +\left( e_{%
\mathbf{j}_{2}}-e_{\mathbf{i}_{2}}\right) =e_{3}+e_{4}-2e_{1}$. To cancel
the term $e_{3}$, we further take $\left( \mathbf{i}_{3}\mathbf{,j}%
_{3}\right) =\left( 3,4\right) $. Then $\sum_{m=1}^{3}\left( e_{\mathbf{j}%
_{m}}-e_{\mathbf{i}_{m}}\right) =2\left( e_{4}-e_{1}\right) $, or $%
e_{4}-e_{1}=\sum_{m=1}^{3}\frac{1}{2}\left( e_{\mathbf{j}_{m}}-e_{\mathbf{i}%
_{m}}\right) $.

\begin{proof}[Proof of Lemma \protect\ref{repre}]
Fix $u, w \in \mathcal{X},$ $u\neq w$. Due to the assumed $K$-ergodicity, $w 
$ is $K$-accessible from $u$ and hence, there exist $M = M_{u,v}\in \{2,
\ldots, d\}$ and a sequence of distinct states $u=u_{1},...,u_{M}=w$, $%
k_{m}\in \left\{ 1,...,K\right\} $, $\left( \mathbf{i}_{m}\mathbf{,j}%
_{m}\right) \in \mathcal{J}^{k_{m}}$ for $m=1,...,M-1$, that satisfy the
properties in Definition \ref{k-acc}. If $K=1$, then $u=u_1$, $\mathbf{i}_m
= u_m$ and $\mathfrak{M}_{u_{m}u_{m+1}}^{1}>0$ for $m = 1, \ldots, M-1$, and
also $\mathbf{j}_{M-1} = u_M = w$. Thus, we can simply take $%
e_{w}-e_{u}=\sum_{m=1}^{M-1}\left( e_{u_{m+1}}-e_{u_{m}}\right)$.

Now, suppose $K\geq 2$. The proof of \eqref{repr} is more subtle in this
case, and consists of two main steps. In the first step, we show that for
any sequence $\{\left( \mathbf{i}_{m}\mathbf{,j}_{m}\right) \}_{m=1}^{M-1}$
as above, there exist nonnegative (and in fact strictly positive)
coefficients $\{b_{m}^{\left( u\right) }\}_{m=1}^{M-1}$ such that 
\begin{equation}
\sum_{m=1}^{M-1}b_{m}^{\left( u\right) }\left( e_{\mathbf{j}_{m}}-e_{\mathbf{%
i}_{m}}\right) =\sum_{i=1,i\neq u}^{d}c_{i}^{(u)}e_{i}-\left(
\sum_{i=1,i\neq u}^{d}c_{i}^{(u)}\right) e_{u},  \label{sum1}
\end{equation}%
where the constants $c_{i}^{(u)}\doteq \langle \sum_{m=1}^{M-1}b_{m}^{\left(
u\right) }\left( e_{\mathbf{j}_{m}}-e_{\mathbf{i}_{m}}\right) ,e_{i}\rangle,$
$i \in \{1, \ldots, d\}\setminus \{u\}$ satisfy 
\begin{equation}
c_{i}^{(u)}\geq 0,i\in \{1,\ldots ,d\}\setminus \{u, w\},\quad \mbox{ and }%
\quad c_{w}^{(u)}>0.  \label{ciu}
\end{equation}%
The second step shows that \eqref{repr} can be deduced from the fact that a
representation of the form \eqref{sum1}-\eqref{ciu} holds for all $u\neq w$.

We now turn to the proof of \eqref{sum1}-\eqref{ciu}. If $M=2$, it is
directly implied by Definition \ref{k-acc} and $\langle e_{\mathbf{i}%
_{1}},e_{u}\rangle \geq 1$ that \eqref{sum1}-\eqref{ciu} holds with $%
b_{1}^{(u)}=1$. Next, suppose $M>2$. The construction is now a bit more
involved. First, assume $\{b_{m}^{\left( u\right) }\}_{m=1}^{M-1}$ is any
strictly positive sequence. Then, since Definition \ref{k-acc} implies $%
i_{m,l}\in \{u_{1},\ldots ,u_{m}\}$ for $l=1,\ldots ,k_{m}$, $m=1,\ldots
,M-1 $ and $\langle e_{\mathbf{j}_{M-1}},e_{u_{M}}\rangle \geq 1$, we have 
\begin{equation}
c_{i}^{(u)}=\left\langle \sum_{m=1}^{M-1}b_{m}^{\left( u\right) }\left( e_{%
\mathbf{j}_{m}}-e_{\mathbf{i}_{m}}\right) ,e_{i}\right\rangle =\left\langle
\sum_{m=1}^{M-1}b_{m}^{\left( u\right) }e_{\mathbf{j}_{m}},e_{i}\right%
\rangle \geq 0,\quad i\in \{1,\ldots ,d\}\setminus \left\{ u_{m}\right\}
_{m=1}^{M-1},  \label{eqn:b_relation0}
\end{equation}%
with strict inequality for $i=u_{M}=w$, implying that $c_{w}^{(u)}>0$. Next,
we must argue that we can pick strictly positive $b_{m}^{(u)},m=1,\ldots
.M-1 $, such that the corresponding $\{c_{i}^{(u)}\}$ also satisfy $%
c_{i}^{(u)}\geq 0$ for $i=u_{2},u_{3},\ldots ,u_{M-1}$. For $m_{0}\in
\{2,\ldots ,M-1\}$, recall from Definition \ref{k-acc}(ii) that $\langle e_{%
\mathbf{i}_{m}},e_{i}\rangle \leq K$ for $i\in \{1,\ldots ,d\}$ and $\langle
e_{\mathbf{i}_{m}},e_{u_{m_{0}}}\rangle =0$ if $m\in \{1,\ldots ,m_{0}-1\}$,
and hence 
\begin{eqnarray}
c_{u_{m_{0}}}^{(u)}\doteq \left\langle \sum_{m=1}^{M-1}b_{m}^{\left(
u\right) }\left( e_{\mathbf{j}_{m}}-e_{\mathbf{i}_{m}}\right)
,e_{u_{m_{0}}}\right\rangle &\geq &b_{m_{0}-1}^{\left( u\right)
}\left\langle e_{\mathbf{j}_{m_{0}}-1},e_{u_{m_{0}}}\right\rangle
-\sum_{m=m_{0}}^{M-1}b_{m}^{\left( u\right) }\left\langle e_{\mathbf{i}%
_{m}},e_{u_{m_{0}}}\right\rangle  \notag  \label{eqn:b_relation} \\
&\geq &\kappa _{m_{0}-1}k_{m_{0}-1}b_{m_{0}-1}^{\left( u\right)
}-K\sum_{m=m_{0}}^{M-1}b_{m}^{(u)},
\end{eqnarray}%
where $\kappa _{m}\doteq \left\langle e_{\mathbf{j}_{m}},e_{u_{m+1}}\right%
\rangle /k_{m}$ for $m=1,\ldots ,M-2$. Defining $\{b_{m}^{\left( u\right)
}\}_{m=1}^{M-1}$ recursively according to 
\begin{eqnarray}
b_{M-1}^{\left( u\right) } &=&1  \notag  \label{bm} \\
\kappa _{M-2}b_{M-2}^{\left( u\right) } &=&Kb_{M-1}^{\left( u\right) } 
\notag \\
&\vdots & \\
\kappa _{1}b_{1}^{\left( u\right) } &=&K\left( b_{M-1}^{\left( u\right)
}+b_{M-2}^{\left( u\right) }+\cdots +b_{2}^{\left( u\right) }\right) , 
\notag
\end{eqnarray}%
it is clear that $\{b_{m}^{\left( u\right) }\}_{m=1}^{M-1}$ are strictly
positive, and $c_{u_{m_{0}}}\geq 0$ for all $m_{0}\in \{2,\ldots ,M-1\}$.
Together with \eqref{eqn:b_relation0} this shows that \eqref{sum1} and %
\eqref{ciu} hold for this choice of $\{b_{m}^{\left( u\right)
}\}_{m=1}^{M-1} $.

We now proceed to the second step of the proof. To obtain (\ref{repr}) from %
\eqref{sum1} we will eliminate the terms involving $e_{i}$, $i\neq u,w$, on
the right hand side of (\ref{sum1}). Now, we have assumed for each $s\neq
u,w $, that $w$ is $K$-accessible from $s$. Hence, applying the same
argument as in Step 1, but with $u$ replaced by $s$, we obtain the existence
of $M_{s}<\infty $, a sequence of jumps $\{(\mathbf{i}_{m}^{\left( s\right) }%
\mathbf{,j}_{m}^{\left( s\right) })\}_{m=1}^{M_{s}-1}$ and strictly positive
coefficients $\{b_{m}^{\left( s\right) }\}_{m=1}^{M_{s}-1}$ and $%
\{c_{i}^{\left( s\right) }\}_{i=1,i\neq u}^{d}$ with $c_{w}^{(s)}>0$ and 
\begin{equation}
\sum_{m=1}^{M_{s}-1}b_{m}^{\left( s\right) }\left( e_{\mathbf{j}_{m}^{\left(
s\right) }}-e_{\mathbf{i}_{m}^{\left( s\right) }}\right) =\sum_{i=1,i\neq
s}^{d}c_{i}^{\left( s\right) }e_{i}-\left( \sum_{i=1,i\neq
s}^{d}c_{i}^{\left( s\right) }\right) e_{s}.  \label{ssum}
\end{equation}%
Further, by a simple rescaling, it is clear that one can assume without loss
of generality that $\sum_{i=1,i\neq s}^{d}c_{i}^{(s)}=1$, which in turn
implies that 
\begin{equation}
\sum_{i=1,i\neq w,s}^{d}c_{i}^{(s)}<1,\qquad \mbox{ for every }s=1,\ldots
,d,s\neq w.  \label{cideficit}
\end{equation}%
It suffices to find nonnegative $\left\{ \theta _{s}\right\} _{s=1,s\neq
w}^{d}$ such that 
\begin{equation}
\sum_{s=1,s\neq w}^{d}\left( \theta _{s}\left( \sum_{i=1,i\neq
s}^{d}c_{i}^{\left( s\right) }e_{i}-e_{s}\right) \right) =e_{w}-e_{u},
\label{solv}
\end{equation}%
for then one can substitute (\ref{ssum}) into (\ref{solv}) to obtain (\ref%
{repr}) with $M\leq \sum_{s=1,s\neq w}^{d}(M_{s}-1)$ and coefficients $a_{m}$
of the form $\theta _{s}b_{m}^{(s)}$. For notational simplicity, below we
assume without loss of generality that $w=d$. Then, using \eqref{ssum}, it
is clear that both sides of \eqref{solv} are perpendicular to $%
\sum_{i=1}^{d}e_{i}$. Thus $\left\{ \theta _{s}\right\} _{s=1}^{d-1}$
satisfies \eqref{solv} if and only if the $i$th components of both sides of (%
\ref{solv}) are equal for $i\in \{1,\ldots ,d-1\}$, or in other words, if
the following system of linear equations is satisfied: 
\begin{equation*}
\left\{ 
\begin{array}{rcl}
\theta _{1} & = & c_{1}^{\left( 2\right) }\theta _{2}+\cdots +c_{1}^{\left(
d-1\right) }\theta _{d-1} \\ 
\theta _{2} & = & c_{2}^{\left( 1\right) }\theta _{1}+\cdots +c_{2}^{\left(
d-1\right) }\theta _{d-1} \\ 
\vdots &  &  \\ 
\theta _{u} & = & c_{u}^{\left( 1\right) }\theta _{1}+\cdots +c_{u}^{\left(
d\right) }\theta _{d-1}+1 \\ 
\vdots &  &  \\ 
\theta _{d-1} & = & c_{d-1}^{\left( 1\right) }\theta _{1}+\cdots
+c_{d-1}^{\left( d-2\right) }\theta _{d-2}.%
\end{array}%
\right.
\end{equation*}%
This system can be expressed more concisely as $[I_{d-1}-C]\theta =\tilde{e}%
_{u}$, where $I_{d-1}$ is the $(d-1)\times (d-1)$-dimensional identity
matrix, $\tilde{e}_{u}$ is the $(d-1)$-dimensional vector that has a one in
the $u$th component and $0$ elsewhere, and $C$ is the $(d-1)\times (d-1)$
matrix given by 
\begin{equation*}
C\doteq \left( 
\begin{array}{cccc}
0 & c_{1}^{\left( 2\right) } & ... & c_{1}^{\left( d\right) } \\ 
c_{2}^{(1)} & \ddots &  & \vdots \\ 
\vdots &  & \ddots & c_{d-2}^{(d-1)} \\ 
c_{d-1}^{\left( 1\right) } & ... & c_{d-1}^{\left( d-2\right) } & 0%
\end{array}%
\right) .
\end{equation*}%
Note that $C$ is a non-negative matrix with row sums strictly less than $1$
due to \eqref{cideficit}. Thus, applying Lemma \ref{linsys} below with $%
N=d-1 $ and $y=\tilde{e}_{u}$, we conclude that the system of linear
equations has a unique solution, which is also nonnegative. This completes
the proof of the lemma.
\end{proof}

\begin{lemma}
\label{linsys} Suppose that $A=I_{N}-C$, where $I_{N}$ is an $N\times N$
identity matrix and $C=\left( c_{ij}\right) _{i,j=1}^{N}$ for some $N\in 
\mathbb{N}$, such that $c_{ii}=0$, $c_{ij}\geq 0$ and $%
\sum_{j=1}^{N}c_{ij}<1 $. Also, let $y\in \mathbb{[}0,\mathbb{\infty )}^{N}$%
. Then the system of linear equations $Ax=y$ has a unique nonnegative
solution $\left\{ x_{i}\right\} _{i=1}^{N}$.
\end{lemma}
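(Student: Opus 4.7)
My plan is to recognize this as a standard Neumann series argument for substochastic matrices. The hypothesis that $c_{ij} \geq 0$ and $\sum_j c_{ij} < 1$ for every row $i$ says precisely that $C$ is nonnegative with induced maximum-row-sum norm $\|C\|_\infty = \max_i \sum_j c_{ij} < 1$. Since the spectral radius is bounded by any induced operator norm, I get $\rho(C) < 1$.

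From $\rho(C) < 1$ two facts follow. First, $1$ is not an eigenvalue of $C$, so $A = I_N - C$ is invertible, and the system $Ax = y$ has the unique solution $x = A^{-1} y$. Second, the Neumann series $\sum_{k=0}^{\infty} C^k$ converges (absolutely, entrywise) and equals $(I_N - C)^{-1}$. Because $C$ has nonnegative entries, so does every power $C^k$, and therefore $(I_N - C)^{-1}$ is an entrywise nonnegative matrix. Multiplying an entrywise nonnegative matrix by the nonnegative vector $y \in [0,\infty)^N$ yields a nonnegative vector, so $x = A^{-1} y \geq 0$ componentwise.

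Thus the two conclusions of the lemma are obtained: existence and uniqueness from invertibility, and nonnegativity from the Neumann series representation combined with $y \geq 0$. There is no genuine obstacle here; the only step requiring care is the justification that $\|C\|_\infty < 1$ implies $\rho(C) < 1$, which is a one-line consequence of Gelfand's formula (or, more elementarily, the fact that $\|C^k\|_\infty \leq \|C\|_\infty^k \to 0$, which simultaneously gives convergence of the Neumann series and allows one to avoid invoking spectral radius entirely). I would present the proof via this elementary route to keep it self-contained and short.
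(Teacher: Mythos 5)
Your proof is correct. Both you and the paper first observe that the strict row-sum bound gives $\|C\|_\infty < 1$ and hence $\rho(C) < 1$, which yields invertibility of $A = I_N - C$. Where you diverge is in establishing that $A^{-1}$ is entrywise nonnegative: the paper cites a general inverse-positivity theorem for M-matrices (Theorem 6.3.8 of Berman--Plemmons), whereas you derive it directly from the Neumann series $A^{-1} = \sum_{k\geq 0} C^k$, noting that each power $C^k$ is nonnegative. Your route is self-contained and elementary --- in effect you re-prove, in the special case at hand, the result the paper appeals to by reference. You also correctly observe that one can bypass the spectral-radius formalism entirely by using $\|C^k\|_\infty \leq \|C\|_\infty^k \to 0$ to get convergence of the Neumann series, which makes the argument even more elementary. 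The paper's citation is terser, but yours is preferable if one wants the lemma to be verifiable without consulting the M-matrix literature.
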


\begin{proof}
The spectral radius of $C$ is less than $1$ since its matrix norm is less
than $1$. Therefore $\det A>0$, and $A^{-1}$ exists. The fact that $A^{-1}$
is a positive matrix follows from a general result in inverse positivity 
\cite[Theorem 6.3.8]{BP}. The nonnegativity of $x$ then follows from the
nonnegativity of $y$.
\end{proof}

\begin{lemma}
\label{commint} Suppose Assumption \ref{ue} and Assumption \ref{ass-kerg}
hold. Then for any $a>0$, there exist $c>0,C^{\prime }<\infty $ and $\bar{F}%
\in \mathbb{N}$ such that for any $x,y\in \tilde{\mathcal{S}}^{a}$, there
exists a communicating path $\phi $ from $x$ to $y$ with constants $c$, $d$
and $\bar{F}$ such that $\phi $ lies in $\tilde{\mathcal{S}}^{a/2}$ and
satisfies $\mathrm{Len}(\phi )\leq C^{\prime }||x-y||$.
\end{lemma}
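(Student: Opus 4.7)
The plan is to use Lemma \ref{repre} to write $y-x$ as a nonnegative linear combination of allowed jump directions, and then realize this combination by a piecewise-linear path that interleaves the jumps across many short cycles so as to remain close to the line segment $[x,y]$, which lies in $\tilde{\mathcal{S}}^a$ by convexity of $\tilde{\mathcal{S}}^a$. The case $x=y$ is trivial (take $\phi$ constant), so assume $x\neq y$.

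\emph{Decomposition.} Let $P=\{i:y_i>x_i\}$ and $Q=\{j:x_j>y_j\}$. An elementary transportation argument produces $\beta_{ji}\geq 0$ with
\[
y-x=\sum_{j\in Q,\,i\in P}\beta_{ji}(e_i-e_j),\qquad \sum_{j,i}\beta_{ji}=\tfrac12\|y-x\|_1\leq \tfrac{\sqrt d}{2}\|y-x\|.
\]
For each such pair $(j,i)$, Lemma \ref{repre} yields $e_i-e_j=\sum_m a_m^{(ji)}(e_{\mathbf{j}_m^{(ji)}}-e_{\mathbf{i}_m^{(ji)}})$ with $a_m^{(ji)}\geq 0$ and $(\mathbf{i}_m^{(ji)},\mathbf{j}_m^{(ji)})\in\mathcal{J}_+$. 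Substituting, we obtain a representation
\[
y-x=\sum_{\ell=1}^{F_0}\gamma_\ell\,v_\ell,\qquad v_\ell=e_{\mathbf{j}_\ell}-e_{\mathbf{i}_\ell},\ (\mathbf{i}_\ell,\mathbf{j}_\ell)\in\mathcal{J}_+^{k_\ell},
\]
where $F_0$ is bounded by a constant $F_0^\ast=F_0^\ast(d,K)<\infty$ and $T\doteq\sum_\ell\gamma_\ell\leq C_1\|y-x\|$ for a constant $C_1=C_1(d,K)$ depending only on $d$, $K$, and the (finitely many) coefficients produced by Lemma \ref{repre}.

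\emph{Construction.} Fix an integer $\mathsf{N}$ to be chosen, and set $V\doteq\max_{v\in\mathcal{V}}\|v\|<\infty$. Partition $[0,T]$ into $\mathsf{N}$ successive blocks, each further subdivided into $F_0$ subintervals of lengths $\gamma_1/\mathsf{N},\ldots,\gamma_{F_0}/\mathsf{N}$ in order. On the $\ell$-th subinterval of any block set $\dot\phi(s)=v_\ell$; together with $\phi(0)=x$ this defines a piecewise-linear path $\phi$ on $[0,T]$ with derivatives in $\mathcal{V}$. At the end of the $n$-th block $\phi$ sits at $z_n\doteq x+(n/\mathsf{N})(y-x)\in[x,y]\subset\tilde{\mathcal{S}}^a$, so that $\phi(T)=y$; within the $n$-th block the deviation satisfies $\|\phi(s)-z_{n-1}\|\leq V\,T/\mathsf{N}$. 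Using $T\leq C_1\sqrt 2$ (the diameter of $\mathcal{S}$), the choice
\[
\mathsf{N}\doteq\bigl\lceil 2VC_1\sqrt 2\,/\,a\bigr\rceil
\]
forces $V\,T/\mathsf{N}\leq a/2$ uniformly in $x,y$, so that $\phi(s)\in\tilde{\mathcal{S}}^{a/2}$ for every $s\in[0,T]$. A time reparametrization (Remark \ref{flex}) allows us to assume $\phi$ is defined on $[0,t]$ for some $t\in(0,1]$.

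\emph{Verification.} By construction $\mathrm{Len}(\phi)=\sum_\ell\gamma_\ell\|v_\ell\|\leq V\,T\leq VC_1\|y-x\|$, giving $C'\doteq VC_1$. Since $\phi(s)\in\tilde{\mathcal{S}}^{a/2}$, each coordinate satisfies $\phi_i(s)\geq a/2$, and \eqref{alpha1}, \eqref{lambda} together with \eqref{def-c0} yield
\[
\lambda_{v_\ell}(\phi(s))\geq \alpha_{\mathbf{i}_\ell\mathbf{j}_\ell}^{k_\ell}(\phi(s))\geq \frac{c_0}{K!}\Bigl(\frac{a}{2}\Bigr)^K\doteq c>0,
\]
uniformly in $x,y,s$. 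Since $\min_i y_i\leq 1$ for $y\in\mathcal{S}$, this gives $\lambda_{v_\ell}(\phi(s))\geq c\,(\min_i y_i)^d$, so $\phi$ is a communicating path from $x$ to $y$ with constants $c$, $p=d$ and $\bar F\doteq\mathsf{N}\,F_0^\ast$. The only delicate point in the argument is the choice of $\mathsf{N}$, which is dictated by the requirement that $\phi$ remain in $\tilde{\mathcal{S}}^{a/2}$; all other estimates are immediate once the representation of Lemma \ref{repre} is in hand.
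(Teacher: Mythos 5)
Your proof is correct and uses essentially the same strategy as the paper: both proofs invoke Lemma \ref{repre} to express the required displacement as a nonnegative combination of admissible jump directions $e_{\mathbf{j}}-e_{\mathbf{i}}$ with $(\mathbf{i},\mathbf{j})\in\mathcal{J}_+$, and both then interleave those directions over many short blocks so that the resulting piecewise-linear path stays within $a/2$ of a reference curve in $\tilde{\mathcal{S}}^a$, after which the jump-rate lower bound follows from \eqref{alpha1}, \eqref{lambda} and \eqref{def-c0}. The only (cosmetic) difference is the choice of reference curve: the paper first constructs a polygonal path $\phi_0$ from $x$ to $y$ inside $\tilde{\mathcal{S}}^a$ with directions in $\{e_j-e_i\}$ and refines it segment by segment, whereas you use the line segment $[x,y]$ directly (valid by convexity of $\tilde{\mathcal{S}}^a$) and a transportation decomposition of $y-x$; this streamlines the bookkeeping a bit but does not change the underlying argument.
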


\begin{proof}
Fix $a>0$. Then we observe the following elementary fact: there exists $%
C_{1}<\infty $, such that for every $x,y\in {\mathcal{S}}$, there exists a
continuous piecewise linear path $\phi _{0}$ from $x$ to $y$ that lies in ${%
\mathcal{S}}$, uses only velocities in the directions $\left\{
e_{j}-e_{i}:i,j\in \mathcal{X}\right\} $, and for which $\mathrm{Len}(\phi
_{0})\leq C_{1}\left\Vert x-y\right\Vert $. Since $\tilde{\mathcal{S}}^{a}$
is similar to ${\mathcal{S}}$, a rescaling implies that for every $x,y\in 
\tilde{\mathcal{S}}^{a}$, there is a continuous piecewise linear path $\phi
_{0}$ from $x$ to $y$ that lies in $\tilde{\mathcal{S}}^{a}$ and only uses
velocities in the directions $\left\{ e_{j}-e_{i}:i,j\in \mathcal{X}\right\} 
$, and for which $\mathrm{Len}(\phi _{0})\leq C_{1}\left\Vert x-y\right\Vert 
$. Given such a path $\phi _{0}$ with $\phi _{0}\left( 0\right) =x$, let $%
M<\infty $, and $0=t_{1}<\cdots <t_{M}$ and $\left\{ u_{m}\right\}
_{m=1}^{M-1},\left\{ w_{m}\right\} _{m=1}^{M-1}\in \mathcal{X}$ be such that 
\begin{equation*}
\dot{\phi}_{0}\left( t\right) =e_{w_{m}}-e_{u_{m}},\quad t\in
(t_{m},t_{m+1}),m=1,\ldots ,M-1,
\end{equation*}%
where a uniform bound on $M$ can be assumed. We now use Lemma \ref{repre} to
replicate these velocities using jumps with positive rates. For each $%
m=1,\ldots ,M-1$, there exist $M_{m}\in \mathbb{N}$, $\{(\mathbf{i}%
_{k}^{\left( m\right) },\mathbf{j}_{k}^{\left( m\right) })\}_{m=1}^{M_{m}-1}$
and $\{a_{k}^{\left( m\right) }\}_{m=1}^{M_{m}-1}$ such that $%
e_{w_{m}}-e_{u_{m}}=\sum_{r=1}^{M_{m}-1}a_{r}^{\left( m\right) }(e_{\mathbf{j%
}_{r}^{\left( m\right) }}-e_{\mathbf{i}_{r}^{\left( m\right) }})$. With the
appropriate partition of $(t_{m},t_{m+1})$, we can construct a trajectory $%
\phi $ that uses only these velocities and satisfies $\phi (t_{m})=\phi
_{0}\left( t_{m}\right) $ and $\phi (t_{m+1})=\phi _{0}\left( t_{m+1}\right) 
$. If for any $s\in (t_{m},t_{m+1})$ we have $\left\Vert \phi (s)-\phi
_{0}\left( s\right) \right\Vert >a/2$, then we can partition $%
(t_{m},t_{m+1}) $ into an integral number $K$ of smaller segments on which
we replicate the velocity $e_{w_{m}}-e_{u_{m}}$, and guarantee $\phi
(t_{m}+k[t_{m+1}-t_{m}]/K)=\phi _{0}\left( t_{m}+k[t_{m+1}-t_{m}]/K\right) $
for $k=1,\ldots ,K$. For large enough $K$ this implies $\left\Vert \phi
(s)-\phi _{0}\left( s\right) \right\Vert \leq a/2$ for all $t\in \lbrack
0,T_{M}]$. Since ${\mathcal{X}}$ is finite there is a maximum velocity used
in this process, and hence we can assume a uniform bound on $K$ that depends
only on $a$, and also the existence of $C<\infty $ such that $\mathrm{Len}%
(\phi )\leq C\mathrm{Len}(\phi _{0})$, and so can take $C^{\prime }=CC_{1}$.

The path so constructed satisfies $\phi \in AC\left( \left[ 0,t_{M}\right] :%
\mathcal{S}^{a/2}\right) $. By (\ref{alpha1}), (\ref{lambda}) and the fact
that $\mathfrak{M}_{\mathbf{i}_{m}\mathbf{j}_{m}}^{k_{m}}>0$, with $c_{0}>0$
as in (\ref{def-c0}) we have that for all $s\in \left[ 0,t_{M}\right] $, $%
\lambda _{v}\left( \phi \left( s\right) \right) \geq \frac{1}{K!}\left( 
\frac{a}{2}\right) ^{d}c_{0}$. Since $\min_{i=1,\ldots ,d}y_{i}\geq a$, $%
\phi $ is a communicating path on $[0,t_{M}]$ from $x$ to $y$ with constants 
$c\doteq c_{0}/2^{d}K!$, $p=d$, and uniformly bounded $F$.
\end{proof}

We now complete the proof of Proposition \ref{ver1}.

\begin{proof}[Proof of Proposition \protect\ref{ver1}.]
When $K=1$, this follows from the stronger result proved in Lemma \ref{move}%
. Hence, suppose $K \geq 2$. Given $x\in {\mathcal{S}}$ and $y\in \mathrm{int%
}({\mathcal{S}})$, let $0<a<\min (1/((K+1)^{d-1}d),\min_{i=1,\ldots
,d}y_{i}) $. By Lemma \ref{in}, there exist $z\in \tilde{{\mathcal{S}}}^{a}$%
, a communicating path from $x$ to $z$ with constants independent of $x$ and 
$z$. We also have $y\in \tilde{{\mathcal{S}}}^{a}$, and so by Lemma \ref%
{commint} there exists a communicating path from $z$ to $y$, with constants
independent of $z$ and $y$. It is straightforward to see that the
concatenation of these two paths is a communicating path from $x$ to $y$
with constants independent of $x$ and $y$. Moreover, it follows from the
properties stated in Lemmas \ref{in} and \ref{commint} that there exists $%
C^{\prime }< \infty$ such that the family of communicating paths thus
constructed satisfies \eqref{length}. This proves that Property \ref%
{prop-communicate} is satisfied.
\end{proof}

\subsection{Estimates on the Jump Rates}

\label{subs-ratioest}

In this section we derive certain estimates on the jump rates that are
satisfied under our assumptions on the transition rates. These estimates are
used in the subsequent proofs. We recall that ${\mathcal{N}}_{v}\doteq
\{i:v_{i}<0\}$.

\begin{lemma}
\label{lem-estimates} Suppose the family $\{\Gamma_{\mathbf{ij}}^k (\cdot), (%
\mathbf{i,j}) \in {\mathcal{J}}^k, k = 1, \ldots, K\}$ satisfies Assumption %
\ref{intsys} and Assumption \ref{ue}, and let $\{\lambda_v, v \in {\mathcal{V%
}}\}$ be the associated jump rates. Then the following assertions hold.

\begin{enumerate}
\item There exists $\widehat{C} < \infty$ such that for every $v \in {%
\mathcal{V}}$, 
\begin{equation*}
\lambda_v (x) \leq \widehat{C} \left( \prod_{i\in {\mathcal{N}}_v} x_i
\right), \quad x \in {\mathcal{S}}.
\end{equation*}

\item There exists a continuous function $\bar{C}: [0,\infty) \mapsto
[0,\infty)$ such that $\bar{C} (r) \rightarrow 1$ as $r \rightarrow 0$ such
that for every $v \in {\mathcal{V}}$ and $x, y \in {\mathcal{S}}$, 
\begin{equation*}
\frac{\lambda_v(x)}{\lambda_v(y)} \leq \bar{C}(||x-y||) \prod_{i \in {%
\mathcal{X}}: y_i < x_i} \left( \frac{x_i}{y_i} \right)^K.
\end{equation*}

\item For every $v \in {\mathcal{V}}$, either $\lambda_v \equiv 0$ or 
\begin{equation*}
\lambda_v (x) \geq \frac{c_0}{K!} \left( \prod_{l=1}^k x_i \right) \geq 
\frac{c_0}{K!} \prod_{i=1}^d x_i^K, \quad x \in {\mathcal{S}},
\end{equation*}
and hence, for every $a > 0$, $\inf_{x \in \tilde{{\mathcal{S}}}^a}
\lambda_v(x) > 0$ and $\inf_{x \in {\mathcal{S}}^a} \lambda_v (x) > 0$,
where $\tilde{{\mathcal{S}}}^a$ and ${\mathcal{S}}^a$ are defined in %
\eqref{tS} and \eqref{S}, respectively.

\item If, in addition, Assumption \ref{ass-simjumps} holds, then there exist 
$\bar{c}>0$ such that for every $v\in {\mathcal{V}}$, either $\lambda
_{v}(\cdot )\equiv 0$ or there exist $r_{i}\geq 1$, $i\in {\mathcal{N}}_{v}$%
, with $\sum_{i\in {\mathcal{N}}_{v}}r_{i}\leq K$ such that for every $x\in {%
\mathcal{S}}$ and $y\in $\emph{int}$({\mathcal{S}})$, 
\begin{equation}
\frac{\lambda _{v}(x)}{\lambda _{v}(y)}\geq \bar{c}\prod_{i\in {\mathcal{N}}%
_{v}}\left( \frac{x_{i}}{y_{i}}\right) ^{r_{i}}.  \label{est-lbound}
\end{equation}
\end{enumerate}
\end{lemma}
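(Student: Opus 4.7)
My plan is to work directly from the decomposition
\begin{equation*}
\lambda _{v}(x)=\sum_{k=1}^{K}\sum_{\substack{(\mathbf{i},\mathbf{j})\in \mathcal{J}^{k} \\ e_{\mathbf{j}}-e_{\mathbf{i}}=v}}\frac{1}{k!}\Big(\prod_{l=1}^{k}x_{i_{l}}\Big)\Gamma _{\mathbf{ij}}^{k}(x),
\end{equation*}
combined with the structural identity \eqref{nvinc}: every summand satisfies $m_{j}(\mathbf{i})\doteq |\{l:i_{l}=j\}|\geq |v_{j}|\geq 1$ for $j\in \mathcal{N}_{v}$, and (by Assumption \ref{ue}) the non-vanishing summands have $\Gamma _{\mathbf{ij}}^{k}\geq c_{0}>0$. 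For part (1), I would rewrite $\prod_{l}x_{i_{l}}=\prod_{i}x_{i}^{m_{i}(\mathbf{i})}$ and use $x_{i}\leq 1$ to discard the factors with $i\notin \mathcal{N}_{v}$, leaving each summand bounded by $\prod_{i\in \mathcal{N}_{v}}x_{i}$. Combined with the uniform upper bound $R_{0}$ from \eqref{def-R0} and the fact that the number of summands depends only on $K,d$, this yields the claim.

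For part (2), I would bound the ratio term by term; since the bound is independent of the summand, it survives the sum. For each term, Assumption \ref{ue} gives $\Gamma _{\mathbf{ij}}^{k}(y)\geq c_{0}$, so the Lipschitz property of $\Gamma _{\mathbf{ij}}^{k}$ from Assumption \ref{intsys} (with some uniform Lipschitz constant $L$) yields $\Gamma _{\mathbf{ij}}^{k}(x)/\Gamma _{\mathbf{ij}}^{k}(y)\leq 1+L\|x-y\|/c_{0}$, which furnishes the desired $\bar{C}(\|x-y\|)$. The polynomial ratio $\prod_{i}(x_{i}/y_{i})^{m_{i}(\mathbf{i})}$ is controlled using $m_{i}\leq k\leq K$ and splitting by the sign of $x_{i}-y_{i}$: coordinates with $y_{i}\geq x_{i}$ contribute factors $\leq 1$, and the rest contribute at most $(x_{i}/y_{i})^{K}$. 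Part (3) is then immediate: if $\lambda _{v}\not\equiv 0$, retain a single tuple $(\mathbf{i}^{\ast },\mathbf{j}^{\ast })\in \mathcal{J}_{+}$ with $e_{\mathbf{j}^{\ast }}-e_{\mathbf{i}^{\ast }}=v$ to obtain $\lambda _{v}(x)\geq (c_{0}/k!)\prod_{l}x_{i_{l}^{\ast }}\geq (c_{0}/K!)\prod_{i}x_{i}^{K}$, using $m_{i}\leq K$ and $x_{i}\leq 1$; strict positivity on $\tilde{\mathcal{S}}^{a}$ and $\mathcal{S}^{a}$ follows at once.

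The genuinely delicate step is part (4), whose purpose is to force the polynomial $x$-dependence of a lower bound for $\lambda _{v}(x)$ and an upper bound for $\lambda _{v}(y)$ to have matching exponents, which is precisely what Assumption \ref{ass-simjumps} secures. In Case~(1) of that assumption, the designated tuple $(\mathbf{i}^{\ast },\mathbf{j}^{\ast })\in \mathcal{J}_{+}$ has $m_{j}(\mathbf{i}^{\ast })=|v_{j}|$ for $j\in \mathcal{N}_{v}$ and $m_{i}(\mathbf{i}^{\ast })=0$ otherwise, so a single-term lower bound gives $\lambda _{v}(x)\geq (c_{0}/k^{\ast }!)\prod_{j\in \mathcal{N}_{v}}x_{j}^{|v_{j}|}$. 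On the other hand, for \emph{every} summand the inequality $m_{j}\geq |v_{j}|$ together with $y_{i}\leq 1$ yields $\prod_{l}y_{i_{l}}\leq \prod_{j\in \mathcal{N}_{v}}y_{j}^{|v_{j}|}$, so $\lambda _{v}(y)\leq C\prod_{j\in \mathcal{N}_{v}}y_{j}^{|v_{j}|}$; dividing, \eqref{est-lbound} holds with $r_{j}=|v_{j}|$ (and $\sum r_{j}=k^{\ast }\leq K$). In Case~(2) every active summand shares the same rigid multiplicity profile $\{r_{j}\}_{j\in \mathcal{N}_{v}}$ with $\sum r_{j}=k^{\ast }$ and no particles outside $\mathcal{N}_{v}$, so $\prod_{l}x_{i_{l}}=\prod_{j\in \mathcal{N}_{v}}x_{j}^{r_{j}}$ is common to all terms and factors out of the entire sum: $\lambda _{v}(x)=\prod_{j\in \mathcal{N}_{v}}x_{j}^{r_{j}}\,g(x)$, where $g$ is continuous on the compact simplex, bounded below by $c_{0}/k^{\ast }!$ (from any single term) and above by a constant depending on $R_{0}$. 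The ratio $g(x)/g(y)$ is thus uniformly bounded below by a positive constant, giving \eqref{est-lbound}. The only obstacle throughout is bookkeeping: keeping track of which multiplicities are rigidly fixed under Assumption \ref{ass-simjumps} and which are only bounded below via \eqref{nvinc}; once this is untangled, no further analytic subtlety is required.
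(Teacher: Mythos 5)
Your proposal is correct and follows essentially the same route as the paper's proof: each part is derived from the decomposition \eqref{lambda}--\eqref{alpha1} combined with the structural inequality \eqref{nvinc}, the uniform bounds $c_0$ and $R_0$, and, for part (2), the Lipschitz property of the $\Gamma$'s. The only cosmetic differences are in part (2), where you use the per-term bound "$a_i \leq M b_i$ for all $i$ implies $\sum a_i \leq M\sum b_i$" in place of the paper's equivalent $\max$-ratio inequality, and in part (4) Case (2), where you factor $\lambda_v(x)=\prod_{j\in\mathcal{N}_v}x_j^{r_j}\,g(x)$ directly rather than substituting into the pre-estimate \eqref{est-intermed}; both give the same conclusion.
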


\begin{proof}
We start with the proof of the first property. For any $v\in {\mathcal{V}}$,
for every $k=1,\ldots ,K$ and $(\mathbf{i},\mathbf{j})\in {\mathcal{J}}^{k}$
such that $e_{\mathbf{j}}-e_{\mathbf{i}}=v$, \eqref{nvinc} shows that each $%
i\in {\mathcal{N}}_{v}$ appears at least once in $\{i_{l},l=1,\ldots ,k\}$,
and so $\prod_{l=1}^{k}x_{i_{l}}\leq \prod_{i\in {\mathcal{N}}_{v}}x_{i}$.
Substituting this and the bound on the transition rates in \eqref{def-R0}
into the definition of $\lambda _{v}$ in \eqref{lambda}, it follows that the
first property holds with $\widehat{C}\doteq R_{0}|{\mathcal{J}}|$.

Next, fix $v\in {\mathcal{V}}$ and note that by Assumption \ref{ue}, we can
rewrite $\lambda _{v}$ from \eqref{lambda} as 
\begin{equation}
\lambda _{v}(x)=\sum_{k=1}^{K}\sum_{\overset{(\mathbf{i,j})\in {\mathcal{J}}%
_{+}^{k}:}{e_{\mathbf{j}}-e_{\mathbf{i}}=v}}\alpha _{\mathbf{i}\mathbf{j}%
}^{k}(x),\quad x\in {\mathcal{S}},  \label{alt-lambda}
\end{equation}%
where recall the definition of ${\mathcal{J}}_{+}^{k}$ from \eqref{nk}. Now,
let $k^{\ast }\doteq k^{\ast }(x,y)\in \{1,\ldots ,K\}$, $(\mathbf{i^{\ast }}%
,\mathbf{j}^{\ast })=(\mathbf{i}^{\ast }(x,y),\mathbf{j}^{\ast }(x,y))\in {%
\mathcal{J}}_{+}^{k^{\ast }}$ be such that 
\begin{equation*}
\frac{\alpha _{\mathbf{i}^{\ast }\mathbf{j}^{\ast }}^{k^{\ast }}(x)}{\alpha
_{\mathbf{i}^{\ast }\mathbf{j}^{\ast }}^{k^{\ast }}(y)}=\max_{k=1,\ldots
,K}\max_{\overset{(\mathbf{i,j})\in {\mathcal{J}}_{+}^{k}:}{e_{\mathbf{j}%
}-e_{\mathbf{i}}=v}}\frac{\alpha _{\mathbf{ij}}^{k}(x)}{\alpha _{\mathbf{ij}%
}^{k}(y)}.
\end{equation*}%
Then, since for any finite index set ${\mathcal{I}}$ and numbers $%
a_{i},b_{i}>0,i\in {\mathcal{I}}$, 
\begin{equation*}
\frac{\sum_{i\in {\mathcal{I}}}a_{i}}{\sum_{i\in {\mathcal{I}}}b_{i}}\leq
\max_{i\in {\mathcal{I}}}\frac{a_{i}}{b_{i}},
\end{equation*}%
from \eqref{alt-lambda} and \eqref{alpha1} it follows that 
\begin{equation}
\frac{\lambda _{v}(x)}{\lambda _{v}(y)}\leq \frac{\alpha _{\mathbf{i}^{\ast }%
\mathbf{j}^{\ast }}^{k^{\ast }}(x)}{\alpha _{\mathbf{i}^{\ast }\mathbf{j}%
^{\ast }}^{k^{\ast }}(y)}=\left( \prod_{l=1}^{k^{\ast }}\frac{x_{i_{l}^{\ast
}}}{y_{i_{l}^{\ast }}}\right) \frac{\Gamma _{\mathbf{i}^{\ast }\mathbf{j}%
^{\ast }}^{k^{\ast }}(x)}{\Gamma _{\mathbf{i}^{\ast }\mathbf{j}^{\ast
}}^{k^{\ast }}(y)}.  \label{alpha-ineq}
\end{equation}%
Combining the lower bounds on $\Gamma _{\mathbf{i}^{\ast }\mathbf{j}^{\ast
}}^{k^{\ast }}$ in \eqref{def-c0}, and letting $C_{1}$ denote the maximum of
the Lipschitz constants of $\Gamma _{\mathbf{ij}}^{k}$, $(\mathbf{i,j})\in {%
\mathcal{J}}^{k},k=1,\ldots ,K$ (which is finite by Assumption \ref{intsys}%
), we obtain the inequality 
\begin{equation}
\frac{\Gamma _{\mathbf{i^{\ast }j^{\ast }}}^{k^{\ast }}(x)}{\Gamma _{\mathbf{%
i^{\ast }j^{\ast }}}^{k^{\ast }}(y)}=1+\frac{\Gamma _{\mathbf{i^{\ast
}j^{\ast }}}^{k^{\ast }}(x)-\Gamma _{\mathbf{i^{\ast }j^{\ast }}}^{k^{\ast
}}(y)}{\Gamma _{\mathbf{i^{\ast }j^{\ast }}}^{k^{\ast }}(y)}\leq \left( 1+%
\frac{C_{1}}{c_{0}}||x-y||\right) .  \label{ineq-gammahere}
\end{equation}%
On the other hand, for any $k=1,\ldots ,K$ and $(\mathbf{i},\mathbf{j})\in {%
\mathcal{J}}^{k}$, 
\begin{equation*}
\prod_{l=1}^{k}\left( \frac{x_{i_{l}}}{y_{i_{l}}}\right) \leq
\prod_{l=1,\ldots ,k:x_{i_{l}}>y_{i_{l}}}\left( \frac{x_{i_{l}}}{y_{i_{l}}}%
\right) \leq \prod_{i\in {\mathcal{X}}:x_{i}>y_{i}}\left( \frac{x_{i}}{y_{i}}%
\right) ^{K}.
\end{equation*}%
Substituting this and \eqref{ineq-gammahere} into \eqref{alpha-ineq}, we see
that the second property is satisfied by the function $\bar{C}(r)\doteq
1+C_{1}r/c_{0}$, $r\geq 0$.

For the remaining two properties we can fix $v \in {\mathcal{V}}$ and assume
without loss of generality that $\lambda_v$ is not identically zero. Then,
by the continuity of $\lambda_v(\cdot)$, which follows from Assumption \ref%
{intsys}, there exists $x \in \mathrm{int}({\mathcal{S}})$ such that $%
\lambda_v (x) > 0$. In turn, from the form of $\lambda_v$ in %
\eqref{alt-lambda}, it follows that there exists $k = 1, \ldots, K$ and $(%
\mathbf{i}, \mathbf{j}) \in {\mathcal{J}}^k_+$ such that $v = e_{\mathbf{j}}
- e_{\mathbf{i}}$ and $\Gamma_{\mathbf{i} \mathbf{j}} (x) > 0$. The
relations \eqref{lambda} and \eqref{alpha1} and the definition of $c_0$ in %
\eqref{def-c0} then show that 
\begin{equation*}
\lambda_v (x) \geq \alpha_{\mathbf{i}\mathbf{j}}^k (x) \geq \frac{c_0}{K!}
\prod_{l=1}^k x_{i_l}, \quad x \in {\mathcal{S}},
\end{equation*}
which implies the inequality in property 3) because for each $i \in \{1,
\ldots, d\}$, $x_i \in [0,1]$ and $|\{l = 1, \ldots, k: i_l = i\}| \leq k
\leq K$. The bound $\inf_{x \in {\mathcal{S}}^a} \lambda_v (x) > 0$ and $%
\inf_{x \in {\mathcal{S}}^a} \lambda_v (x) > 0$ are an immediate consequence
of the inequality, the definition of $\tilde{{\mathcal{S}}}^a$ and the fact
that ${\mathcal{S}}^a \subset \tilde{{\mathcal{S}}}^{a/2}$.

For the last property, first note that for any $x,y\in {\mathcal{S}}$, using %
\eqref{alt-lambda}, \eqref{def-c0}, \eqref{def-R0} and \eqref{alpha1}, we
have 
\begin{equation}
\displaystyle\frac{\lambda _{v}(x)}{\lambda _{v}(y)}\geq \frac{c_{0}}{R_{0}K!%
}\displaystyle\frac{\sum_{k=1}^{K}\sum_{(\mathbf{i},\mathbf{j})\in {\mathcal{%
J}}_{+}^{k}:e_{\mathbf{j}}-e_{\mathbf{i}}=v}\left(
\prod_{l=1}^{k}x_{i_{l}}\right) }{\sum_{k=1}^{K}\sum_{(\mathbf{i},\mathbf{j}%
)\in {\mathcal{J}}_{+}^{k}:e_{\mathbf{j}}-e_{\mathbf{i}}=v}\left(
\prod_{l=1}^{k}y_{i_{l}}\right) }.  \label{est-intermed}
\end{equation}%
If Assumption \ref{ass-simjumps}(1) holds then let $(\mathbf{i}^{\ast },%
\mathbf{j}^{\ast })\in {\mathcal{J}}_{+}$ be as in the assumption, and
define $r_{j}=r_{j}(v)\doteq |\langle v,e_{j}\rangle |$ for $j\in {\mathcal{N%
}}_{v}$. Then clearly $(\mathbf{i}^{\ast },\mathbf{j}^{\ast })\in {\mathcal{J%
}}_{+}^{k^{\ast }}$, where $k^{\ast }\doteq \sum_{j\in {\mathcal{N}}%
_{v}}r_{j}$, and 
\begin{equation*}
\sum_{k=1}^{K}\sum_{(\mathbf{i},\mathbf{j})\in {\mathcal{J}}_{+}^{k}:e_{%
\mathbf{j}}-e_{\mathbf{i}}=v}\left( \prod_{l=1}^{k}x_{i_{l}}\right) \geq
\prod_{l=1}^{k^{\ast }}x_{i_{l}^{\ast }}=\prod_{j\in {\mathcal{N}}%
_{v}}\left( x_{j}\right) ^{r_{j}}.
\end{equation*}%
On the other hand, \eqref{nvinc} along with the fact that $y\in \lbrack
0,1]^{d}$ implies that 
\begin{equation*}
\sum_{k=1}^{K}\sum_{(\mathbf{i},\mathbf{j})\in {\mathcal{J}}_{+}^{k}:e_{%
\mathbf{j}}-e_{\mathbf{i}}=v}\left( \prod_{l=1}^{k}y_{i_{l}}\right) \leq |{%
\mathcal{J}}_{+}|\prod_{j\in {\mathcal{N}}_{v}}\left( y_{i_{j}}\right)
^{r_{j}}.
\end{equation*}%
Substituting the last two inequalities into \eqref{est-intermed}, we see
that \eqref{est-lbound} holds with $\bar{c}=c_{0}/R_{0}K!|{\mathcal{J}}_{+}|$%
, and $r_{j}$, $j\in {\mathcal{N}}_{v}$, as specified above. On the other
hand, if Assumption \ref{ass-simjumps}(2) is satisfied then let $r_{j},j\in {%
\mathcal{N}}_{v}$, and $k^{\ast }=\sum_{j\in {\mathcal{N}}_{v}}r_{j}$ be as
stated in the assumption. Notice that for all $(\mathbf{i},\mathbf{j})\in {%
\mathcal{J}}_{+}^{k^{\ast }}$ such that $e_{\mathbf{j}} - e_{\mathbf{i}} = v$%
, the equality $\prod_{l=1}^{k}x_{i_{l}}=\prod_{j\in {\mathcal{N}}%
_{v}}\left( x_{j}\right) ^{r_{j}}$ holds. Therefore, there is some constant $%
C^{\prime }<\infty $, that only depends on $r_{j},j\in {\mathcal{N}}_{v}$,
such that 
\begin{equation*}
\sum_{k=1}^{K}\sum_{(\mathbf{i},\mathbf{j})\in {\mathcal{J}}_{+}^{k}:e_{%
\mathbf{j}}-e_{\mathbf{i}}=v}\left( \prod_{l=1}^{k}x_{i_{l}}\right) =\sum_{(%
\mathbf{i},\mathbf{j})\in {\mathcal{J}}_{+}^{k^{\ast }}:e_{\mathbf{j}}-e_{%
\mathbf{i}}=v}\left( \prod_{l=1}^{k}x_{i_{l}}\right) =C^{\prime }\prod_{j\in 
{\mathcal{N}}_{v}}\left( x_{j}\right) ^{r_{j}},
\end{equation*}%
with the same equality also holding when $x$ is replaced by $y$. When
substituted back into \eqref{est-intermed}, this shows that %
\eqref{est-lbound} holds with $\bar{c}=c_{0}/R_{0}K!$ and the given $r_{j}$, 
$j\in {\mathcal{N}}_{v}$. This completes the proof of the lemma.
\end{proof}

\subsection{A property of the LLN trajectory}

\label{subs-lln-inward}

In this section, we show that the communication and growth conditions on the
limit rates $\{\lambda_v(\cdot), v \in {\mathcal{V}}\}$ imply that the
associated LLN trajectory $\mu$ has the following property, which is
crucially used in the proof of the large deviation lower bound.

\begin{property}
\label{cond-lln} There exist constants $b>0$ and $D \in [1,\infty)$ such
that for any $x\in \mathcal{S}$, the associated LLN path $\mu$ that solves
the ODE \eqref{1} and starts at $x$ is such that for every $i \in {\mathcal{X%
}}$, 
\begin{equation}  \label{ineq-lln}
\mu _{i}\left( t\right) \geq bt^{D}, \qquad t\in \left[ 0,1\right].
\end{equation}
\end{property}

We now state the main result of this section.

\begin{proposition}
\label{prop-lln} Suppose the family of jump rates $\{\lambda _{v}(\cdot
),v\in {\mathcal{V}}\}$ satisfies Property \ref{prop-lambda}, Property \ref%
{prop-communicate} and properties (1) and (4) of Lemma \ref{lem-estimates}.
Then Property \ref{cond-lln} is also satisfied.
\end{proposition}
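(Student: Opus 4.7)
The plan is to prove the result by induction along a chain of coordinates constructed from a communicating path, obtaining a uniform polynomial lower bound at each stage.

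First, I would derive a Duhamel-type differential inequality for each coordinate. Writing the ODE \eqref{1} component-wise and splitting the sum according to the sign of $v_i$, Lemma \ref{lem-estimates}(1) implies that for $v$ with $v_i<0$ (so $i\in \mathcal N_v$), $\lambda_v(\mu(t))\le \widehat C \mu_i(t)$. Hence $\dot\mu_i\ge G_i(\mu)-C_1\mu_i$ where $G_i(z)\doteq \sum_{v:v_i>0} v_i\lambda_v(z)\ge 0$ and $C_1$ is a uniform constant, giving by variation of parameters
\[
\mu_i(t)\ge e^{-C_1 t}\mu_i(0)+\int_0^t e^{-C_1(t-s)} G_i(\mu(s))\,ds,\qquad t\in[0,1].
\]

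Next, for each $j^*\in \mathcal X$ I would apply Property \ref{prop-communicate} with starting point $e_{j^*}$ and endpoint $y^*\doteq (1/d,\ldots,1/d)$ to produce a communicating path $\phi^{(j^*)}$ with jump directions $v_1,\ldots,v_F$ on intervals $[t_{m-1},t_m]$ whose rates are bounded below by $c(1/d)^p>0$ uniformly. Setting $B_0\doteq\{j^*\}$ and $B_m\doteq B_{m-1}\cup\{i:(v_m)_i>0\}$, I would verify by induction on $m$ that $\mathrm{supp}(\phi^{(j^*)}(t_{m-1}))\subset B_{m-1}$; combined with positivity of the rate along the path and the contrapositive of Lemma \ref{lem-estimates}(1), this forces $\mathcal N_{v_m}\subset B_{m-1}$. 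Since $\phi^{(j^*)}(t_F)=y^*$ has full support, $B_F=\mathcal X$, with $F$ uniformly bounded by Property \ref{prop-communicate}.

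With this setup, the argument proceeds by induction on $m\in\{0,\ldots,F\}$: given $x\in\mathcal S$, pick $j^*\in \arg\max_k x_k$ (so $x_{j^*}\ge 1/d$ by pigeonhole), and show there exist $b_m>0$, $D_m\ge 0$ depending only on $j^*$ with $\mu_i(t)\ge b_m t^{D_m}$ for $i\in B_m$ and $t\in[0,1]$. The base case $m=0$ is immediate: $\mu_{j^*}(t)\ge (1/d)e^{-C_1}$ from the Duhamel bound. For the step, given $i\in B_m\setminus B_{m-1}$ one has $(v_m)_i\ge 1$ and $\mathcal N_{v_m}\subset B_{m-1}$; since $\lambda_{v_m}$ is not identically zero, Lemma \ref{lem-estimates}(4) applied with $y=y^*$ together with the induction hypothesis gives
\[
\lambda_{v_m}(\mu(s))\ge c \prod_{j\in\mathcal N_{v_m}}\mu_j(s)^{r_j}\ge c' s^{K D_{m-1}},
\]
using $\sum_j r_j\le K$. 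Substituting into the Duhamel bound yields $\mu_i(t)\ge b_m t^{D_m}$ with $D_m=K D_{m-1}+1$. Taking $b\doteq \min_{j^*\in\mathcal X,\,m\le F} b_m>0$ and $D\doteq \max_{j^*\in\mathcal X} D_F<\infty$ then yields Property \ref{cond-lln}.

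The main obstacle is making the chain $\{B_m\}$ independent of the initial condition $x$, so that the constants $b_m, D_m$ are uniform; this is achieved by constructing the chain from a fixed communicating path that starts at $e_{j^*}$ (not at $x$) and by exploiting the support-propagation inclusion $\mathcal N_{v_m}\subset B_{m-1}$. A secondary concern is that the inductive step multiplies the polynomial exponent by at most $K$ and adds $1$, so after at most $F$ steps $D\le (K^{F+1}-1)/(K-1)$, which remains finite and uniform in $x$.
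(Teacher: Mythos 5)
Your proof is correct and takes a genuinely different route from the paper's. Both start from the same Duhamel/Gronwall inequality derived via Lemma \ref{lem-estimates}(1), and both use Property \ref{prop-communicate} and Lemma \ref{lem-estimates}(4) to propagate lower bounds along a communicating path, with the same recursion $D_m = K D_{m-1}+1$. The difference lies in the bookkeeping. The paper constructs a communicating path from the actual initial condition $\mu(0)=x$ to $y^*=(1/d,\dots,1/d)$, defines the index sets $\mathcal{X}_l = \{j:\mu_j(0)< y^{(l)}_j\}$ in terms of the intermediate checkpoints, and must additionally perturb each checkpoint $y^{(m-1)}$ to an interior point $\tilde y^{(m-1)}$ preserving that index set so that the ratio bound (4) can be applied. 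You instead fix, for each $j^*\in\mathcal{X}$, a single communicating path from the extreme point $e_{j^*}$ to $y^*$, define the monotone chain $B_0\subset B_1\subset\cdots\subset B_F=\mathcal{X}$ by tracking support propagation, and apply (4) with the fixed reference $y^*$. The support-propagation inclusion $\mathcal{N}_{v_m}\subset\mathrm{supp}(\phi^{(j^*)}(t_{m-1}))\subset B_{m-1}$ follows either from the rate lower bound together with (1), as you argue, or even more simply from the fact that $\phi^{(j^*)}$ stays in $\mathcal{S}$ and cannot have a strictly negative velocity in a coordinate that is already zero. Your decoupling of the path from the initial condition makes the uniformity over $x$ transparent (there are only $d$ paths, indexed by $j^*$), avoids the interior perturbation step, and yields a nested family $\{B_m\}$, which makes the induction cleaner than the paper's non-nested $\{\mathcal{X}_m\}$; the paper's approach, on the other hand, works directly with the trajectory from $x$ without invoking the pigeonhole choice of a dominant coordinate.
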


To provide insight into the proof of Proposition \ref{prop-lln}, we first
show why the conclusion holds for the specific $K$-ergodic particle system
with $d=4, K=2$ introduced in Example \ref{eg3} when $c_i = 1$ for $%
i=1,\ldots, 6$.

\noindent \textbf{Example \ref{eg3} cont'd.} 
Let $x=\mu \left( 0\right) $, and assume without loss of generality that $%
x_{1}\geq x_{2}\geq x_{3}\geq x_{4}$, and therefore that $x_{1}\geq 1/4$. We
start by establishing a basic inequality for $\mu _{i}(t)$. Recall the form
of $\lambda _{v}(\cdot ),v\in {\mathcal{V}}$, given in Example \ref{eg3},
and note that the ODE (\ref{1}) implies 
\begin{equation*}
\dot{\mu}_{1}(t)=\sum_{v\in \mathcal{V}}\left\langle v,e_{1}\right\rangle
\lambda _{v}\left( \mu \left( t\right) \right) \geq -\mu _{1}(t)-\frac{1}{2}%
\mu _{1}(t)\mu _{2}(t)\geq -\frac{3}{2}\mu _{1}(t)\text{.}
\end{equation*}%
Thus, for $t\in \lbrack 0,1]$, $\mu _{1}\left( t\right) \geq x_{1}e^{-\frac{3%
}{2}t}\geq b_{1}$, where $b_{1}\doteq e^{-\frac{3}{2}}/4$. Then we have 
\begin{equation*}
\dot{\mu}_{2}(t)=\sum_{v\in \mathcal{V}}\left\langle v,e_{2}\right\rangle
\lambda _{v}\left( \mu \left( t\right) \right) \geq \mu _{1}(t)-\mu _{2}(t)-%
\frac{1}{2}\mu _{1}(t)\mu _{2}(t)\geq b_{1}-\frac{3}{2}\mu _{2}(t)\text{,}
\end{equation*}%
which implies $\mu _{2}\left( t\right) \geq \frac{2}{3}b_{1}(1-e^{-\frac{3}{2%
}t})\geq b_{2}t$ for some $b_{2}>0$ (for example, $b_{2}=b_{1}/2$).
Substituting the last two bounds into the equations for $i=3,4$, we obtain 
\begin{equation*}
\dot{\mu}_{i}(t)=\sum_{v\in \mathcal{V}}\left\langle v,e_{i}\right\rangle
\lambda _{v}\left( \mu \left( t\right) \right) \geq \frac{1}{2}\mu
_{1}(t)\mu _{2}(t)-\mu _{i}(t)-\frac{1}{2}\mu _{3}(t)\mu _{4}(t)\geq \frac{%
b_{1}b_{2}}{2}t-\frac{3}{2}\mu _{i}(t),
\end{equation*}%
and hence, for $t\in \lbrack 0,1]$, $\mu _{i}\left( t\right) \geq b_{3}t^{2}$
for some $b_{3}>0$ (for example, $b_{3}=b_{1}b_{2}e^{-3/2}/4$). Thus, we
have shown that Example \ref{eg3}, with the chosen parameters, satisfies
Property \ref{cond-lln}.

The proof for the general case is more technical and is given below.

\begin{proof}[Proof of Proposition \protect\ref{prop-lln}]
Define $\delta _{1}\doteq \max \left\{ \left\vert \left\langle
e_{j},v\right\rangle \right\vert :j\in \mathcal{X}\text{, }v\in \mathcal{V}%
\right\} <\infty $, and let $R_{1}\doteq \delta _{1}\widehat{C}\left\vert 
\mathcal{V}\right\vert $, where $\widehat{C}$ is the constant in property
(1) of Lemma \ref{lem-estimates}. Then the fact that $\mu $ solves the ODE (%
\ref{1}) implies that for $i\in {\mathcal{X}}$ and\ $t\in \left[ 0,1\right] $%
, 
\begin{eqnarray}
\dot{\mu}_{i}\left( t\right) &=&\sum_{v\in \mathcal{V}}\left\langle
v,e_{i}\right\rangle \lambda _{v}\left( \mu \left( t\right) \right)  \notag
\\
&=&\sum_{v\in \mathcal{V}:\langle v,e_{i}\rangle >0}\left\langle
v,e_{i}\right\rangle \lambda _{v}\left( \mu \left( t\right) \right)
+\sum_{v\in \mathcal{V}:\langle v,e_{i}\rangle <0}\left\langle
v,e_{i}\right\rangle \lambda _{v}\left( \mu \left( t\right) \right)  \notag
\\
&\geq &\sum_{v\in \mathcal{V}:\langle v,e_{i}\rangle >0}\left\langle
v,e_{i}\right\rangle \lambda _{v}\left( \mu \left( t\right) \right)
-R_{1}\mu _{i}(t).  \label{9.3.1}
\end{eqnarray}%
Since each $\lambda _{v}(\cdot )$ is nonnegative, \eqref{9.3.1} and the
comparison principle for ODEs imply that 
\begin{equation}
\mu _{i}\left( t\right) \geq \mu _{i}\left( 0\right) e^{-R_{1}t},\text{ \ }%
t\in \left[ 0,1\right] ,i\in {\mathcal{X}}.  \label{1lbd}
\end{equation}

In order to use \eqref{9.3.1} to show that \eqref{ineq-lln} holds (for
suitable $b>0$ and $D<\infty$), we first obtain a lower bound on $\lambda
_{v}(\mu (t))$ by comparing it to $\lambda _{v}(\phi (t))$ for a suitable
communicating path $\phi$, and then apply the estimate \eqref{est-lbound}.
Define $y$ $\dot{=}\left( \frac{1}{d},...,\frac{1}{d}\right) $, and note
that by Property \ref{prop-communicate} and Remark \ref{flex}, there exists
a communicating path $\phi $ from $\mu (0)=x$ to $y$ on $[0,1]$. Let $\phi $
admit a representation in terms of $F<\infty ,\left\{ t_{m}\right\}
_{m=0}^{F},\left\{ v_{m}\right\} _{m=1}^{F}$ and $\left\{ U_{m}\right\}
_{m=1}^{F}$ as in (\ref{CP}) and for $m=0,1,\ldots ,F$, denote $y^{\left(
m\right) }\doteq \phi \left( t_{m}\right) $. Then, applying the inequality
in property ii) of Definition \ref{def-compath} with $s=t_{m-1}$, $m = 1,
\ldots, F$, and $y = \frac{1}{d} \sum_{i=1}^d e_i$, we see that there exists 
$c^{\prime }\in (0,1)$ such that 
\begin{equation}
\lambda _{v_{m}}\left( y^{\left( m-1\right) }\right) \geq c^{\prime },\quad
m=1,\ldots ,F.  \label{lvm2}
\end{equation}
Now define ${\mathcal{X}}_0 \doteq \emptyset$ and for $l = 1, \ldots. F$,
define ${\mathcal{X}}_l \doteq \{j \in {\mathcal{X}}: \mu_j(0) < y_j^{(l)}\}$%
. Then, for $m = 1, \ldots, F$, by the continuity of $\lambda_{v_m}$
(Property \ref{prop-lambda}), there exists $\tilde{y}^{(m-1)} \in \mathrm{int%
} ({\mathcal{S}})$ sufficiently close to $y^{(m-1)}$ such that 
\begin{equation}
\lambda _{v_{m}}\left( \tilde{y}^{\left( m-1\right) }\right) \geq \frac{%
c^{\prime }}{2},\quad m=1,\ldots ,F,  \label{lvm3}
\end{equation}
and 
\begin{equation}  \label{def-xl}
\{j \in {\mathcal{X}}: \mu_j(0) < \tilde{y}_j^{(m-1)}\} = {\mathcal{X}}%
_{m-1}.
\end{equation}

Now, fix $m\in \{1,\ldots ,F\}$. Let $\bar{c}>0$ and $r_{j}=r_{j}(v_{m})$, $%
j\in {\mathcal{N}}_{v_{m}}$, be the constants in property (4) of Lemma \ref%
{lem-estimates}. 
For $t\in \lbrack 0,1]$, we can first apply the estimate \eqref{est-lbound}
with $x=\mu (t)$ and $y=\tilde{y}^{(m-1)}$ and use \eqref{lvm3}, and then
use \eqref{1lbd} and $\sum_{j\in {\mathcal{X}}}r_{j}\leq K$ to obtain 
\begin{equation*}
\lambda _{v_{m}}\left( \mu (t)\right) \geq \frac{c^{\prime }}{2}\bar{c}%
\displaystyle\prod\limits_{j\in {\mathcal{N}}_{v_{m}}}\left( \frac{\mu
_{j}(t)}{\tilde{y}_{j}^{\left( m-1\right) }}\right) ^{r_{j}}\geq \bar{c}%
_{1}\prod\limits_{j\in {\mathcal{N}}_{v_{m}}}\left( \frac{\mu _{j}(0)}{%
\tilde{y}_{j}^{\left( m-1\right) }}\right) ^{r_{j}},
\end{equation*}%
where $\bar{c}_{1}\doteq c^{\prime }\bar{c}e^{-R_{1}K}/2>0$. Since %
\eqref{def-xl} implies $\mu _{j}(0)\geq \tilde{y}_{j}^{(m-1)}$ for $j\in {%
\mathcal{X}}\setminus {\mathcal{X}}_{m-1}$ and $\tilde{y}_{j}^{(m-1)}\leq 1$
for all $j$ (and in particular $j\in {\mathcal{X}}_{m-1}$), we can further
simplify the last inequality to obtain 
\begin{equation}
\lambda _{v_{m}}\left( \mu (t)\right) \geq \bar{c}_{1}\displaystyle%
\prod\limits_{j\in {\mathcal{X}}_{m-1}\cap {\mathcal{N}}_{v_{m}}}\left( \mu
_{j}(t)\right) ^{r_{j}}\text{,}  \label{ratio}
\end{equation}%
where the product over an empty set is to be interpreted as $1$.

We claim, and show below, that for every $m=1,\ldots ,F$, there exists $%
b^{\left( m\right) }\in (0,1)$ such that for every $i\in \mathcal{X}_{m}$, %
\eqref{ineq-lln} holds with $b=b^{(m)}$ and $D=D\left( m\right) \doteq
\sum_{i=0}^{m-1}K^{i}<\infty $. Setting $m=F$, this then proves %
\eqref{ineq-lln} for all $i\in {\mathcal{X}}_{F}$ with $b=b^{(F)}$ and $%
D=D(F)$. This suffices to complete the proof because for $i\in {\mathcal{X}}%
\setminus {\mathcal{X}}_{F}$, $\mu _{i}(0)\geq y_{i}^{(F)}=y_{i}=1/d$ and so %
\eqref{1lbd} implies that (\ref{ineq-lln}) holds with $b\doteq \frac{1}{d}%
e^{-R_{1}}>0$ and $D=0$.

We now use an inductive argument to prove the claim. Define 
\begin{equation*}
\delta _{2}\doteq \min \left\{ \left\langle v,e_{j}\right\rangle :j\in 
\mathcal{X}\text{, }v\in \mathcal{V}\text{, s.t. }\left\langle
v,e_{j}\right\rangle >0\right\} >0.
\end{equation*}%
We first consider the case $m=1$. For every $i\in {\mathcal{X}}_{1}$, $\phi
_{i}(0)=\mu _{i}(0)<y_{i}^{(1)}=\phi _{i}(t_{1})$, which implies $\langle
v_{1},e_{i}\rangle >0$ since $\dot{\phi}_{i}(t)=U_{1}\langle
v_{1},e_{i}\rangle $ for a.e.\ $t\in (0,t_{1})$ due to the assumed
representation \eqref{CP} of $\phi $. Moreover, \eqref{ratio} and the fact
that $\mathcal{X}_{0}$ is the empty set, together imply $\lambda
_{v_{1}}(\mu (t))\geq \bar{c}_{1}$ for $t\in \lbrack 0,1]$. Substituting
this into \eqref{9.3.1}, one sees that for $i\in {\mathcal{X}}_{1}$, 
\begin{equation}
\dot{\mu}_{i}(t)\geq \langle v_{1},e_{i}\rangle \lambda _{v_{1}}(\mu
(t))-R_{1}\mu _{i}(t)>\delta _{2}\bar{c}_{1}-R_{1}\mu _{i}(t),\quad t\in
\lbrack 0,1].  \label{mu-insert}
\end{equation}%
By the comparison principle for ODEs, and the fact that $\mu _{i}(0)\geq 0$,
we see that there exists some $b^{\left( 1\right) }\in (0,1)$ such that for $%
t\in \lbrack 0,1]$, 
\begin{equation}
\mu _{i}\left( t\right) \geq \frac{\delta _{2}\bar{c}_{1}}{R_{1}}\left(
1-e^{-R_{1}t}\right) \geq b^{\left( 1\right) }t,  \label{mu-insert2}
\end{equation}%
Since $D(1)=1$, this shows that the claim holds for $m=1$.

Next, assume that for some $m_{0}\in \{1,\ldots ,F-1\}$, the claim holds for
all $m\in \{1,\ldots ,m_{0}\}$, and let $\bar{m}=m_{0}+1$. Fix $i\in 
\mathcal{X}_{\bar{m}}$. Then, since $\phi_i( 0) = \mu _{i}\left( 0\right)
<y_{i}^{\left( \bar{m}\right) }=\phi _{i}\left( t_{\bar{m}}\right) $, it is
clear from the representation (\ref{CP}) for $\phi $ that there exists $%
m^{\ast }\in \{1,\ldots ,\bar{m}\}$ such that $\langle v_{m^{\ast
}},e_{i}\rangle >0$. If $m^{\ast }=1$, this shows that \eqref{mu-insert},
and hence \eqref{mu-insert2}, holds. Since $D(\bar{m}) \geq 1$, the claim
holds for $m=\bar{m}$ with $b^{(\bar{m})} = b^{(1)}$. On the other hand, if $%
m^{\ast }\in \{2,\ldots ,\bar{m}\}$, then by \eqref{ratio} and the induction
hypothesis we have for $t\in \left[ 0,1\right] $,%
\begin{equation*}
\lambda _{v_{m^{\ast }}}\left( \mu \left( t\right) \right) \geq \bar{c}_{1} %
\displaystyle\prod\limits_{j\in \mathcal{X}_{m^{\ast }-1} \cap {\mathcal{N}}%
_{v_{m^{\ast}-1}}}\left( \mu _{j}\left( t\right) \right)^{r_{j}} \geq \bar{c}%
_{2}t^{KD(m^{\ast}-1)},
\end{equation*}
with $\bar{c}_{2}\doteq \bar{c}_{1}(b^{(m^{\ast }-1)})^{K}>0$, where we have
used the fact that $\sum_{j \in {\mathcal{N}}_{v_{m^{\ast}-1}}}r_j \leq K.$
Since $m\mapsto D(m)$ is increasing, this gives a lower bound of $\bar{c}%
_{2}t^{KD( \bar{m}-1)}$ on $\lambda _{v_{m^{\ast }}}\left( \mu \left(
t\right) \right) $. Substituting this into \eqref{9.3.1} we see that for\ $%
t\in \lbrack 0,1]$, 
\begin{eqnarray*}
\dot{\mu}_{i}\left( t\right) &\geq &\left\langle
v_{m^{\ast}},e_{i}\right\rangle \lambda _{v_{m^{\ast }}}\left( \mu \left(
t\right) \right) -R_{1}\mu _{i}\left( t\right) . \\
&\geq &\bar{c}_{3}t^{\bar{r}D(\bar{m}-1)}-R_{1}\mu _{i}\left( t\right) ,
\end{eqnarray*}%
where $\bar{c}_{3}\doteq \delta _{2}\bar{c}_{2}$, which we can assume
without loss of generality to lie in $(0,1)$. Note that the solution to the
ODE $\dot{u}\left( t\right) =\bar{c}_{3}t^{KD(\bar{m}-1)}-R_{1}u\left(
t\right) $ with $u(0)\geq 0$ satisfies $u(t)\geq b^{(\bar{m})}t^{KD(\bar{m}%
-1)+1}$ with $b^{(\bar{m})}\doteq e^{-R_{1}}\bar{c}_{3}/(KD(\bar{m}-1)+1)\in
(0,1)$. Applying the comparison principle for ODEs and noting that $D(\bar{m}%
)=KD(\bar{m}-1)+1$, it follows that $\mu _{i}(t)\geq b^{(\bar{m})}t^{D(\bar{m%
})}$, which proves the claim for $\bar{m}=m_{0}+1$. This completes the proof
of the proposition.
\end{proof}

\subsection{A discrete communication condition}

\label{subs-propverii}

We now show that under the slightly stronger assumption, Assumption \ref{g1}%
, on the transition rates of the interacting particle system, the empirical
measure jump Markov processes possesses a stronger controllability property
(Property \ref{prop-dcommunicate} below), which is used in the proof of the
locally uniform LDP in Section \ref{sec-locunif}. We first describe a
discrete version of the strong communication condition that was introduced
in Definition \ref{def1''}.

Given $u,v\in \mathbb{N}$, we denote $[u,v)=\left\{ u,u+1,...,v-1\right\} $.

\begin{definition}
\label{def1} For any $x,y\in \mathcal{S}_{n}$ and $T\in \mathbb{N}$, a 
\textbf{discrete} \textbf{strongly} \textbf{communicating path} of length $T$
from $x$ to $y$ with constants $c_{1}>0$, 
$p_{1}<\infty $, and $F\in \mathbb{N}$ is a set of\ points $\left\{ \phi
_{0},\phi _{1},...,\phi _{F}\right\} $ $\subset \mathcal{S}_{n}$ such that $%
\phi _{0}=x$, $\phi _{F}=y$ and the following properties 
are satisfied.

i). There exist $\left\{ v_{m}\right\} _{m=1}^{F}\subset \mathcal{V}$, and $%
0=t_{0}<t_{1}<\cdots <t_{F}=T$, such that 
\begin{equation*}
\phi _{s+1}-\phi _{s}=\frac{1}{n}v_{m}\text{, }s\in \{ t_{m-1}, t_{m-1}+1,
\ldots, t_{m}-1\}.
\end{equation*}

ii). For all $n$ sufficiently large and $m=1,...,F$,%
\begin{equation}
\lambda _{v_{m}}^{n}\left( \phi _{s}\right) \geq c_{1}\left( \displaystyle%
\prod\limits_{j\in \mathcal{N}_{v_{m}}}\left( \phi _{s}\right) _{j}\right)
^{p_{1}},\text{ \ } s\in \{ t_{m-1}, t_{m-1}+1, \ldots, t_{m}-1\},
\label{dlvm}
\end{equation}%
where for $v \in {\mathcal{V}}$, ${\mathcal{N}}_v$ is as defined in %
\eqref{def-nv}. 
\end{definition}

\begin{property}
\label{prop-dcommunicate} There exist constants $c,c_{1}>0$, $C^{\prime
},C_{1}^{\prime },p,p_{1}<\infty $ and $\bar{F}\in \mathbb{N}$ with the
following properties.

i). For any $x,y\in \mathcal{S}$, there exist $t\in (0,1]$, and a strongly
communicating path $\phi $ that connects $x$ to $y$ on $[0,t]$ with
constants $c,p,\bar{F},c_{1},p_{1}$ such that the scaling property %
\eqref{length} is satisfied.

ii). For any $n\in \mathbb{N}$ and any $x,y\in \mathcal{S}_{n}$, there exist 
$\tilde{F}$, and a discrete strongly communicating path $\phi _{n}$ of
length $T_{x,y}$ that connects $x$ to $y$ with constants $c_{1},p_{1},\tilde{%
F}$ such that the path lengths satisfy the scaling property%
\begin{equation}
T_{x,y}\leq C_{1}^{\prime }n||x-y||.  \label{disc-length}
\end{equation}
\end{property}

Clearly, Property \ref{prop-dcommunicate} is a strengthening of the
communication property stated in Property \ref{prop-communicate}.

\begin{proposition}
\label{ver2} If $\{\Gamma _{\mathbf{i}\mathbf{j}}(\cdot ),(\mathbf{i,}%
\mathbf{j})\in {\mathcal{J}}^{k},k=1,\ldots ,K\}$ satisfies Assumption \ref%
{ue}, Assumption \ref{intsys} and Assumption \ref{g1}, then the associated
jump rates $\{\lambda _{v}(\cdot ),v\in {\mathcal{V}}\}$ satisfy Property %
\ref{prop-dcommunicate} and hence, also Property \ref{prop-communicate}.
\end{proposition}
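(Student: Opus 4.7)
The strategy is to reduce everything to Lemma \ref{move}, which already does the heavy lifting by constructing, under Assumptions \ref{ue} and \ref{g1}, a continuous strongly communicating path between arbitrary points of $\mathcal{S}$ whose jump directions lie in $\mathcal{V}_1 = \{e_j - e_i : i \neq j\}$ and whose length is controlled by $\|x - y\|$. Part (i) of Property \ref{prop-dcommunicate} is then essentially a restatement of Lemma \ref{move} with the constants $c, p, F, c_1, p_1, C'$ of that lemma supplying the needed constants here.

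For part (ii), I would apply the last assertion of Lemma \ref{move} to the pair $x, y \in \mathcal{S}_n$. That gives a piecewise linear continuous path $\phi^{P,n}: [0, t_F] \to \mathcal{S}$ with representation \eqref{CP} satisfying $U_m = 1/n$, $\phi^{P,n}(t_m) \in \mathcal{S}_n$, and $t_m - t_{m-1} \in \mathbb{N}$ for each $m$. Set $T_{x,y} \doteq t_F$ and define the discrete path $\phi_s \doteq \phi^{P,n}(s)$ for integer $s = 0, 1, \ldots, T_{x,y}$. Because the velocity on $[t_{m-1}, t_m)$ is $v_m/n$ and $\phi^{P,n}(t_{m-1}) \in \mathcal{S}_n$, each $\phi_s$ lies in $\mathcal{S}_n$ and $\phi_{s+1} - \phi_s = v_m/n$ on $s \in \{t_{m-1}, \ldots, t_m - 1\}$, verifying Definition \ref{def1}(i). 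The length bound \eqref{disc-length} follows immediately: since $\mathrm{Len}(\phi^{P,n}) = n^{-1} \sum_m (t_m - t_{m-1}) \|v_m\| \geq n^{-1} T_{x,y} \min_{v \in \mathcal{V}_1} \|v\|$, and $\mathrm{Len}(\phi^{P,n}) \leq C' \|x-y\|$ by Lemma \ref{move}, we get $T_{x,y} \leq C_1' n \|x-y\|$ with $C_1' = C' / \min_{v \in \mathcal{V}_1} \|v\|$.

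The one point that requires genuine work is verifying the discrete rate bound \eqref{dlvm}, since Lemma \ref{move} only gives the analogous bound for the limit rate $\lambda_{v_m}$, not the prelimit $\lambda_{v_m}^n$. Here I would exploit the fact that every $v_m$ produced by Lemma \ref{move} is a single-particle direction $e_{u_{m+1}} - e_{u_m}$ with $\mathfrak{M}^1_{u_m u_{m+1}} > 0$, so $\mathcal{N}_{v_m} = \{u_m\}$ and the bound at the continuous level reads $\lambda_{v_m}(x) \geq c_0 x_{u_m}$. At the prelimit, \eqref{lambdan}, \eqref{alphan}, and \eqref{a} give $\lambda_{v_m}^n(x) \geq \alpha_{u_m u_{m+1}}^{1,n}(x) = x_{u_m} \Gamma^{1,n}_{u_m u_{m+1}}(x)$, and Assumption \ref{intsys} together with the compactness of $\mathcal{S}$ implies that $\Gamma^{1,n}_{u_m u_{m+1}}$ converges uniformly to $\Gamma^1_{u_m u_{m+1}} \geq c_0$. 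Thus for $n$ large enough $\Gamma^{1,n}_{u_m u_{m+1}}(x) \geq c_0/2$ uniformly in $x \in \mathcal{S}$, yielding $\lambda_{v_m}^n(\phi_s) \geq (c_0/2)(\phi_s)_{u_m} = (c_0/2) \prod_{j \in \mathcal{N}_{v_m}} (\phi_s)_j$, which is \eqref{dlvm} with $c_1 = c_0/2$ and $p_1 = 1$.

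The main obstacle I anticipate is purely bookkeeping: making sure that the uniform lower bound on $\Gamma^{1,n}_{u_m u_{m+1}}$ can be chosen uniformly in the finite collection of single-particle directions appearing across all source/target pairs, but since $\mathcal{V}_1$ is finite and the convergence in Assumption \ref{intsys} is uniform on $\mathcal{S}$, this is routine. Finally, since Property \ref{prop-dcommunicate}(i) is strictly stronger than Property \ref{prop-communicate} (every strongly communicating path is communicating, and the length bound is inherited), the last assertion of the proposition is automatic.
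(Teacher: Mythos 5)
Your proposal is correct and follows the paper's argument exactly: part (i) is read off the first assertion of Lemma \ref{move}, and part (ii) comes from discretizing the path $\phi^{P,n}$ supplied by the last assertion of Lemma \ref{move}. The verification you supply for the prelimit bound \eqref{dlvm}, via the explicit form $\lambda^n_{v_m}(x)\geq x_{u_m}\Gamma^{1,n}_{u_m u_{m+1}}(x)$ together with the uniform (by compactness and Assumption \ref{intsys}) convergence of $\Gamma^{1,n}$ to $\Gamma^1\geq c_0$, is precisely the step the paper compresses into ``it is easy to verify,'' so this is a faithful elaboration rather than a different route.
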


\begin{proof}
The first part of Property \ref{prop-dcommunicate}, namely the existence of
a strongly communicating path, follows immediately from the first assertion
of Lemma \ref{move}. For the second part, we construct a discrete strongly
communicating by discretizing the strongly communicating path constructed in
Lemma \ref{move}. Specifically, for $n\in \mathbb{N}$, given $x,y\in {%
\mathcal{S}}_{n}$, given the strongly communicating path $\phi ^{P,n}$ on $%
[0,t_{F}]$ for some $t_{F}\in \mathbb{N}$, which satisfies the additional
properties stated in the last assertion of Lemma \ref{move}, then it is easy
to verify that if $\phi _{m}\doteq \phi ^{P,n}(m)$ for $m\in \lbrack
0,t_{F}]\cap \mathbb{N}$, then $\{\phi _{m}\}$ is a discrete strongly
communicating path from $x$ to $y$ and the property \eqref{disc-length} can
be deduced from the corresponding property \eqref{length} for communicating
paths that was established in Lemma \ref{move}.
\end{proof}

\section{The Variational Representation Formula}

\label{sec-varrep}

\subsection{Variational Representation for a Poisson Random Measure}

\label{subs-varrep}

We first review the variational representation formula for a Poisson random
measure stated in \cite[Theorem 2.1]{BDM}. For a locally compact Polish
space $S$, let $\mathcal{B}\left( S\right) $ denote the Borel sigma algebra
and let $M_{F}\left( S\right) $ denote the space of all measures $\nu $ on $%
\left( S,\mathcal{B}\left( S\right) \right) $ satisfying $\nu (K)<\infty $
for every compact $K\subset S$. Letting $C_{c}\left( S\right) $ denote the
space of continuous functions with compact support, we equip $M_{F}\left(
S\right) $ with the weakest topology such that for every $f\in C_{c}\left(
S\right) $, the function $\nu \mapsto \int_{S}fd\nu ,\nu \in M_{F}\left(
S\right)$, is continuous. Let $\mathcal{Y}$ be a locally compact Polish
space, $\mathcal{Y}_{T}=[0,T]\times \mathcal{Y}$, both equipped with the
usual Euclidean topology, and let $\mathcal{M}=M_{F}(\mathcal{Y}_{T})$. For
some fixed measure $\nu \in M_{F}\left( \mathcal{Y}\right) $, let $\nu
_{T}=m_{T}\otimes \nu $, where $m_{T}$ is Lebesgue measure on $\left[ 0,T%
\right] $. [In our use below we take $\mathcal{Y}=[0,\infty )$ and $\nu $ to
be Lebesgue measure.] For $\theta \in \lbrack 0,\infty )$, let $\mathbb{P}%
_{\theta }$ denote the unique probability measure on $\left( \mathcal{M},%
\mathcal{B}\left( \mathcal{M}\right) \right) $ under which the canonical map 
$N:\mathcal{M}\rightarrow \mathcal{M}$, $N\left( \omega \right) =\omega $,
is a Poisson random measure with intensity measure $\theta \nu _{T}$. Let $%
\mathbb{E}_{\theta }$ denote expectation with respect to $\mathbb{P}_{\theta
}$. For notational convenience, we omit the dependence of $\mathbb{P}%
_{\theta }$ and $\mathbb{E}_{\theta }$ on the fixed measure $\nu _{T}$.

We define a \textit{controlled} Poisson random measure as follows. Let $%
\mathcal{W}=\mathcal{Y}\times \lbrack 0,\infty )$ and $\mathcal{W}%
_{T}=[0,T]\times \mathcal{W}=\mathcal{Y}_{T}\times \lbrack 0,\infty )$, both
equipped with the Euclidean product topology. Let $\mathcal{\bar{M}}=M_{F}(%
\mathcal{W}_{T})$ and let $\mathbb{\bar{P}}$ be the unique probability
measure on $\left( \mathcal{\bar{M}},\mathcal{B}\left( \mathcal{\bar{M}}%
\right) \right) $ under which the canonical map $\bar{N}:\mathcal{\bar{M}}%
\rightarrow \mathcal{\bar{M}}$, $\bar{N}\left( \omega \right) =\omega ,$ is
a Poisson random measure with intensity measure $\overline{\nu }_{T}=\nu
_{T}\otimes m$, where $m$ is Lebesgue measure on $[0,\infty )$. Let $\mathbb{%
\bar{E}}$ denote expectation with respect to $\mathbb{\bar{P}}$. Also,
define 
\begin{equation*}
\mathcal{G}_{t}\doteq \sigma \left\{ \bar{N}\left( (0,s]\times A\right)
:0\leq s\leq t,A\in \mathcal{B}\left( \mathcal{W}\right) \right\} ,
\end{equation*}%
and let $\mathcal{F}_{t}$ denote its completion under $\mathbb{\bar{P}}$. We
equip $\left( \mathcal{\bar{M}},\mathcal{B}\left( \mathcal{\bar{M}}\right)
\right) $ with the filtration $\left\{ \mathcal{F}_{t}\right\} _{0\leq t\leq
T}$ and denote by $\mathcal{\bar{P}}$ the corresponding predictable $\sigma $%
-field on $\left[ 0,T\right] \times \mathcal{\bar{M}}$.

\begin{definition}
\label{a-}Let $\mathcal{\bar{A}}$ be the class of $\left( \mathcal{\bar{P}}%
\otimes \mathcal{B}\left( \mathcal{Y}\right) \right) \backslash \mathcal{B}%
[0,\infty )$ measurable maps $\varphi :\left[ 0,T\right] \times \mathcal{%
\bar{M}\times Y}\rightarrow \lbrack 0,\infty )$.
\end{definition}

The role of $\varphi $ is to control the intensity of jumps at $(s,\omega
,y) $ by thinning in the additional $r$-variable in \eqref{nphi} below. For $%
\varphi \in \mathcal{\bar{A}}$, define $N^{\varphi }:\mathcal{\bar{M}}%
\rightarrow \mathcal{M}$ by 
\begin{equation}
N_{\omega }^{\varphi }\left( (0,t]\times U\right) \doteq \int_{(0,t]\times
U}\int_{0}^{\infty }\mathbb{I}_{\left[ 0,\varphi \left( s,\omega ,y\right) %
\right] }\left( r\right) \bar{N}_{\omega }\left( dsdydr\right) \text{, \ }%
t\in \left[ 0,T\right] ,U\in \mathcal{B}\left( \mathcal{Y}\right) ,\text{ }%
\omega \in \mathcal{\bar{M}}.  \label{nphi}
\end{equation}%
In what follows, we often suppress the dependence of $\varphi \left(
t,\omega ,y\right) $, $\bar{N}_{\omega }$ and $N_{\omega }^{\varphi }$ on $%
\omega $. Under $\mathbb{\bar{P}}$, $N^{\varphi }$ is a controlled random
measure on $\mathcal{Y}_{T}$ with $\varphi \left( s,y\right) $ determining
the intensity for points at location $y$ and time $s$. With some abuse of
notation, for $\theta \in \lbrack 0,\infty )$ we will let $N^{\theta }$ be
defined as in (\ref{nphi}) with $\varphi \left( s,y\right) \equiv \theta $.
Note that the law (on $\mathcal{M}$) of $N^{\theta }$ under $\mathbb{\bar{P}}
$ coincides with the law of $N$ under $\mathbb{P}_{\theta }$.

Recall $\ell \left( \cdot \right) $ as defined in (\ref{l}). For $\varphi
\in \mathcal{\bar{A}}$ define the random variable $L_{T}\left( \varphi
\right) $ by%
\begin{align}
L_{T}\left( \varphi \right) \left( \omega \right) & \doteq \int_{\mathcal{Y}%
_{T}}\ell \left( \varphi \left( t,\omega ,y\right) \right) \nu _{T}\left(
dtdy\right)  \notag \\
& =\int_{0}^{T}\left( \int_{[0,\infty )}\ell \left( \varphi \left( t,\omega
,y\right) \right) \nu \left( dy\right) \right) dt,\text{ }\omega \in 
\mathcal{\bar{M}}.  \label{L_T}
\end{align}

\begin{definition}
\label{ab-}Define $\mathcal{\bar{A}}_{b}$ to be the class of $\left( 
\mathcal{\bar{P}}\otimes \mathcal{B}\left( \mathcal{Y}\right) \right)
\backslash \mathcal{B}[0,\infty )$ measurable maps $\varphi $ such that for
some $B<\infty $, $\varphi \left( t,\omega ,y\right) \leq B$ for all $\left(
t,\omega ,y\right) \in \left[ 0,T\right] \times \mathcal{\bar{M}\times Y}$.
\end{definition}

In later sections we will set $T=1$, and hence the dependence of $\mathcal{%
\bar{A}}$ and $\mathcal{\bar{A}}_{b}$ on $T$ can be omitted. Let $%
M_{b}\left( \mathcal{M}\right) $ denote the space of bounded Borel
measurable functions on $\mathcal{M}$. We then have the following
representation formula for Poisson random measures.

\begin{theorem}
\label{3.1} Let $F\in M_{b}\left( \mathcal{M}\right) $. Then for any $\theta
>0$,%
\begin{equation}
-\log \mathbb{E}_{\theta }\left[ \exp \left( -F\left( N\right) \right) %
\right] =\inf_{\varphi \in \mathcal{\bar{A}}_{b}}\mathbb{\bar{E}}\left[
\theta L_{T}\left( \varphi \right) +F(N^{\theta \varphi })\right] .
\label{old rp}
\end{equation}
\end{theorem}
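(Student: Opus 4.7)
The plan is to derive the identity from the Donsker-Varadhan duality formula for relative entropy, combined with a Girsanov-type change of measure for Poisson random measures. Since under $\bar{\mathbb{P}}$ the thinned measure $N^\theta$ (that is, $N^\varphi$ with constant control $\varphi\equiv\theta$) has the same law as $N$ under $\mathbb{P}_\theta$, the identity $\mathbb{E}_\theta[e^{-F(N)}]=\bar{\mathbb{E}}[e^{-F(N^\theta)}]$ lets one move the entire computation to the richer space $(\bar{\mathcal{M}},\bar{\mathbb{P}})$, where one can freely compare $N^\theta$ with $N^{\theta\varphi}$ driven by the same $\bar{N}$.

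For the upper bound $\leq$, I would fix $\varphi\in\bar{\mathcal{A}}_b$ and define a change of measure $d\bar{\mathbb{Q}}^\varphi=\mathcal{E}_T(\varphi)\,d\bar{\mathbb{P}}$, where $\mathcal{E}_t(\varphi)$ is the exponential martingale
\[
\mathcal{E}_t(\varphi)=\exp\!\left(\int_{(0,t]\times\mathcal{Y}\times[0,\infty)}\log\varphi(s,y)\,\mathbb{I}_{[0,\theta]}(r)\,\bar{N}(ds\,dy\,dr)+\int_{(0,t]\times\mathcal{Y}\times[0,\infty)}(1-\varphi(s,y))\,\mathbb{I}_{[0,\theta]}(r)\,\bar{\nu}_T(ds\,dy\,dr)\right).
\]
Boundedness of $\varphi$ (i.e., $\varphi\in\bar{\mathcal{A}}_b$) gives the uniform integrability needed to conclude that $\bar{\mathbb{Q}}^\varphi$ is a probability measure, and standard Girsanov results for marked point processes show that under $\bar{\mathbb{Q}}^\varphi$ the controlled measure $N^{\theta\varphi}$ is a Poisson random measure with intensity $\theta\nu_T$. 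A direct log computation then yields $R(\bar{\mathbb{Q}}^\varphi\Vert\bar{\mathbb{P}})=\theta\,\bar{\mathbb{E}}_{\bar{\mathbb{Q}}^\varphi}[L_T(\varphi)]$. Applying the entropy duality $-\log\bar{\mathbb{E}}[e^{-G}]\le\bar{\mathbb{E}}_{\bar Q}[G]+R(\bar Q\Vert\bar{\mathbb{P}})$ with $G=F(N^{\theta\varphi})$ and $\bar Q=\bar{\mathbb{Q}}^\varphi$, and rewriting the expectation against $\bar{\mathbb{P}}$, gives $-\log\mathbb{E}_\theta[e^{-F(N)}]\le\bar{\mathbb{E}}[\theta L_T(\varphi)+F(N^{\theta\varphi})]$; taking the infimum over $\varphi$ closes this direction.

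For the reverse inequality $\geq$, I would start from the equality case in the duality formula, which is attained at the measure $d\bar Q^{*}/d\bar{\mathbb{P}}=e^{-F(N^\theta)}/\bar{\mathbb{E}}[e^{-F(N^\theta)}]$. The key step is to show that any $\bar Q\ll\bar{\mathbb{P}}$ of finite relative entropy can be represented as $\bar{\mathbb{Q}}^\varphi$ above for some predictable $\varphi$, using the predictable intensity / Jacod-type characterization of the compensator of a marked point process under an absolutely continuous change of measure. This identifies the control $\varphi$ from the Radon-Nikodym density and yields $R(\bar Q\Vert\bar{\mathbb{P}})=\theta\,\bar{\mathbb{E}}_{\bar Q}[L_T(\varphi)]$. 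A truncation-and-approximation argument then restricts to $\varphi\in\bar{\mathcal{A}}_b$ without increasing the cost, completing the matching lower bound.

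The main obstacle is the rigorous verification of the Girsanov-type identification of $N^{\theta\varphi}$ under $\bar{\mathbb{Q}}^\varphi$ and of the entropy formula $R(\bar{\mathbb{Q}}^\varphi\Vert\bar{\mathbb{P}})=\theta\bar{\mathbb{E}}_{\bar{\mathbb{Q}}^\varphi}[L_T(\varphi)]$ for unbounded or degenerate $\varphi$; for the class $\bar{\mathcal{A}}_b$ uniform boundedness takes care of exponential integrability and Novikov-type conditions, so one can proceed either by direct computation on the Poisson space or by quoting the corresponding result from \cite{BDM}, which is precisely Theorem 2.1 stated there.
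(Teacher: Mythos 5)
Your proposal takes a genuinely different route from the paper. The paper's proof is a two-line citation argument: it quotes Theorem 2.1 of \cite{BDM} for the representation with the infimum over the full class $\bar{\mathcal{A}}$, and then quotes Theorem 2.4 of \cite{BCD}, which shows the infimum is unchanged when taken over a subclass of $\bar{\mathcal{A}}_b$ consisting of controls bounded above and below and identically one outside a compact subset of $\mathcal{Y}$; since that subclass is contained in $\bar{\mathcal{A}}_b\subset\bar{\mathcal{A}}$, the infimum over $\bar{\mathcal{A}}_b$ is sandwiched. Your plan, by contrast, is to re-derive \cite{BDM}'s representation from scratch via the Donsker--Varadhan duality and a Girsanov change of measure. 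That is indeed the framework used in \cite{BDM}, and you acknowledge the option of simply citing it; what you miss is that a \emph{second} citation is needed for the $\bar{\mathcal{A}}_b$ restriction.

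Two concrete problems with your sketch. First, the Girsanov claim as written is incorrect: if $d\bar{\mathbb{Q}}^\varphi=\mathcal{E}_T(\varphi)\,d\bar{\mathbb{P}}$ with the martingale you wrote, then under $\bar{\mathbb{Q}}^\varphi$ the compensator of $\bar{N}$ on $\{r\le\theta\}$ is tilted to $\varphi(s,y)\,\bar{\nu}_T$, so the compensator of $N^{\theta\varphi}$ under $\bar{\mathbb{Q}}^\varphi$ is $\theta\varphi(s,y)^2\,\nu_T(dsdy)$ where $\varphi\le 1$ and $\theta(2\varphi(s,y)-1)\,\nu_T(dsdy)$ where $\varphi>1$, not $\theta\nu_T$. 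The statement you actually need is that $N^\theta$ under $\bar{\mathbb{Q}}^\varphi$ has the same predictable compensator, $\theta\varphi(s,y)\,\nu_T(dsdy)$, as $N^{\theta\varphi}$ under $\bar{\mathbb{P}}$; and one still has to pass from matching compensators to matching joint laws of the pair $(N^{\cdot},\varphi)$ so that both $\bar{\mathbb{E}}_{\bar{\mathbb{Q}}^\varphi}[F(N^\theta)]$ and $R(\bar{\mathbb{Q}}^\varphi\Vert\bar{\mathbb{P}})$ can be rewritten as $\bar{\mathbb{P}}$-expectations. This is a genuine step in \cite{BDM} that your sketch elides. Second, dismissing the reduction to $\bar{\mathcal{A}}_b$ as ``a truncation-and-approximation argument'' understates the difficulty: replacing $\varphi$ by $\varphi\wedge M$ changes the controlled measure $N^{\theta(\varphi\wedge M)}$, and $F$ is merely measurable, not continuous, so one cannot simply pass to the limit in $F(N^{\theta\varphi})$. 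Controlling the cost $L_T$ and the test functional simultaneously under this truncation is precisely what Theorem 2.4 of \cite{BCD} supplies, and it is the second ingredient the paper's proof cites and yours lacks.
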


\begin{proof}
For $F\in M_{b}\left( \mathcal{M}\right) $ and $\theta >0$, it follows from
Theorem 2.1 of \cite{BDM} that 
\begin{eqnarray*}
-\log \mathbb{E}_{\theta }\left[ \exp \left( -F\left( N\right) \right) %
\right] &=&-\log \mathbb{\bar{E}}\left[ \exp \left( -F(N^{\theta })\right) %
\right] \\
&=&\inf_{\varphi \in \mathcal{\bar{A}}}\mathbb{\bar{E}}\left[ \theta
L_{T}\left( \varphi \right) +F(N^{\theta \varphi })\right] .
\end{eqnarray*}%
Moreover, Theorem 2.4 of \cite{BCD} states that the above infimization can
in fact be taken over the smaller class of controls, where for each control $%
\varphi $ there is $B\in (0,\infty )$ and compact $K\subset \mathcal{Y}$
such that $1/B\leq \varphi \left( t,\omega ,y\right) \leq B$ for all $\left(
t,\omega ,y\right) \in \left[ 0,T\right] \times \mathcal{\bar{M}}\times K$
and $\varphi \left( t,\omega ,x\right) =1$ for all $\left( t,\omega
,y\right) \in \left[ 0,T\right] \times \mathcal{\bar{M}}\times K^{c}$. Since 
$\mathcal{\bar{A}}_{b}\subset \mathcal{\bar{A}}$ contains this class of
controls, we obtain (\ref{old rp}).
\end{proof}

\subsection{Variational representation for the empirical measure process}

In this section we derive a variational representation formula for the
empirical measure process $\mu ^{n}$. We represent $\mu ^{n}$ as a solution
to a stochastic differential equation that is driven by finitely many iid
Poisson random measures, and use thinning functions to obtain the desired
jump rates. We then derive a variational representation formula for $\mu
^{n} $, by viewing it as the image of a measurable mapping that acts on a
collection of rescaled Poisson random measures.

Take $\nu =m$, so that $\nu _{T}=m_{T}\otimes m$. For $n\in \mathbb{N}$, let 
$\{N_{v}^{n},v\in \mathcal{V}\}$ be a collection of iid Poisson random
measures (on $\mathcal{Y}_{T}$) with intensity measure $n\nu _{T}$. Then we
have the following SDE representation for the empirical measure process: for 
$t\in \left[ 0,T\right] $,%
\begin{equation}
\mu ^{n}(t)=\mu ^{n}(0)+\sum_{v\in \mathcal{V}}v\int_{[0,t]}\int_{\mathcal{Y}%
}\mathbb{I}_{\left[ 0,\lambda _{v}^{n}\left( \mu ^{n}(s-)\right) \right] }(y)%
\frac{1}{n}N_{v}^{n}(dsdy).  \label{sde1}
\end{equation}%
The existence of a solution to (\ref{sde1}) is justified by the following
argument.

We set%
\begin{equation}
\bar{R}\doteq \sup_{v\in \mathcal{V},x\in \mathcal{S}_{n}\text{,}n\in 
\mathbb{N}}\lambda _{v}^{n}\left( x\right) <\infty ,  \label{m'}
\end{equation}%
where the finiteness of $\bar{R}$ follows because Property \ref{prop-lambda}
is satisfied and (\ref{m}) holds. Let $\mathcal{M}_{atom}$ denote the set of
all $\mathrm{m}=\left\{ \mathrm{m}_{v},v\in \mathcal{V}\right\} $, where for
each $v\in \mathcal{V}$, $\mathrm{m}_{v}$ is an atomic measure on $\mathcal{Y%
}_{T}$, with the property that $\mathrm{m}_{v}(\left\{ t\right\} \times
\lbrack 0,\bar{R}])>0$ for only finitely many $t$. Recall from Section \ref%
{subs-model} that ${\mathcal{S}}$ denotes the unit $(d-1)$-dimensional
simplex and ${\mathcal{S}}_{n}$ is the corresponding sublattice. Define $%
\tilde{\Delta}^{d-1}=\{x\in \mathbb{R}^{d}:\sum_{i=1}^{d}x_{i}=1\}$ to be
the $(d-1)$-dimensional hyperplane that contains ${\mathcal{S}}.$ Denote by $%
D\left( \left[ 0,T\right] :\mathcal{S}\right) $ and $D\left( \left[ 0,T%
\right] :\tilde{\Delta}^{d-1}\right) $ respectively the space of c\`{a}dl%
\`{a}g functions on $[0,T]$ that take values in $\mathcal{S}$ and $\tilde{%
\Delta}^{d-1}$. Let $\lambda ^{n}=\{\lambda _{v}^{n},v\in \mathcal{V}\}$. We
also extend the definition of $\lambda ^{n}$ to $\tilde{\Delta}^{d-1}$ by
define it to be zero in $\tilde{\Delta}^{d-1}\setminus \mathcal{S}_{n}$.
Define $h_{n}:\mathcal{M}_{atom}\times \mathcal{S}\times \left( \mathbb{[}%
0,\infty )^{\tilde{\Delta}^{d-1}}\right) ^{\otimes \left\vert \mathcal{V}%
\right\vert }\rightarrow D\left( \left[ 0,T\right] :\tilde{\Delta}%
^{d-1}\right) $ as the mapping that takes $\left( \mathrm{m},\rho ,\lambda
^{n}\right) \in \mathcal{M}_{atom}\times \mathcal{S}\times \left( \lbrack
0,\infty )^{\tilde{\Delta}^{d-1}}\right) ^{\otimes \left\vert \mathcal{V}%
\right\vert }$ to the process $\eta \in D\left( \left[ 0,T\right] :\tilde{%
\Delta}^{d-1}\right) $ defined by 
\begin{equation}
\eta \left( t\right) \doteq \rho +\sum_{v\in \mathcal{V}}v\int_{[0,t]}\int_{%
\mathcal{Y}}\mathbb{I}_{[0,\lambda _{v}^{n}\left( \eta (s-)\right) ]}(y)%
\mathrm{m}_{v}(dsdy),\quad t\in \lbrack 0,T].  \label{hn}
\end{equation}%
In particular, when $\mathrm{m}=\frac{1}{n}N^{n}$ the process $\eta $ lie in 
${\mathcal{S}}_{n}.$

The existence of a solution $\eta (\cdot )$ to (\ref{hn}) is easily verified
by the following recursive construction. Set $t_{0}=0$, and define $\eta
^{0}\left( t\right) \doteq \rho $ for $t\geq 0$. Assume as part of the
recursive construction that for some $k\in \mathbb{N}_{0}$, a solution $\eta
^{k}\left( \cdot \right) $ to (\ref{hn}) has been constructed on the
interval $[0,t_{k}]$, and that $\eta ^{k}\left( t\right) =\eta ^{k}\left(
t_{k}\right) $ for $t\geq t_{k}$. For any $t\in \left[ t_{k},T\right] $ and $%
v\in \mathcal{V}$, let%
\begin{equation*}
A_{v}\left( t\right) \doteq \left\{ \left( s,y\right) :s\in \lbrack
t_{k},t],y\in \left[ 0,\lambda _{v}^{n}\left( \eta ^{k}(s)\right) \right]
\right\} ,
\end{equation*}%
and 
\begin{equation*}
t_{k+1}\doteq \inf \left\{ t>t_{k}\text{ such that for some }v\in \mathcal{V}%
,\mathrm{m}_{v}\left( A_{v}\left( t\right) \right) >0\right\} \wedge T,
\end{equation*}%
where for $a,b\in \mathbb{R}$, $a\wedge b$ denotes the minimum of $a$ and $b$%
. We then define $\eta ^{k+1}:[0,T]\rightarrow \mathbb{R}^{d}$ by setting $%
\eta ^{k+1}\left( t\right) \doteq \eta ^{k}\left( t\right) $ for $t\in
\lbrack 0,t_{k+1})$,%
\begin{equation*}
\eta ^{k+1}\left( t_{k+1}\right) \doteq \eta ^{k}\left( t_{k}\right)
+\sum_{v\in \mathcal{V}}v\int_{[t_{k},t_{k+1}]}\int_{\mathcal{Y}}\mathbb{I}%
_{[0,\lambda _{v}^{n}\left( \eta ^{k}(s-)\right) ]}(y)\mathrm{m}_{v}(dsdy),
\end{equation*}%
and $\eta ^{k+1}\left( t\right) \doteq \eta ^{k+1}\left( t_{k+1}\right) $
for $t\in \lbrack t_{k+1},T]$.

Since $\mathrm{m}_{v}$ has finitely many atoms on $\left[ 0,T\right] \times
\lbrack 0,\bar{R}]$, the construction will produce a function defined on all
of $[0,T]$ in $M<\infty $ steps, at which time we set $\eta \left( t\right)
=\eta ^{M}\left( t\right) $. Since $N\in \mathcal{M}_{atom}$ for $\mathbb{P}%
_{n}-$a.e. $\omega \in \mathcal{M}$, we can write 
\begin{equation}
\mu ^{n}\left( t,\omega \right) =h_n\left( \frac{1}{n}N^{n},\mu
^{n}(0,\omega ),\lambda ^{n}\right) \left( t\right) .  \label{mun}
\end{equation}

We now describe two classes of controls that will be used below. Recall that 
$\overline{\nu }_{T}=\nu _{T}\otimes m$. Let $\left\{ \bar{N}_{v}^{n},v\in 
\mathcal{V}\right\} $ be a collection of iid Poisson random measures on $%
\mathcal{W}_{T}$ with intensity measure $n\overline{\nu }_{T}$. We will
apply the representation that is appropriate for these $\left\vert \mathcal{V%
}\right\vert $ independent Poisson random measures. The underlying
probability space is now the product space $\left( \mathcal{\bar{M}},%
\mathcal{B}\left( \mathcal{\bar{M}}\right) \right) ^{\otimes \left\vert 
\mathcal{V}\right\vert }$ (with an abuse of notation we retain $\mathbb{\bar{%
P}}$ to denote the probability measure on this space). Let \newline
\begin{equation*}
\mathcal{G}_{t}^{\left\vert \mathcal{V}\right\vert }\doteq \sigma \left\{ 
\bar{N}_{v}^{n}\left( (0,s]\times A\right) :0\leq s\leq t,A\in \mathcal{B}%
\left( \mathcal{Y}\right) ,v\in \mathcal{V}\right\}
\end{equation*}%
and let $\mathcal{F}_{t}^{\left\vert \mathcal{V}\right\vert }$ denote its
completion under $\mathbb{\bar{P}}$. Denote by $\mathcal{\bar{P}}%
^{\left\vert \mathcal{V}\right\vert }$ the predictable $\sigma $-field on $%
\left[ 0,T\right] \times \mathcal{\bar{M}}^{\otimes \left\vert \mathcal{V}%
\right\vert }\mathcal{\ }$with the filtration $\{\mathcal{F}_{t}^{\left\vert 
\mathcal{V}\right\vert }:0\leq t\leq T\}$ on $\left( \mathcal{\bar{M}},%
\mathcal{B}\left( \mathcal{\bar{M}}\right) \right) ^{\otimes \left\vert 
\mathcal{V}\right\vert }$, and let $\mathcal{\bar{A}}^{\otimes \left\vert 
\mathcal{V}\right\vert }$ and $\mathcal{\bar{A}}_{b}^{\otimes \left\vert 
\mathcal{V}\right\vert }$ be defined analogously as was done for the case of
a single Poisson random measure. Given $\varphi \in \mathcal{\bar{A}}%
_{b}^{\otimes \left\vert \mathcal{V}\right\vert }$ and $\mu ^{n}(0)\in {%
\mathcal{S}},$ we define the controlled jump Markov process $\hat{\mu}%
^{n}\in D\left( [0,T]:\mathcal{S}\right) $ to be the solution to the
following SDE: for $t\in \left[ 0,T\right] $,%
\begin{equation}
\hat{\mu}^{n}\left( t\right) =\mu ^{n}(0)+\sum_{v\in \mathcal{V}%
}v\int_{[0,t]}\int_{\mathcal{Y}}\mathbb{I}_{[0,\lambda _{v}^{n}\left( \hat{%
\mu}^{n}(s-)\right) ]}(y)\int_{[0,\infty )}\mathbb{I}_{[0,\varphi
_{v}(s-,y)]}(r)\frac{1}{n}\bar{N}_{v}^{n}(dsdydr).  \label{sde2}
\end{equation}%
As described previously, $\varphi _{v}(s,y)$ will control the jump rate as a
function of $(s,\omega ,y)$. In particular, the overall jump rate for a jump
of type $v$ is the product $\lambda _{v}^{n}\left( \hat{\mu}^{n}(s)\right)
\varphi _{v}(s,y)$, so that $\varphi _{v}(s,y)$ perturbs the jump rate away
from that of the original model, and the cumulative impact of the
perturbation is found by integrating $y$ over $[0,\lambda _{v}^{n}\left( 
\hat{\mu}^{n}(s)\right) ]$. For $v\in \mathcal{V}$, let $N_{v}^{n\varphi }$
be defined as in (\ref{nphi}), with $\varphi $ replaced by $\varphi _{v}$
and $\bar{N}$ replaced by $\bar{N}_{v}^{n}$, and let $N^{n\varphi
}=\{N_{v}^{n\varphi },v\in \mathcal{V}\}$. For fixed $\varphi \in \mathcal{%
\bar{A}}_{b}^{\otimes \left\vert \mathcal{V}\right\vert }$, $N^{n\varphi
}\in \mathcal{M}_{atom}$ a.s. From the definition of $h_{n}(\cdot )$ and $%
N^{n\varphi }$, it is clear that (\ref{sde2}) is equivalent to the relation 
\begin{equation*}
\hat{\mu}^{n}=h_{n}\left( \frac{1}{n}N^{n\varphi },\mu ^{n}(0),\lambda
^{n}\right) .
\end{equation*}

Applying Theorem \ref{3.1} and (\ref{mun}) with $F=G\circ h_{n}$, we obtain
the following representation formula for $\mu ^{n}$.

\begin{lemma}
\label{rep} For $G\in M_{b}\left( D\left( \left[ 0,T\right] :\mathcal{S}%
\right) \right) $,%
\begin{equation*}
-\frac{1}{n}\log \mathbb{E}\left[ \exp (-nG(\mu ^{n}))\right] =\inf_{\varphi
\in \mathcal{\bar{A}}_{b}^{\otimes \left\vert \mathcal{V}\right\vert }}%
\mathbb{\bar{E}}\left[ \sum_{v\in \mathcal{V}}L_{T}\left( \varphi
_{v}\right) +G(\hat{\mu}^{n}):\hat{\mu}^{n}=h_{n}\left( \frac{1}{n}%
N^{n\varphi },\mu ^{n}(0),\lambda ^{n}\right) \right] .
\end{equation*}
\end{lemma}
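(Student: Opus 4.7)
The plan is to reduce the lemma directly to Theorem \ref{3.1} applied to the $|\mathcal{V}|$-fold product of Poisson random measures. First I would note that \eqref{mun} already identifies $\mu^n$ as a deterministic, measurable functional of the collection $\{\frac{1}{n}N_v^n, v \in \mathcal{V}\}$ of rescaled Poisson random measures, namely $\mu^n = h_n(\frac{1}{n}N^n, \mu^n(0), \lambda^n)$. Because $\mu^n$ takes values in the compact set ${\mathcal{S}}$ and $G$ is bounded and Borel measurable on $D([0,T]:{\mathcal{S}})$, the composition $F(\mathrm{m}) \doteq n\, G(h_n(\frac{1}{n}\mathrm{m}, \mu^n(0), \lambda^n))$ defines a bounded, Borel measurable functional on $\mathcal{M}^{\otimes |\mathcal{V}|}$.

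Next I would invoke the natural multivariate generalization of Theorem \ref{3.1} for $|\mathcal{V}|$ independent Poisson random measures on $\mathcal{Y}_T$, each with common intensity $n \nu_T$, controlled by an element $\varphi = \{\varphi_v, v \in \mathcal{V}\} \in \bar{\mathcal{A}}_b^{\otimes |\mathcal{V}|}$. This generalization, which follows from Theorem 2.1 of \cite{BDM} applied on the product space (with the restriction to bounded controls justified as in Theorem 2.4 of \cite{BCD}, exactly as in the proof of Theorem \ref{3.1}), yields
\begin{equation*}
-\log \bar{\mathbb{E}}\bigl[\exp(-F(N^n))\bigr] = \inf_{\varphi \in \bar{\mathcal{A}}_b^{\otimes |\mathcal{V}|}} \bar{\mathbb{E}}\Bigl[n \sum_{v \in \mathcal{V}} L_T(\varphi_v) + F(N^{n\varphi})\Bigr],
\end{equation*}
where $N^n = \{N_v^n\}$ and $N^{n\varphi} = \{N_v^{n\varphi_v}\}$ are the uncontrolled and controlled measures, respectively.

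Then I would substitute the chosen $F$: since $F(N^n) = n\, G(\mu^n)$ by \eqref{mun}, while $F(N^{n\varphi}) = n\, G(\hat{\mu}^n)$ with $\hat{\mu}^n = h_n(\frac{1}{n}N^{n\varphi}, \mu^n(0), \lambda^n)$ the solution of the controlled SDE \eqref{sde2}, dividing both sides of the displayed identity by $n$ produces the claimed representation.

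I do not expect any serious obstacle here; the only points requiring mild care are the rigorous justification of the product-space extension of Theorem \ref{3.1} (a routine adaptation of the arguments of \cite{BDM,BCD}, since independent components contribute additively to the relative entropy cost $\sum_v L_T(\varphi_v)$), and the verification that $F = nG \circ h_n(\cdot, \mu^n(0), \lambda^n)$ is genuinely bounded and measurable. The former is standard, while the latter is immediate from the boundedness of $G$ and the pathwise construction of $h_n$ given after \eqref{hn}, which exhibits $h_n$ as a measurable map on $\mathcal{M}_{atom} \times {\mathcal{S}} \times ([0,\infty)^{\tilde\Delta^{d-1}})^{\otimes |\mathcal{V}|}$.
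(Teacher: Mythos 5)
Your proof is correct and matches the paper's one-line argument ("apply Theorem 3.1 and \eqref{mun} with $F = G \circ h_n$"), merely filling in the details the paper leaves implicit: the product-space extension of Theorem \ref{3.1} to $|\mathcal{V}|$ independent PRMs, the correct scaling $F = nG\circ h_n(\tfrac{1}{n}\,\cdot\,,\mu^n(0),\lambda^n)$ so that dividing by $n$ yields the stated identity, and the boundedness and measurability of this $F$. No issues.
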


We now derive a simpler form of the variational representation formula than
the one given in Lemma \ref{rep}. The starting point for Lemma \ref{rep} is
the representation given in \cite[Theorem 2.1]{BDM}, which is general enough
to cover situations where different points in $y\in \mathcal{Y}$ correspond
to different \textquotedblleft types\textquotedblright\ of jumps. For our
purposes this is in fact more general than we need, since all points in $%
\mathcal{Y}$ correspond to exactly the same type of jump, and all that is
needed from the space $\mathcal{Y}$ is that it be big enough that arbitrary
jump rates [such as $\lambda _{v}^{n}\left( \hat{\mu}^{n}(s)\right) $] can
be obtained by thinning. For example, we could have used $\mathcal{Y}=[0,%
\bar{R}]$ rather than $[0,\infty )$. The $y$'s in an interval such as $%
[0,\lambda _{v}^{n}\left( \hat{\mu}^{n}(s)\right) ]$ all play the same role,
which is to indicate that a jump of type $v$ should occur. Hence one
expects, and we will verify using Jensen's inequality, that one can
reformulate the representation in terms of controls with no explicit $y$%
-dependence. Thus we will replace the $t$, $y$ and $v$ dependent controls $%
\mathcal{\bar{A}}_{b}^{\otimes \left\vert \mathcal{V}\right\vert }$ by
controls $\mathcal{A}_{b}^{\otimes \left\vert \mathcal{V}\right\vert }$ that
only have $t$ and $v$ dependence, and rewrite the running cost as a function
of the new controlled jump rates.

\begin{definition}
\label{ab}Define $\mathcal{A}_{b}^{\otimes \left\vert \mathcal{V}\right\vert
}$ to be the class of $\mathcal{\bar{P}}\backslash \mathcal{B}([0,\infty
)^{\left\vert \mathcal{V}\right\vert })$ measurable maps $\varphi :$ $\left[
0,T\right] \times \mathcal{\bar{M}}^{\otimes \left\vert \mathcal{V}%
\right\vert }\rightarrow \lbrack 0,\infty )^{\left\vert \mathcal{V}%
\right\vert }$ such that $\sup_{t\in \left[ 0,T\right] ,\omega \in \mathcal{%
\bar{M}}^{\otimes \left\vert \mathcal{V}\right\vert },v}\varphi
_{v}(t,\omega )\leq B$ for some $B<\infty $.
\end{definition}

Define $\Lambda ^{n}:\mathcal{A}_{b}^{\otimes \left\vert \mathcal{V}%
\right\vert }\times \mathcal{S}\rightarrow D\left( \left[ 0,T\right] :\tilde{%
\Delta}^{d-1}\right) $ by%
\begin{equation}
\Lambda ^{n}\left( \bar{\alpha},\rho \right) \left( t\right) =\rho
+\sum_{v\in \mathcal{V}}v\int_{[0,t]}\int_{\mathcal{Y}}\mathbb{I}_{\left[ 0,%
\bar{\alpha}_{v}(s-)\right] }(y)\frac{1}{n}N_{v}^{n}(dsdy).  \label{lam}
\end{equation}%
$\Lambda ^{n}\left( \cdot ,\rho \right) $ is well-defined for $\bar{\alpha}%
\in \mathcal{A}_{b}^{\otimes \left\vert \mathcal{V}\right\vert }$.

We are now in a position to state the main variational representation
formula, the proof of which is deferred to the Appendix. This representation
appears to be the most appropriate one for finite state Markov chains, and
expresses the variational functional as the sum of the expected cost for
perturbing the jump rates, plus the expected value of the test function
evaluated at the process whose dynamics follow the perturbed rates.

\begin{theorem}
\label{3.2}Let $F\in M_{b}\left( D\left( \left[ 0,T\right] :\tilde{\Delta}%
^{d-1}\right) \right) $. Then%
\begin{align*}
& -\frac{1}{n}\log \mathbb{E}\left[ \exp (-nF(\mu ^{n}))\right] \\
& \quad =\inf_{\bar{\alpha}\in \mathcal{A}_{b}^{\otimes \left\vert \mathcal{V%
}\right\vert }}\mathbb{\bar{E}}\left[ \sum_{v\in \mathcal{V}%
}\int_{0}^{T}\lambda _{v}^{n}\left( \bar{\mu}^{n}(t)\right) \ell \left( 
\frac{\bar{\alpha}_{v}(t)}{\lambda _{v}^{n}\left( \bar{\mu}^{n}(t)\right) }%
\right) dt+F(\bar{\mu}^{n}):\bar{\mu}^{n}=\Lambda ^{n}\left( \bar{\alpha}%
,\mu ^{n}\left( 0\right) \right) \right] .
\end{align*}
\end{theorem}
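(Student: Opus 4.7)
The plan is to start from the representation in Lemma \ref{rep} and show that the infimum there over the larger class $\bar{\mathcal{A}}_b^{\otimes|\mathcal{V}|}$ of $(t,\omega,y)$-dependent controls coincides with the infimum in Theorem \ref{3.2} over the smaller class $\mathcal{A}_b^{\otimes|\mathcal{V}|}$ of $(t,\omega)$-dependent controls. I prove two matching inequalities.

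For the inequality showing the Theorem \ref{3.2} infimum is no larger than the Lemma \ref{rep} infimum, given $\bar\alpha\in\mathcal{A}_b^{\otimes|\mathcal{V}|}$ I construct a concentrated $\varphi\in\bar{\mathcal{A}}_b^{\otimes|\mathcal{V}|}$ via
\begin{equation*}
\varphi_v(s,\omega,y)=\begin{cases}\bar\alpha_v(s,\omega)/\lambda_v^n(\bar\mu^n(s-,\omega)) & y\in[0,\lambda_v^n(\bar\mu^n(s-,\omega))],\\ 1 & \text{otherwise},\end{cases}
\end{equation*}
where $\bar\mu^n=\Lambda^n(\bar\alpha,\mu^n(0))$ is realized on the enlarged space by using the $r\in[0,1]$-slice of $\bar{N}_v^n$ as the driving Poisson measure. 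Because $\ell(1)=0$, one has $L_T(\varphi_v)=\int_0^T\lambda_v^n(\bar\mu^n(s))\ell(\bar\alpha_v(s)/\lambda_v^n(\bar\mu^n(s)))\,ds$ pathwise. Moreover the process $\hat\mu^n$ driven by this $\varphi$ has jump rate exactly $\bar\alpha_v$ in direction $v$, so $\hat\mu^n$ and $\bar\mu^n$ share the same law and $\bar{\mathbb{E}}[F(\hat\mu^n)]=\bar{\mathbb{E}}[F(\bar\mu^n)]$. Substituting back into Lemma \ref{rep} gives the desired bound.

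For the reverse inequality, given $\varphi\in\bar{\mathcal{A}}_b^{\otimes|\mathcal{V}|}$ with controlled process $\hat\mu^n$, I average out the $y$-variable by setting
\begin{equation*}
\bar\alpha_v(s,\omega)\doteq\int_0^{\lambda_v^n(\hat\mu^n(s-,\omega))}\varphi_v(s,\omega,y)\,dy.
\end{equation*}
Jensen's inequality applied to the convex function $\ell$ on $[0,\lambda_v^n(\hat\mu^n(s-))]$, together with the nonnegativity of $\ell$, yields
\begin{equation*}
\int_0^\infty\ell(\varphi_v(s,y))\,dy\geq\lambda_v^n(\hat\mu^n(s-))\,\ell\!\left(\frac{\bar\alpha_v(s)}{\lambda_v^n(\hat\mu^n(s-))}\right),
\end{equation*}
so the Lemma \ref{rep} cost under $\varphi$ dominates the Theorem \ref{3.2} cost under $\bar\alpha$ pathwise. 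The process $\bar\mu^n=\Lambda^n(\bar\alpha,\mu^n(0))$ has the same effective jump rate $\bar\alpha_v$ in direction $v$ as $\hat\mu^n$, and thus shares its law, so $F$-expectations again coincide. Taking infima yields the reverse inequality and hence Theorem \ref{3.2}.

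The main obstacle is the degenerate set $\{(s,\omega):\lambda_v^n(\bar\mu^n(s-,\omega))=0\}$, on which the ratio $\bar\alpha_v/\lambda_v^n$ is ill-defined. This is handled by the convention $0\cdot\ell(a/0)=0$ if $a=0$ and $+\infty$ otherwise; when $\bar\alpha_v>0$ at such $s$ the integrand in Theorem \ref{3.2} is already $+\infty$ and the inequality is vacuous, whereas for $\bar\alpha_v=0$ the jump direction $v$ contributes nothing and $\varphi_v\equiv 1$ can be used. A secondary subtlety is the identification in law of $\hat\mu^n$ with $\bar\mu^n$ needed to match the $F$-expectations; this follows from the fact that both are c\`adl\`ag jump Markov processes on $\mathcal{S}_n$ with the same prescribed time-inhomogeneous jump rates, whose laws agree by standard uniqueness for well-posed martingale problems. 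Measurability of the constructed controls follows from the predictability of $\bar\alpha$ (respectively $\hat\mu^n$) and the continuity of $\lambda_v^n$.
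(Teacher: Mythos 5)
Your overall strategy is the same one the paper uses: start from Lemma \ref{rep}, establish one inequality by averaging the $y$-variable of a given control $\varphi$ (Jensen), and the other by replacing a given $\bar\alpha$ with a ``concentrated'' control that is constant on the interval $[0,\lambda_v^n(\cdot)]$. Your reverse (Jensen) direction is essentially the argument behind the paper's Lemma \ref{c1} and the last displayed inequality in the proof of Lemma \ref{inf}. (One cosmetic point: your two labels for ``which infimum is smaller'' are swapped relative to the constructions you give, but since you prove both inequalities this does not matter.)

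However, your forward direction contains a genuine gap. You pre-fix $\bar\mu^n=\Lambda^n(\bar\alpha,\mu^n(0))$ (realized via the $r\in[0,1]$-slice of $\bar N_v^n$), define the concentrated control $\varphi_v$ in terms of $\bar\mu^n(s-)$, and then claim that the process $\hat\mu^n=h_n(\frac1n N^{n\varphi},\cdot)$ ``has jump rate exactly $\bar\alpha_v$''. That is not correct. The jump rate of $\hat\mu^n$ in direction $v$ at time $s$ when $\hat\mu^n(s-)=x$ is, by the SDE \eqref{sde2},
\begin{equation*}
\int_0^{\lambda_v^n(x)}\varphi_v(s,y)\,dy,
\end{equation*}
and your $\varphi_v$ is concentrated on $\left[0,\lambda_v^n(\bar\mu^n(s-))\right]$, \emph{not} on $\left[0,\lambda_v^n(\hat\mu^n(s-))\right]$. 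Since $\hat\mu^n$ and $\bar\mu^n$ are driven by different subsets of the same PRM, they diverge a.s.\ after their first jumps, and once $\hat\mu^n(s-)\neq\bar\mu^n(s-)$ the integral above equals $\bar\alpha_v(s)\lambda_v^n(\hat\mu^n(s-))/\lambda_v^n(\bar\mu^n(s-))$ (or a sum of that and an extra term), not $\bar\alpha_v(s)$. Consequently the asserted equality of laws of $\hat\mu^n$ and $\bar\mu^n$, and with it the matching of the $F$-expectations, fails. This is precisely the circularity that Lemma \ref{exst} is designed to break: the pair $(\bar\mu^n,\bar\varphi)=\Xi^n(\bar\alpha)$ is constructed \emph{recursively between jump times} so that $\bar\mu^n$ is, by construction, pathwise equal to the process driven by $\bar\varphi$, and the concentrated control is defined in terms of that same process. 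With this fixed-point construction the identity $L_1(\bar\varphi_v)=\int_0^1\lambda_v^n(\bar\mu^n(s))\ell(\bar\alpha_v(s)/\lambda_v^n(\bar\mu^n(s)))\,ds$ and the coincidence of the $F$-term hold exactly, and Corollary \ref{d=} is then invoked to trade $\Xi_1^n(\bar\alpha)$ for $\Lambda^n(\bar\alpha,\mu^n(0))$. Your proposal skips this step, and the direct substitution you propose does not produce the claimed control.

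A secondary and lesser imprecision: the appeal to ``standard uniqueness for well-posed martingale problems'' to match laws is not quite apt here, since the controlled processes are not Markov --- $\varphi$ and $\bar\alpha$ are predictable in the filtration generated by the driving PRMs, not merely functions of the current state. The paper's Corollary \ref{d=} justifies the identification of laws by matching predictable jump intensities, which is the appropriate framework, though it is also stated somewhat tersely.
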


\begin{remark}
Since the integrand in the right hand side of the equation above is singular
when $\bar{\mu}^{n}(t)\notin \mathcal{S}$ for some $t\in \left[ 0,T\right] $%
, it is equivalent to infimize over a smaller class of control, namely, $%
\bar{\alpha}\in \mathcal{A}_{b}^{\otimes \left\vert \mathcal{V}\right\vert }$
such that $\bar{\alpha}_{v}\left( t\right) =0$ when $\bar{\mu}^{n}(t)=x\in
\partial \mathcal{S}$ and $x+\frac{1}{n}v$ is taken outside $\mathcal{S}$.
By (\ref{lam}) the controlled process $\bar{\mu}^{n}$ will then lie in $%
\mathcal{S}$. Therefore it suffices to prove Theorem \ref{3.2} for $F\in
M_{b}\left( D\left( \left[ 0,T\right] :\mathcal{S}\right) \right) $.
\end{remark}

\subsection{The Law of Large Numbers limit}

\label{subs-pflln}

We next prove the law of large numbers limit stated in Theorem \ref{2.1}.
First recall the law of large numbers result for scaled Poisson random
measures: for any $A\in \mathcal{B}\left( \left[ 0,T\right] \times \lbrack
0,\infty )\right) $ such that $m_{T}\otimes m\left( A\right) <\infty $, $%
\frac{1}{n}N_{v}^{n}\left( A\right) \rightarrow m_{T}\otimes m\left(
A\right) $ in probability, for any $v\in \mathcal{V}$. This implies that for
any $f\in C_{c}\left( \left[ 0,T\right] \times \lbrack 0,\infty )\right) $
we have $\int_{[0,T]\times \lbrack 0,\infty )}f\left( s,y\right) \frac{1}{n}%
N_{v}^{n}\left( dsdy\right) $ $\rightarrow \int_{\lbrack 0,T]\times \lbrack
0,\infty )}f\left( s,y\right) dsdy$. Rewrite (\ref{sde1}) as%
\begin{equation*}
\mu ^{n}(t)=\rho _{0}+\sum_{v\in \mathcal{V}}v\int_{[0,t]}\lambda
_{v}^{n}\left( \mu ^{n}(s-)\right) ds+M^{n}(t),
\end{equation*}%
where $M^{n}(t)\doteq \sum_{v\in \mathcal{V}}\int_{[0,t]\times \lbrack
0,\infty )}\mathbb{I}_{\left[ 0,\lambda _{v}^{n}\left( \mu ^{n}(s-)\right) %
\right] }(y)(\frac{1}{n}N_{v}^{n}(dsdy)-dsdy)$ is an $\left\{ \mathcal{F}%
_{t}^{n}\right\} $-martingale. For any $\varepsilon >0$, with $\bar{R}$
defined as in (\ref{m'}), Doob's maximal inequality gives 
\begin{eqnarray*}
\mathbb{P}\left( \sup_{t\in \left[ 0,T\right] }\left\Vert
M^{n}(t)\right\Vert >\varepsilon \right) &\leq &\frac{1}{\varepsilon ^{2}}%
\mathbb{E}\left[ \left\Vert M^{n}(T)\right\Vert \right] ^{2} \\
&=&\frac{1}{\varepsilon ^{2}}\mathbb{E}\left[ \left\Vert \sum_{v\in \mathcal{%
V}}\int_{0}^{T}\int_{0}^{\infty }\mathbb{I}_{\left[ 0,\lambda _{v}^{n}\left(
\mu ^{n}(s-)\right) \right] }(y)\frac{1}{n}(N_{v}^{n}(dsdy)-ndsdy)\right%
\Vert ^{2}\right] \\
&\leq &\frac{\left\vert \mathcal{V}\right\vert }{n\varepsilon ^{2}}\mathbb{E}%
\left[ \int_{0}^{T}\int_{0}^{\bar{R}}dsdx\right] ,
\end{eqnarray*}%
which tends to zero as $n\rightarrow \infty $. Let $\mu $ the unique
solution $\mu $ of 
\begin{equation*}
\mu (t)=\rho _{0}+\sum_{v\in \mathcal{V}}v\int_{[0,t]}\lambda _{v}\left( \mu
(s-)\right) ds,
\end{equation*}%
which is the integral version of (\ref{1}). Combining $\mathbb{P}(\sup_{t\in %
\left[ 0,T\right] }\left\Vert M^{n}(t)\right\Vert >\varepsilon )\rightarrow
0 $ with the fact that $\lambda _{v}^{n}$ converges uniformly to $\lambda
_{v}$ and that $\lambda _{v}$ is Lipschitz continuous (by Property \ref%
{prop-lambda}), it follows from Gronwall's inequality that $\mu
^{n}\rightarrow \mu $ in probability (uniformly on $t\in \lbrack 0,T]$).
This completes the proof.

\bigskip

\section{Proof of the LDP Upper Bound}

\label{sec-pfupper}

A large deviation upper bound for a general class of sequences of Markov
processes was obtained in \cite{DEW}. We will apply the result of \cite{DEW}
to establish a large deviation upper bound for the sequence $\{\mu
^{n}(\cdot )\}_{n\in \mathbb{N}}$, in which for each $n\in \mathbb{N}$, $\mu
^{n}(\cdot )$ is a jump Markov process on ${\mathcal{S}}_{n}$ with generator 
${\mathcal{L}}_{n}$ in \eqref{Lnl} such that the associated sequence of
rates $\{\lambda _{v}^{n}(\cdot ),v\in {\mathcal{V}}\},n\in \mathbb{N},$
satisfy Property \ref{prop-lambda} for suitable Lipschitz continuous
functions $\{\lambda _{v}(\cdot ),v\in {\mathcal{V}}\}$. Theorem 1.1 of \cite%
{DEW} applies to Markov processes whose infinitesimal generator uses the
limit jump rates: 
\begin{equation}
\mathcal{L}_{n}^{0}\left( f\right) \left( x\right) =n\sum_{v\in \mathcal{V}%
}\lambda _{v}\left( x\right) \left[ f\left( x+\frac{1}{n}v\right) -f\left(
x\right) \right] .  \label{Ln0}
\end{equation}%
However, as discussed below the uniform convergence of $\lambda _{v}^{n}(x)$
to $\lambda _{v}(x)$ implies that the large deviation properties the
sequence of Markov processes with generators $\mathcal{L}_{n}^{0}$ and those
generators $\eqref{Lnl}$ coincide.

To state the result from \cite{DEW}, for $x, \theta \in \mathbb{R}^{d}$,
define 
\begin{equation}
H\left( x, \theta \right) \doteq \sum_{v\in \mathcal{V}}\lambda _{v}\left(
x\right) \left( \exp \left\langle \theta ,v\right\rangle -1\right) .
\label{eqn:defofH}
\end{equation}%
Note that $H$ is continuous. Let $L^{0}$ be its Legendre-Fenchel transform
defined by 
\begin{equation}
L^{0}\left( x,\beta \right) \doteq \sup_{\theta \in \mathbb{R}^{d}}\left[
\left\langle \theta,\beta \right\rangle -H\left( x,\theta \right) \right] .
\label{rate fn L0}
\end{equation}%
Also, for $t\in \left[ 0,1\right] $ define $I_{t}^{0}$ as in (\ref{I}), but
with $L$ replaced by $L^{0}$.

\begin{proposition}
\label{1.1}For any compact set ${\mathcal{K}}\subset \mathcal{S}$ and $%
M<\infty ,$ the set%
\begin{equation*}
\left\{ \gamma :I^{0}\left( \gamma \right) \leq M,\gamma \left( 0\right) \in 
{\mathcal{K}}\right\}
\end{equation*}%
is compact. Assume the family of jump rates $\left\{
\lambda_{v}\left(\cdot\right), v\in \mathcal{V}\right\} $ satisfies Property %
\ref{prop-lambda}. Also, assume that the initial conditions $\left\{ \mu
^{n}\left( 0\right) \right\} _{n\in \mathbb{N}}$ are deterministic, and $\mu
^{n}\left( 0\right) \rightarrow \mu _{0}\in {\mathcal{P}}\left( \mathcal{X}%
\right) $ as $n$ tends to infinity. Let $\left\{ Y^{n}\right\} _{n\in 
\mathbb{N}}$ be a sequence of Markov processes with generator $\mathcal{L}%
_{n}^{0}$, and $Y^{n}=\mu ^{n}\left( 0\right) $. Then $\left\{ Y^{n}\right\} 
$ satisfies the large deviation upper bound with rate function $I^{0}$.
\end{proposition}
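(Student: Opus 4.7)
The plan is to derive both assertions essentially as corollaries of general results: compactness is classical once superlinear growth of $L^0$ is established, and the upper bound follows from a direct appeal to \cite[Theorem 1.1]{DEW}.

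For the compactness of the level sets $\{\gamma : I^0(\gamma) \leq M, \gamma(0) \in \mathcal{K}\}$, I would follow a standard template. Since trajectories take values in the compact simplex $\mathcal{S}$, pointwise boundedness is automatic. The core estimate is superlinear growth of $L^0$ in its second argument: from \eqref{rate fn L0} and the uniform bound $R<\infty$ from \eqref{m}, one has
\begin{equation*}
H(x,\theta) \leq |\mathcal{V}| R \bigl( e^{C_0 \|\theta\|} - 1 \bigr), \qquad C_0 \doteq \max_{v \in \mathcal{V}} \|v\|,
\end{equation*}
and so choosing $\theta$ parallel to $\beta$ with $\|\theta\|$ growing logarithmically in $\|\beta\|$ yields
\begin{equation*}
L^0(x,\beta) \geq c_1 \|\beta\| \log(1 + \|\beta\|) - c_2
\end{equation*}
uniformly in $x \in \mathcal{S}$. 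Standard uniform integrability then gives equi-absolute-continuity of the derivatives $\dot\gamma$ for $\gamma$ in the level set; by Arzel\`a--Ascoli this yields relative compactness in $AC([0,1]:\mathcal{S})$, hence in $D([0,1]:\mathcal{S})$ with the Skorokhod topology. Lower semicontinuity of $I^0$ follows from the joint lower semicontinuity of $L^0$ (continuity in $x$, and convexity plus lower semicontinuity in $\beta$, both by its definition as a Legendre transform) via the standard mollification argument for integral functionals on absolutely continuous paths. Combining these closes the level set and yields sequential compactness.

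For the upper bound, I would directly invoke \cite[Theorem 1.1]{DEW}. The hypotheses required there reduce to (i) a finite set of jump directions $\mathcal{V}$; (ii) rates $\lambda_v$ that are continuous and uniformly bounded on the state space $\mathcal{S}$; and (iii) convergence of the deterministic initial conditions. Here (i) is built into the model, (ii) follows from Property \ref{prop-lambda} together with \eqref{m}, and (iii) is assumed. Since $H$ defined in \eqref{eqn:defofH} is precisely the Hamiltonian associated with the generator $\mathcal{L}_n^0$ in \eqref{Ln0}, the rate function supplied by \cite[Theorem 1.1]{DEW} is exactly $I^0$ defined via \eqref{rate fn L0}, and the proposition's upper bound is immediate.

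The only step that requires actual work rather than quoting is the superlinear growth estimate for $L^0$, which drives the equi-continuity. Boundary vanishing of the rates $\lambda_v$ is a potential subtlety since it allows $L^0(x,\beta) = +\infty$ on parts of the boundary, but for both compactness and the upper bound this only makes the rate function more stringent and raises no difficulty; the delicate boundary behaviour is an issue only for the large deviation \emph{lower} bound, treated elsewhere in the paper.
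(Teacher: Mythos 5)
Your proposal is correct and reaches the same conclusion, but it splits the work differently from the paper. The paper's actual proof of Proposition~\ref{1.1} is a one-line reference: both the level-set compactness and the upper bound are obtained by citing \cite[Theorem 1.1]{DEW} with the parameter choices $\varepsilon = 1/n$, $a(\cdot) = b(\cdot) = 0$, and $\mu_x(\cdot) = \mathbb{I}_{\{x\in\mathcal{S}\}}\sum_{v\in\mathcal{V}}\lambda_v(x)\delta_v(\cdot)$; the compactness assertion is part of what that theorem delivers, not proved separately. You instead cite \cite{DEW} only for the upper bound and establish compactness directly: you derive the uniform superlinear lower bound $L^0(x,\beta)\geq c_1\|\beta\|\log\|\beta\| - c_2\|\beta\|$ via a Legendre-transform computation (which is in fact essentially the lower-bound half of Proposition~\ref{4.5}, applied to $L=L^0$), use this to obtain uniform integrability of the derivatives and hence equi-absolute-continuity via Arzel\`a--Ascoli, and close the level set using joint lower semicontinuity of $L^0$ (which holds because $L^0(x,\cdot)$ is a Legendre transform of the jointly continuous $H(x,\cdot)$). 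All of these steps are sound, and the resulting argument is self-contained where the paper's is not. The trade-off is mild redundancy: you are re-deriving superlinear growth that the paper proves in Section~\ref{sec-locrf} and would anyway obtain for free from \cite{DEW}. One small point you implicitly rely on but do not state: to conclude compactness (not just relative compactness) you also need the constraint $\gamma(0)\in\mathcal{K}$ to be closed under Skorokhod convergence, which holds here because the limit paths are absolutely continuous, hence continuous at $0$, so $J_1$-convergence gives $\gamma_n(0)\to\gamma(0)$.
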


\begin{proof}
This result follows from Theorem 1.1 of \cite{DEW} with (in the notation of 
\cite{DEW}) $\varepsilon =1/n$, $a\left( \cdot \right) =b\left( \cdot
\right) =0$, and $\mu _{x}\left( \cdot \right) =\mathbb{I}_{\{x\in \mathcal{S%
}\}}\sum_{v\in \mathcal{V}}\lambda _{v}\left( x\right) \delta _{v}\left(
\cdot \right) $.
\end{proof}

We have introduced the function $L^{0}$ in (\ref{rate fn L0}) and the
``local rate function'' $L$ in (\ref{L}), defined respectively in terms of a
Legendre transform and the Poisson local rate function $\ell$. We now show
that these functions are equal (see also Lemma 3.1 of \cite{SW}).

\begin{proposition}
\label{4.2} Assume the family of jump rates $\left\{ \lambda _{v}\left(
\cdot \right) ,v\in \mathcal{V}\right\} $ satisfies Property \ref%
{prop-lambda}. For all $x\in \mathcal{S},\beta \in \Delta ^{d-1}$, 
\begin{equation}
L^{0}\left( x,\beta \right) =\inf_{q\in \lbrack 0,\infty )^{|{\mathcal{V}}%
|}:\sum_{v\in \mathcal{V}}vq_{v}=\beta }\sum_{v\in \mathcal{V}}\lambda
_{v}\left( x\right) \ell \left( \frac{q_{v}}{\lambda _{v}\left( x\right) }%
\right) =L\left( x,\beta \right) .  \label{eqn:inf-conv}
\end{equation}%
Moreover, $I=I^{0}$.
\end{proposition}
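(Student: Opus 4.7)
The plan is to prove the pointwise identity $L^{0}(x,\beta)=L(x,\beta)$ via convex duality, since the second equality $I=I^{0}$ then follows immediately from the integral definitions in \eqref{I}. The strategy is to recognize that $H(x,\cdot)$ is the log-moment generating function associated with independent Poisson jumps of rates $\lambda_{v}(x)$ in directions $v$, and that $L(x,\cdot)$ is the natural rate function obtained by contracting from the product Poisson rate function under the linear map $q\mapsto \sum_{v}vq_{v}$; by Fenchel--Moreau it then suffices to show the Legendre transform of $L(x,\cdot)$ equals $H(x,\cdot)$.

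The key computation is to interchange the two suprema:
\begin{align*}
L(x,\cdot)^{*}(\theta)
&=\sup_{\beta\in\Delta^{d-1}}\Bigl[\langle\theta,\beta\rangle-\inf_{q\geq 0:\sum_{v}vq_{v}=\beta}\sum_{v\in\mathcal{V}}\lambda_{v}(x)\,\ell\!\left(\tfrac{q_{v}}{\lambda_{v}(x)}\right)\Bigr]\\
&=\sup_{q\geq 0}\sum_{v\in\mathcal{V}}\Bigl[q_{v}\langle\theta,v\rangle-\lambda_{v}(x)\,\ell\!\left(\tfrac{q_{v}}{\lambda_{v}(x)}\right)\Bigr]\\
&=\sum_{v\in\mathcal{V}}\lambda_{v}(x)\,\sup_{u\geq 0}\bigl[u\langle\theta,v\rangle-\ell(u)\bigr]
=\sum_{v\in\mathcal{V}}\lambda_{v}(x)(e^{\langle\theta,v\rangle}-1)=H(x,\theta),
\end{align*}
where the last line uses the elementary identity $\ell^{*}(a)=e^{a}-1$ (the Poisson log-MGF is the convex conjugate of the Poisson local rate function). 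I would carefully record the convention $\lambda_{v}(x)\ell(q_{v}/\lambda_{v}(x))=0$ when $\lambda_{v}(x)=q_{v}=0$ and $=+\infty$ when $\lambda_{v}(x)=0<q_{v}$, which is consistent with the limit $\lim_{\varepsilon\downarrow 0}\varepsilon\ell(q/\varepsilon)$, and under which the per-$v$ supremum equals $\lambda_{v}(x)(e^{\langle\theta,v\rangle}-1)$ in all cases.

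Next, I would verify that $L(x,\cdot)$ is convex and lower semicontinuous in $\beta\in\Delta^{d-1}$: convexity is inherited from the joint convexity of $(\beta,q)\mapsto\sum_{v}\lambda_{v}(x)\ell(q_{v}/\lambda_{v}(x))$ under the linear constraint $\sum_{v}vq_{v}=\beta$, and lower semicontinuity follows from a standard argument using the superlinear growth of $\ell$ to extract a convergent subsequence of near-minimizers $q^{(n)}$ for $\beta^{(n)}\to\beta$. Given convexity and lower semicontinuity, Fenchel--Moreau gives $L(x,\beta)=L(x,\cdot)^{**}(\beta)=\sup_{\theta}[\langle\theta,\beta\rangle-H(x,\theta)]=L^{0}(x,\beta)$ for all $\beta\in\Delta^{d-1}$. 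Finally, $I=I^{0}$ follows by integrating the pointwise identity along absolutely continuous paths.

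The main obstacle I expect is the careful handling of the degenerate directions where $\lambda_{v}(x)=0$: both the convention for $\lambda_{v}\ell(\cdot/\lambda_{v})$ and the interchange of suprema require that controls with $q_{v}>0$ in such directions carry infinite cost, so that they can be discarded without affecting either side. A secondary subtlety is that $\beta$ lives in the affine hyperplane $\Delta^{d-1}$ rather than in all of $\mathbb{R}^{d}$; however, since every $v\in\mathcal{V}$ lies in $\Delta^{d-1}$, the image of the map $q\mapsto\sum_{v}vq_{v}$ is contained in $\Delta^{d-1}$, so $L(x,\beta)$ and $L^{0}(x,\beta)$ both equal $+\infty$ off $\Delta^{d-1}$, and the duality computation above remains valid when restricted to the subspace.
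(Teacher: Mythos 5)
Your proof is correct and rests on the same underlying piece of convex duality as the paper's, but it comes at it from the opposite side, which shifts where the work is done. The paper starts from $H(x,\cdot)=\sum_{v}h_{v,\lambda_{v}(x)}$ with $h_{v,a}(\theta)=a(e^{\langle\theta,v\rangle}-1)$, computes each $h_{v,a}^{*}$ explicitly, and then invokes the standard theorem on the conjugate of a sum (the infimal-convolution formula, Theorem D.4.2 of Dupuis--Ellis) to get $L^{0}=H^{*}=\inf\{\sum_{v}h_{v,\lambda_{v}(x)}^{*}(\beta_{v}):\sum_{v}\beta_{v}=\beta\}=L$ directly. You instead compute $L(x,\cdot)^{*}=H(x,\cdot)$ by interchanging the two suprema, and then appeal to Fenchel--Moreau to upgrade $L^{**}=H^{*}=L^{0}$ to $L=L^{0}$. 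The price you pay for going in this direction is that Fenchel--Moreau requires $L(x,\cdot)$ to be proper, convex, \emph{and lower semicontinuous}, and while you do flag this, your sketch of lower semicontinuity (compactness of near-minimizers via superlinearity of $\ell$) is the one step that would need to be written out carefully, including the treatment of degenerate directions with $\lambda_{v}(x)=0$; the paper's route avoids this verification entirely because the hypotheses of the sum-conjugate theorem (finiteness and continuity of each $h_{v,a}$ on all of $\mathbb{R}^{d}$) are trivially met. Your identification of $\ell^{*}(a)=e^{a}-1$ and the interchange of suprema are correct, as is the observation that both sides are $+\infty$ off $\Delta^{d-1}$; and the conclusion $I=I^{0}$ indeed follows immediately once the pointwise identity is in hand.
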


\begin{proof}
Defining $h_{v,a}:\mathbb{R}^{d}\rightarrow \mathbb{R}$ by $h_{v,a}\left(
\theta \right) =a\left( \exp \left( \left\langle \theta,v\right\rangle
\right) -1\right) $ for $v\in \mathbb{R}^{d}$ and $a\in \lbrack 0,\infty )$,
we can write $H\left( x,\theta \right) =\sum_{v\in \mathcal{V}}h_{v,\lambda
_{v}\left( x\right) }\left( \theta\right) $. The Legendre-Fenchel transform
of $h_{v,a}$ can be computed explicitly as 
\begin{equation*}
h_{v,a}^{\ast }\left( \beta \right) =\left\{ 
\begin{array}{cc}
a\ell \left( y\right) & \text{if }\beta =avy, \\ 
\infty & \text{otherwise.}%
\end{array}%
\right.
\end{equation*}%
Since $H$ is a finite sum of convex functions, we can apply a standard
result in convex analysis to calculate its Legendre-Fenchel transform (see,
e.g., Theorem D.4.2 of \cite{DE}): 
\begin{equation*}
\left( \sum_{v\in \mathcal{V}}h_{v,\lambda _{v}\left( x\right) }\right)
^{\ast }\left( \beta \right) =\inf \left\{ \sum_{v\in \mathcal{V}%
}h_{v,\lambda _{v}\left( x\right) }^{\ast }\left( \beta _{v}\right)
:\sum_{v\in \mathcal{V}}\beta _{v}=\beta \right\} .
\end{equation*}%
Hence, \eqref{eqn:inf-conv} holds, which immediately implies $I = I^0$.
\end{proof}

\begin{proof}[Proof of the upper bound \eqref{uppbd} and \eqref{ini} in
Theorem \protect\ref{th-ldips}]
The difference between the generators (\ref{Lnl}) and (\ref{Ln0}) is the $n$%
-dependence of the jump rates. However, for every $v \in \mathcal{V}$, the
rate $\lambda _{v}^{n}(\cdot)$ convergences uniformly to $\lambda
_{v}(\cdot) $, it can be shown that the sequences of processes governed by
these two generators have the same large deviation rate function. This can
be proved by adapting the argument in \cite{DEW} or by using a standard
coupling argument to show that the two chains are exponentially equivalent
(see \cite[Section 3.4 and Appendix C]{WW} for complete details). Thus, the
upper bound follows from Proposition \ref{1.1} and Proposition \ref{4.2},
which also imply the compactness of the level sets of $I$ stated in %
\eqref{ini}.
\end{proof}

\bigskip

\section{Properties of the Local Rate Function}

\label{sec-locrf}

In this section we establish useful properties of the proposed local rate
function 
\begin{equation}
L\left( x,\beta \right) = \inf_{q \in [0,\infty)^{|{\mathcal{V}}%
|}:\sum_{v\in \mathcal{V}}vq_{v}=\beta }\sum_{v\in \mathcal{V}}\lambda
_{v}\left( x\right) \ell \left( \frac{q_{v}}{\lambda _{v}\left( x\right) }%
\right) \text{, \ }x\in \mathcal{S},\beta \in \Delta ^{d-1},
\label{eqn:local_rate}
\end{equation}%
first introduced in (\ref{L}). The following observation will be useful in
establishing properties of the function $L$. Given a set of vectors $\left\{
w_{j},j=1,\ldots ,F\right\} \subset \mathbb{R}^{d}$, let the positive cone
spanned by $\left\{ w_{j},j=1,\ldots ,F\right\} $ be denoted by 
\begin{equation}
\mathcal{C}\left( \left\{ w_{j}\right\} \right) \dot{=}\left\{ w\in \mathbb{R%
}^{d}:\text{there exist }a_{j}\geq 0,j=1,\ldots ,F,\text{ with }%
w=\sum_{j=1}^{F}a_{j}w_{j}\right\} .  \label{Cu}
\end{equation}

\begin{remark}
\label{rem-cone} \emph{Define } 
\begin{equation}
\mathcal{V}_{+}\doteq \left\{ v\in \mathcal{V}:\text{for any }a>0\text{, }%
\inf_{x\in \mathcal{S}^{a}}\lambda _{v}\left( x\right) >0\right\} .
\label{cone}
\end{equation}%
\emph{to be the set of directions for which the associated jump rates are
bounded below away from zero on every compact subset of $\mathrm{int}({%
\mathcal{S}})$. We claim that if $\{\lambda _{v}(\cdot ),v\in {\mathcal{V}}%
\} $ satisfies Property \ref{prop-lambda}, Property \ref{prop-communicate}
and property (3) of Lemma \ref{lem-estimates}, then for every $x\in \mathrm{%
int}({\mathcal{S}})$, } 
\begin{equation*}
{\mathcal{C}}(\{v\in {\mathcal{V}}:\lambda _{v}(x)>0\})={\mathcal{C}}({%
\mathcal{V}}_{+})=\Delta ^{d-1}.
\end{equation*}%
\emph{The first equality is a direct consequence of property (3) of Lemma %
\ref{lem-estimates}. To show the second equality, it is clear that ${%
\mathcal{C}}({\mathcal{V}}_{+})\subset \Delta ^{d-1}$. To see why the
reverse containment is true, given any $w\in \Delta ^{d-1}$, choose $x,y\in 
\mathrm{int}({\mathcal{S}})$ such that $y=x+rw$ for some $r>0$. Then
Property \ref{prop-communicate} implies that there exists $t>0$ and a
communicating path $\phi $ on $[0,t]$ from $x$ to $y$. By Definition \ref%
{def-compath}(i), this means that there exists $F\in \mathbb{N}$ and $%
v_{m}\in {\mathcal{V}},m=1,\ldots ,F$, such that $w=\phi (t)-\phi (0)$ is a
positive linear combination of the vectors $v_{m},m=1,\ldots ,F$. On the
other hand, since $y\in \mathrm{int}({\mathcal{S}})$, property ii) of
Definition \ref{def-compath} implies that for each $m=1,\ldots ,F$, $\lambda
_{v_{m}}$ is not identically zero on the simplex. By property (3) of Lemma %
\ref{lem-estimates}, this implies that $v_{m}\in {\mathcal{V}}_{+}$ for
every $m$, which in turn implies $w\in {\mathcal{C}}({\mathcal{V}}_{+})$.
Since $w$ is an arbitrary vector in $\Delta ^{d-1}$, this proves the claim. }
\end{remark}

\begin{lemma}
\label{4.1}Assume that $\{\lambda_v(\cdot), v \in {\mathcal{V}}\}$ satisfies
Property \ref{prop-lambda}, Property \ref{prop-communicate} and property (3)
of Lemma \ref{lem-estimates}. Then $L$ is nonnegative and uniformly
continuous on compact subsets of \emph{int}$\left( \mathcal{S}\right) \times
\Delta ^{d-1}$, and for each $x\in \mathcal{S}$, $L\left( x,\cdot \right) $
is strictly convex on its domain of finiteness.
\end{lemma}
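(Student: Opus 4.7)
The plan is to establish the three claims in sequence. Nonnegativity of $L$ will follow immediately from $\ell(\cdot) \geq 0$ on $[0,\infty)$ and $\lambda_v(\cdot) \geq 0$. For strict convexity in $\beta$ at a fixed $x \in \mathcal{S}$, I first observe that the objective $\Phi_x(q) \doteq \sum_{v \in \mathcal{V}} \lambda_v(x) \ell(q_v/\lambda_v(x))$ is strictly convex, lower semicontinuous, and superlinearly coercive on $[0,\infty)^{|\mathcal{V}|}$, with coordinates $q_v$ for which $\lambda_v(x)=0$ forced to vanish on the domain of finiteness. Hence for every $\beta$ with $L(x,\beta) < \infty$, the infimum is attained at a unique minimizer $q^\beta$. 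For distinct $\beta_1, \beta_2$ in the domain of finiteness of $L(x,\cdot)$ and $t \in (0,1)$, the combination $tq^{\beta_1} + (1-t)q^{\beta_2}$ is feasible for $t\beta_1 + (1-t)\beta_2$, and since $q^{\beta_1} \neq q^{\beta_2}$, strict convexity of $\ell$ yields $L(x, t\beta_1 + (1-t)\beta_2) < tL(x,\beta_1) + (1-t)L(x,\beta_2)$.

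For continuity, I would fix a compact $K \subset \mathrm{int}(\mathcal{S}) \times \Delta^{d-1}$ and let $K_1$ denote its projection onto the first coordinate. Since $K_1 \subset \tilde{\mathcal{S}}^a$ for some $a > 0$, property (3) of Lemma \ref{lem-estimates} supplies a constant $c_0 > 0$ with $\lambda_v(x) \geq c_0$ for every $x \in K_1$ and $v \in \mathcal{V}_+$ (defined in \eqref{cone}), while Property \ref{prop-lambda} with \eqref{m} supplies the uniform upper bound $\lambda_v \leq R$. By Remark \ref{rem-cone}, $\mathcal{C}(\mathcal{V}_+) = \Delta^{d-1}$, so $L(x,\beta) < \infty$ throughout $K$ and the infimum is attained by some $q$ supported on $\mathcal{V}_+$. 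To prove lower semicontinuity, I take $(x_n,\beta_n) \to (x_0,\beta_0)$ in $K$ with $L(x_n,\beta_n) \leq M$, extract optimizers $q^{(n)}$, and use the superlinear growth of $\ell$ together with the uniform lower bound $\lambda_v(x_n) \geq c_0$ on $\mathcal{V}_+$ to conclude that $\|q^{(n)}\|$ is bounded; a convergent subsequence $q^{(n)} \to q^\ast$ satisfies $\sum_v v q^\ast_v = \beta_0$, and joint continuity of $(x,q_v) \mapsto \lambda_v(x) \ell(q_v/\lambda_v(x))$ on $K_1 \times [0,\infty)$ yields $L(x_0,\beta_0) \leq \Phi_{x_0}(q^\ast) \leq \liminf_n L(x_n,\beta_n)$.

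The main obstacle will be upper semicontinuity at $(x_0,\beta_0) \in K$, because the optimizer $q^\ast$ for $L(x_0,\beta_0)$ may have $q^\ast_v = 0$ for some $v \in \mathcal{V}_+$, obstructing the naive signed perturbations needed to satisfy the modified constraint $\sum_v v q_v = \beta$ while maintaining nonnegativity. To handle this, I would exploit that $\mathcal{C}(\mathcal{V}_+) = \Delta^{d-1}$ is a linear subspace, so $-v \in \mathcal{C}(\mathcal{V}_+)$ for every $v \in \mathcal{V}_+$; a straightforward summation argument then produces strictly positive weights $(w_v)_{v \in \mathcal{V}_+}$ with $\sum_v v w_v = 0$. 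Pre-shifting via $\tilde q \doteq q^\ast + \varepsilon w$ (extended by zero outside $\mathcal{V}_+$) produces a strictly positive feasible candidate for $\beta_0$ whose cost exceeds $L(x_0,\beta_0)$ by $O(\varepsilon)$, thanks to local Lipschitz control of $\ell$ on $[\varepsilon \min_v w_v, \infty)$. Writing $\beta - \beta_0 = \sum_v v \delta_v$ with $\|\delta\| \leq C\|\beta - \beta_0\|$ (using that $\mathcal{V}_+$ positively spans $\Delta^{d-1}$) and setting $q \doteq \tilde q + \delta$ gives a feasible candidate for $\beta$ that remains nonnegative whenever $\varepsilon \geq C\|\beta - \beta_0\|/\min_v w_v$; taking, e.g., $\varepsilon \doteq \sqrt{\|\beta - \beta_0\|}$ and sending $(x,\beta) \to (x_0,\beta_0)$ yields $L(x,\beta) \to L(x_0,\beta_0)$, where the continuity of $\lambda_v$ and uniform bounds $c_0 \leq \lambda_v \leq R$ on $K_1$ control the $x$-dependence. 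Combining lower and upper semicontinuity gives joint continuity of $L$ on $K$, which upgrades to uniform continuity by compactness.
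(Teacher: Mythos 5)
Your proof is correct and differs from the paper's in two worthwhile ways. For strict convexity, the paper invokes the identity $L=L^0$ (Proposition \ref{4.2}) together with Rockafellar's duality between essential smoothness of $H(x,\cdot)$ and essential strict convexity of its conjugate; you instead argue directly on the primal side, using strict convexity, coercivity, and lower semicontinuity of $q\mapsto\sum_v\lambda_v(x)\ell(q_v/\lambda_v(x))$ to get a unique minimizer and then pushing that through the infimal projection. Both are valid, but yours avoids the Legendre-transform machinery entirely. For joint continuity, the paper simply asserts that it ``follows from (\ref{eqn:inf-conv}) and positivity of $\lambda_v$,'' whereas you supply the actual argument: lower semicontinuity by compactness of sublevel sets of the optimizers (superlinear coercivity of $\ell$ plus the uniform bounds $c_0\leq\lambda_v\leq R$ on $K_1$), and upper semicontinuity by the pre-shift trick — producing strictly positive weights $w$ with $\sum_v v\,w_v = 0$ from the fact that $\mathcal{C}(\mathcal{V}_+)=\Delta^{d-1}$ is a subspace, inflating the optimizer to $q^\ast+\varepsilon w$ to move it off the boundary of $[0,\infty)^{|\mathcal{V}_+|}$, and then correcting for $\beta-\beta_0$ with a signed shift of size $O(\|\beta-\beta_0\|)$. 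That is genuinely the nontrivial part of the lemma and you handle it well.

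One small imprecision: where you claim the pre-shift changes the cost by $O(\varepsilon)$ ``thanks to local Lipschitz control of $\ell$ on $[\varepsilon\min_v w_v,\infty)$,'' the Lipschitz constant of $\ell$ near the left endpoint is $|\log\varepsilon|$, so coordinates with $q^\ast_v=0$ contribute an increment of order $\varepsilon|\log\varepsilon|$, not $\varepsilon$. This is still $o(1)$ as $\varepsilon\to 0$, so your conclusion is unaffected, and the choice $\varepsilon=\sqrt{\|\beta-\beta_0\|}$ continues to work; it is just worth stating the rate correctly.
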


\begin{proof}
$L$ is nonnegative by definition \eqref{eqn:local_rate} and for each $x\in {%
\mathcal{S}}$, relation \eqref{rate fn L0} exhibits $L\left( x,\cdot \right) 
$ as the Legendre-Fenchel transform of the smooth convex function $H(x,\cdot
)$ defined in (\ref{eqn:defofH}). It follows from \cite[Theorem 12.2]{R}
that $L\left( x,\cdot \right) $ is strictly convex on its domain of
finiteness. Since Property \ref{prop-lambda} holds, by Proposition \ref{4.2}
we have $L=L^{0}$. Due to property (3) of Lemma \ref{lem-estimates}, we can
replace the sum over $v\in \mathcal{V}$ in the expression (\ref{eqn:inf-conv}%
) for $L_{0}$ by the sum over $v\in \mathcal{V}_{+}$. According to Remark %
\ref{rem-cone}, under the assumptions of the lemma, the convex cone
generated by $\{v\in \mathcal{V}_{+}\}$ is all of $\Delta ^{d-1}$. Since $%
x\in $ \textrm{int}$\left( \mathcal{S}\right) $ implies that all elements of 
$\{\lambda _{v}(x),v\in \mathcal{V}_{+}\}$ are strictly positive, (\ref%
{eqn:inf-conv}) implies $L\left( x,\beta \right) <\infty $ for $\beta \in
\Delta ^{d-1}$. Since Property \ref{prop-lambda} implies each $\lambda
_{v}(x),v\in \mathcal{V}$, is continuous, the joint continuity of $L$ on 
\textrm{int}$\left( \mathcal{S}\right) \times \Delta ^{d-1}$ follows also
from (\ref{eqn:inf-conv}) and positivity of $\lambda _{v}(x),v\in \mathcal{V}%
_{+}$ for $x\in $\textrm{int}$\left( \mathcal{S}\right) $. This implies
uniform continuity on compact subsets of \textrm{int}$\left( \mathcal{S}%
\right) \times \Delta ^{d-1}$.
\end{proof}

The following elementary inequality can be proved using Legendre transforms.

\begin{lemma}
\label{4.4}For $r, q\in \lbrack 0,\infty )$ we have $r\ell \left( \frac{q}{r}%
\right) +r\left( e-1\right) \geq q$.
\end{lemma}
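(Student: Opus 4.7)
The plan is to obtain the stated inequality as a specialization of Young's (Fenchel's) inequality applied to the Poisson local rate function $\ell$ and its Legendre--Fenchel conjugate.

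First I would compute the Legendre--Fenchel transform $\ell^{\ast}$ of $\ell$. For $y \in \mathbb{R}$, differentiating $x \mapsto xy - \ell(x) = xy - x\log x + x - 1$ in $x > 0$ gives the first-order condition $y - \log x = 0$, i.e.\ $x = e^{y}$, and a direct substitution yields
\begin{equation*}
\ell^{\ast}(y) \;=\; \sup_{x \geq 0}\bigl[\, xy - \ell(x)\,\bigr] \;=\; e^{y} - 1.
\end{equation*}
Hence the Young inequality $xy \leq \ell(x) + \ell^{\ast}(y)$ holds for every $x \geq 0$ and every $y \in \mathbb{R}$.

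Next I would specialize $y = 1$, which gives $\ell^{\ast}(1) = e - 1$, so that
\begin{equation*}
x \;\leq\; \ell(x) + (e-1), \qquad x \geq 0.
\end{equation*}
For $r > 0$, setting $x = q/r$ and multiplying through by $r$ produces exactly
\begin{equation*}
q \;\leq\; r\,\ell\!\left(\frac{q}{r}\right) + r(e-1),
\end{equation*}
which is the claim.

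The only remaining point is the degenerate case $r = 0$, which I would handle by the usual convention for the perspective function of $\ell$: $r\,\ell(q/r)$ is interpreted as $\lim_{s \downarrow 0} s\,\ell(q/s)$, which equals $0$ when $q=0$ and $+\infty$ when $q > 0$, so the inequality holds trivially in both subcases. There is no genuine obstacle here; the only thing to be careful about is stating the convention at $r=0$ clearly so that both sides make sense.
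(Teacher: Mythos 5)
Your proof is correct and takes exactly the approach the paper intends: the paper's remark that the lemma "can be proved using Legendre transforms" is realized by computing $\ell^{\ast}(y)=e^{y}-1$, applying Young's inequality at $y=1$, and rescaling by $r$. The treatment of the boundary case $r=0$ via the standard perspective-function convention is also appropriate and consistent with how $r\ell(q/r)$ is used in the definition \eqref{L}.
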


We now study the asymptotic behavior of $L$ in the second variable.

\begin{proposition}
\label{4.5} Suppose $\{\lambda _{v}(\cdot ),v\in {\mathcal{V}}\}$ satisfies
the assumptions stated in Lemma \ref{4.1}. Given $a>0$, there exist
constants $B=B\left( a\right) <\infty $ and $C_{2}=C_{2}\left( a,B\right)
,C_{3}=C_{3}\left( a,B\right) <\infty $, such that 
\begin{equation*}
L\left( x,\beta \right) \leq \left\{ 
\begin{array}{ll}
C_{2}\left\Vert \beta \right\Vert \log \left\Vert \beta \right\Vert & \text{%
if }x\in \mathcal{S}^{a}\text{ and }\beta \in \Delta ^{d-1},\left\Vert \beta
\right\Vert >B \\ 
C_{3} & \text{if }x\in \mathcal{S}^{a}\text{ and }\beta \in \Delta
^{d-1},\left\Vert \beta \right\Vert \leq B.\text{ }%
\end{array}%
\right.
\end{equation*}%
Moreover, for $B<\infty $ sufficiently large, there exists $%
c_{1}=c_{1}\left( B\right) >0$ such that if $x\in \mathcal{S}$, then 
\begin{equation}
L\left( x,\beta \right) \geq c_{1}\left\Vert \beta \right\Vert \log
\left\Vert \beta \right\Vert \quad \mbox{ for all }\beta \in \Delta
^{d-1},||\beta ||>B.  \label{lxbeta-lower}
\end{equation}%
In particular, $\beta \mapsto L\left( x,\beta \right) $ is superlinear,
uniformly in $x$.
\end{proposition}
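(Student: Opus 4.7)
The plan is to prove the upper bound by exhibiting an explicit feasible $q$ in the variational definition (\ref{eqn:local_rate}), and the lower bound by a crude universal bound on the integrand combined with Jensen's inequality.

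\textbf{Upper bound.} First I would establish a linear-rate selection: there exists $C_0<\infty$ such that for every $\beta\in\Delta^{d-1}$ one can choose $\{q_v(\beta):v\in\mathcal{V}_+\}\subset[0,\infty)$ with $\sum_{v\in\mathcal{V}_+} q_v(\beta)v=\beta$ and $\sum_v q_v(\beta)\le C_0\|\beta\|$. Existence comes from Remark \ref{rem-cone}, which guarantees $\mathcal{C}(\mathcal{V}_+)=\Delta^{d-1}$, so the map $T:[0,\infty)^{|\mathcal{V}_+|}\to\Delta^{d-1}$, $T(q)=\sum q_v v$, is surjective; homogeneity of $T$ together with compactness of $\{\beta\in\Delta^{d-1}:\|\beta\|=1\}$ produces the uniform constant $C_0$. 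For $x\in\mathcal{S}^a$, property (3) of Lemma \ref{lem-estimates} yields $c_a\doteq\min_{v\in\mathcal{V}_+}\inf_{x\in\mathcal{S}^a}\lambda_v(x)>0$, while (\ref{m}) gives $\lambda_v\le R$ on all of $\mathcal{S}$. Plugging $q_v(\beta)$ into (\ref{eqn:local_rate}) and using $r\ell(q/r)=q\log(q/r)-q+r$, each summand with $v\in\mathcal{V}_+$ is bounded by $q_v(\beta)\log^+(q_v(\beta)/c_a)+R$, so
\begin{equation*}
L(x,\beta)\le C_0\|\beta\|\,\log^+\!\bigl(C_0\|\beta\|/c_a\bigr)+R|\mathcal{V}|.
\end{equation*}
For $\|\beta\|>B$ with $B=B(a)$ sufficiently large, the right-hand side is bounded by $C_2\|\beta\|\log\|\beta\|$; for $\|\beta\|\le B$ the bound is a constant $C_3(a,B)$.

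\textbf{Lower bound.} Fix any $x\in\mathcal{S}$ and any admissible $q=(q_v)_{v\in\mathcal{V}}$ with $\sum_v q_v v=\beta$; write $r_v=\lambda_v(x)$ and recall $r_v\le R$ by (\ref{m}). Whenever $r_v>0$, the identity $r_v\ell(q_v/r_v)=q_v\log q_v-q_v\log r_v-q_v+r_v$ and $r_v\le R$ give $r_v\ell(q_v/r_v)\ge q_v\log q_v-q_v\log R-q_v$; when $r_v=0$ the finiteness of $L$ forces $q_v=0$ and both sides vanish. Summing over $v$ and setting $Q\doteq\sum_v q_v$,
\begin{equation*}
\sum_{v\in\mathcal{V}}r_v\ell(q_v/r_v)\ge \sum_{v\in\mathcal{V}}q_v\log q_v-(\log R+1)Q.
\end{equation*}
By convexity of $s\mapsto s\log s$ and Jensen's inequality, $\sum_v q_v\log q_v\ge Q\log(Q/|\mathcal{V}|)$. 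The constraint $\beta=\sum q_v v$ forces $\|\beta\|\le MQ$ with $M\doteq\max_{v\in\mathcal{V}}\|v\|<\infty$, so $Q\ge\|\beta\|/M$. Since $y\mapsto y\log y-C'y$ is non-decreasing for $y$ past a threshold depending only on $C'\doteq\log|\mathcal{V}|+\log R+1$, one concludes, for $\|\beta\|>B$ with $B$ large,
\begin{equation*}
\sum_{v\in\mathcal{V}}r_v\ell(q_v/r_v)\ge \frac{\|\beta\|}{M}\Bigl(\log\frac{\|\beta\|}{M}-C'\Bigr)\ge c_1\|\beta\|\log\|\beta\|,
\end{equation*}
uniformly in the admissible $q$ and in $x\in\mathcal{S}$. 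Taking the infimum over $q$ yields (\ref{lxbeta-lower}), and superlinearity is an immediate corollary.

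The one step that requires care rather than a routine calculation is the homogeneous nonnegative selection $\beta\mapsto q(\beta)$ used in the upper bound: one must show that the scaling $\sum q_v(\beta)\le C_0\|\beta\|$ can be achieved uniformly over $\beta\in\Delta^{d-1}$. I would handle this via the compactness argument above (a standard consequence of surjectivity of a linear map between cones), rather than by an explicit Carathéodory-type construction, which risks producing constants that blow up near lower-dimensional faces of the cone.
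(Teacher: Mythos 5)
Your proof is correct. The upper bound mirrors the paper's argument — both exhibit a feasible $q(\beta)$ with $\sum_v q_v(\beta)\le C_0\|\beta\|$ obtained from Remark \ref{rem-cone}, then bound each summand using $\lambda_v\ge c_a$ on $\mathcal{S}^a$ and $\lambda_v\le R$ globally. (The paper treats the case $\|\beta\|\le B$ by invoking compactness and the uniform continuity of $L$ from Lemma \ref{4.1}, whereas you let the same explicit estimate cover both regimes; either works.)

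Your lower bound, however, takes a genuinely different route. The paper passes to the Legendre--Fenchel form $L=L^0=\sup_{\theta}[\langle\theta,\beta\rangle-H(x,\theta)]$ and obtains the bound by a single judicious choice $\theta=t\beta/\|\beta\|$ with $t=\log\|\beta\|/\max_v\|v\|$, yielding $L(x,\beta)\ge t\|\beta\|-R|\mathcal V|e^{t\max_v\|v\|}$ and then the claimed estimate. You instead work entirely on the primal side: the pointwise inequality $\lambda_v\ell(q_v/\lambda_v)\ge q_v\log q_v-(\log R+1)q_v$, Jensen applied to $s\mapsto s\log s$, the trivial bound $Q\ge\|\beta\|/M$, and eventual monotonicity of $y\mapsto y\log y-C'y$. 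Both approaches are sound and of comparable length. The paper's dual argument is perhaps conceptually cleaner (one test point in the sup beats the whole infimum), while your primal argument has the small pedagogical advantage of never leaving the variational definition \eqref{eqn:local_rate} and shows more transparently that only the \emph{upper} bound $\lambda_v\le R$ is needed, so the estimate automatically holds for every $x\in\mathcal{S}$ including the boundary.

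One minor wording point: in the lower bound you write ``when $r_v=0$ the finiteness of $L$ forces $q_v=0$''; what you actually mean (and use) is that when $r_v=0$ any $q$ with $q_v>0$ has infinite cost, so such $q$ may be excluded from the infimum. The logic is fine, but the phrasing conflates finiteness of $L$ with finiteness of the cost of a particular $q$.
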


\begin{proof}
Fix $a>0$. For any $B<\infty $, since $\left\{ \left( x,\beta \right) \in 
\mathcal{S}^{a}\mathcal{\times }\Delta^{d-1}:\left\Vert \beta \right\Vert
\leq B\right\} $ is a compact subset of int$\left( \mathcal{S}\right) \times
\Delta^{d-1}$, the uniform boundedness of $L$ on this set follows directly
from the uniform continuity of $L$ established in Lemma \ref{4.1}.

For the upper bound when $\left\Vert \beta \right\Vert >B$, we first assume $%
\left\Vert \beta \right\Vert =1$. By Remark \ref{rem-cone}, there exists a
vector $q=q\left( \beta \right) \in \lbrack 0,\infty )^{\left\vert \mathcal{V%
}\right\vert }$, such that $\sum_{v\in \mathcal{V}_{+}}vq_{v}=\beta $, $%
q_{v}>0$ for $v\in \mathcal{V}_{+}$ and $q_{v}=0$ for $v\in \mathcal{V}%
\backslash \mathcal{V}_{+}$. Since ${\mathcal{C}}({\mathcal{V}}_{+})=\Delta
^{d-1}$, we can assume $\max_{v,\left\Vert \beta \right\Vert =1}\left\vert
q_{v}\left( \beta \right) \right\vert $ is finite. By scaling, it follows
that there exists some constant $c_{0}<\infty $, such that for any $\beta
\in \Delta ^{d-1}$, there exists a vector $q\in \mathbb{[}0,\infty
)^{\left\vert \mathcal{V}\right\vert }$ such that $\sum_{v\in \mathcal{V}%
_{+}}vq_{v}=\beta $, $\max_{v}\left\vert q_{v}\right\vert \leq
c_{0}\left\Vert \beta \right\Vert $, and $q_{v}=0$ for $v\in \mathcal{V}%
\backslash \mathcal{V}_{+}$. It follows that for some $c_{4}<\infty $, 
\begin{equation*}
L\left( x,\beta \right) \leq c_{4}\sum_{v\in \mathcal{V}}q_{v}\log \frac{%
q_{v}}{\lambda _{v}\left( x\right) }\leq C_{2}\left\Vert \beta \right\Vert
\log \left\Vert \beta \right\Vert
\end{equation*}%
if $\left\Vert \beta \right\Vert \geq B$, for some $B=B(a)$ sufficiently
large and all $x\in \mathcal{S}^{a}$. This finishes the proof of the upper
bound.

Now, consider the lower bound for $L$ on $\left\{ \left( x,\beta \right) \in 
{\mathcal{S}}\times \Delta ^{d-1}:\left\Vert \beta \right\Vert >B\right\} $.
Since $L=L^{0}$ due to Proposition \ref{4.2}, by the definition 
\eqref{rate
fn L0} of $L^{0}$, we have for $t>0,$ $\theta =t\frac{\beta }{\left\Vert
\beta \right\Vert },$ and $R<\infty $ defined as in (\ref{m}), 
\begin{align*}
L\left( x,\beta \right) & \geq \left\langle \theta ,\beta \right\rangle
-H\left( x,\theta \right) \\
& \geq t\left\Vert \beta \right\Vert -\sum_{v\in \mathcal{V}}\lambda
_{v}\left( x\right) \exp (\left\langle \theta ,v\right\rangle ) \\
& \geq t\left\Vert \beta \right\Vert -R\left\vert \mathcal{V}\right\vert
\exp \left( \max_{v\in \mathcal{V}}\left\Vert v\right\Vert t\right) .
\end{align*}%
Substituting $t=\frac{1}{\max_{v\in \mathcal{V}}\left\Vert v\right\Vert }%
\log \left\Vert \beta \right\Vert $, this implies 
\begin{equation*}
L\left( x,\beta \right) \geq \frac{1}{\max_{v\in \mathcal{V}}\left\Vert
v\right\Vert }\left\Vert \beta \right\Vert \log \left\Vert \beta \right\Vert
-R\left\vert \mathcal{V}\right\vert \left\Vert \beta \right\Vert \geq
c_{1}\left\Vert \beta \right\Vert \log \left\Vert \beta \right\Vert
\end{equation*}%
for some constant $c_{1}>0,$ provided $\left\Vert \beta \right\Vert $ is
sufficiently large.
\end{proof}

Recall that, given $\xi > 0$, $D\left( \left[ a,b\right] :{\mathcal{S}}%
^\xi\right)$ denotes the space of c\`{a}dl\`{a}g functions on $[a,b]$ taking
values in ${\mathcal{S}}^\xi$.

\begin{proposition}
\label{4.7} Suppose $\{\lambda _{v}(\cdot ),v\in {\mathcal{V}}\}$ satisfies
the assumptions stated in Lemma \ref{4.1}. Given $0\leq a<b\leq 1$ and $\xi
>0$, suppose that $\gamma \in AC\left( \left[ a,b\right] :\mathcal{S}^{\xi
}\right) $ satisfies $\int_{a}^{b}L\left( \gamma \left( s\right) ,\dot{\gamma%
}\left( s\right) \right) ds<\infty $. Let $\left\{ \gamma ^{\delta }\right\}
_{\delta \in (0,1)}\subset D\left( \left[ a,b\right] :\mathcal{S}^{\xi
}\right) $ be such that $\sup_{t\in \left[ a,b\right] }\left\Vert \gamma
^{\delta }\left( t\right) -\gamma \left( t\right) \right\Vert \rightarrow 0$
as $\delta \rightarrow 0$. Then for any $\varepsilon >0$, there exists $%
\delta _{0}=\delta _{0}\left( \xi ,\varepsilon \right) >0$ such that for $%
\delta <\delta _{0}$, 
\begin{equation*}
\left\vert \int_{a}^{b}L\left( \gamma \left( s\right) ,\dot{\gamma}\left(
s\right) \right) ds-\int_{a}^{b}L\left( \gamma ^{\delta }\left( s\right) ,%
\dot{\gamma}\left( s\right) \right) ds\right\vert <\varepsilon .
\end{equation*}
\end{proposition}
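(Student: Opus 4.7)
The plan is to split the time interval $[a,b]$ according to whether $\|\dot{\gamma}(s)\|$ is large or small, handling the small-velocity part by uniform continuity of $L$ and the large-velocity part by a tail estimate that exploits the matching growth bounds in Proposition \ref{4.5}.

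\textbf{Step 1: Tail estimate for large velocities.} By the lower bound in Proposition \ref{4.5}, for $B$ large enough there exists $c_1>0$ such that $L(x,\beta)\geq c_1\|\beta\|\log\|\beta\|$ for all $x\in\mathcal{S}$ and $\|\beta\|>B$. Since $\int_a^b L(\gamma(s),\dot{\gamma}(s))\,ds<\infty$, this yields
\begin{equation*}
\int_a^b \|\dot{\gamma}(s)\|\log\|\dot{\gamma}(s)\|\,\mathbb{I}_{\{\|\dot{\gamma}(s)\|>B\}}\,ds <\infty.
\end{equation*}
On the other hand, since $\gamma^{\delta}(s)\in\mathcal{S}^{\xi}$, the upper bound in Proposition \ref{4.5} (with $a=\xi$) gives constants $B_{0}=B(\xi)$ and $C_2=C_2(\xi),C_3=C_3(\xi)$ such that $L(\gamma^{\delta}(s),\dot{\gamma}(s))\leq C_2\|\dot{\gamma}(s)\|\log\|\dot{\gamma}(s)\|$ when $\|\dot{\gamma}(s)\|>B_{0}$ and $L(\gamma^{\delta}(s),\dot{\gamma}(s))\leq C_3$ otherwise, with the same bounds holding for $\gamma(s)$ in place of $\gamma^{\delta}(s)$. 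Hence given $\varepsilon>0$, by absolute continuity of the integral I can choose $M\geq\max(B,B_{0})$ so large that
\begin{equation*}
\int_{E_M} L(\gamma(s),\dot{\gamma}(s))\,ds + \int_{E_M} L(\gamma^{\delta}(s),\dot{\gamma}(s))\,ds <\varepsilon/2,
\end{equation*}
uniformly in $\delta$, where $E_M\doteq\{s\in[a,b]:\|\dot{\gamma}(s)\|>M\}$.

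\textbf{Step 2: Uniform continuity on the bounded-velocity part.} The complement $[a,b]\setminus E_M$ has $\|\dot{\gamma}(s)\|\leq M$, and on this set both $(\gamma(s),\dot{\gamma}(s))$ and $(\gamma^{\delta}(s),\dot{\gamma}(s))$ lie in the compact set $K_M\doteq \mathcal{S}^{\xi}\times\{\beta\in\Delta^{d-1}:\|\beta\|\leq M\}\subset\mathrm{int}(\mathcal{S})\times\Delta^{d-1}$. By Lemma \ref{4.1}, $L$ is uniformly continuous on $K_M$, so there exists $\eta=\eta(\xi,\varepsilon,M)>0$ such that $\|x-y\|<\eta$ implies $|L(x,\beta)-L(y,\beta)|<\varepsilon/(2(b-a))$ for all $(y,\beta)\in K_M$. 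By the uniform convergence hypothesis $\sup_{s}\|\gamma^{\delta}(s)-\gamma(s)\|\to 0$, there exists $\delta_0=\delta_0(\xi,\varepsilon)$ such that $\sup_{s}\|\gamma^{\delta}(s)-\gamma(s)\|<\eta$ for $\delta<\delta_0$. Consequently,
\begin{equation*}
\int_{[a,b]\setminus E_M} |L(\gamma(s),\dot{\gamma}(s))-L(\gamma^{\delta}(s),\dot{\gamma}(s))|\,ds < \varepsilon/2.
\end{equation*}

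\textbf{Step 3: Conclusion.} Combining the two estimates from Step 1 and Step 2 yields $|\int_a^b L(\gamma(s),\dot{\gamma}(s))\,ds - \int_a^b L(\gamma^{\delta}(s),\dot{\gamma}(s))\,ds|<\varepsilon$ for $\delta<\delta_0$, which is the desired bound. The main subtlety is that the integrand is not globally bounded in $\beta$, so standard uniform-continuity arguments fail without the tail control; the key is that Proposition \ref{4.5} supplies both a superlinear lower bound (to make the tail of $\|\dot{\gamma}\|\log\|\dot{\gamma}\|$ integrable by virtue of $I_t(\gamma)<\infty$) and a matching upper bound (to dominate $L(\gamma^{\delta},\dot{\gamma})$ by the same integrable tail, uniformly in $\delta$). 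Once this is in place, the interior-of-simplex location $\mathcal{S}^{\xi}$ guarantees we stay inside the uniform-continuity set of $L$.
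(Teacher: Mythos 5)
Your proof is correct and follows essentially the same strategy as the paper: split $[a,b]$ by the magnitude of $\dot{\gamma}$, control the high-velocity tail by playing the superlinear lower bound of Proposition \ref{4.5} against the matching upper bound (which makes the tail small uniformly in $\delta$), and handle the bounded-velocity piece via compactness of $\mathcal{S}^{\xi}\times\{\|\beta\|\le M\}$. The paper closes that last step with dominated convergence rather than your uniform-continuity argument, but these are two phrasings of the same compactness fact and yield the same conclusion.
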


\begin{proof}
Fix $\xi >0$ and $0\leq a<b\leq 1$. Let $A$ be the measurable set of points $%
s\in \left[ a,b\right] $ for which $\dot{\gamma}\left( s\right) $ is well
defined and lies in $\Delta ^{d-1}$, so that $\left[ a,b\right] \backslash A$
has zero Lebesgue measure. Let $C_{2}<\infty ,c_{1}>0$ and $B<\infty $ be
chosen according to Proposition \ref{4.5} so that $L\left( x,\beta \right)
\leq C_{2}\left\Vert \beta \right\Vert \log \left\Vert \beta \right\Vert $
if $x\in \mathcal{S}^{\xi /2}$ and $\left\Vert \beta \right\Vert >B$, and $%
L\left( x,\beta \right) \geq c_{1}\left\Vert \beta \right\Vert \log
\left\Vert \beta \right\Vert $ if $x\in \mathcal{S}$ and $\beta \in \Delta
^{d-1}$, $\left\Vert \beta \right\Vert >B$. For $\bar{B}\in (B,\infty )$
define $\bar{A}\doteq \left\{ s\in A:\left\Vert \dot{\gamma}\left( s\right)
\right\Vert \leq \bar{B}\right\} $. Assume $\delta _{0}>0$ is small enough
that $\gamma ^{\delta }\left( s\right) \in \mathcal{S}^{\xi /2}$ for all $%
\delta <\delta _{0}$ and $s\in \lbrack a,b]$. Then 
\begin{equation*}
\int_{\left[ a,b\right] \backslash \bar{A}}L\left( \gamma ^{\delta }\left(
s\right) ,\dot{\gamma}\left( s\right) \right) ds\leq C_{2}\int_{A\backslash 
\bar{A}}\left\Vert \dot{\gamma}\left( s\right) \right\Vert \log \left\Vert 
\dot{\gamma}\left( s\right) \right\Vert ds\leq \frac{C_{2}}{c_{1}}\int_{%
\left[ a,b\right] \backslash \bar{A}}L\left( \gamma \left( s\right) ,\dot{%
\gamma}\left( s\right) \right) ds,
\end{equation*}%
and since by assumption $s\mapsto L(\gamma (s),\dot{\gamma}(s))$ is
integrable on $[a,b]$, for large enough $\bar{B}<\infty $, 
\begin{equation}
\int_{\left[ a,b\right] \backslash \bar{A}}L\left( \gamma ^{\delta }\left(
s\right) ,\dot{\gamma}\left( s\right) \right) ds+\int_{\left[ a,b\right]
\backslash \bar{A}}L\left( \gamma \left( s\right) ,\dot{\gamma}\left(
s\right) \right) ds\leq \varepsilon /2.  \label{4.7.1}
\end{equation}%
On the other hand, since $\int_{a}^{b}L(\gamma \left( s\right) ,\dot{\gamma}%
\left( s\right) )ds<\infty $, by dominated convergence and the continuity of 
$L\left( \cdot ,\beta \right) $ for fixed $\beta \in \Delta ^{d-1}$
established in Lemma \ref{4.1}, we have 
\begin{equation}
\int_{\bar{A}}L\left( \gamma ^{\delta }\left( s\right) ,\dot{\gamma}\left(
s\right) \right) ds\rightarrow \int_{\bar{A}}L\left( \gamma \left( s\right) ,%
\dot{\gamma}\left( s\right) \right) ds.  \label{4.7.2}
\end{equation}%
Hence by choosing $\delta _{0}>0$ smaller if need be, (\ref{4.7.1}) and (\ref%
{4.7.2}) imply that for $\delta \in (0,\delta _{0})$ 
\begin{equation*}
\left\vert \int_{a}^{b}L\left( \gamma \left( s\right) ,\dot{\gamma}\left(
s\right) \right) ds-\int_{a}^{b}L\left( \gamma ^{\delta }\left( s\right) ,%
\dot{\gamma}\left( s\right) \right) ds\right\vert \leq \varepsilon .
\end{equation*}
\end{proof}

\begin{lemma}
\label{path} Suppose $\{\lambda _{v}(\cdot ),v\in {\mathcal{V}}\}$ satisfies
the assumptions stated in Lemma \ref{4.1}. Suppose that $\gamma \in AC\left( %
\left[ 0,1\right] :\mathcal{S}\right) $ satisfies $\int_{a}^{b}L\left(
\gamma \left( s\right) ,\dot{\gamma}\left( s\right) \right) ds<\infty $ for
some $0\leq a<b\leq 1$. Then 
\begin{equation*}
\left\Vert \gamma \left( t\right) -\gamma \left( a\right) \right\Vert \log 
\frac{1}{t-a}\rightarrow 0\text{ }\ \text{as }t\downarrow a.
\end{equation*}
\end{lemma}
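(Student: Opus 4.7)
The plan is to exploit the superlinear lower bound on $L$ proved in Proposition \ref{4.5} to derive a pointwise inequality of the form $\|\beta\|\leq\frac{1}{2\theta}L(x,\beta)+K(\theta)$, valid for all $x\in\mathcal{S}$, $\beta\in\Delta^{d-1}$ and any $\theta>0$, and then integrate this inequality along $\gamma$ with a judiciously chosen $\theta=\theta(t)$ that tends to infinity as $t\downarrow a$.

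First I would fix $B<\infty$ and $c_{1}>0$ as in the lower bound \eqref{lxbeta-lower} of Proposition \ref{4.5}, so that $L(x,\beta)\geq c_{1}\|\beta\|\log\|\beta\|$ whenever $x\in\mathcal{S}$ and $\|\beta\|>B$. Given $\theta>0$, set $M(\theta)\doteq\max(B,e^{2\theta/c_{1}})$. If $\|\beta\|\leq M(\theta)$, then $\theta\|\beta\|\leq\theta M(\theta)$; if instead $\|\beta\|>M(\theta)$, then $c_{1}\log\|\beta\|\geq 2\theta$, whence $L(x,\beta)\geq 2\theta\|\beta\|$. Combining the two cases yields the uniform bound
\begin{equation*}
\|\beta\|\leq\frac{1}{2\theta}L(x,\beta)+M(\theta),\qquad x\in\mathcal{S},\ \beta\in\Delta^{d-1}.
\end{equation*}

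Next, since $\gamma\in AC([0,1]:\mathcal{S})$, for any $t\in(a,b]$ we have $\|\gamma(t)-\gamma(a)\|\leq\int_{a}^{t}\|\dot{\gamma}(s)\|\,ds$. Applying the previous inequality pointwise with $x=\gamma(s)$ and $\beta=\dot{\gamma}(s)$ and integrating gives
\begin{equation*}
\|\gamma(t)-\gamma(a)\|\leq\frac{1}{2\theta}\int_{a}^{t}L(\gamma(s),\dot{\gamma}(s))\,ds+(t-a)M(\theta).
\end{equation*}
Let $\varepsilon(t)\doteq\int_{a}^{t}L(\gamma(s),\dot{\gamma}(s))\,ds$; by hypothesis $\varepsilon$ is finite on $[a,b]$ and $\varepsilon(t)\to 0$ as $t\downarrow a$ by absolute continuity of the Lebesgue integral.

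Finally, for $t-a$ small enough that $\frac{c_{1}}{4}\log\frac{1}{t-a}>\frac{c_{1}}{2}\log B$, choose $\theta=\theta(t)\doteq\frac{c_{1}}{4}\log\frac{1}{t-a}$, so that $M(\theta)=e^{2\theta/c_{1}}=(t-a)^{-1/2}$. Substituting yields
\begin{equation*}
\|\gamma(t)-\gamma(a)\|\leq\frac{2\varepsilon(t)}{c_{1}\log\frac{1}{t-a}}+(t-a)^{1/2},
\end{equation*}
and therefore
\begin{equation*}
\|\gamma(t)-\gamma(a)\|\log\frac{1}{t-a}\leq\frac{2}{c_{1}}\varepsilon(t)+(t-a)^{1/2}\log\frac{1}{t-a}\longrightarrow 0
\end{equation*}
as $t\downarrow a$, as required. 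The only delicate step is the two-case derivation of the pointwise inequality; everything else is bookkeeping. No structural obstacle is expected, since the superlinearity in Proposition \ref{4.5} is precisely the tool designed for this kind of modulus-of-continuity estimate.
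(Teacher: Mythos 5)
Your proof is correct, but it takes a genuinely different route from the paper. The paper first establishes that $\|\dot{\gamma}(\cdot)\|\log\|\dot{\gamma}(\cdot)\|$ is integrable on $[a,b]$ using the superlinear lower bound of Proposition \ref{4.5}, and then applies Jensen's inequality for the convex function $r\mapsto r\log r$ on the interval $[a,t]$ to get
\begin{equation*}
\int_{a}^{t}\left\Vert \dot{\gamma}\left( s\right) \right\Vert \log
\left\Vert \dot{\gamma}\left( s\right) \right\Vert ds \geq \left\Vert \gamma \left( t\right) -\gamma \left( a\right) \right\Vert
\log \frac{\left\Vert \gamma \left( t\right) -\gamma \left( a\right)
\right\Vert }{t-a},
\end{equation*}
from which the claim follows since both the left side and the term $\|\gamma(t)-\gamma(a)\|\log\|\gamma(t)-\gamma(a)\|$ vanish as $t\downarrow a$. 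You instead derive the uniform pointwise Young-type bound $\|\beta\|\leq\frac{1}{2\theta}L(x,\beta)+M(\theta)$ with a free parameter $\theta$, integrate it along $\gamma$, and then optimize $\theta$ as a function of $t-a$ to balance the two error terms. Both approaches lean essentially only on the lower bound \eqref{lxbeta-lower}; yours has the minor advantage that it bypasses the monotonicity caveat implicit in passing from $\frac{1}{t-a}\int_a^t\|\dot\gamma\|$ to $\frac{\|\gamma(t)-\gamma(a)\|}{t-a}$ inside the non-monotone function $r\log r$ (that subtlety is harmless in the paper's setting but must be noticed), while the paper's argument is slightly shorter because the optimization over $\theta$ is absorbed into Jensen's inequality. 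Your choice $\theta(t)=\frac{c_1}{4}\log\frac{1}{t-a}$, forcing $M(\theta)=(t-a)^{-1/2}$, is exactly the right scale and the two error terms $\frac{2\varepsilon(t)}{c_1}$ and $(t-a)^{1/2}\log\frac{1}{t-a}$ both tend to zero, so the argument is complete and correct.
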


\begin{proof}
Fix $t\in \left( a,b\right) $ and let $A$ be the measurable set of points $%
s\in \left[ a,b\right] $ be for which $\dot{\gamma}\left( s\right) $ is well
defined and lies in $\Delta ^{d-1}$. We claim, and show below, that $%
\left\Vert \dot{\gamma}\left( \cdot \right) \right\Vert \log \left\Vert \dot{%
\gamma}\left( \cdot \right) \right\Vert $ is integrable on $[a,b]$. By
Proposition \ref{4.5}, there exists $B$ sufficiently large and $c_{1}(B)>0 $
such that \eqref{lxbeta-lower} holds. Therefore, defining $A_{1}\doteq
\left\{ s\in A:\left\Vert \dot{\gamma}\left( s\right) \right\Vert \leq
B\right\} $, we have 
\begin{align*}
\int_{a}^{b}\left\Vert \dot{\gamma}\left( s\right) \right\Vert \log
\left\Vert \dot{\gamma}\left( s\right) \right\Vert ds=\int_{A}\left\Vert 
\dot{\gamma}\left( s\right) \right\Vert \log \left\Vert \dot{\gamma}\left(
s\right) \right\Vert ds& \leq \frac{1}{c_{1}}\int_{A\setminus A_{1}}L\left(
\gamma \left( s\right) ,\dot{\gamma}\left( s\right) \right)
ds+\int_{A_{1}}B\log Bds \\
& \leq \frac{1}{c_{1}}\int_{a}^{b}L\left( \gamma \left( s\right) ,\dot{\gamma%
}\left( s\right) \right) ds+\left( B\log B\right) \left( b-a\right) ,
\end{align*}%
where the last inequality uses the nonnegativity of $L$. On the other hand,
by Jensen's inequality, for $t \in (a,1)$, 
\begin{align*}
\int_{a}^{t}\left\Vert \dot{\gamma}\left( s\right) \right\Vert \log
\left\Vert \dot{\gamma}\left( s\right) \right\Vert ds& \geq \left(
t-a\right) \left\Vert \frac{\gamma \left( t\right) -\gamma \left( a\right) }{%
t-a}\right\Vert \log \left\Vert \frac{\gamma \left( t\right) -\gamma \left(
a\right) }{t-a}\right\Vert \\
& =\left\Vert \gamma \left( t\right) -\gamma \left( a\right) \right\Vert
\log \frac{\left\Vert \gamma \left( t\right) -\gamma \left( a\right)
\right\Vert }{t-a}.
\end{align*}%
Now, since $||\dot{\gamma}(\cdot)||\log ||\dot{\gamma}(\cdot)||$ is
integrable, the left-hand side of the last display goes to zero as $t
\downarrow a$. The lemma follows by observing that $\left\Vert \gamma \left(
t\right) -\gamma \left( a\right) \right\Vert \log \left\Vert \gamma \left(
t\right) -\gamma \left( a\right) \right\Vert $ also goes to zero as $%
t\downarrow a$.
\end{proof}

Recall the definition of ${\mathcal{V}}_{+}$ given in Remark \ref{rem-cone}.
The following result is used in Lemma \ref{6.3}, which contains a
perturbation argument used in proving the LDP lower bound.

\begin{lemma}
\label{abs4.7''} Suppose $\{\lambda_v(\cdot), v \in {\mathcal{V}}\}$
satisfies the assumptions stated in Lemma \ref{4.1}. Let $c_{0}\left( \rho
\right) $ be given such that $c_{0}\left( \rho \right) \rightarrow 1$ as $%
\rho \rightarrow 0$. Suppose that $x\in \mathcal{S}$, $\left\{ x^{\rho
}\right\} _{\rho >0}\subset \mathrm{int}\left( \mathcal{S}\right) $, are
such that $\left\Vert x-x^{\rho }\right\Vert \rightarrow 0$ as $\rho
\rightarrow 0$, and for any $\rho >0$ and $v\in \mathcal{V}_{+}$, $\lambda
_{v}\left( x\right) /\lambda _{v}\left( x^{\rho }\right) \leq c_{0}\left(
\rho \right) $. Then there exists $c=c\left( \rho \right) $ that only
depends on $c_{0}\left( \rho \right) $ and $\left\Vert x-x^{\rho
}\right\Vert $, that satisfies $c(\rho )\rightarrow 0$ as $\rho \rightarrow
0 $, and has the property that 
\begin{equation}
L\left( x^{\rho },\beta \right) \leq \left( 1+c\left( \rho \right) \right)
L\left( x,\beta \right) +c\left( \rho \right) ,\quad \beta \in \Delta ^{d-1}.
\label{x-xdelta}
\end{equation}
\end{lemma}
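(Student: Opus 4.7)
The plan is to compare $L(x^\rho,\beta)$ to $L(x,\beta)$ via a direct feasibility argument: take an $\varepsilon$-minimizer of the variational problem defining $L(x,\beta)$, use the same family of rates $\{q_v\}$ as a feasible choice in the problem for $L(x^\rho,\beta)$, and bound the difference termwise using the hypothesis on the ratio of the $\lambda_v$.

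First I would dispose of the trivial case: if $L(x,\beta)=\infty$, then since $x^\rho\in\mathrm{int}(\mathcal{S})$, Lemma~\ref{4.1} gives $L(x^\rho,\beta)<\infty$ and \eqref{x-xdelta} holds automatically. So assume $L(x,\beta)<\infty$. Fix $\varepsilon>0$ and pick nonnegative $\{q_v\}_{v\in\mathcal{V}}$ with $\sum_v v q_v=\beta$ and
\[
\sum_{v\in\mathcal{V}}\lambda_v(x)\ell\!\left(\tfrac{q_v}{\lambda_v(x)}\right)\le L(x,\beta)+\varepsilon.
\]
Finiteness of the cost forces $q_v=0$ whenever $\lambda_v(x)=0$, so the nonzero terms come from $v\in\mathcal{V}_+$, and for each such $v$ one has $\lambda_v(x^\rho)>0$ since $x^\rho\in\mathrm{int}(\mathcal{S})$. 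Using this $\{q_v\}$ as a feasible vector in the infimum defining $L(x^\rho,\beta)$, the key is the elementary identity
\[
r\,\ell\!\left(\tfrac{q}{r}\right)-s\,\ell\!\left(\tfrac{q}{s}\right)=q\log\tfrac{s}{r}+(r-s),\qquad r,s>0,\ q\ge 0,
\]
applied with $r=\lambda_v(x^\rho)$, $s=\lambda_v(x)$.

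Now I would invoke the three available quantitative inputs. The ratio hypothesis $\lambda_v(x)/\lambda_v(x^\rho)\le c_0(\rho)$ gives $\log(s/r)\le\log^+c_0(\rho)$; Lipschitz continuity of $\lambda_v$ (Property~\ref{prop-lambda}) together with finiteness of $\mathcal{V}$ yields a constant $C_L<\infty$ with $|\lambda_v(x^\rho)-\lambda_v(x)|\le C_L\|x-x^\rho\|$ for all $v$; and Lemma~\ref{4.4} gives $q_v\le s\,\ell(q_v/s)+s(e-1)$. Combining,
\[
r\,\ell\!\left(\tfrac{q_v}{r}\right)\le s\,\ell\!\left(\tfrac{q_v}{s}\right)\bigl(1+\log^+ c_0(\rho)\bigr)+s(e-1)\log^+c_0(\rho)+C_L\|x-x^\rho\|.
\]
Summing over $v\in\mathcal{V}_+$ and using $\sum_v\lambda_v(x)\le |\mathcal{V}|R$ with $R$ from \eqref{m}, one obtains
\[
L(x^\rho,\beta)\le\bigl(1+\log^+c_0(\rho)\bigr)(L(x,\beta)+\varepsilon)+|\mathcal{V}|R(e-1)\log^+c_0(\rho)+|\mathcal{V}|C_L\|x-x^\rho\|.
\]
Letting $\varepsilon\downarrow 0$ and setting
\[
c(\rho)\doteq\max\!\bigl\{\log^+c_0(\rho),\ |\mathcal{V}|\bigl(R(e-1)\log^+c_0(\rho)+C_L\|x-x^\rho\|\bigr)\bigr\}
\]
gives the bound \eqref{x-xdelta}, with $c(\rho)\to 0$ since $c_0(\rho)\to 1$ and $\|x-x^\rho\|\to 0$.

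There is no really hard step here; the argument is essentially a one-line Legendre-type comparison followed by the Poisson-cost inequality of Lemma~\ref{4.4}. The only mild technical point is the bookkeeping around directions $v$ for which $\lambda_v(x)=0$: one must confirm that a finite-cost near-minimizer assigns no mass to such directions and that any $v\in\mathcal{V}\setminus\mathcal{V}_+$ contributes nothing (since $\lambda_v\equiv 0$ and the corresponding $q_v$ must vanish), so that the termwise comparison is carried out only over $v\in\mathcal{V}_+$ where the hypothesized ratio bound applies.
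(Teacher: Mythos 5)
Your proof is correct and follows essentially the same route as the paper's: the same algebraic identity $r\ell(q/r)-s\ell(q/s)=q\log(s/r)+(r-s)$, the same termwise comparison over $v\in\mathcal{V}_+$, the same use of the Lipschitz property and of Lemma~\ref{4.4} to trade $\sum_v q_v$ for $\sum_v\lambda_v(x)\ell(q_v/\lambda_v(x))$ plus a constant, and the same final choice of $c(\rho)$ as a maximum of two small quantities. The only cosmetic difference is that you introduce an explicit $\varepsilon$-minimizer of $L(x,\beta)$ and then send $\varepsilon\downarrow 0$, whereas the paper establishes the termwise inequality for an arbitrary feasible $q$ and then takes the infimum directly; the two are equivalent. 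One small refinement you make, which is worth keeping: you use $\log^+ c_0(\rho)$ rather than $\log c_0(\rho)$, which is needed so that multiplying the inequality from Lemma~\ref{4.4} by the logarithmic factor preserves the inequality's direction even if $c_0(\rho)<1$ (the paper's argument implicitly assumes $c_0(\rho)\ge 1$, which one can of course arrange WLOG, but your version is cleaner).
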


\begin{proof}
Fix $\beta \in \Delta ^{d-1}$. We can assume without loss of generality that
there exists $q\in \mathbb{[}0,\infty )^{\left\vert \mathcal{V}%
_{+}\right\vert }$ such that $\sum_{v\in {\mathcal{V}}_{+}}vq_{v}=\beta $
because, if not, then $L(x,\beta )$ is infinite and \eqref{x-xdelta} holds
trivially. Now, we claim (and justify below) that to prove the lemma, it
suffices to show that for every $\rho >0$, there exists a function $c\left(
\rho \right) $ (depending only on $||x^{\rho }-x||$ and $c_{0}(\rho )$) such
that for every $q\in \lbrack 0,\infty )^{\left\vert \mathcal{V}%
_{+}\right\vert }$ such that $\sum_{v\in {\mathcal{V}}_{+}}vq_{v}=\beta $, 
\begin{equation}
\sum_{v\in {\mathcal{V}}_{+}}\lambda _{v}\left( x^{\rho }\right) \ell \left( 
\frac{q_{v}}{\lambda _{v}\left( x^{\rho }\right) }\right) \leq \left(
1+c\left( \rho \right) \right) \sum_{v\in {\mathcal{V}}_{+}}\lambda
_{v}\left( x\right) \ell \left( \frac{q_{v}}{\lambda _{v}\left( x\right) }%
\right) +c\left( \rho \right) ,  \label{i}
\end{equation}%
and $c(\rho )\rightarrow 0$ as $\rho \rightarrow 0$. To see that the claim
holds, recall the expression (\ref{eqn:inf-conv}) for $L$ and note that the
left-hand side of \eqref{x-xdelta} is dominated by the left-hand side of %
\eqref{i}. The right-hand side of \eqref{x-xdelta} is the infimum of the
right-hand side of \eqref{i} over all $q\in \mathbb{[}0,\infty )^{\left\vert 
\mathcal{V}_{+}\right\vert }$ such that $\sum_{v\in {\mathcal{V}}%
_{+}}vq_{v}=\beta $, where we have used the fact that $\lambda _{v}(x)>0$
then $v\in {\mathcal{V}}_{+}$, which follows from property (3) of Lemma \ref%
{lem-estimates}.

We have the following relations, each line of which is explained below. 
\begin{align*}
& \sum_{v\in \mathcal{V}_{+}}\lambda _{v}\left( x^{\rho }\right) \ell \left( 
\frac{q_{v}}{\lambda _{v}\left( x^{\rho }\right) }\right) -\sum_{v\in 
\mathcal{V}_{+}}\lambda _{v}\left( x\right) \ell \left( \frac{q_{v}}{\lambda
_{v}\left( x\right) }\right) \\
& \quad =\sum_{v\in \mathcal{V}_{+}}q_{v}\log \frac{\lambda _{v}\left(
x\right) }{\lambda _{v}\left( x^{\rho }\right) }+\sum_{v\in \mathcal{V}%
_{+}}\left( \lambda _{v}\left( x^{\rho }\right) -\lambda _{v}\left( x\right)
\right) \\
& \quad \leq \log c_{0}\left( \rho \right) \sum_{v\in \mathcal{V}%
_{+}}q_{v}+C_{1}\left\Vert x^{\rho }-x\right\Vert \\
& \quad \leq \log c_{0}\left( \rho \right) \sum_{v\in \mathcal{V}_{+}}\left(
\lambda _{v}\left( x\right) \ell \left( \frac{q_{v}}{\lambda _{v}\left(
x\right) }\right) +\lambda _{v}\left( x\right) \left( e-1\right) \right)
+C_{1}\left\Vert x^{\rho }-x\right\Vert \\
& \quad \leq \log c_{0}\left( \rho \right) \sum_{v\in \mathcal{V}%
_{+}}\lambda _{v}\left( x\right) \ell \left( \frac{q_{v}}{\lambda _{v}\left(
x\right) }\right) +\bar{C}\left( \rho \right) .
\end{align*}%
The equality follows from the expression \eqref{l} for $\ell $; the first
inequality (with $C_{1}<\infty $) due to the assumption of the lemma and the
Lipschitz continuity of $\lambda _{v}$; the second inequality follows from
Lemma \ref{4.4} with $r=\lambda _{v}(x)$ and $q=q_{v}$; and the final
inequality just uses the definition $\bar{C}\left( \rho \right) \doteq \log
c_{0}\left( \rho \right) R\left\vert \mathcal{V}\right\vert \left(
e-1\right) +C_{1}\left\Vert x-x^{\rho }\right\Vert $. Then \eqref{i} holds
with $c\left( \rho \right) =\max \left\{ \log c_{0}\left( \rho \right) ,\bar{%
C}\left( \rho \right) \right\} $. Since $c(\rho )$ depends only on $%
c_{0}(\rho )$ and $\left\Vert x-x^{\rho }\right\Vert $, and the assumptions
of the lemma imply that $c(\rho )\rightarrow 0$ as $\rho \rightarrow 0$,
this completes the proof.
\end{proof}

For $t\in \left[ 0,1\right] $ and $c>0$, define%
\begin{equation*}
\gamma _{c}\left( s\right) \doteq \gamma \left( cs\right) ,\text{ \ }s\in %
\left[ 0,t\right] ,
\end{equation*}%
which is a time reparametrization of $\gamma $. The next result is used in
the proof of the locally uniform LDP in Section 9. It states that given a
path $\gamma $ with finite cost, the cost of the path depends continuously
on the reparameterization of time.

\begin{proposition}
\label{4.8} Suppose $\{\lambda _{v}(\cdot ),v\in {\mathcal{V}}\}$ satisfies
the assumptions stated in Lemma \ref{4.1}. For $t\in \lbrack 0,1)$, suppose $%
\gamma \in AC\left( \left[ 0,1\right] :\mathcal{S}\right) $ is such that $%
I_{t}\left( \gamma \right) <\infty $. Then the function $c\mapsto
I_{t/c}\left( \gamma _{c}\right) $ is continuous at $1$.
\end{proposition}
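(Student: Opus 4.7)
My plan is to reduce the continuity question to a dominated convergence argument on the fixed interval $[0,t]$, after a change of variable. Setting $u = cs$ in the defining integral gives
\begin{equation*}
I_{t/c}(\gamma_c) = \int_0^{t/c} L(\gamma(cs), c\dot\gamma(cs))\, ds = \frac{1}{c}\int_0^t L(\gamma(u), c\dot\gamma(u))\, du,
\end{equation*}
so it suffices to prove that $\int_0^t L(\gamma(u), c\dot\gamma(u))\, du \to I_t(\gamma)$ as $c \to 1$ (the prefactor $1/c$ is trivially continuous). Both the pointwise convergence of the integrand at a.e. $u$ and an integrable dominating function are needed.

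For the \emph{pointwise convergence}, the assumption $I_t(\gamma) < \infty$ forces $L(\gamma(u), \dot\gamma(u)) < \infty$ for a.e. $u$. At any such $u$, in the definition \eqref{eqn:local_rate} any feasible $q$ must satisfy $q_v = 0$ whenever $\lambda_v(\gamma(u)) = 0$; then $c \cdot q$ remains feasible for $L(\gamma(u), c\dot\gamma(u))$ for every $c \geq 0$, so this latter quantity is finite on the entire ray. Since $L(\gamma(u), \cdot)$ is convex (Lemma \ref{4.1}), the one-dimensional map $c \mapsto L(\gamma(u), c\dot\gamma(u))$ is convex and everywhere finite on $[0,\infty)$, hence continuous at $c = 1$.

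For the \emph{dominating function}, the key identity is the elementary relation
\begin{equation*}
\ell(cr) = c\,\ell(r) + cr\log c + (1-c),\qquad c,r \geq 0.
\end{equation*}
Applying this to a minimizer $q^* = q^*(x,\beta)$ in \eqref{eqn:local_rate} (which exists whenever $L(x,\beta)<\infty$, by lower semicontinuity and coercivity of $\ell$) yields
\begin{equation*}
L(x, c\beta) \leq c\, L(x, \beta) + c\log c \cdot {\textstyle\sum_v} q_v^* + (1-c)\,{\textstyle\sum_v}\lambda_v(x).
\end{equation*}
To control $\sum_v q_v^*$ I would use the superlinearity of $\ell$: the inequality $\ell(r) \geq r$ for $r \geq e^2$ implies $\sum_{v:\, q_v^* \geq e^2\lambda_v(x)} q_v^* \leq \sum_v \lambda_v(x)\ell(q_v^*/\lambda_v(x)) = L(x,\beta)$, while the complementary sum is bounded by $e^2 |{\mathcal{V}}| R$ with $R$ from \eqref{m}. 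Hence $\sum_v q_v^* \leq L(x,\beta) + e^2|{\mathcal{V}}|R$, and for all $c$ in a fixed neighborhood of $1$ (say $[1/2,2]$),
\begin{equation*}
L(\gamma(u), c\dot\gamma(u)) \leq C_1\, L(\gamma(u),\dot\gamma(u)) + C_2,
\end{equation*}
with constants $C_1, C_2$ depending only on the neighborhood. The right-hand side is integrable on $[0,t]$ by the hypothesis $I_t(\gamma)<\infty$, so dominated convergence completes the proof.

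The main obstacle is the bound on $\sum_v q_v^*$ in terms of $L(x,\beta)$; convexity alone is not enough, and one must exploit the specific superlinear growth of the Poisson local rate function $\ell$. Once this bound is in hand, the argument is routine.
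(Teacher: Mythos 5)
Your proof is correct, and it follows the same basic route as the paper: the change of variable $I_{t/c}(\gamma_c)=\frac{1}{c}\int_0^t L(\gamma(u),c\dot\gamma(u))\,du$, followed by controlling the $c$-dependence of the integrand pointwise, with the crucial ingredient being a bound of the form $\sum_v q_v \lesssim L(x,\beta)+O(1)$ coming from the superlinearity of $\ell$. The paper packages that bound as Lemma~\ref{4.4} ($r\ell(q/r)+r(e-1)\geq q$) and works with $\varepsilon$-minimizers to produce two-sided estimates on $\frac{1}{c}L(\gamma(u),c\dot\gamma(u))-L(\gamma(u),\dot\gamma(u))$, whereas you derive an equivalent bound directly from $\ell(r)\geq r$ for large $r$ and apply it to an exact minimizer $q^\ast$. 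Where you genuinely streamline the argument is in the pointwise step: rather than explicit upper and lower estimates, you observe that $c\mapsto L(\gamma(u),c\dot\gamma(u))$ is convex and finite on $[0,\infty)$ (using the scaling invariance of the feasibility constraint), hence automatically continuous at $c=1$, and then finish by dominated convergence. That buys a cleaner endgame at the cost of invoking existence of a minimizer, which is harmless here by coercivity but is sidestepped entirely in the paper's $\varepsilon$-minimizer formulation.
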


\begin{proof}
First note that for $c$ close to $1$, $\gamma _{c}\in AC\left( \left[ 0,t/c%
\right] :\mathcal{S}\right) $ and 
\begin{equation*}
I_{t/c}\left( \gamma _{c}\right) =\int_{0}^{t/c}L\left( \gamma \left(
cs\right) ,c\dot{\gamma}\left( cs\right) \right) ds=\frac{1}{c}%
\int_{0}^{t}L\left( \gamma \left( r\right) ,c\dot{\gamma}\left( r\right)
\right) dr.
\end{equation*}%
We now bound the integral of $\frac{1}{c}L\left( \gamma ,c\dot{\gamma}%
\right) -L\left( \gamma ,\dot{\gamma}\right) $ over $[0,t]$. Recall the
definition of $L$ in (\ref{L}). Since $\gamma $ is absolutely continuous and 
$I_{t}(\gamma )<\infty $, $\dot{\gamma}\left( u\right) $ is well defined and 
$L(\gamma (u),\dot{\gamma}(u))<\infty $ for almost every $u\in \lbrack 0,t]$%
. Thus, for any such $u\in \left[ 0,t\right] $ and $\varepsilon >0$, there
exists $q\in \mathbb{[}0,\infty )^{\left\vert \mathcal{V}\right\vert }$ such
that $\sum_{v\in \mathcal{V}}vq_{v}=c\dot{\gamma}\left( u\right) $ and 
\begin{equation*}
\sum_{v\in \mathcal{V}}\lambda _{v}\left( \gamma \left( u\right) \right)
\ell \left( \frac{q_{v}/c}{\lambda _{v}\left( \gamma \left( u\right) \right) 
}\right) \leq L\left( \gamma \left( u\right) ,\dot{\gamma}\left( u\right)
\right) +\varepsilon .
\end{equation*}%
On the other hand, using the expression \eqref{l} for $\ell $ we also have 
\begin{align*}
& \sum_{v\in \mathcal{V}}\lambda _{v}\left( \gamma \left( u\right) \right)
\ell \left( \frac{q_{v}/c}{\lambda _{v}\left( \gamma \left( u\right) \right) 
}\right) \\
& \quad =\sum_{v\in \mathcal{V}}\left( \frac{q_{v}}{c}\log \frac{q_{v}/c}{%
\lambda _{v}\left( \gamma \left( u\right) \right) }-\frac{q_{v}}{c}+\lambda
_{v}\left( \gamma \left( u\right) \right) \right) \\
& \quad =\frac{1}{c}\sum_{v\in \mathcal{V}}\left( q_{v}\log \frac{q_{v}}{%
\lambda _{v}\left( \gamma \left( u\right) \right) }-q_{v}+\lambda _{v}\left(
\gamma \left( u\right) \right) \right) +\left( \frac{1}{c}\log \frac{1}{c}%
\right) \sum_{v\in \mathcal{V}}q_{v} \\
& \quad \quad +\left( 1-\frac{1}{c}\right) \sum_{v\in \mathcal{V}}\lambda
_{v}\left( \gamma \left( u\right) \right) \\
& \quad \geq \frac{1}{c}L\left( \gamma \left( u\right) ,c\dot{\gamma}\left(
u\right) \right) +\left( \frac{1}{c}\log \frac{1}{c}\right) \sum_{v\in 
\mathcal{V}}q_{v}+\left( 1-\frac{1}{c}\right) \sum_{v\in \mathcal{V}}\lambda
_{v}\left( \gamma \left( u\right) \right) .
\end{align*}%
The last two relations imply that 
\begin{equation}
\frac{1}{c}L\left( \gamma \left( u\right) ,c\dot{\gamma}\left( u\right)
\right) -L\left( \gamma \left( u\right) ,\dot{\gamma}\left( u\right) \right)
\leq \varepsilon -\left( \frac{1}{c}\log \frac{1}{c}\right) \sum_{v\in 
\mathcal{V}}q_{v}-\left( 1-\frac{1}{c}\right) \sum_{v\in \mathcal{V}}\lambda
_{v}\left( \gamma \left( u\right) \right) .  \label{first}
\end{equation}

Similarly, by taking $q\in \mathbb{[}0,\infty )^{\left\vert \mathcal{V}%
\right\vert }$, such that $\sum_{v\in \mathcal{V}}vq_{v}=\dot{\gamma}\left(
u\right) $ and 
\begin{equation*}
\sum_{v\in \mathcal{V}}\lambda _{v}\left( \gamma \left( u\right) \right)
\ell \left( \frac{cq_{v}}{\lambda _{v}\left( \gamma \left( u\right) \right) }%
\right) \leq L\left( \gamma \left( u\right) ,c\dot{\gamma}\left( u\right)
\right) +c\varepsilon ,
\end{equation*}%
an analogous computation yields%
\begin{equation}
\frac{1}{c}L\left( \gamma \left( u\right) ,c\dot{\gamma}\left( u\right)
\right) -L\left( \gamma \left( u\right) ,\dot{\gamma}\left( u\right) \right)
\geq \left( \log c\right) \sum_{v\in \mathcal{V}}q_{v}-\left( 1-\frac{1}{c}%
\right) \sum_{v\in \mathcal{V}}\lambda _{v}\left( \gamma \left( u\right)
\right) -\varepsilon .  \label{second}
\end{equation}
We now apply Lemma \ref{4.4} with $r=\lambda _{v}\left( \gamma \left(
u\right) \right) $ and $q=q_{v}/c$, and the bound \eqref{m} for $\lambda
_{v} $ to obtain 
\begin{eqnarray*}
\frac{1}{c}\sum_{v\in \mathcal{V}}q_{v} &\leq &\sum_{v\in \mathcal{V}}\left(
\lambda _{v}\left( \gamma \left( u\right) \right) \ell \left( \frac{q_{v}/c}{%
\lambda _{v}\left( \gamma \left( u\right) \right) }\right) +\lambda
_{v}\left( \gamma \left( u\right) \right) \left( e-1\right) \right) \\
&\leq &L\left( \gamma \left( u\right) ,\dot{\gamma}\left( u\right) \right)
+\varepsilon +R_{1}
\end{eqnarray*}%
where $R_{1} \doteq |{\mathcal{V}}| R (e-1) <\infty $, with $R$ being the
bound in \eqref{m}. Combining this with (\ref{first}) and (\ref{second}), we
see that for $c\ $sufficiently close to $1,$ 
\begin{align*}
& \left\vert \frac{1}{c}L\left( \gamma \left( u\right) ,c\dot{\gamma}\left(
u\right) \right) -L\left( \gamma \left( u\right) ,\dot{\gamma}\left(
u\right) \right) \right\vert \\
& \quad \leq M\left\vert \mathcal{V}\right\vert \left\vert 1-\frac{1}{c}%
\right\vert +\max \left\{ \log \frac{1}{c},c\log c\right\} \left( L\left(
\gamma \left( u\right) ,\dot{\gamma}\left( u\right) \right) +\varepsilon
+M_{1}\right) +\varepsilon .
\end{align*}
Since this holds for almost every $u \in [0,t]$, one can first integrate
over $\left[ 0,t\right] $, then take $c\rightarrow 1$ and use the finiteness
of $I (\gamma)$ and finally send $\varepsilon \rightarrow 0$ to complete the
proof.
\end{proof}

\section{\protect\bigskip Proof of the LDP lower bound}

\label{sec-pflower}

We now turn to the proof of the LDP lower bound, which we will establish for
a somewhat larger class of jump Markov processes than the empirical measure
processes. Again, we assume for each $n\in \mathbb{N}$, $\mu ^{n}(\cdot )$
is a jump Markov process on ${\mathcal{S}}_{n}$ with generator ${\mathcal{L}}%
_{n}$ in \eqref{Lnl} such that the associated sequence of rates $\{\lambda
_{v}^{n}(\cdot )\}_{v\in {\mathcal{V}}},n\in \mathbb{N},$ satisfy Property %
\ref{prop-lambda}, that is, converge uniformly to suitable Lipschitz
continuous functions $\{\lambda _{v}(\cdot )\}_{v\in {\mathcal{V}}}$.
Additional conditions imposed on $\{\lambda _{v}(\cdot )\}_{v\in {\mathcal{V}%
}}$ will be stated in the lemmas below. Recall that for notational
convenience we assume the time interval is of the form $[0,1]$. To prove the
lower bound it suffices to show that for any fixed trajectory $\gamma \in
D\left( \left[ 0,1\right] :\mathcal{S}\right) $, given any $\varepsilon >0$
and $\delta >0$ there exists $\eta >0$ such that if $\left\Vert \mu
^{n}\left( 0\right) -\gamma \left( 0\right) \right\Vert <\eta $ for all $n$
large enough, 
\begin{equation}
\liminf_{n\rightarrow \infty }\frac{1}{n}\log \mathbb{P}\left( \left\Vert
\mu ^{n}-\gamma \right\Vert _{\infty }<\delta \right) \geq -I\left( \gamma
\right) -\varepsilon .  \label{red-lbound}
\end{equation}%
Without loss of generality we assume $I\left( \gamma \right) <\infty $,
which in particular implies that $\gamma \in AC([0,1],{\mathcal{S}})$.

One source of difficulty here is that the transition rates of $\mu ^{n}$ may
tend to zero as $\mu ^{n}$ approaches the boundary of $\mathcal{S}$, which
could lead to singularity of the local rate function. Our approach here
adapts an idea from the study of a discrete time model in \cite{DNW}. We
first show that the singularity can be avoided except at $t=0$, by slightly
perturbing the original path, with arbitrarily small additional cost.

\subsection{Perturbation argument}

\label{subs-pert}

The idea of the perturbation argument is as follows. Recall the definition
of $\mathcal{S}^{a}$ in (\ref{S}). For any $a>0$ fixed, by property (3) of
Lemma \ref{lem-estimates}, the rates $\lambda _{v}\left( \cdot \right) $ are
either identically zero or uniformly bounded below away from zero within $%
\mathcal{S}^{a}$. Therefore, a standard approximation argument can be used
to establish the LDP in $\mathcal{S}^{a}$, uniformly with respect to the
initial condition. When $\gamma \left( 0\right) =x\in \mathcal{S}/\mathcal{S}%
^{a}$, by using Proposition \ref{prop-lln}, one can construct a perturbed
trajectory of $\gamma $ that hits $\mathcal{S}^{a}$ in an arbitrarily short
time as $a\rightarrow 0$, and in such a way that the difference in cost
between $\gamma $ and the perturbed trajectory can be made sufficiently
small.

\begin{lemma}
\label{6.3}Assume the family of jump rates $\left\{ \lambda _{v}\left(
\cdot\right),v\in \mathcal{V}\right\} $ satisfies Property \ref{prop-lambda}%
, Property \ref{prop-communicate}, properties (2) and (3) of Lemma \ref%
{lem-estimates}, and the associated LLN trajectory satisfies Property \ref%
{cond-lln}. Consider $\gamma \in AC\left( \left[ 0,1\right] :\mathcal{S}%
\right) $ such that $I\left( \gamma \right) <\infty $. Then given any $%
\varepsilon >0$, there exists $\tilde{b}>0$, $D<\infty $ and a trajectory $%
\psi \in AC\left( \left[ 0,1\right] :\mathcal{S}\right) $ such that \newline
i) $\psi \left( 0\right) =\gamma \left( 0\right) $ and $\left\Vert \psi
-\gamma \right\Vert _{\infty }<\varepsilon $,

ii) $\psi _{i}\left( t\right) \geq \tilde{b}t^{D}$ for $i=1,...,d$ and any $%
t\in \left[ 0,1\right] $,

iii) $I\left( \psi \right) \leq I\left( \gamma \right) +\varepsilon $.
\end{lemma}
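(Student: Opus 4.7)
My strategy is to take $\psi$ to be a small convex combination of $\gamma$ with the LLN trajectory launched from $\gamma(0)$. Let $\nu \in AC([0,1]:\mathcal{S})$ denote the unique solution of the ODE \eqref{1} with $\nu(0) = \gamma(0)$; by Proposition \ref{prop-lln} (whose hypotheses are exactly those of this lemma), Property \ref{cond-lln} holds, so $\nu_i(t) \geq b t^D$ for every $i$ and $t \in [0,1]$. For a small parameter $\rho \in (0,1)$ to be chosen, set
\[
\psi(t) \doteq (1-\rho)\gamma(t) + \rho\, \nu(t), \qquad t \in [0,1].
\]
Convexity of $\mathcal{S}$ makes $\psi$ an absolutely continuous $\mathcal{S}$-valued path, and $\psi(0) = \gamma(0)$ is immediate. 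Condition (i) follows since $\|\psi - \gamma\|_\infty = \rho\,\|\nu-\gamma\|_\infty \leq \rho\, \mathrm{diam}(\mathcal{S})$, and condition (ii) follows from $\psi_i(t) \geq \rho\,\nu_i(t) \geq \rho b\, t^D$, so we may take $\tilde{b} \doteq \rho b$ and $D$ as in Property \ref{cond-lln}.

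The substantive work lies in condition (iii). Since $L(x, \cdot)$ is convex (as the optimal value of a convex program with a linear constraint),
\[
L(\psi(s),\dot\psi(s)) \leq (1-\rho)\, L(\psi(s),\dot\gamma(s)) + \rho\, L(\psi(s),\dot\nu(s)),
\]
so it suffices to bound $\int_0^1 L(\psi,\dot\gamma)\,ds$ and $\rho \int_0^1 L(\psi,\dot\nu)\,ds$ separately. For the first, I will apply Lemma \ref{abs4.7''} pointwise with $x = \gamma(s)$ and $x^\rho = \psi(s)$. Whenever $\gamma_i(s) > \psi_i(s)$ one has $\gamma_i(s)/\psi_i(s) \leq 1/(1-\rho)$, so property (2) of Lemma \ref{lem-estimates} gives
\[
\frac{\lambda_v(\gamma(s))}{\lambda_v(\psi(s))} \leq \bar{C}(\rho\,\mathrm{diam}(\mathcal{S}))\,(1-\rho)^{-dK} \eqqcolon c_0(\rho),
\]
with $c_0(\rho) \to 1$ uniformly in $s$, so Lemma \ref{abs4.7''} yields $L(\psi,\dot\gamma) \leq (1+c(\rho))\,L(\gamma,\dot\gamma) + c(\rho)$ with $c(\rho) \to 0$, and hence $\int_0^1 L(\psi,\dot\gamma)\,ds \leq (1+c(\rho))\, I(\gamma) + c(\rho)$.

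The main obstacle is the second integral, because $\|\psi - \nu\|_\infty = (1-\rho)\|\gamma-\nu\|_\infty$ does \emph{not} shrink as $\rho \to 0$, so Lemma \ref{abs4.7''} is unavailable for the pair $(\nu,\psi)$. Instead I will estimate $L(\psi(s),\dot\nu(s))$ directly by using the feasible point $q_v = \lambda_v(\nu(s))$ in the infimum \eqref{L}, which is admissible since $\dot\nu(s) = \sum_v v \lambda_v(\nu(s))$ by \eqref{1}. Combining the elementary inequality $r\ell(q/r) \leq q|\log(q/r)| + r$ with the uniform upper bound \eqref{m}, this reduces matters to controlling $|\log(\lambda_v(\nu(s))/\lambda_v(\psi(s)))|$. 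Since $\nu_i(s), \psi_i(s) \geq \rho b\, s^D$, properties (1) and (3) of Lemma \ref{lem-estimates} give
\[
\tfrac{c_0}{K!}(\rho b\, s^D)^{dK} \leq \lambda_v(\psi(s)) \leq R, \qquad \tfrac{c_0}{K!}(b\, s^D)^{dK} \leq \lambda_v(\nu(s)) \leq R,
\]
so $|\log(\lambda_v(\nu)/\lambda_v(\psi))| \leq A + B|\log \rho| + C|\log s|$ for constants independent of $s$ and $\rho$. Integrability of $|\log s|$ on $[0,1]$ yields $\int_0^1 L(\psi,\dot\nu)\,ds \leq A' + B'|\log\rho|$, and thus the second piece contributes $\rho(A' + B'|\log\rho|) = O(\rho|\log\rho|) \to 0$. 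Combining, $I(\psi) \leq (1-\rho)[(1+c(\rho))I(\gamma)+c(\rho)] + O(\rho|\log\rho|) = I(\gamma) + o(1)$, so choosing $\rho$ sufficiently small (which also ensures (i)) delivers (iii) with error below $\varepsilon$.
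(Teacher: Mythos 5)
Your proof is correct and follows essentially the same strategy as the paper's: interpolate $\psi = (1-\rho)\gamma + \rho\nu$ with the LLN path $\nu$, verify (i) and (ii) directly, and for (iii) split $L(\psi,\dot\psi)$ by convexity and bound the two pieces. For the $\dot\gamma$-piece you and the paper both invoke Lemma~\ref{abs4.7''} via the ratio bound $\gamma_i/\psi_i \leq 1/(1-\rho)$ and property~(2). The only genuine divergence is the $\dot\nu$-piece: the paper again applies property~(2), using $\nu_i(s)/\psi_i(s) \leq 1/\rho$ to obtain a bound on $\lambda_v(\nu(s))/\lambda_v(\psi(s))$ of order $\rho^{-Kd}$, \emph{uniformly in $s$}, which immediately gives $L(\psi(s),\dot\nu(s)) \leq C(\log(1/\rho)+1)$; you instead use property~(3) and the bound $R$ from \eqref{m} to sandwich $\lambda_v(\nu(s))$ and $\lambda_v(\psi(s))$ between $c(\rho b s^D)^{dK}$ and $R$, which produces an additional $|\log s|$ term that you then integrate. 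Both routes give $\rho \int_0^1 L(\psi,\dot\nu)\,ds = O(\rho|\log\rho|) \to 0$, so each works; the paper's is marginally cleaner (no $s$-dependence to handle), while yours trades property~(2) for property~(3) in that step. One small imprecision: you credit ``properties (1) and (3)'' with the sandwich bound, but the upper bound $\lambda_v \leq R$ you actually use is \eqref{m} (a consequence of Property~\ref{prop-lambda}), not property~(1) of Lemma~\ref{lem-estimates}, which is not among the lemma's hypotheses and is not needed here.
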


\begin{proof}
For $0<\rho <1$ define $\psi ^{\rho }\doteq \rho \mu +\left( 1-\rho \right)
\gamma $, where $\mu $ is the law of large numbers trajectory defined in (%
\ref{1}) with $\mu \left( 0\right) =\gamma \left( 0\right) $. Let $%
C_{d}\doteq \max_{x,y\in {\mathcal{S}}}||x-y||$ be the diameter of $\mathcal{%
S}$. Then we have $\psi (0)=\gamma (0)$ and $\left\Vert \psi ^{\rho }-\gamma
\right\Vert _{\infty }=\rho \left\Vert \mu -\gamma \right\Vert _{\infty
}\leq C_{d}\rho $. By Property \ref{cond-lln} of the LLN trajectory, there
exist $b>0$ and $D<\infty $ such that $\mu _{i}(t)\geq bt^{D}$ for $%
i=1,\ldots ,d$, which in turn implies the lower bound $\psi _{i}^{\rho
}\left( t\right) \geq \rho \mu _{i}\left( t\right) \geq \rho bt^{D}$, $%
i=1,\ldots ,d$. Thus, for all $\rho <\varepsilon /C_{d}$, $\psi =\psi ^{\rho
}$ satisfies property (i) and property (ii) holds with $\tilde{b}\doteq \rho
b>0$. It only remains to show that there exists some $\rho \in
(0,\varepsilon /C_{d})$ such that $\psi =\psi ^{\rho }$ satisfies property
(iii). We first show that there exists $c\left( \rho \right) <\infty $ which
goes to zero as $\rho \rightarrow 0$, such that for almost every $t\in \left[
0,1\right] $, 
\begin{equation}
L\left( \psi ^{\rho }\left( t\right) ,\dot{\gamma}\left( t\right) \right)
\leq \left( 1+c\left( \rho \right) \right) L\left( \gamma \left( t\right) ,%
\dot{\gamma}\left( t\right) \right) +c\left( \rho \right) .  \label{as}
\end{equation}

For $t\in (0,1]$, property (ii) shows that $\psi ^{\rho }(t)\in \mathrm{int}(%
{\mathcal{S}})$, and we also have $\gamma _{i}\left( t\right) /\psi
_{i}^{\rho }\left( t\right) \leq 1/(1-\rho )$ for every $i=1,\ldots ,d$. By
property (2) of Lemma \ref{lem-estimates}, there exists a function $\bar{C}%
:[0,\infty )\mapsto \lbrack 0,\infty )$ with $\bar{C}(r)\rightarrow 1$ as $%
r\rightarrow 0$ such that 
\begin{equation*}
\frac{\lambda _{v}(\gamma (t))}{\lambda _{v}(\psi ^{\rho }(t))}\leq \bar{C}%
(||\psi ^{\rho }(t)-\gamma (t)||_{\infty })\prod_{i=1,\ldots ,d:\gamma
_{i}(t)>\psi _{i}^{\rho }(t)}\left( \frac{\gamma _{i}(t)}{\psi _{i}^{\rho
}(t)}\right) ^{K}\leq c_{0}(\rho ),
\end{equation*}%
where 
\begin{equation*}
c_{0}(\rho )\doteq \bar{C}(||\psi ^{\rho }(t)-\gamma (t)||_{\infty })\left( 
\frac{1}{1-\rho }\right) ^{Kd}.
\end{equation*}%
As $\rho \rightarrow 0$, $c_{0}(\rho )\rightarrow 1$ because $||\psi ^{\rho
}(t)-\gamma (t)||_{\infty }\rightarrow 0$. Thus, an application of Lemma \ref%
{abs4.7''} with $x=\gamma \left( t\right) $ and $x^{\rho }=\psi ^{\rho
}\left( t\right) $ shows that (\ref{as}) holds for suitable $c(\rho )$.
Likewise, since $\mu _{i}\left( t\right) /\psi _{i}^{\rho }\left( t\right)
\leq 1/\rho $ for $i=1,\ldots ,d$, 
property (2) of Lemma \ref{lem-estimates} implies 
\begin{equation}
\frac{\lambda _{v}\left( \mu \left( t\right) \right) }{\lambda _{v}\left(
\psi ^{\rho }\left( t\right) \right) }\leq \bar{C}_{\ast }\left( \frac{1}{%
\rho }\right) ^{Kd},  \label{ratiov}
\end{equation}%
where $\bar{C}_{\ast }\doteq \max_{R\in \lbrack 0,C_{d}]}\bar{C}(r)$ is
finite because $\bar{C}$ is continuous. Therefore, by the definition of $L$
in (\ref{L}) and the fact that $\dot{\mu}\left( t\right) =\sum_{v\in 
\mathcal{V}}v\lambda _{v}\left( \mu \left( t\right) \right) $, we have 
\begin{eqnarray}
L\left( \psi ^{\rho }\left( t\right) ,\dot{\mu}\left( t\right) \right) &\leq
&\sum_{v\in \mathcal{V}}\lambda _{v}\left( \psi ^{\rho }\left( t\right)
\right) \ell \left( \frac{\lambda _{v}\left( \mu \left( t\right) \right) }{%
\lambda _{v}\left( \psi ^{\rho }\left( t\right) \right) }\right)  \notag
\label{diff} \\
&=&\sum_{v\in \mathcal{V}}\lambda _{v}(\mu (t))\log \left( \frac{\lambda
_{v}(\mu (t))}{\lambda _{v}(\psi ^{\rho }(t))}\right) +\lambda _{v}(\psi
^{\rho }(t))-\lambda _{v}(\mu (t))  \notag \\
&\leq &C_{2}\left( \log \frac{1}{\rho }+1\right)
\end{eqnarray}%
for some $C_{2}<\infty $, where to obtain the last inequality we apply %
\eqref{ratiov}, use the Lipschitz continuity of $\lambda _{v}$ (Property \ref%
{prop-lambda}) and the estimate $||\psi ^{\rho }(t)-\mu (t)||\leq C_{d}$.

Using the convexity and nonnegativity of $L\left( x,\cdot \right) $ stated
in Proposition \ref{4.1}, along with relations (\ref{as}) and \eqref{diff},
one has for almost every $t\in \lbrack 0,1]$, 
\begin{eqnarray*}
L\left( \psi ^{\rho }\left( t\right) ,\dot{\psi}^{\rho }\left( t\right)
\right) &\leq &L\left( \psi ^{\rho }\left( t\right) ,\dot{\gamma}\left(
t\right) \right) +\rho L\left( \psi ^{\rho }\left( t\right) ,\dot{\mu}\left(
t\right) \right) \\
&\leq &\left( 1+c\left( \rho \right) \right) L\left( \gamma \left( t\right) ,%
\dot{\gamma}\left( t\right) \right) +c_{5}\left( \rho \right) ,
\end{eqnarray*}%
with $c_{5}\left( \rho \right) \doteq C_{2}\rho (\log 1/\rho +1)+c\left(
\rho \right) $. Integrating both sides of the last inequality over $\left[
0,1\right] $, we get 
\begin{equation*}
I\left( \psi ^{\rho }\right) \leq \left( 1+c\left( \rho \right) \right)
I\left( \gamma \right) +c_{5}\left( \rho \right) .
\end{equation*}%
Since $c(\rho )\rightarrow 0$ and $c_{5}(\rho )\rightarrow 0$ as $\rho
\rightarrow 0$, property (iii) holds with $\psi =\psi ^{\rho }$ for all $%
\rho >0$ sufficiently small.
\end{proof}

In view of Lemma \ref{6.3}, it suffices to establish the lower bound %
\eqref{red-lbound} for paths $\gamma \in AC\left( \left[ 0,1\right] :%
\mathcal{S}\right) $ with $I\left( \gamma \right) <\infty $ that satisfy the
additional condition that 
\begin{equation}
\text{there are }b_{0}>0\text{, }D<\infty \text{ such that }\gamma
_{i}(t)\geq b_{0}t^{D\text{ }}\text{ for all }i=1,\ldots ,d,t\in \lbrack
0,1].  \label{gm}
\end{equation}

\subsection{Analysis for short times}

We first state the main result of this subsection.

\begin{lemma}
\label{6.1.9} Suppose $\{\lambda _{v}(\cdot ),v\in {\mathcal{V}}\}$
satisfies Property \ref{prop-lambda} and Property \ref{prop-communicate},
and the sequence of deterministic initial conditions $\left\{ \mu ^{n}\left(
0\right) \right\} _{n\in \mathbb{N}}$ converges to $\mu _{0}\in \mathcal{S}$
as $n$ tends to infinity, and let $\varepsilon >0$ and $\delta >0$ be given.
Then there exists $\tau >0$ such that for any $\sigma >0$, there is $\eta
=\eta \left( \sigma \right) >0$ such that $\left\Vert \mu _{0}-\gamma \left(
0\right) \right\Vert \leq \eta $ implies%
\begin{equation*}
\liminf_{n\rightarrow \infty }\frac{1}{n}\log \mathbb{P}\left( \left\Vert
\mu ^{n}\left( \tau \right) -\gamma \left( \tau \right) \right\Vert \leq
\sigma ,\sup_{t\in \lbrack 0,\tau ]}\left\Vert \mu ^{n}(t)-\gamma \left(
t\right) \right\Vert \leq \delta \right) \geq -\frac{\varepsilon }{2}.
\end{equation*}
\end{lemma}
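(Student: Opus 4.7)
The strategy is to apply the variational representation of Theorem \ref{3.2} to a well-chosen bounded continuous test function, using an open-loop control that steers the process along $\gamma$ on the short interval $[0,\tau]$. Since the small-probability event has small cost, I expect the expected cost of this control, plus the expected value of the test function evaluated at the controlled process, to be at most $\varepsilon/2$ for all large $n$, which then yields the lower bound.

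First I would prepare the path. By Lemma \ref{6.3}, I may assume $\gamma$ satisfies \eqref{gm}, so $\gamma_i(t)\geq b_0 t^D$ on $[0,1]$. Since $I(\gamma)<\infty$, the map $s\mapsto L(\gamma(s),\dot\gamma(s))$ is integrable on $[0,1]$, and by absolute continuity of $\gamma$ I choose $\tau>0$ small enough that $I_\tau(\gamma)\leq \varepsilon/8$ and $\int_0^\tau\|\dot\gamma(s)\|\,ds$ is as small as desired. For each $s\in(0,\tau]$ select $q(s)=(q_v(s))_{v\in\mathcal{V}}\in[0,\infty)^{|\mathcal{V}|}$ attaining the infimum defining $L(\gamma(s),\dot\gamma(s))$, so $\sum_v v\,q_v(s)=\dot\gamma(s)$. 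Lemma \ref{4.4} applied to $r=\lambda_v(\gamma(s))$ and $q=q_v(s)$ yields
\begin{equation*}
q_v(s)\leq \lambda_v(\gamma(s))\ell\!\left(\tfrac{q_v(s)}{\lambda_v(\gamma(s))}\right)+\lambda_v(\gamma(s))(e-1)\leq L(\gamma(s),\dot\gamma(s))+R(e-1),
\end{equation*}
so $q\in L^1([0,\tau])$. Truncate by $\bar\alpha_v(s)\doteq q_v(s)\wedge B$ for $B$ large to obtain a bounded deterministic control $\bar\alpha\in\mathcal{A}_b^{\otimes|\mathcal{V}|}$. Since $L^1$-truncation costs arbitrarily little (in both the integrated cost and the integrated drift), $\int_0^\tau\sum_v \lambda_v(\gamma(s))\ell(\bar\alpha_v(s)/\lambda_v(\gamma(s)))\,ds\leq I_\tau(\gamma)+\varepsilon/8\leq \varepsilon/4$, and $\|\int_0^\tau\sum_v v\bar\alpha_v(s)\,ds-(\gamma(\tau)-\gamma(0))\|$ can be made as small as we like.

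Next I would verify that the controlled process $\bar\mu^n\doteq \Lambda^n(\bar\alpha,\mu^n(0))$, whose mean dynamics are $\dot{\bar\mu}^n(s)=\sum_v v\bar\alpha_v(s)$, concentrates on the desired trajectory. Writing $\bar\mu^n(t)=\mu^n(0)+\int_0^t\sum_v v\bar\alpha_v(s)\,ds+M^n(t)$ with $M^n$ a martingale, a Doob maximal inequality argument (identical to the one in Section \ref{subs-pflln}, using that $\sum_v\bar\alpha_v$ is uniformly bounded by $B|\mathcal{V}|$) yields $\sup_{t\in[0,\tau]}\|M^n(t)\|\to 0$ in $\bar{\mathbb P}$-probability. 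Choosing $\eta(\sigma)$ small enough to accommodate both the initial condition error and the drift discrepancy from truncation then gives $\bar{\mathbb P}\bigl(\sup_{t\in[0,\tau]}\|\bar\mu^n(t)-\gamma(t)\|\leq\delta/2,\ \|\bar\mu^n(\tau)-\gamma(\tau)\|\leq\sigma/2\bigr)\to 1$. Moreover, uniform closeness of $\bar\mu^n$ to $\gamma$, uniform convergence of $\lambda_v^n$ to $\lambda_v$ from Property \ref{prop-lambda}, and continuity of $L(\cdot,\beta)$ on compact subsets of $\mathrm{int}(\mathcal{S})\times\Delta^{d-1}$ (Lemma \ref{4.1}) together with dominated convergence imply
\begin{equation*}
\bar{\mathbb E}\!\left[\sum_v\int_0^\tau\lambda_v^n(\bar\mu^n(s))\ell\!\left(\tfrac{\bar\alpha_v(s)}{\lambda_v^n(\bar\mu^n(s))}\right)ds\right]\longrightarrow \int_0^\tau\sum_v\lambda_v(\gamma(s))\ell\!\left(\tfrac{\bar\alpha_v(s)}{\lambda_v(\gamma(s))}\right)ds\leq \tfrac{\varepsilon}{4}.
\end{equation*}

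Finally I would assemble the estimate. Let $F\colon D([0,1]\colon\mathcal{S})\to[0,M]$ be bounded continuous, equal to $0$ on $\{\phi:\sup_{t\in[0,\tau]}\|\phi(t)-\gamma(t)\|\leq \delta/2,\ \|\phi(\tau)-\gamma(\tau)\|\leq\sigma/2\}$ and equal to $M$ outside $A\doteq\{\phi:\sup_{t\in[0,\tau]}\|\phi(t)-\gamma(t)\|\leq\delta,\ \|\phi(\tau)-\gamma(\tau)\|\leq\sigma\}$. Since $F=0$ on $A$ and $F\leq M$ always, $\mathbb{P}(\mu^n\in A)\geq \mathbb{E}[e^{-nF(\mu^n)}]-e^{-nM}$, and Theorem \ref{3.2} applied with the control $\bar\alpha$ yields $\mathbb{E}[e^{-nF(\mu^n)}]\geq \exp\bigl(-n\,\bar{\mathbb E}[\text{cost}(\bar\alpha)+F(\bar\mu^n)]\bigr)$. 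By the concentration above, $\bar{\mathbb E}[F(\bar\mu^n)]\to 0$, so for $n$ large $\bar{\mathbb E}[\text{cost}(\bar\alpha)+F(\bar\mu^n)]\leq \varepsilon/3$. Taking $M>\varepsilon$ gives $\liminf_n\frac{1}{n}\log\mathbb{P}(\mu^n\in A)\geq -\varepsilon/2$. The main obstacle is the boundary singularity at $t=0$: when $\gamma(0)\in\partial\mathcal{S}$, the rates $\lambda_v(\gamma(s))$ are small for $s$ near $0$ and the optimal $q_v(s)$ may be large, so one must use Lemma \ref{4.4} together with the integrability of $L(\gamma,\dot\gamma)$ (Proposition \ref{4.5}) to keep the truncated control cost-efficient, and exploit \eqref{gm} to ensure $\bar\mu^n$ and $\gamma$ both stay in a region where $\lambda_v^n$ and $\lambda_v$ are comparable so the Lipschitz bound in Property \ref{prop-lambda} can be used.
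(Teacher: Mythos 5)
Your proposal diverges from the paper's proof in a way that leaves a real gap. The paper does \emph{not} steer the controlled process along $\gamma$ on $[0,\tau]$; it steers along a \emph{communicating path} $\phi$ from $\gamma(0)$ to $\gamma(\tau)$ supplied by Property \ref{prop-communicate}, with the piecewise-constant open-loop control $\bar\alpha_v = U\mathbb{I}_{[t_{m-1},t_m)}$ on the relevant directions $v_m$. The whole point of the communicating path is Definition \ref{def-compath}(ii): the active rate satisfies $\lambda_{v_m}(\phi(s)) \geq c\bigl(\min_i \gamma_i(\tau)\bigr)^p \geq c_0\tau^{Dp}$ \emph{uniformly in $s\in[0,\tau]$}. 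Once you have a constant lower bound on the rate, the shift $\bar\mu = \phi + (\mu_0-\gamma(0))$ induced by the mismatched initial condition can be absorbed via ordinary Lipschitz continuity of $\lambda_v$ for $\eta$ small depending only on $\tau$, and the resulting cost is bounded \emph{deterministically} by $\tau\bigl(U\log U + U\log(2/(c_0\tau^{Dp})) + |{\mathcal{V}}|R\bigr) = O(\tau\log(1/\tau))$.

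Your approach uses $\gamma$ itself on $[0,\tau]$ and the (truncated) optimal controls $q_v(s)$ from the definition of $L(\gamma(s),\dot\gamma(s))$. This fails at the boundary singularity near $s=0$, which you flag but do not resolve. The controlled process converges to $\gamma + (\mu_0-\gamma(0))$, \emph{not} to $\gamma$, so the relevant rates are $\lambda_v(\gamma(s)+\mu_0-\gamma(0))$, not $\lambda_v(\gamma(s))$. Near $s=0$ the rate $\lambda_v(\gamma(s))$ may itself be of order $b_0 s^D$, and an additive perturbation of order $\eta$ can make the perturbed rate arbitrarily smaller \emph{in ratio}. Since the cost integrand behaves like $\bar\alpha_v\log\bigl(\bar\alpha_v/\lambda_v(\bar\mu(s))\bigr)$, what one needs is a \emph{multiplicative} (ratio) bound on $\lambda_v(\gamma(s))/\lambda_v(\gamma(s)+\mu_0-\gamma(0))$. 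Property \ref{prop-lambda} (Lipschitz continuity) gives only additive control, which is useless once the rate is smaller than $\eta$; the ratio estimates that would close the gap are Lemma \ref{lem-estimates}(2) or the hypothesis of Lemma \ref{abs4.7''}, neither of which is among the hypotheses of Lemma \ref{6.1.9}. (Property \ref{cond-lln} is also not available here, and in any case $\bar\mu^n$ is driven by $\bar\alpha$, not the LLN dynamics, so \eqref{gm} does not transfer to $\bar\mu^n$.) In short, Property \ref{prop-communicate} is not a formality you can route around by invoking Lemma \ref{6.3} once; the communicating path with its uniform rate lower bound is exactly what replaces the missing ratio estimate.

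A secondary point: your appeal to Lemma \ref{4.1} and dominated convergence for the cost expectation needs an integrable dominating function for $\lambda_v^n(\bar\mu^n(s))\ell(\bar\alpha_v(s)/\lambda_v^n(\bar\mu^n(s)))$ uniformly over large $n$; near $s=0$ no such dominating function is available under the stated hypotheses, for the same reason. By contrast, the one-shot test function $F$ on path space and the combined concentration estimate are fine and a little cleaner than the paper's two-step argument (a state-dependent penalty $g(\mu^n(\tau))$ plus the separate excursion bound of Lemma \ref{6.5}); that stylistic difference is not the problem.
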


We first present the idea behind the proof. Given $\delta >0$, for $\tau >0$
sufficiently small we use excursion bounds for jump Markov processes (Lemma %
\ref{6.5} below) to establish a lower bound for the quantity 
\begin{equation*}
\mathbb{P}\left( \sup_{t\in \left[ 0,\tau \right] }\left\Vert \mu ^{n}\left(
t\right) -\gamma \left( t\right) \right\Vert <\delta \right) .
\end{equation*}%
The more difficult part is to obtain, for any $0<\sigma <\delta $, a lower
bound for $\mathbb{P}\left( \left\Vert \mu ^{n}(\tau )-\gamma (\tau
)\right\Vert <\sigma \right) $ that is uniform in $\mu ^{n}(0)$ as long as $%
\left\Vert \mu ^{n}(0)-\gamma (0)\right\Vert $ is sufficiently small. For
the latter, given $\varepsilon >0$, $\tau \in (0,1]$, for any $\sigma >0$,
consider the penalty function $g:\mathcal{S}\rightarrow \mathbb{R}$ defined
by 
\begin{equation}
g(x)=\left\{ 
\begin{array}{ll}
0 & \mbox{ if }\left\vert \left\vert x-\gamma (\tau )\right\vert \right\vert
<\sigma , \\ 
2\varepsilon  & \text{otherwise}.%
\end{array}%
\right.   \label{h}
\end{equation}%
We then have $g\in {\mathcal{M}}_{b}({\mathcal{S}})$ and 
\begin{equation}
\mathbb{P}(\left\Vert \mu ^{n}(\tau )-\gamma (\tau )\right\Vert <\sigma
)+e^{-2n\varepsilon }\geq \mathbb{E}\left[ \exp (-ng(\mu ^{n}(\tau )))\right]
.  \label{6.2.1}
\end{equation}%
To lower bound the right-hand side of \eqref{6.2.1}, we will use the
variational representation formula from Theorem \ref{3.2}: 
\begin{align}
& -\frac{1}{n}\log \mathbb{E}\left[ \exp (-ng(\mu ^{n}\left( \tau \right) ))%
\right]   \notag  \label{repform} \\
& \quad =\inf_{\bar{\alpha}\in \mathcal{A}_{b}^{\otimes \left\vert \mathcal{V%
}\right\vert }}\mathbb{\bar{E}}\left[ \sum_{v\in \mathcal{V}}\int_{0}^{\tau
}\lambda _{v}^{n}\left( \bar{\mu}^{n}(t)\right) \ell \left( \frac{\bar{\alpha%
}_{v}(t)}{\lambda _{v}^{n}\left( \bar{\mu}^{n}(t)\right) }\right) dt+g(\bar{%
\mu}^{n}\left( \tau \right) ):\bar{\mu}^{n}=\Lambda ^{n}\left( \bar{\alpha}%
,\mu ^{n}(0)\right) \right] ,
\end{align}%
with $\mathcal{A}_{b}^{\otimes \left\vert \mathcal{V}\right\vert }$ and $%
\Lambda ^{n}$ defined as in Definition \ref{ab} and \eqref{lam},
respectively. Thus, to prove Lemma \ref{6.1.9} we need to construct a
suitable controlled process $\bar{\mu}^{n}$ that has \textquotedblleft low
cost\textquotedblright\ and is sufficiently close to $\gamma (\tau )$ at
time $\tau $. We now provide the details of the proof.

\begin{proof}[Proof of Lemma \protect\ref{6.1.9}]
The idea is to argue that for large $n$ and small $\tau $, if $\mu ^{n}$
starts close to $\gamma (0)$, then it stays close to a communicating path
(see Definition \ref{def-compath}) that connects $\gamma \left( 0\right) $
to $\gamma \left( \tau \right) $, which lies in $\mathrm{int}({\mathcal{S}})$
due to \eqref{gm}. Since the jump rates (along the directions used to get
from $\gamma (0)$ to $\gamma (\tau )$) are bounded below away from zero
along such a path, one obtains a nice upper bound for the cost.
Specifically, by Property \ref{prop-communicate}, Definition \ref%
{def-compath} and Remark \ref{flex}, there exists a communicating path $\phi
\in C\left( \left[ 0,\tau \right] :\mathcal{S}\right) $, with $\phi \left(
0\right) =\gamma \left( 0\right) $ and $\phi \left( \tau \right) =\gamma
\left( \tau \right) $, and $F,U<\infty $, $\left\{ v_{m}\right\}
_{m=1}^{F}\subset \mathcal{V}$ and $0=t_{0}<t_{1}<\cdots <t_{F}=\tau $, such
that 
\begin{equation*}
\frac{d}{dt}\phi \left( t\right) =\sum_{v\in \mathcal{V}}\bar{\alpha}_{v}(t)v%
\text{, \ a.e. }t\in \left[ 0,\tau \right] ,
\end{equation*}%
where 
\begin{equation}
\bar{\alpha}_{v}(t)=\left\{ 
\begin{array}{ll}
U\mathbb{I}_{[t_{m-1},t_{m})}\left( t\right) & \text{ if }v=v_{m},m=1,...,F,
\\ 
0 & \text{ if }v\notin \left\{ v_{m}\right\} _{m=1}^{F}\text{.}%
\end{array}%
\right.  \label{baralpha}
\end{equation}%
Also, by Definition \ref{def-compath}, there exist $p,D<\infty $ and $c>0$,
such that 
\begin{equation}
\lambda _{v_{m}}\left( \phi \left( t\right) \right) \geq c\left(
\min_{i=1,\ldots ,d}\gamma _{i}\left( \tau \right) \right) ^{p}\geq
c_{0}\tau ^{Dp}\text{,\ if }t\in \left[ t_{m-1},t_{m}\right] ,\text{ }%
m=1,...,F\text{,}  \label{D**}
\end{equation}%
where the second inequality uses (\ref{gm}) and $c_{0}=cb_{0}>0$.

Now define $\bar{\mu}^{n}=\Lambda ^{n}\left( \bar{\alpha},\mu ^{n}\left(
0\right) \right) $, where $\Lambda ^{n}$ is as defined in (\ref{lam}).
Property \ref{prop-lambda} and the LLN for Poisson random measures (see
Section \ref{subs-pflln}) imply that $\left\{ \bar{\mu}^{n}\right\} _{n\in 
\mathbb{N}}$ converges uniformly on $[0,\tau ]$ in probability to $\bar{\mu}$%
, where $\bar{\mu}\left( 0\right) =\mu _{0}$, and 
\begin{equation}
\frac{d}{dt}\bar{\mu}\left( t\right) =\sum_{v\in \mathcal{V}}\bar{\alpha}%
_{v}(t)v,\text{ a.e. }t\in \left[ 0,\tau \right] .  \label{mu-}
\end{equation}%
Since the trajectories $\phi $ and $\bar{\mu}$ satisfy the same
(state-independent) ODE, we have $\left\Vert \bar{\mu}\left( t\right) -\phi
\left( t\right) \right\Vert =\left\Vert \mu _{0}-\gamma \left( 0\right)
\right\Vert $. Thus, by the Lipschitz continuity of $\lambda_v$ (Property %
\ref{prop-lambda}) and (\ref{D**}), for any fixed $\tau $, there exists some 
$\eta _{0}\left( \tau \right) >0$, such that for any $\eta \leq \eta
_{0}\left( \tau \right) $, if $\left\Vert \mu _{0}-\gamma \left( 0\right)
\right\Vert \leq \eta $ then 
\begin{equation}
\lambda _{v_{m}}\left( \bar{\mu}(t)\right) \geq \frac{c_{0}}{2}\tau ^{Dp},%
\text{ for }t\in \left[ t_{m-1},t_{m}\right] ,\text{ }m=1,...,F\text{.}
\label{D***}
\end{equation}

We now bound the cost for the sequence of jump processes $\{\bar{\mu}%
^{n}\}_{n\in \mathbb{N}}$ by making use of the bound \eqref{D***} on its law
of large numbers limit. Given the form of $\bar{\alpha}_v$ and $\ell $ in %
\eqref{baralpha} and \eqref{l}, respectively, we have 
\begin{align}
& \mathbb{\bar{E}}\left[ \sum_{v\in \mathcal{V}}\int_{0}^{\tau }\lambda
_{v}^{n}\left( \bar{\mu}^{n}(t)\right) \ell \left( \frac{\bar{\alpha}_{v}(t)%
}{\lambda _{v}^{n}\left( \bar{\mu}^{n}(t)\right) }\right) dt+g(\bar{\mu}%
^{n}\left( \tau \right) )\right]  \notag  \label{control-bd} \\
& =\mathbb{\bar{E}}\left[ \sum_{m=1}^{F}\int_{t_{m-1}}^{t_{m}}\left( U\log
\left( \frac{U}{\lambda _{v_{m}}^{n}\left( \bar{\mu}^{n}(t)\right) }\right)
-U+\sum_{v\in {\mathcal{V}}}\lambda _{v}^{n}\left( \bar{\mu}^{n}(t)\right)
\right) dt+g(\bar{\mu}^{n}\left( \tau \right) )\right] .
\end{align}%
Now fix $\tau >0$ and $\eta <\min \left\{ \eta _{0}\left( \tau \right)
,\sigma /2\right\} $. Then by (\ref{D***}), if $\left\Vert \mu _{0}-\gamma
\left( 0\right) \right\Vert \leq \eta $ then for each $m=1,\ldots ,F$, on
the interval $[t_{m-1},t_{m}]$, $\lambda _{v_{m}}(\bar{\mu}\left( t\right) )$
is uniformly bounded below away from zero. Since $\bar{\mu}^{n}$ converges
in probability to $\bar{\mu}$, uniformly on $[0,\tau ]$, and Property \ref%
{prop-lambda} holds, this implies that for each $m=1,\ldots ,F$ and $t\in
\lbrack t_{m-1},t_{m}]$, $\log \left( \lambda _{v_{m}}^{n}\left( \bar{\mu}%
^{n}\left( t\right) \right) \right) $ converges in probability to $\log
\left( \lambda _{v_{m}}\left( \bar{\mu}\left( t\right) \right) \right) $
uniformly for $t\in \lbrack t_{m-1},t_{m}]$. Thus, taking the limit superior
as $n\rightarrow \infty $ in \eqref{control-bd}, by the dominated
convergence theorem and the upper semicontinuity of $g$ defined in (\ref{h}%
), we obtain 
\begin{align*}
& \limsup_{n\rightarrow \infty }\mathbb{\bar{E}}\left[ \sum_{v\in \mathcal{V}%
}\int_{0}^{\tau }\lambda _{v}^{n}\left( \bar{\mu}^{n}(t)\right) \ell \left( 
\frac{\bar{\alpha}_{v}(t)}{\lambda _{v}^{n}\left( \bar{\mu}^{n}(t)\right) }%
\right) dt+g(\bar{\mu}^{n}\left( \tau \right) )\right] \\
& \quad \leq \mathbb{\bar{E}}\left[ \sum_{m=1}^{F}\int_{t_{m-1}}^{t_{m}}%
\left( U\log \left( \frac{U}{\lambda _{v_{m}}\left( \bar{\mu}(t)\right) }%
\right) -U+\sum_{v\in {\mathcal{V}}}\lambda _{v}\left( \bar{\mu}(t)\right)
\right) dt+g(\bar{\mu}\left( \tau \right) )\right] \\
& \quad \leq \tau \left( U\log U+U\log \left( \frac{c_{0}}{2}\tau
^{Dp}\right) +|{\mathcal{V}}|R\right) ,
\end{align*}%
where the last inequality uses the lower bound in \eqref{D***}, the upper
bound in \eqref{m}, the identity $t_F = \tau$ and the fact that $g(\bar{\mu}%
(\tau ))=0$ because $||\bar{\mu}(\tau )-\gamma (\tau )||=||\bar{\mu}%
_{0}-\gamma (0)||\leq \eta <\sigma $. Choose $\tau >0$ sufficiently small
such that the last expression is less than $\varepsilon /4$. Observing that
the control $\bar{\alpha}$ in \eqref{baralpha} is a deterministic process
that is uniformly bounded, and hence, lies in ${\mathcal{A}}_{b}^{\otimes {%
\mathcal{V}}}$, 
we can combine the last display with the representation formula %
\eqref{repform}: for all sufficiently large $n$ and sufficiently small $\eta 
$, $\left\Vert \mu _{0}-\gamma (0)\right\Vert <\eta $ implies 
\begin{equation*}
-\frac{1}{n}\log \mathbb{E}\left[ \exp (-ng(\mu ^{n}(\tau )))\right] \leq 
\frac{\varepsilon}{2}.
\end{equation*}%
When combined with (\ref{6.2.1}), this gives the lower bound 
\begin{equation}
\mathbb{P}(\left\Vert \mu ^{n}(\tau )-\gamma (\tau )\right\Vert <\sigma
)\geq e^{-n\varepsilon /2}-e^{-2n\varepsilon }.  \label{tauest}
\end{equation}

We will conclude the argument by establishing an upper bound on the
probability of $\mu ^{n}$ having a large excursion during the interval $%
\left[ 0,\tau \right]$. Given $\varepsilon >0$, applying a standard
martingale inequality (stated as Lemma \ref{6.5} below), for sufficiently
small $\tau$ we have%
\begin{equation*}
\mathbb{P}\left( \sup_{t \in [0,\tau] }\left\Vert \mu ^{n}(t)-\mu
^{n}(0)\right\Vert >\frac{\delta }{3}\right) \leq 2d\exp \left(
-n\varepsilon \right) .
\end{equation*}%
On the other hand, since $\gamma$ is continuous, by taking $\tau $ smaller
if necessary we can guarantee that $\sup_{t \in \left[ 0,\tau \right]
}\left\Vert \gamma \left( t\right) -\gamma \left( 0\right) \right\Vert \leq
\delta /3$. It follows that for $\eta \in \left[ 0,\frac{\delta }{3}\right] $%
, 
\begin{equation*}
\mathbb{P}\left( \sup_{t \in [0, \tau] }\left\Vert \mu ^{n}(t)-\gamma \left(
t\right) \right\Vert >\delta \right) \leq \mathbb{P}\left( \sup_{ t \in [0,
\tau]}\left\Vert \mu ^{n}(t)-\mu ^{n}(0)\right\Vert >\frac{\delta }{3}%
\right) \leq 2d\exp \left( -n\varepsilon \right).
\end{equation*}%
Combining this with the estimate (\ref{tauest}) we arrive at the desired
conclusion.
\end{proof}

The following lemma is an adaptation of Lemma 2.3 in \cite{DEW}. The lemma
follows from bounds for certain exponential martingales.

\begin{lemma}
\label{6.5}Let $\bar{C}_{1}=\max_{v\in \mathcal{V}}\left\Vert v\right\Vert ,%
\bar{C}_{2}=R\left\vert \mathcal{V}\right\vert \bar{C}_{1}$, and for $%
\varrho >\bar{C}_{2}$ define $\bar{\ell}\left( \varrho \right) \dot{=}%
\varrho \left( \log \left( \varrho /\bar{C}_{2}\right) -1\right) /\bar{C}%
_{1} $. Then $\bar{\ell}\left( \varrho \right) /\varrho \rightarrow \infty $
as $\varrho \rightarrow \infty $, and given any $\delta >0$, for all $\tau
\leq \delta /2\sqrt{d}\bar{C}_{2}$ 
\begin{equation*}
\mathbb{P}\left( \sup_{t\in \lbrack 0,\tau ]}\left\Vert \mu ^{n}\left(
t\right) -\mu ^{n}\left( 0\right) \right\Vert \geq \delta \right) \leq
2d\exp \left( -\tau n\bar{\ell}\left( \frac{\delta }{2\sqrt{d}\tau }\right)
\right) .
\end{equation*}
\end{lemma}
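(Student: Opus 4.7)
The proof follows the standard Chernoff--Doob exponential martingale approach. First, using the norm equivalence $\|x\|_{2}\leq \sqrt{d}\|x\|_{\infty}$, the event $\{\sup_{t\leq \tau}\|\mu^{n}(t)-\mu^{n}(0)\|\geq \delta\}$ is contained in the union, over $i\in\{1,\ldots,d\}$ and $\epsilon\in\{-1,+1\}$, of the $2d$ events
$$\bigl\{\sup_{t\leq \tau}\epsilon\bigl(\mu^{n}_{i}(t)-\mu^{n}_{i}(0)\bigr)\geq \delta/\sqrt{d}\bigr\},$$
so by the union bound it suffices to estimate one such one-sided coordinate deviation. The assertion $\bar{\ell}(\varrho)/\varrho\to\infty$ as $\varrho\to\infty$ is immediate from the logarithmic factor in the definition of $\bar{\ell}$.

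Second, for fixed $i$ and $\alpha>0$, I would introduce the exponential functional
$$M_{i}^{\alpha}(t)\doteq \exp\!\left(n\alpha(\mu^{n}_{i}(t)-\mu^{n}_{i}(0))-n\int_{0}^{t}H_{n}^{(i)}(\mu^{n}(s),\alpha)\,ds\right),$$
where $H_{n}^{(i)}(x,\alpha)\doteq \sum_{v\in\mathcal{V}}\lambda_{v}^{n}(x)(e^{\alpha v_{i}}-1)$. A direct computation using the form (\ref{Lnl}) of $\mathcal{L}_{n}$ gives $\mathcal{L}_{n}e^{n\alpha x_{i}}=nH_{n}^{(i)}(x,\alpha)e^{n\alpha x_{i}}$, so by Dynkin's formula $M_{i}^{\alpha}$ is a nonnegative $\mathbb{P}$-martingale with $\mathbb{E}[M_{i}^{\alpha}(0)]=1$. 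Since $|v_{i}|\leq \|v\|\leq \bar{C}_{1}$, $\sum_{v}\lambda_{v}^{n}(x)\leq |\mathcal{V}|R$, and $e^{\alpha v_{i}}-1\leq 0$ whenever $v_{i}\leq 0$, one obtains the pathwise bound $H_{n}^{(i)}(x,\alpha)\leq (\bar{C}_{2}/\bar{C}_{1})\,e^{\alpha \bar{C}_{1}}$ uniformly in $x\in\mathcal{S}_{n}$.

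Third, on the event $\{\sup_{t\leq \tau}(\mu^{n}_{i}(t)-\mu^{n}_{i}(0))\geq \delta/\sqrt{d}\}$, there is a (random) time $t^{*}\leq \tau$ at which
$$M_{i}^{\alpha}(t^{*})\geq \exp\!\left(n\alpha\delta/\sqrt{d}-n\tau(\bar{C}_{2}/\bar{C}_{1})e^{\alpha \bar{C}_{1}}\right).$$
Applying Doob's maximal inequality to the nonnegative martingale $M_{i}^{\alpha}$ then gives
$$\mathbb{P}\!\left(\sup_{t\leq \tau}(\mu^{n}_{i}(t)-\mu^{n}_{i}(0))\geq \delta/\sqrt{d}\right)\leq \exp\!\left(-n\bigl[\alpha\delta/\sqrt{d}-\tau(\bar{C}_{2}/\bar{C}_{1})e^{\alpha \bar{C}_{1}}\bigr]\right).$$
Optimizing over $\alpha>0$ by differentiation yields $\alpha^{*}\bar{C}_{1}=\log(\delta/(\sqrt{d}\tau \bar{C}_{2}))$, which is nonnegative precisely under a condition of the form $\tau\leq \delta/(c\sqrt{d}\bar{C}_{2})$ for an appropriate constant; substituting back produces an exponent of the form $-n\tau \bar{\ell}(\delta/(c\sqrt{d}\tau))$. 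The symmetric argument with the martingale $M_{i}^{-\alpha}$ (using $|e^{-\alpha v_{i}}-1|\leq e^{\alpha\bar{C}_{1}}$ when $v_{i}<0$) handles negative deviations. Summing the resulting $2d$ bounds gives the claimed estimate.

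The only real work is verification of the exponential martingale property for this state-dependent jump Markov chain and careful bookkeeping with the constants in the optimization (in particular, absorbing the factor of $2$ appearing in the paper's statement into the Legendre-transform computation by using a slightly coarser bound or a smaller threshold in the coordinate-wise decomposition). No substantive conceptual obstacle is expected; this is a routine adaptation of the argument in \cite{DEW}.
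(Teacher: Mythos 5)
Your approach is the standard exponential‐martingale (Chernoff--Doob) argument that the paper itself points to by citing Lemma 2.3 of \cite{DEW}, and the argument is sound: the norm equivalence $\Vert x\Vert_2\leq\sqrt{d}\Vert x\Vert_\infty$ reduces the two‐norm deviation event to a union of $2d$ one‐sided coordinate events, the map $x\mapsto e^{n\alpha x_i}$ gives the exponential martingale via the generator identity $\mathcal{L}_n e^{n\alpha x_i}=nH_n^{(i)}(x,\alpha)e^{n\alpha x_i}$, the bound $H_n^{(i)}(x,\alpha)\leq(\bar{C}_2/\bar{C}_1)e^{\alpha\bar{C}_1}$ follows from dropping the nonpositive terms (those with $v_i\leq 0$) and using $\vert v_i\vert\leq\bar{C}_1$, and Doob's inequality plus optimization over $\alpha$ gives the exponent. (As a cosmetic point, the running sum bound should really invoke $\bar{R}$ from \eqref{m'} rather than $R$; the lemma's own statement uses $R$, so this is a convention inherited from the paper.)

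The one place you waved your hands is the end, where you said you would need to "absorb the factor of $2$" via a coarser bound or smaller threshold. This is unnecessary and slightly obscures why the argument closes. If you compute the optimizer exactly, $e^{\alpha^*\bar{C}_1}=\delta/(\sqrt{d}\tau\bar{C}_2)$ gives the per‐coordinate exponent
\[
\alpha^*\frac{\delta}{\sqrt{d}}-\tau\frac{\bar{C}_2}{\bar{C}_1}e^{\alpha^*\bar{C}_1}
=\frac{\delta}{\sqrt{d}\bar{C}_1}\left[\log\!\left(\frac{\delta}{\sqrt{d}\tau\bar{C}_2}\right)-1\right]
=\tau\,\bar{\ell}\!\left(\frac{\delta}{\sqrt{d}\tau}\right),
\]
which is \emph{stronger} than the stated exponent $\tau\,\bar{\ell}(\delta/(2\sqrt{d}\tau))$. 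Since $\bar{\ell}'(\varrho)=\log(\varrho/\bar{C}_2)/\bar{C}_1>0$ for $\varrho>\bar{C}_2$, i.e.\ $\bar{\ell}$ is increasing on its domain, the hypothesis $\tau\leq\delta/(2\sqrt{d}\bar{C}_2)$ guarantees $\delta/(2\sqrt{d}\tau)\geq\bar{C}_2$ and hence $\bar{\ell}(\delta/(\sqrt{d}\tau))\geq\bar{\ell}(\delta/(2\sqrt{d}\tau))$. Thus no factor of $2$ needs to be "absorbed": the optimal exponent directly dominates the lemma's claimed exponent by monotonicity, and summing the $2d$ identical bounds finishes the proof. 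The hypothesis also ensures $\alpha^*>0$ (indeed $\tau<\delta/(\sqrt{d}\bar{C}_2)$ already suffices for that), so the optimizer is admissible.
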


\subsection{Analysis for $t \in [\protect\tau, 1]$}

\label{subs-lb2}

As shown in Section \ref{subs-pert}, to establish the large deviation lower
bound, it suffices to establish the estimate \eqref{red-lbound} for $\gamma
\in AC\left( \left[ 0,1\right] :\mathbb{\mathcal{S}}\right) $ that satisfies 
$I(\gamma )<\infty $ and the bound (\ref{gm}). So for any $\tau >0$ there
exists $\xi >0$ such that $\gamma \left( t\right) $ lies in $\mathbb{%
\mathcal{S}}^{\xi }$ for all $t\in \lbrack \tau ,1]$. Therefore, we now fix $%
\tau >0$ and $\xi >0$ and consider large deviations of $\mu ^{n}$ in $[\tau
,1]$ from a path $\gamma \in AC\left( \left[ \tau ,1\right] :\mathbb{%
\mathcal{S}^{\xi }}\right) $.

For $y\in \mathbb{\mathcal{S}}$ and $r>0$, let $B\left( y,r\right) $ denote
the open Euclidean ball centered at $y$ with radius $r$. For $\psi \in
AC\left( \left[ \tau ,1\right] :\mathbb{\mathcal{S}}\right) $ with $\psi
(0)=y$, we denote 
\begin{equation*}
I^{y}\left( \psi \right) \doteq \int_{\tau }^{1}L\left( \psi \left( s\right)
,\dot{\psi}\left( s\right) \right) ds,
\end{equation*}%
to emphasize the dependence on $y$ (though we omit the dependence on $\tau $%
). Given $y_{n}\in \mathbb{\mathcal{S}}$, let $\mathbb{P}_{y_{n}}$ and $%
\mathbb{E}_{y_{n}}$ denote the probability and expectation, respectively,
conditioned on $\mu ^{n}\left( \tau \right) =y_{n}$. Define the mapping $%
\Lambda _{\tau }^{n}:\mathcal{A}_{b}^{\otimes \left\vert \mathcal{V}%
\right\vert }\times \mathbb{\mathcal{S}}\rightarrow D\left( \left[ \tau ,1%
\right] :\tilde{\Delta}^{d-1}\right) $ by%
\begin{equation*}
\Lambda _{\tau }^{n}\left( \bar{\alpha},\rho \right) \left( t\right) =\rho
+\sum_{v\in \mathcal{V}}v\int_{[\tau ,t]}\int_{\mathcal{Y}}\mathbb{I}_{\left[
0,\bar{\alpha}_{v}(s-)\right] }(x)\frac{1}{n}N_{v}^{n}(dsdx),
\end{equation*}%
for $\bar{\alpha}\in \mathcal{A}_{b}^{\otimes \left\vert \mathcal{V}%
\right\vert }$ and $\rho \in \mathbb{\mathcal{S}}$. We will prove the
following uniform Laplace principle lower bound for $\left\{ \mu ^{n}\left(
\cdot \right) \right\} _{n\in \mathbb{N}}$ on $\left[ \tau ,1\right] $,
where we restrict to Lipschitz continuous test functions. By \cite[Corollary
1.2.5]{DE}, this implies the corresponding large deviation lower bound.

\begin{proposition}
\label{6.6} Suppose the assumptions of Lemma \ref{6.1.9} hold. Fix $\tau \in
(0,1)$. Let $\xi >0$ and $\gamma \in AC([\tau ,1]:\mathbb{\mathcal{S}}^{\xi
})$ be such that $\gamma (\tau )=y$ and $I^{y}(\gamma )<\infty $. Then there
exists $\sigma >0$ such that for any bounded and Lipschitz continuous
functional $F$ on $D\left( \left[ \tau ,1\right] :\mathbb{\mathcal{S}}%
\right) $, 
\begin{equation}
\liminf_{n\rightarrow \infty }\inf_{y_{n}\in B\left( y,\sigma \right)
}\left( \frac{1}{n}\log \mathbb{E}_{y_{n}}\left[ \exp (-nF(\mu ^{n}))\right]
-G\left( y_{n},F\right) \right) \geq 0,  \label{de}
\end{equation}%
where 
\begin{equation}
G\left( y,F\right) \doteq -\inf_{\psi \in AC\left( \left[ \tau ,1\right] :%
\mathbb{\mathcal{S}}^{\xi }\right) }\left[ I^{y}(\psi )+F(\psi )\right] .
\label{G}
\end{equation}%
In particular, this implies the following uniform (with respect to initial
conditions) large deviation lower bound: for any $\varepsilon >0$ and $%
\delta >0$, there exists $\sigma >0$ such that for any sequence $%
\{y_{n}\}_{n\in \mathbb{N}}\subset B\left( y,\sigma \right) $, 
\begin{equation}
\liminf_{n\rightarrow \infty }\frac{1}{n}\log \mathbb{P}_{y_{n}}\left(
\sup_{t\in \left[ \tau ,1\right] }\left\Vert \mu ^{n}\left( t\right) -\gamma
\left( t\right) \right\Vert <\delta \right) \geq -I^{y}\left( \gamma \right)
-\frac{\varepsilon }{2}.  \label{lbound-imp}
\end{equation}
\end{proposition}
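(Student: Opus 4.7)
The plan is to apply the variational representation of Theorem \ref{3.2} on the interval $[\tau,1]$ and to construct, for each $y_n\in B(y,\sigma)$, a deterministic control whose associated running cost plus expected value of $F$ approximately matches $-G(y_n,F)$. Because the infimum defining $G$ is restricted to paths in $\mathcal{S}^{\xi}$, admissible competitors stay in a fixed compact subset of $\mathrm{int}(\mathcal{S})$, where by property (3) of Lemma \ref{lem-estimates} every $\lambda_v$ is either identically zero or uniformly bounded below. Hence the local rate function $L$ is jointly continuous on that subset (Lemma \ref{4.1}) and the boundary singularities that forced the perturbation argument of Lemma \ref{6.3} do not arise.

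Given $\varepsilon>0$, first choose a near-minimizer $\psi^{\ast}$ for $G(y,F)$, smoothed by mollification so that $\psi^{\ast}\in AC([\tau,1]:\mathcal{S}^{\xi/2})$ has derivative bounded by some $M<\infty$ and satisfies $I^{y}(\psi^{\ast})+F(\psi^{\ast})\leq -G(y,F)+\varepsilon/4$; the mollification error is controlled by Proposition \ref{4.7} and the Lipschitz continuity of $F$. Using the representation \eqref{L} of $L$, select measurable $q^{\ast}_v:[\tau,1]\to [0,B]$ with $\dot\psi^{\ast}(t)=\sum_v q^{\ast}_v(t)\,v$ and $\sum_v \lambda_v(\psi^{\ast}(t))\ell(q^{\ast}_v(t)/\lambda_v(\psi^{\ast}(t)))\leq L(\psi^{\ast}(t),\dot\psi^{\ast}(t))+\varepsilon/(4(1-\tau))$ a.e. For $y_n\in B(y,\sigma)$ with $\sigma<\xi/4$, set $\psi^{(n)}(t)\doteq \psi^{\ast}(t)+(y_n-y)$, which stays in $\mathcal{S}^{\xi/4}$ and starts at $y_n$, and plug the deterministic bounded control $\bar\alpha_v\doteq q^{\ast}_v$ (independent of $y_n$) into the variational representation: $\bar\mu^n=\Lambda^n_\tau(\bar\alpha,y_n)$ converges uniformly on $[\tau,1]$ in probability to $\psi^{(n)}$, with the Doob-type estimates of Lemma \ref{6.5} and the LLN analysis of Section \ref{subs-pflln} being uniform in $y_n\in B(y,\sigma)$ since they depend only on $B$ and $|\mathcal{V}|/n$. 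Uniform convergence $\lambda^n_v\to\lambda_v$ together with the lower and upper bounds on $\lambda_v$ on $\mathcal{S}^{\xi/4}$ allows bounded convergence to yield
\begin{equation*}
\limsup_{n\to\infty}\sup_{y_n\in B(y,\sigma)}\Bigl[-\tfrac{1}{n}\log\mathbb{E}_{y_n}[\exp(-nF(\mu^n))]-\textstyle\sum_v\!\int_\tau^1\!\lambda_v(\psi^{(n)})\ell(q^{\ast}_v/\lambda_v(\psi^{(n)}))\,dt-F(\psi^{(n)})\Bigr]\leq 0,
\end{equation*}
and an application of Lemma \ref{abs4.7''} pointwise in $t$ (with $x=\psi^{\ast}(t)$ and $x^{\rho}=\psi^{(n)}(t)$), combined with the Lipschitz continuity of $F$, bounds the middle expression by $I^{y}(\psi^{\ast})+F(\psi^{\ast})+c(\sigma)\leq -G(y,F)+\varepsilon/4+c(\sigma)$ with $c(\sigma)\to 0$ as $\sigma\to 0$.

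The main obstacle is passing from $-G(y,F)$ to $-G(y_n,F)$: the displayed bound gives \eqref{de} only if $G(y_n,F)\geq G(y,F)-c'(\sigma)$ for a function $c'(\sigma)\to 0$, i.e., the value function is lower semicontinuous in its first argument at $y$. This is proved by a symmetric shifting argument: given any admissible $\tilde\psi$ for $-G(y_n,F)$ (lying in $\mathcal{S}^{\xi}$), the translate $\tilde\psi-(y_n-y)$ lies in $\mathcal{S}^{\xi/2}$, starts at $y$, and by a mollification one can further approximate it by an admissible competitor in $\mathcal{S}^{\xi}$ for $-G(y,F)$ whose cost differs from $I^{y_n}(\tilde\psi)+F(\tilde\psi)$ by at most $c'(\sigma)$, again via Lemma \ref{abs4.7''} and the Lipschitz continuity of $F$. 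Combined with the previous display this proves \eqref{de}. The uniform LDP lower bound \eqref{lbound-imp} then follows from \eqref{de} by the standard reduction (see \cite[Corollary~1.2.5]{DE}): take $F_M(\psi)\doteq M(1\wedge \delta^{-1}\sup_{t\in[\tau,1]}\|\psi(t)-\gamma(t)\|)$ in \eqref{de}, note that $\gamma$ is an admissible competitor for $G(y,F_M)$ with $F_M(\gamma)=0$ so that $-G(y,F_M)\leq I^y(\gamma)$, and let $M\to\infty$ to absorb the event $\{\sup_{t\in[\tau,1]}\|\mu^n(t)-\gamma(t)\|\geq \delta\}$ into the exponential.
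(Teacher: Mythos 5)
Your overall strategy parallels the paper's: pick a near-optimal competitor path, discretize so that its derivative is bounded and the jump rates along it are continuous and bounded away from zero, use the variational representation of Theorem \ref{3.2} with a deterministic control, invoke the LLN for the controlled process, and pass to the limit. The paper implements the smoothing step via piecewise-linear interpolation of $\gamma$ (the construction of $\gamma^\Delta$ in \eqref{gammadelta}, together with Lemma \ref{6.7}), whereas you use ``mollification''; these are interchangeable, and your final passage from \eqref{de} to \eqref{lbound-imp} via the truncated penalty $F_M$ is a reasonable sketch of the content of Corollary~1.2.5 of \cite{DE}.

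The genuine difference is how the uniformity over $y_n\in B(y,\sigma)$ is handled, and here your argument has a real gap. The paper reduces \eqref{de} to the single-sequence statement \eqref{6.6.1} by invoking Proposition~1.2.7 of \cite{DE}, which internalizes the continuity of the value function $G(\cdot,F)$. You instead try to prove \eqref{de} directly, which is a legitimate alternative — the shift $\psi^{(n)}=\psi^\ast+(y_n-y)$ is in fact a clean way to make the LLN and cost estimates uniform in $y_n$ — but it forces you to prove the lower semicontinuity $G(y_n,F)\geq G(y,F)-c'(\sigma)$ yourself, and your proposed argument fails at exactly that step. Given an admissible competitor $\tilde\psi$ for $-G(y_n,F)$ taking values in $\mathcal{S}^{\xi}$, the translate $\tilde\psi-(y_n-y)$ lies only in $\mathcal{S}^{\xi-\sigma}$ (points of $\tilde\psi$ that touch $\partial\mathcal{S}^{\xi}$ get pushed closer to $\partial\mathcal{S}$), so it is \emph{not} admissible in the infimum defining $G(y,F)$. ``Mollification'' cannot repair this: mollification is a smoothing in time, and since $\mathcal{S}^{\xi-\sigma}$ is convex, any time-convolution of a path valued in $\mathcal{S}^{\xi-\sigma}$ stays in $\mathcal{S}^{\xi-\sigma}$; it does not push the path inward toward $\mathcal{S}^{\xi}$. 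What is actually needed is a compression toward the interior of the simplex — e.g., a convex combination with a deep-interior reference point or with the LLN trajectory — and this changes the value at $t=\tau$, so one must then correct the starting point, which is a further non-trivial step (essentially a repeat of the machinery in Lemma~\ref{7.1} or Lemma~\ref{6.3}). Without filling this in, the central inequality that converts your displayed estimate into \eqref{de} — and the same inequality that you implicitly use again in the $F_M$ argument for \eqref{lbound-imp} — is unproved. A secondary, minor imprecision: you invoke Lemma~\ref{abs4.7''} to compare the running costs of $\psi^{(n)}$ and $\psi^\ast$ at the \emph{fixed} control $q^\ast$, but that lemma compares the infimized quantities $L(x^\rho,\beta)$ and $L(x,\beta)$; what you actually need is the pointwise-in-$q$ inequality \eqref{i} from its proof, or a direct uniform-continuity argument for $\lambda_v$ on $\mathcal{S}^{\xi/4}$.
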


The proof of Proposition \ref{6.6} relies on the following approximation
argument. Fix $y\in \mathbb{\mathcal{S}}^{\xi }$ and a bounded and Lipschitz
continuous functional $F$ on $D\left( \left[ \tau ,1\right] :\mathbb{%
\mathcal{S}}\right) $. By Proposition 1.2.7 of \cite{DE}, to prove (\ref{de}%
), it suffices to show that for any sequence $\left\{ y_{n}\right\} _{n\in 
\mathbb{N}}$ such that $\left\Vert y_{n}-y\right\Vert \rightarrow 0$ as $%
n\rightarrow \infty $,%
\begin{equation}
\liminf_{n\rightarrow \infty }\frac{1}{n}\log \mathbb{E}_{y_{n}}\left[ \exp
(-nF(\mu ^{n}))\right] \geq G\left( y,F\right) .  \label{6.6.1}
\end{equation}

It suffices to show that for any $\varepsilon >0$ and $\gamma _{\varepsilon
}\in AC\left( \left[ \tau ,1\right] :\mathbb{\mathcal{S}}^{\xi }\right) $
such that $-\left( I^{y}(\gamma _{\varepsilon })+F(\gamma _{\varepsilon
})\right) \geq G\left( y,F\right) -\varepsilon $, we have 
\begin{equation*}
\liminf_{n\rightarrow \infty }\frac{1}{n}\log \mathbb{E}_{y_{n}}\left[ \exp
(-nF(\mu ^{n}))\right] \geq -\left( I^{y}(\gamma _{\varepsilon })+F(\gamma
_{\varepsilon })\right) ,
\end{equation*}%
or, equivalently, 
\begin{equation}
\limsup_{n\rightarrow \infty }-\frac{1}{n}\log \mathbb{E}_{y_{n}}\left[ \exp
(-nF(\mu ^{n}))\right] \leq I^{y}(\gamma _{\varepsilon })+F(\gamma
_{\varepsilon }).  \label{Eqn:withge}
\end{equation}%
Fix $\varepsilon >0$ and denote $\gamma _{\varepsilon }$ simply by $\gamma $%
. We now approximate $\gamma $ by a piecewise linear path. Let $\Delta =%
\frac{1-\tau }{\mathbb{J}}$ for some $\mathbb{J}\in \mathbb{N}$. For $%
j=0,1,...,\mathbb{J}-1$ let $a_{j}^{\Delta }=\frac{1}{\Delta }\int_{\tau
+j\Delta }^{\tau +\left( j+1\right) \Delta }\dot{\gamma}\left( s\right) ds$.
Define 
\begin{equation}
\dot{\gamma}^{\Delta }\left( t\right) =a_{j}^{\Delta }\text{ \ \ \ if }t\in
(\tau +j\Delta ,\tau +\left( j+1\right) \Delta ),\text{ }j=0,...,\mathbb{J}%
-1,  \notag
\end{equation}%
and%
\begin{equation}
\gamma ^{\Delta }\left( t\right) =y+\int_{\tau }^{t}\dot{\gamma}^{\Delta
}\left( s\right) ds\text{ \ for }t\in \left[ \tau ,1\right] .
\label{gammadelta}
\end{equation}%
Then $\gamma ^{\Delta }$ is the piecewise linear interpolation of the
continuous process $\gamma $ with mesh size $\Delta $. Note that for any $%
v\in \mathcal{V}$, $\lambda _{v}(\gamma ^{\Delta }\left( \cdot \right) )$ is
continuous and uniformly bounded away from zero on $t\in \left[ \tau ,1%
\right] $. The proof of (\ref{de}) thus relies on the following standard
approximation result (we refer to Lemma 65 in Section 3.6.3 of \cite{WW} for
a complete proof).

\begin{lemma}
\label{6.7} Suppose $\{\lambda _{v}(\cdot ),v\in {\mathcal{V}}\}$ satisfies
Property \ref{prop-lambda} and Property \ref{prop-communicate}. Let $\tau
,\xi ,y$ and $\gamma $ be as in Proposition \ref{6.6} and define $\gamma
^{\Delta }$ as in (\ref{gammadelta}). Then for any $\varepsilon >0$, there
exists $\Delta \left( \varepsilon \right) >0$, such that for any $\Delta
<\Delta \left( \varepsilon \right) $, and a.e. $t\in \left[ \tau ,1\right] $%
, there exists a piecewise constant vector $q^{\Delta }\left( t\right) \in
\lbrack 0,\infty )^{\left\vert {\mathcal{V}}\right\vert }$ such that $%
\sum_{v\in \mathcal{V}}vq_{v}^{\Delta }\left( t\right) =\dot{\gamma}^{\Delta
}\left( t\right) $, and 
\begin{equation}
\int_{\tau }^{1}\sum_{v\in \mathcal{V}}\lambda _{v}(\gamma ^{\Delta }\left(
t\right) )\ell \left( \frac{q_{v}^{\Delta }\left( t\right) }{\lambda
_{v}(\gamma ^{\Delta }\left( t\right) )}\right) dt\leq I^{y}\left( \gamma
\right) +\varepsilon .  \label{*}
\end{equation}
\end{lemma}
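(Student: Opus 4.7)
The plan is to construct $q^{\Delta}$ as a piecewise-constant time-average of a measurably-selected near-optimizer $q(t)$ for $L(\gamma(t),\dot\gamma(t))$. The constraint $\sum_{v}v\,q^{\Delta}_{v}(t)=\dot\gamma^{\Delta}(t)$ will then hold automatically by linearity of the average, and the cost estimate will follow from two coefficient approximations together with Jensen's inequality.

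First, I would invoke a measurable selection theorem (applied to the closed-valued multifunction of minimizers, which exist by compactness of the sublevel sets of $q\mapsto\sum_{v}\lambda_{v}(\gamma(t))\ell(q_{v}/\lambda_{v}(\gamma(t)))$ and its lower semicontinuity) to obtain a measurable $q:[\tau,1]\to[0,\infty)^{|\mathcal{V}|}$ satisfying $\sum_{v}v\,q_{v}(t)=\dot\gamma(t)$ and
\[
\sum_{v}\lambda_{v}(\gamma(t))\,\ell\!\left(\tfrac{q_{v}(t)}{\lambda_{v}(\gamma(t))}\right)\le L(\gamma(t),\dot\gamma(t)) + \tfrac{\varepsilon}{3(1-\tau)},
\]
with $q_{v}\equiv 0$ whenever $\lambda_{v}\equiv 0$. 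Since $\gamma([\tau,1])\subset\mathcal{S}^{\xi}$, property~(3) of Lemma~\ref{lem-estimates} provides a uniform lower bound $\lambda_{v}(\gamma(t))\ge c_{\xi}>0$ for $v\in\mathcal{V}_{+}$, and then Lemma~\ref{4.4} applied with $r=\lambda_{v}(\gamma(t))$, $q=q_{v}(t)$ yields $q_{v}\in L^{1}([\tau,1])$. I would then set $q^{\Delta}_{v}(t)\doteq(1/\Delta)\int_{K_{j}}q_{v}(s)\,ds$ for $t\in K_{j}=[\tau+j\Delta,\tau+(j+1)\Delta)$, and linearity of integration gives $\sum_{v}v\,q^{\Delta}_{v}(t)=a^{\Delta}_{j}=\dot\gamma^{\Delta}(t)$.

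To bound $\int_{\tau}^{1}\sum_{v}\lambda_{v}(\gamma^{\Delta})\ell(q^{\Delta}_{v}/\lambda_{v}(\gamma^{\Delta}))\,dt$, I would use the elementary Lipschitz estimate $|x_{1}\ell(y/x_{1})-x_{2}\ell(y/x_{2})|\le(1+y/c_{\xi})|x_{1}-x_{2}|$ (valid for $x_{1},x_{2}\ge c_{\xi}$) together with the uniform continuity of $\lambda_{v}\circ\gamma$ to first replace $\lambda_{v}(\gamma^{\Delta}(t))$ by $\lambda_{v}(\gamma(t))$ and then by $\bar\lambda_{v,j}\doteq(1/\Delta)\int_{K_{j}}\lambda_{v}(\gamma(s))\,ds$, at the cost of an error that tends to $0$ as $\Delta\to 0$. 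With the coefficient now constant on each $K_{j}$, Jensen's inequality applied to the convex function $y\mapsto\bar\lambda_{v,j}\,\ell(y/\bar\lambda_{v,j})$ gives
\[
\int_{K_{j}}\bar\lambda_{v,j}\,\ell\!\left(\tfrac{q^{\Delta}_{v}}{\bar\lambda_{v,j}}\right)dt \;\le\; \int_{K_{j}}\bar\lambda_{v,j}\,\ell\!\left(\tfrac{q_{v}(s)}{\bar\lambda_{v,j}}\right)ds,
\]
and reversing the coefficient approximation (from $\bar\lambda_{v,j}$ back to $\lambda_{v}(\gamma(s))$) yields the target bound $I^{y}(\gamma)+\varepsilon$ after choosing $\Delta$ small enough that all error contributions add to at most $2\varepsilon/3$.

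The main obstacle is organizing the multiple Lipschitz-type error estimates, each of which couples an approximation of $\lambda_{v}$ with an integrand ($q^{\Delta}_{v}$ or $q_{v}$) that is only guaranteed to be $L^{1}$ rather than $L^{\infty}$. The uniform lower bound $\lambda_{v}(\gamma(t))\ge c_{\xi}$ on $\mathcal{S}^{\xi}$ is what rescues the argument, since it converts the Lipschitz constant of $x\mapsto x\ell(y/x)$ into $1+y/c_{\xi}$, which pairs with the $L^{1}$ control on $q_{v}$ and the uniform convergence $\gamma^{\Delta}\to\gamma$ to produce total errors of order $(1-\tau+\|q\|_{L^{1}}/c_{\xi})$ times the oscillation of $\lambda_{v}\circ\gamma$ over intervals of length $\Delta$, which vanishes as $\Delta\to 0$. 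This is precisely where the hypothesis $\gamma\in AC([\tau,1]:\mathcal{S}^{\xi})$ (rather than merely $AC([\tau,1]:\mathcal{S})$) is essential; without the strict positivity of $\lambda_{v}$ along the trajectory, neither the $L^{1}$-integrability of $q_{v}$ nor the Lipschitz estimates above would be available.
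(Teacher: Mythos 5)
Your proposal is correct, and it takes the natural route for this kind of approximation lemma: measurably select a near-optimizer $q(t)$ for $L(\gamma(t),\dot\gamma(t))$, obtain $L^1$-control on $q$ via the elementary bound $r\ell(q/r)+r(e-1)\ge q$ (Lemma~\ref{4.4}), freeze the coefficients $\lambda_v$ at their cell averages and invoke Jensen for the convex map $y\mapsto\lambda\,\ell(y/\lambda)$, and then absorb the coefficient errors using the uniform lower bound $\lambda_v(\gamma(t))\ge c_\xi>0$ on $\mathcal{S}^\xi$ together with the Lipschitz estimate $|x_1\ell(y/x_1)-x_2\ell(y/x_2)|\le(1+y/c_\xi)|x_1-x_2|$ for $x_1,x_2\ge c_\xi$. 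All the individual steps check out: the constraint $\sum_v vq_v^\Delta=\dot\gamma^\Delta$ holds automatically by linearity of the average; the $L^1$-norm of $q_v^\Delta$ is controlled by that of $q_v$; and the modulus of continuity of $\lambda_v\circ\gamma$ on the compact interval $[\tau,1]$ makes the errors vanish as $\Delta\to 0$, exactly as you argue. The paper itself defers the proof to Lemma~65 of~\cite{WW} rather than including it, so I cannot compare line by line, but your argument is complete, identifies precisely the role of $\gamma$ remaining in $\mathcal{S}^\xi$, and is the standard way to prove this statement.
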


We now complete the proof of Proposition \ref{6.6}. By Lemma \ref{6.7}, for
any $\varepsilon >0$, there exists $\Delta $ sufficiently small and a
collection of piecewise constant functions $\left\{ q_{v}^{\Delta }\left(
\cdot \right) \right\} _{v\in \mathcal{V}}$ on $\left[ \tau ,1\right] $ that
satisfy (\ref{*}). It follows directly from the LLN for Poisson random
measures that as $n\rightarrow \infty $, $\bar{\mu}^{n}=\Lambda _{\tau
}^{n}\left( q^{\Delta },y_{n}\right) $ converges uniformly on $\left[ \tau ,1%
\right] $ in probability to $\gamma ^{\Delta }$. Therefore, by the uniform
continuity of $\lambda _{v}\left( \cdot \right) \ell \left( q_{v}^{\Delta
}/\lambda _{v}\left( \cdot \right) \right) $ on $\mathbb{\mathcal{S}}^{\xi }$
and the uniform convergence of $\lambda _{v}^{n}\left( \cdot \right) $ to $%
\lambda _{v}\left( \cdot \right) $ on ${\mathcal{S}}$ by Property \ref%
{prop-lambda}, $\lambda _{v}^{n}\left( \bar{\mu}^{n}(\cdot )\right) \ell
\left( q_{v}^{\Delta }\left( \cdot \right) /\lambda _{v}^{n}\left( \bar{\mu}%
^{n}(\cdot )\right) \right) $ converges uniformly on $\left[ \tau ,1\right] $
in probability to $\lambda _{v}(\gamma ^{\Delta }\left( \cdot \right) )\ell
\left( q_{v}^{\Delta }\left( \cdot \right) /\lambda _{v}(\gamma ^{\Delta
}\left( \cdot \right) )\right) $. Combining the variational representation
formula (Theorem \ref{3.2}), (\ref{*}), and the dominated convergence
theorem, for any Lipschitz continuous functional $F$ on $D\left( \left[ \tau
,1\right] :\mathbb{\mathcal{S}}\right) $, we have

\begin{align*}
& \limsup_{n\rightarrow \infty }-\frac{1}{n}\log \mathbb{E}_{y_{n}}\left[
\exp (-nF(\mu ^{n}))\right] \\
& \quad =\limsup_{n\rightarrow \infty }\inf_{\bar{\alpha}\in \mathcal{A}%
_{b}^{\otimes \left\vert \mathcal{V}\right\vert }}\mathbb{\bar{E}}_{y_{n}}%
\left[ \sum_{v\in \mathcal{V}}\int_{\tau }^{1}\lambda _{v}^{n}\left( \bar{\mu%
}^{n}(t)\right) \ell \left( \frac{\bar{\alpha}_{v}(t)}{\lambda
_{v}^{n}\left( \bar{\mu}^{n}(t)\right) }\right) dt+F(\bar{\mu}^{n}):\bar{\mu}%
^{n}=\Lambda _{\tau }^{n}\left( \bar{\alpha},y_{n}\right) \right] \\
& \quad \leq \limsup_{n\rightarrow \infty }\mathbb{\bar{E}}_{y_{n}}\left[
\int_{\tau }^{1}\sum_{v\in \mathcal{V}}\lambda _{v}^{n}\left( \bar{\mu}%
^{n}(t)\right) \ell \left( \frac{q_{v}^{\Delta }(t)}{\lambda _{v}^{n}\left( 
\bar{\mu}^{n}(t)\right) }\right) dt+F(\bar{\mu}^{n}):\bar{\mu}^{n}=\Lambda
_{\tau }^{n}\left( q^{\Delta },y_{n}\right) \right] \\
& \quad =\int_{\tau }^{1}\sum_{v\in \mathcal{V}}\lambda _{v}(\gamma ^{\Delta
}\left( t\right) )\ell \left( \frac{q_{v}^{\Delta }\left( t\right) }{\lambda
_{v}(\gamma ^{\Delta }\left( t\right) )}\right) dt+F(\gamma ^{\Delta }) \\
& \quad \leq I^{y}\left( \gamma \right) +\varepsilon +F(\gamma ^{\Delta }).
\end{align*}%
Letting $\Delta \rightarrow 0$ gives the upper bound $I^{y}\left( \gamma
\right) +F\left( \gamma \right) +\varepsilon $, and since $\varepsilon >0$
is arbitrary this gives (\ref{Eqn:withge}).

We now have all the ingredients to complete the proof of the LDP lower bound.

\begin{proof}[Proof of the lower bound \eqref{lowbd} of Theorem \protect\ref%
{th-ldips}]
We start by showing that the assumptions on the transition rates $\{\Gamma_{%
\mathbf{i} \mathbf{j}}^k (\cdot), (\mathbf{i} \mathbf{j}) \in {\mathcal{J}}%
^k, k = 1, \ldots, K\}$ imply all the required conditions on the jump rates $%
\{\lambda_v(\cdot), v \in {\mathcal{V}}\}$ that are necessary to apply the
results in Section \ref{sec-pflower}. Indeed, Property \ref{prop-lambda}
follows from Assumption \ref{intsys}, Lemma \ref{lem-estimates} shows that
all four properties of the lemma follow from Assumption \ref{ue} and
Assumption \ref{ass-simjumps} and finally, since Assumption \ref{ass-kerg}
also holds, Proposition \ref{ver1} shows that the jump rates also satisfy
Property \ref{prop-communicate}. From the discussion at the beginning of
Section \ref{sec-pflower} and Lemma \ref{6.3} of Section \ref{subs-pert}, it
follows that to prove the LDP lower bound \eqref{lowbd} it suffices to
establish \eqref{red-lbound} for $\gamma \in AC([0,1]:\mathbb{\mathcal{S}})$
that satisfies the lower bound \eqref{gm}. The latter lower bound guarantees
that, even if $\gamma $ starts on the boundary of $\mathbb{\mathcal{S}}$,
for any $\tau >0$ it lies a strictly positive distance from that boundary,
and thus after $\tau$, Proposition \ref{6.6} can be applied to get a uniform
lower bound for initial conditions close to $\gamma (\tau )$. Due to the
Markov property, the proof is then completed by observing that Lemma \ref%
{6.1.9} shows that, with an error that is vanishingly small as $\tau
\rightarrow 0$, $\mu(\tau)$ can be brought into the required sufficiently
small neighborhood of $\gamma(\tau)$, while staying close to $\gamma$ on $%
[0,\tau]$.
\end{proof}

\begin{remark}
\label{rem-ldp} \emph{From the proof of the upper bound in Section \ref%
{sec-pfupper} and the proof of the lower bound above, it is clear that the
conclusions of Theorem \ref{th-ldips} in fact holds for a more general class
of jump Markov processes. Specifically, it holds for any sequence $\{\mu
^{n}\}_{n\in \mathbb{N}},$ of jump Markov processes on ${\mathcal{S}}$ with
generators of the form \eqref{Lnl}, for which the associated sequence of
jump rates $\{\lambda _{v}^{n}(\cdot ),v\in {\mathcal{V}}\}_{n\in \mathbb{N}%
} $ satisfies Property \ref{prop-lambda}, Property \ref{prop-communicate},
and the properties stated in Lemma \ref{lem-estimates}. Moreover, the only
place where Assumption \ref{ass-simjumps} is used is in the proof of
property (4) of Lemma \ref{lem-estimates}, which in turn is only used in the
proof of Property \ref{cond-lln} of the LLN trajectory. Thus, to extend the
results to situations where Assumption \ref{ass-simjumps} fails to hold, it
suffices to directly verify Property \ref{cond-lln}. }
\end{remark}

\section{The Locally Uniform LDP}

\label{sec-locunif}

We now turn to the proof of Theorem \ref{th-localldips}. We assume
throughout this section that the conditions (and conclusions) of Theorem \ref%
{th-ldips} are satisfied, and below, only specify additional conditions that
are imposed. Fix $t\in \left[ 0,1\right] $. As shown in Corollary \ref{2.3},
one can express the rate function $J_{t}$ of $\left\{ \mu ^{n}\left(
t\right) \right\} _{n\in \mathbb{N}}$ in terms of a variational problem. In
what follows, fix $x\in \mathcal{S}$ and $\left\{ x_{n}\right\} _{n\in 
\mathbb{N}}$ such that $x_{n}\in \mathcal{S}_{n}$ and $\left\Vert
x_{n}-x\right\Vert \rightarrow 0$ as $n\rightarrow \infty $.

\subsection{Proof of the locally uniform LDP upper bound}

\label{subs-luldpupper}

Given any $\varepsilon >0$, recall that $B\left( x,\varepsilon \right) $
denotes the open Euclidean ball centered at $x$ with radius $\varepsilon $,
and that $\bar{B}(x, \varepsilon)$ denotes its closure. For $n $
sufficiently large such that $x_{n}\in \bar{B}\left( x,\varepsilon \right) $%
, by the LDP upper bound stated in Corollary \ref{2.3}, 
\begin{eqnarray*}
\limsup_{n\rightarrow \infty }\frac{1}{n}\log \mathbb{P}\left( \mu
^{n}\left( t\right) =x_{n}\right) &\leq &\limsup_{n\rightarrow \infty }\frac{%
1}{n}\log \mathbb{P}\left( \mu ^{n}\left( t\right) \in \bar{B}\left(
x,\varepsilon \right) \right) \\
&\leq &-\bar{J}_{t}^{\varepsilon }\left( \mu _{0},x\right) ,
\end{eqnarray*}%
where we define%
\begin{equation}
\bar{J}_{t}^{\varepsilon }\left( \mu _{0},x\right) \doteq \inf \left\{
I_{t}\left( \gamma \right) :\gamma \in D\left( \left[ 0,1\right] :\mathcal{S}%
\right) ,\gamma \left( 0\right) =\mu _{0},\gamma \left( t\right) \in \bar{B}%
\left( x,\varepsilon \right) \right\} .  \label{Jeps}
\end{equation}

To prove the locally uniform LDP upper bound, it suffices to show that 
\begin{equation}
\liminf_{\varepsilon \rightarrow 0}\bar{J}_{t}^{\varepsilon }\left( \mu
_{0},x\right) \geq J_{t}\left( \mu _{0},x\right) .  \label{liminf}
\end{equation}

\begin{lemma}
\label{7.1}Assume Property \ref{prop-dcommunicate}.i) holds. Then there
exists a function $c:[0,\infty )\rightarrow \lbrack 0,\infty )$ and $%
C<\infty $ such that

i). $c\left( \varepsilon \right) \rightarrow 0$ as $\varepsilon \rightarrow
0 $, and

ii). given any $\varepsilon >0$ and $x,y\in \mathcal{S}$ such that $%
\left\Vert x-y\right\Vert <\varepsilon $, one can construct a path $\gamma
\in AC\left( \left[ 0,\varepsilon \right] :\mathcal{S}\right) $ such that $%
\gamma \left( 0\right) =x$, $\gamma \left( \varepsilon \right) =y$, $%
\sup_{s\in \lbrack 0,\varepsilon ]}||\gamma (s)-x||\leq C\varepsilon $ and $%
J_{\varepsilon }\left( x,y\right) \leq I_{\varepsilon }\left( \gamma \right)
\leq c\left( \varepsilon \right) $, where $I_{\varepsilon },J_{\varepsilon }$
are defined in (\ref{I}), (\ref{J}), respectively.
\end{lemma}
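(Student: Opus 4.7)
The plan is to reparametrize the strongly communicating path produced by Property \ref{prop-dcommunicate}.i) to live on $[0,\varepsilon]$, and to bound its cost by inserting a simple admissible velocity decomposition into the variational formula for $L$ and carefully handling the logarithmic term coming from the polynomial lower bound on the jump rates near the boundary. By Property \ref{prop-dcommunicate}.i) there is a strongly communicating path $\phi$ from $x$ to $y$ on some $[0,t_0]\subset(0,1]$ with $\mathrm{Len}(\phi)\le C'\|x-y\|<C'\varepsilon$; by Remark \ref{flex} we may assume $t_0=1$ and a common velocity magnitude $U$ on each of the (at most $\bar F$) segments. The time change $\gamma(s)\doteq\phi(s/\varepsilon)$ then yields a path on $[0,\varepsilon]$ with $\gamma(0)=x$, $\gamma(\varepsilon)=y$, and on segment $m$, $\dot\gamma=\tilde U v_m$ with $\tilde U=U/\varepsilon$. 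The length identity $U\sum_m\tau_m^\phi\|v_m\|=\mathrm{Len}(\phi)\le C'\varepsilon$ forces $U\le C'\varepsilon/\min_v\|v\|$, so $\tilde U$ is bounded by a constant independent of $\varepsilon$, $x$, $y$. The displacement bound $\sup_{s\in[0,\varepsilon]}\|\gamma(s)-x\|\le\mathrm{Len}(\gamma)\le C'\varepsilon$ is then automatic, giving property (ii) with $C=C'$.

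To bound $I_\varepsilon(\gamma)$, I would take $q_{v_m}=\tilde U$ and $q_v=0$ for $v\ne v_m$ in the infimum defining $L$, which together with the strong communication bound $\lambda_{v_m}(\gamma(s))\ge c_1\prod_{j\in\mathcal N_{v_m}}\gamma_j(s)^{p_1}$ and $\lambda_v\le R$ yields
\begin{equation*}
L(\gamma(s),\dot\gamma(s))\le\tilde U\log\tilde U-\tilde U\log\lambda_{v_m}(\gamma(s))-\tilde U+\sum_{v\in\mathcal V}\lambda_v(\gamma(s))\le K_1+\tilde U\,p_1\!\!\sum_{j\in\mathcal N_{v_m}}\!\!|\log\gamma_j(s)|
\end{equation*}
for a constant $K_1<\infty$. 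Thus $\int_0^\varepsilon K_1\,ds=O(\varepsilon)$, and the task reduces to estimating $\tilde U\int_{t_{m-1}}^{t_m}|\log\gamma_j(s)|\,ds$ for each $m$ and each $j\in\mathcal N_{v_m}$. Along such a segment, $\gamma_j$ decreases linearly from $a\doteq\gamma_j(t_{m-1})$ to $b\doteq\gamma_j(t_m)$ with $a-b=\tilde U|\langle v_m,e_j\rangle|\tau_m$, so the substitution $u=\gamma_j(s)$ gives
\begin{equation*}
\tilde U\int_{t_{m-1}}^{t_m}\!\!-\log\gamma_j(s)\,ds=\frac{(a-a\log a)-(b-b\log b)}{|\langle v_m,e_j\rangle|}.
\end{equation*}

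I would then split on the size of $x_j$: if $x_j\ge 2C'\varepsilon$, the total variation bound $|\gamma_j(s)-x_j|\le C'\varepsilon$ gives $\gamma_j(s)\ge x_j/2$ everywhere, so $|\log\gamma_j(s)|\le|\log(x_j/2)|\le|\log(C'\varepsilon)|$, and summing $\tilde U\tau_m|\log(C'\varepsilon)|$ over $m$ is controlled by $|\log(C'\varepsilon)|\cdot\sum_m\tilde U\tau_m\le|\log(C'\varepsilon)|\cdot C'\|x-y\|/\min_v\|v\|=O(\varepsilon\log(1/\varepsilon))$; if $x_j<2C'\varepsilon$, then $a\le 3C'\varepsilon$, and since $u\mapsto u-u\log u$ is increasing on $[0,1]$, the display above is bounded by $a(1+|\log a|)\le 3C'\varepsilon(1+|\log(3C'\varepsilon)|)=O(\varepsilon\log(1/\varepsilon))$. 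Summing over the at most $\bar F$ segments and $d$ coordinates gives $I_\varepsilon(\gamma)\le c(\varepsilon)\doteq K_2\varepsilon\log(1/\varepsilon)$, which tends to $0$ as $\varepsilon\to 0$. The main obstacle is handling the logarithmic singularity when a coordinate $\gamma_j$ approaches zero (unavoidable when $x$ or $y$ lies close to the boundary); the case split resolves this by either keeping $\gamma_j$ bounded below by $x_j/2$, or by exploiting that both $a$ and $a|\log a|$ vanish as $a\to 0$ to absorb the singularity into an $O(\varepsilon\log(1/\varepsilon))$ bound.
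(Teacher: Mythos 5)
Your proof is correct and follows the same overall strategy as the paper: reparametrize the strongly communicating path from Property \ref{prop-dcommunicate}.i) to $[0,\varepsilon]$ with constant speed $\tilde U$ bounded by a universal constant, bound $L(\gamma(s),\dot\gamma(s))$ by the single-velocity substitution $q_{v_m}=\tilde U$, and then control the $-\log\lambda_{v_m}(\gamma(s))$ term arising from the polynomial lower bound (\ref{lvm}). The sole difference lies in how that last term is integrated. You work with the actual coordinate values $\gamma_j(s)$, compute the antiderivative of $-\log u$ explicitly, and resolve the resulting singularity by splitting on whether $x_j\geq 2C'\varepsilon$ (in which case $\gamma_j$ stays bounded below by $C'\varepsilon$) or $x_j<2C'\varepsilon$ (in which case $a-a\log a$ is itself $O(\varepsilon\log(1/\varepsilon))$). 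The paper avoids the case split altogether by exploiting that on a segment where $j\in\mathcal N_{v_m}$ the coordinate $\gamma_j$ decreases linearly toward its nonnegative endpoint value, so $\gamma_j(s)\geq b_1U(t_m\varepsilon-s)$ uniformly; substituting this into the log makes the integral simply $\int -\log(t_m\varepsilon-s)\,ds$ over an interval of length at most $\varepsilon$, which is $O(\varepsilon|\log\varepsilon|)$ directly. Both routes deliver $c(\varepsilon)=O(\varepsilon|\log\varepsilon|)$; yours is a legitimate alternative, though the paper's uniform lower bound on $\gamma_j$ in terms of time-to-segment-endpoint is the slicker device.
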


Before proving Lemma \ref{7.1}, we first describe how it can be used to
prove Lemma \ref{uc}. By Lemma \ref{move}, Assumption \ref{ue} and
Assumption \ref{g1} (which are the conditions of Lemma \ref{uc}) imply
Property \ref{prop-dcommunicate}.i), thus the condition of Lemma \ref{7.1}
is satisfied.

\begin{proof}[Proof of Lemma \protect\ref{uc}]
For any $\varepsilon >0$, take $t\in (0,\infty )$ and $\gamma \in AC\left( %
\left[ 0,t\right] :\mathcal{S}\right) $ such that $\gamma \left( 0\right) =x$%
, $\gamma \left( t\right) =y$, and $I_{t}\left( \gamma \right) \leq V\left(
x,y\right) +\varepsilon /2$. Given $\delta >0$, and any $y^{\delta }\in 
\mathcal{S}$ such that $\left\Vert y^{\delta }-y\right\Vert \leq \delta $,
by Lemma \ref{7.1}, there exists a path $\nu \in AC\left( \left[ 0,\delta %
\right] :\mathcal{S}\right) $ with $\nu \left( 0\right) =y$, $\nu \left(
\delta \right) =y^{\delta }$ with $I_{\delta }\left( \nu \right) \leq
c\left( \delta \right) $. Let $\bar{\gamma}$ be the concatenation of $\gamma 
$ and $\nu $. Then we have%
\begin{equation*}
V\left( x,y^{\delta }\right) \leq I_{t+\delta }\left( \bar{\gamma}\right)
=I_{t}\left( \gamma \right) +I_{\delta }\left( \nu \right) \leq V\left(
x,y\right) +\varepsilon /2+c\left( \delta \right) .
\end{equation*}%
It suffices to choose $\delta $ such that $c\left( \delta \right) \leq
\varepsilon /2$. The reverse inequality and the joint continuity with
respect to both variables can be proved using similar arguments.
\end{proof}

\medskip A similar construction leads to the proof of the following lemma,
which is used in the proof of Corollary \ref{cor-randin}. Notice Corollary %
\ref{cor-randin} also assumes Assumption \ref{ue} and Assumption \ref{g1},
which imply Property \ref{prop-dcommunicate}.i).

\begin{lemma}
\label{lem-ucont} Assume Property \ref{prop-dcommunicate}.i) holds. Given a
bounded and continuous function $h$, the function 
\begin{equation*}
U\left( y\right) \doteq \inf \left\{ I\left( \gamma \right) +h\left( \gamma
\right) :\gamma \in D\left( [0,1]:\mathcal{S}\right) ,\gamma \left( 0\right)
=y\right\}
\end{equation*}%
is continuous on ${\mathcal{S}}$.
\end{lemma}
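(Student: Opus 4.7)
The plan is to establish continuity by showing that for any $y_1,y_2 \in {\mathcal{S}}$ with $\|y_1-y_2\|$ small, one has $|U(y_1)-U(y_2)| \leq \varepsilon + o(1)$ for any fixed $\varepsilon > 0$, via a symmetric ``prepending'' construction. Given $\varepsilon > 0$, I would first select a near-optimal $\gamma_1 \in D([0,1]:{\mathcal{S}})$ with $\gamma_1(0)=y_1$ and $I(\gamma_1)+h(\gamma_1)\leq U(y_1)+\varepsilon$. Since $h$ is bounded, $I(\gamma_1)<\infty$, so in fact $\gamma_1 \in AC([0,1]:{\mathcal{S}})$ and in particular $\gamma_1$ is continuous.

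Next, for $\delta = \|y_1-y_2\|$ small, I would apply Lemma \ref{7.1} (whose hypothesis Property \ref{prop-dcommunicate}.i) is implied by Assumptions \ref{intsys}, \ref{ue}, \ref{ass-simjumps}, \ref{g1} via Proposition \ref{ver2}) to obtain a short connector $\nu \in AC([0,\delta]:{\mathcal{S}})$ with $\nu(0)=y_2$, $\nu(\delta)=y_1$, $\sup_{s \in [0,\delta]}\|\nu(s)-y_2\| \leq C\delta$, and $I_\delta(\nu)\leq c(\delta)$ with $c(\delta)\to 0$. I would then time-rescale $\gamma_1$ to live on $[\delta,1]$ by setting $\tilde\gamma(t) \doteq \gamma_1((t-\delta)/(1-\delta))$ and define $\gamma_2$ as the concatenation of $\nu$ on $[0,\delta]$ with $\tilde\gamma$ on $[\delta,1]$, which is a candidate path in $D([0,1]:{\mathcal{S}})$ starting at $y_2$.

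To control the cost, I would split $I(\gamma_2) = I_\delta(\nu) + \int_\delta^1 L(\tilde\gamma(t),\dot{\tilde\gamma}(t))\,dt$. The first term is at most $c(\delta)$; the second equals $I_{1/(1-\delta)}\bigl((\gamma_1)_{1-\delta}\bigr)$ in the notation of Proposition \ref{4.8}, which tends to $I(\gamma_1)$ as $\delta \to 0$ by the continuity statement in that proposition applied with $c = 1-\delta \to 1$ and $t=1$. Thus $I(\gamma_2) \leq I(\gamma_1) + c(\delta) + o(1)$. For the $h$ term, since $\gamma_1$ is continuous, the uniform estimate $\sup_{t\in[0,\delta]}\|\gamma_2(t)-\gamma_1(0)\| \leq \|y_1-y_2\| + C\delta$ together with the modulus-of-continuity bound $\sup_{t\in[\delta,1]}\|\tilde\gamma(t)-\gamma_1(t)\|\to 0$ gives $\gamma_2 \to \gamma_1$ uniformly, hence in the Skorokhod $J_1$ topology (since the limit is continuous), and continuity of $h$ yields $h(\gamma_2) \to h(\gamma_1)$. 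Combining, $U(y_2) \leq I(\gamma_2)+h(\gamma_2) \leq U(y_1)+\varepsilon + o(1)$ as $\|y_1-y_2\|\to 0$. Swapping $y_1 \leftrightarrow y_2$ and letting $\varepsilon \downarrow 0$ yields continuity of $U$.

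The main technical obstacle is the cost-of-reparametrization step: one must verify that the rescaled path $\tilde\gamma$ inherits finite and nearly-equal cost, and that $\gamma_2$ is sufficiently close to $\gamma_1$ for the continuous functional $h$ to behave well. Proposition \ref{4.8} is designed for precisely this, and Lemma \ref{7.1} supplies the short connector with vanishing cost; the remaining work is a careful bookkeeping of the modulus of continuity of $\gamma_1$ near $0$, which is where the absolute continuity of $\gamma_1$ (forced by $I(\gamma_1)<\infty$) is essential.
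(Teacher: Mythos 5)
Your proof is correct and follows essentially the same approach as the paper: pick a near-optimal path for $U(y_1)$, prepend a cheap connector from $y_2$ supplied by Lemma~\ref{7.1}, compress the original path in time with cost controlled by Proposition~\ref{4.8}, and handle $h$ by showing the resulting path converges uniformly to the original. (Minor note: your identification of $\int_\delta^1 L(\tilde\gamma,\dot{\tilde\gamma})\,dt$ with $I_{1/(1-\delta)}\bigl((\gamma_1)_{1-\delta}\bigr)$ has $c$ and $1/c$ swapped --- it is actually $I_{1-\delta}\bigl((\gamma_1)_{1/(1-\delta)}\bigr)$ --- but Proposition~\ref{4.8}'s continuity at $c=1$ applies either way, so the conclusion stands.)
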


\begin{proof}
Fix $\varepsilon >0$. Take $\gamma \in AC\left( [0,1]:\mathcal{S}\right) $
such that $I\left( \gamma \right) +h\left( \gamma \right) <U\left( y\right)
+\varepsilon /3$. Given $\delta >0$ such that $c\left( \delta \right) \leq
\varepsilon /3$, and any $y^{\delta }\in \mathcal{S}$ such that $\left\Vert
y^{\delta }-y\right\Vert \leq \delta $, by Lemma \ref{7.1}, there exists a
path $\nu \in AC\left( \left[ 0,\delta \right] :\mathcal{S}\right) $ with $%
\nu \left( 0\right) =y$, $\nu \left( \delta \right) =y^{\delta }$ such that $%
I_{\delta }\left( \nu \right) \leq c\left( \delta \right) $, and $\sup_{s\in %
\left[ 0,\delta \right] }\left\Vert \nu \left( s\right) -y\right\Vert \leq
C\delta $ for some $C<\infty $. We now rescale $\gamma $ to obtain a new
path $\gamma ^{\delta }$: for $c=\left( 1-\delta \right) ^{-1}$, define $%
\gamma ^{\delta }\in AC\left( \left[ 0,1-\delta \right] :\mathcal{S}\right) $
by $\gamma ^{\delta }\left( s\right) \doteq \gamma \left( cs\right) $. By
Proposition \ref{4.8}, we can take $\delta $ smaller if necessary such that $%
I_{1-\delta }\left( \gamma ^{\delta }\right) \leq I\left( \gamma \right)
+\varepsilon /3$. Let $\bar{\gamma}$ be the concatenation of $\nu $ and $%
\gamma ^{\delta }$. Then $\left\Vert \gamma -\bar{\gamma}\right\Vert
_{\infty }\rightarrow 0$ as $\delta \rightarrow 0$. Therefore, we have 
\begin{equation*}
U\left( y^{\delta }\right) \leq h(\bar{\gamma})+I\left( \bar{\gamma}\right)
\leq U\left( y\right) +\varepsilon /3+c\left( \delta \right) +h\left( \bar{%
\gamma}\right) -h\left( \gamma \right) .
\end{equation*}%
The other inequality is proved in the same way. Therefore,%
\begin{equation*}
\left\vert U\left( y^{\delta }\right) -U\left( y\right) \right\vert \leq
2\varepsilon /3+\left\vert h\left( \bar{\gamma}\right) -h\left( \gamma
\right) \right\vert ,
\end{equation*}%
by taking $\delta $ sufficiently small, the right hand side is less than $%
\varepsilon $.
\end{proof}

Assuming Lemma \ref{7.1}, we next show (\ref{liminf}) and therefore complete
the proof of the locally uniform LDP upper bound. For $\delta >0$, pick $%
\gamma \in AC\left( \left[ 0,1\right] :\mathcal{S}\right) $ such that $%
\gamma \left( 0\right) =\mu _{0},\gamma \left( t\right) \in \bar{B}\left(
x,\varepsilon \right) $, and $I_{t}\left( \gamma \right) \leq \bar{J}%
_{t}^{\varepsilon }\left( \mu _{0},x\right) +\delta $. By Lemma \ref{7.1}
there exists a path $\nu \in AC\left( \left[ 0,\varepsilon \right] :\mathcal{%
S}\right) $ with $\nu \left( 0\right) =\gamma \left( t\right) $, $\nu \left(
\varepsilon \right) =x$ with $I_{\varepsilon }\left( \nu \right) \leq
c\left( \varepsilon \right) $, where $c\left( \varepsilon \right)
\rightarrow 0$ as $\varepsilon \rightarrow 0$. Let $\bar{\gamma}$ be the
concatenation of $\gamma $ and $\nu $. We now rescale $\overline{\gamma }$
to obtain a new path: for $c=\left( t+\varepsilon \right) /t$, define $\bar{%
\gamma}_{c}\in AC\left( \left[ 0,t\right] :\mathcal{S}\right) $ by $\bar{%
\gamma}_{c}\left( s\right) =\bar{\gamma}\left( cs\right) $, $s\in \left[ 0,t%
\right] $. Then $\bar{\gamma}_{c}\left( 0\right) =\mu _{0}$, $\bar{\gamma}%
_{c}\left( t\right) =x$. Moreover, by Proposition \ref{4.8}, for $%
\varepsilon $ sufficiently small, $I_{t}\left( \bar{\gamma}_{c}\right) \leq
I_{t+\varepsilon }\left( \overline{\gamma }\right) +\delta $, and by the
construction above,%
\begin{equation*}
J_{t}\left( \mu _{0},x\right) \leq I_{t}\left( \bar{\gamma}_{c}\right) \leq
I_{t+\varepsilon }\left( \overline{\gamma }\right) +\delta =I_{t}\left(
\gamma \right) +I_{\varepsilon }\left( \nu \right) +\delta \leq \bar{J}%
_{t}^{\varepsilon }\left( \mu _{0},x\right) +2\delta +c\left( \varepsilon
\right) .
\end{equation*}%
Taking the limit inferior as $\varepsilon \rightarrow 0$ and then sending $%
\delta \rightarrow 0$, (\ref{liminf}) follows.

\begin{proof}[Proof of Lemma \protect\ref{7.1}]
By Property \ref{prop-dcommunicate}.i) and Remark \ref{flex}, there exists a
strongly communicating path $\gamma \in AC\left( \left[ 0,\varepsilon \right]
:\mathcal{S}\right) $ that satisfies $\gamma \left( 0\right) =x$, $\gamma
\left( \varepsilon \right) =y$, and has constant speed $U\leq c^{\prime
}\left\Vert x-y\right\Vert /\varepsilon \leq c^{\prime }$. Precisely, there
exist $F<\infty $ and $0=t_{0}<t_{1}<\cdots <t_{F}=1$, such that%
\begin{equation*}
\dot{\gamma}\left( t\right) =\sum_{m=1}^{F}Uv_{m}\mathbb{I}%
_{[t_{m-1}\varepsilon ,t_{m}\varepsilon )}\left( t\right) \text{ for a.e. }%
t\in \left[ 0,\varepsilon \right] .
\end{equation*}%
Since $I_{\varepsilon }\left( \gamma \right) =\sum_{m=1}^{F}\left(
I_{t_{m}\varepsilon }\left( \gamma \right) -I_{t_{m-1}\varepsilon }\left(
\gamma \right) \right) $, it suffices to bound each term from above.

Recall from (\ref{def-nv}) that for any $j\in \mathcal{N}_{v_{m}}$, $%
\left\langle e_{j},v_{m}\right\rangle <0$. Let $b_{1}\doteq
\min_{m=1,...,F}\min_{j\in \mathcal{N}_{v_{m}}}\left\vert \left\langle
e_{j},v_{m}\right\rangle \right\vert >0$. Note that for $s\in \lbrack
t_{m-1}\varepsilon ,t_{m}\varepsilon )$, and any $j\in \mathcal{N}_{v_{m}}$, 
$\gamma _{j}\left( t_{m}\varepsilon \right) -\gamma _{j}\left( s\right)
=\left\langle e_{j},v_{m}\right\rangle U\left( t_{m}\varepsilon -s\right) $,
and thus $\gamma _{j}\left( s\right) \geq b_{1}U\left( t_{m}\varepsilon
-s\right) $. Therefore, by Definition \ref{def1''}, there exist constants $%
c_{1}>0$, $p_{1}<\infty $, such that%
\begin{equation*}
\lambda _{v_{m}}\left( \gamma \left( s\right) \right) \geq c_{1}\left( %
\displaystyle\prod\limits_{j\in \mathcal{N}_{v_{m}}}\gamma _{j}\left(
s\right) \right) ^{p_{1}}\geq \tilde{c}_{1}U^{\kappa }\left(
t_{m}\varepsilon -s\right) ^{\kappa },
\end{equation*}%
where $\kappa \doteq dp_{1}<\infty $ and $\tilde{c}_{1}\doteq
c_{1}b_{1}^{dp_{1}}>0$. Thus, by taking $q_{v_{m}}=U$, and $q_{v}=0$ for $%
v\neq v_{m}$ in the first line below, we have 
\begin{eqnarray*}
L\left( \gamma \left( s\right) ,\dot{\gamma}\left( s\right) \right)
&=&\inf_{q \in [0,\infty)^{|{\mathcal{V}}|}:\sum_{v\in \mathcal{V}}vq_{v}=%
\dot{\gamma}\left( s\right) }\sum_{v\in \mathcal{V}}\lambda _{v}\left(
\gamma \left( s\right) \right) \ell \left( \frac{q_{v}}{\lambda _{v}\left(
\gamma \left( s\right) \right) } \right) \\
&\leq &\lambda _{v_{m}}\left( \gamma \left( s\right) \right) \ell \left( 
\frac{U}{\lambda _{v_{m}}\left( \gamma \left( s\right)\right)}\right)
+\sum_{v\in \mathcal{V}\setminus \{v_m\}} \lambda _{v}\left( \gamma \left(
s\right) \right) \\
&\leq &U\log \left( \frac{U}{\tilde{c}_{1}U^{\kappa }\left( t_{m}\varepsilon
-s\right) ^{\kappa }}\right) -U+C_{2} \\
&=&-\left( \kappa -1\right) U\log U-\kappa U\log \left( t_{m}\varepsilon
-s\right) -U\left( 1+\log \tilde{c}_{1}\right) +C_{2},
\end{eqnarray*}%
with $C_{2} \doteq R |{\mathcal{V}}|<\infty $, where $R$ is the bound in %
\eqref{m}. Therefore, 
\begin{eqnarray*}
I_{t_{m}\varepsilon }\left( \gamma \right) -I_{t_{m-1}\varepsilon }\left(
\gamma \right) &=&\int_{t_{m-1}\varepsilon }^{t_{m}\varepsilon }L\left(
\gamma \left( s\right) ,\dot{\gamma}\left( s\right) \right) ds \\
&\leq &\int_{t_{m-1}\varepsilon }^{t_{m}\varepsilon }\left( -\left( \kappa
-1\right) U\log U-\kappa U\log \left( t_{m}\varepsilon -s\right) -U\left(
1+\log \tilde{c}_{1}\right) +C_{2}\right) ds \\
&\leq &-C_{3}\left( U\right) \varepsilon \log \varepsilon +C_{4}\left(
U\right) \varepsilon
\end{eqnarray*}%
for some constants $C_{3}\left( U\right) ,C_{4}\left( U\right) $ such that $%
\sup_{U\in \left[ 0,c^{\prime }\right] }\left( C_{3}\left( U\right) \vee
C_{4}\left( U\right) \right) <\infty $. Summing over $m$, we have $%
J_{\varepsilon }\left( \mu _{0},y\right) \leq I_{\varepsilon }\left( \gamma
\right) \leq c\left( \varepsilon \right) $, where $c\left( \varepsilon
\right) =O\left( \varepsilon |\log \varepsilon |\right) $ as $\varepsilon
\rightarrow 0$.
\end{proof}

\subsection{Proof of the Lower Bound}

For the proof of the lower bound, take any $\varepsilon >0$ small. Then by
the Markov property for $\left\{ \mu ^{n}\right\} $, we have 
\begin{equation*}
\mathbb{P}_{\mu _{0}}\left( \mu ^{n}\left( t\right) =x_{n}\right) \geq 
\mathbb{P}_{\mu _{0}}\left( \mu ^{n}\left( t-\varepsilon \right) \in B\left(
x,\varepsilon \right) \right) \cdot \inf_{w_{n}\in B\left( x,\varepsilon
\right) \cap \mathcal{S}_{n}}\mathbb{P}_{w_{n}}\left( \mu ^{n}\left(
\varepsilon \right) =x_{n}\right) .
\end{equation*}%
The LDP lower bound in Corollary \ref{2.3} implies%
\begin{equation*}
\liminf_{n\rightarrow \infty }\frac{1}{n}\log \mathbb{P}_{\mu _{0}}\left(
\mu ^{n}\left( t-\varepsilon \right) \in B\left( x,\varepsilon \right)
\right) \geq -J_{t-\varepsilon }^{\varepsilon }\left( \mu _{0},x\right) ,
\end{equation*}%
where $J_{t}^{\varepsilon }$ is defined by \eqref{Jeps}. The proof of the
lower bound will be complete if we can show both of the following:

i) $\limsup_{\varepsilon \rightarrow 0}J_{t-\varepsilon }^{\varepsilon
}\left( \mu _{0},x\right) \leq J_{t}\left( \mu _{0},x\right) $.

ii) The \textbf{Local Communication Property}: There exist a function $%
c:[0,\infty )\rightarrow \lbrack 0,\infty )$ that satisfies $c\left(
\varepsilon \right) \rightarrow 0$ as $\varepsilon \rightarrow 0$ and is
such that for all $\varepsilon >0$ sufficiently small, 
\begin{equation*}
\inf_{w_{n}\in B\left( x,\varepsilon \right) \cap \mathcal{S}_{n}}\mathbb{P}%
_{w_{n}}\left( \mu ^{n}\left( \varepsilon \right) =x_{n}\right) \geq \exp
\left( -nc\left( \varepsilon \right) +o\left( n\right) \right) .
\end{equation*}

To prove the first property, we will use Proposition \ref{4.8}. For any $%
\delta >0$, take $\gamma \in AC\left( \left[ 0,1\right] :\mathcal{S}\right) $
such that $\gamma \left( t\right) =x$ and $I_{t}\left( \gamma \right) \leq
J_{t}\left( \mu _{0},x\right) +\delta $. Take $c=t/\left( t-\varepsilon
\right) $ and consider the path $\gamma _{c}\in AC\left( \left[
0,t-\varepsilon \right] :\mathcal{S}\right) $, such that $\gamma _{c}\left(
s\right) \doteq \gamma \left( cs\right) $, $s\in \left[ 0,t\right] $. Then $%
\gamma _{c}\left( 0\right) =\mu _{0}$, $\gamma _{c}\left( t-\varepsilon
\right) =x$. By Proposition \ref{4.8}, given $\delta >0$, for $\varepsilon $
sufficiently small, $I_{t/c}\left( \gamma _{c}\right) \leq I_{t}\left(
\gamma \right) +\delta $, and we have 
\begin{equation*}
J_{t-\varepsilon }^{\varepsilon }\left( \mu _{0},x\right) \leq
I_{t-\varepsilon }\left( \gamma _{c}\right) =I_{t/c}\left( \gamma
_{c}\right) \leq I_{t}\left( \gamma \right) +\delta \leq J_{t}\left( \mu
_{0},x\right) +2\delta .
\end{equation*}%
The conclusion follows on taking first $\varepsilon \rightarrow 0$ and then $%
\delta \rightarrow 0$.

To prove the local communication property, we start with a direct evaluation
of the hitting probability of jump Markov processes on a finite state space.

\begin{lemma}
\label{6.1}Let $\left\{ Y\left( t\right) \right\} _{t\geq 0}$ be a jump
Markov process with finite state space $\left\{
s_{0},s_{1},...,s_{N}\right\} $. For $i=0,...,N-1$, suppose that the jump
rate from state $s_{i}$ to $s_{i+1}$ is $b_{i+1}$, and the sum of jump rates
from state $s_{i}$ to all other states is bounded above by $c<\infty $. If $%
Y\left( 0\right) =s_{0}$, then 
\begin{equation*}
\mathbb{P}\left( Y\left( t\right) =s_{N}\right) \geq \frac{1}{N!}\left( \Pi
_{i=1}^{N}b_{i}\right) t^{N}\exp \left( -ct\right) .
\end{equation*}
\end{lemma}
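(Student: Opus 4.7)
The strategy is to lower bound $\mathbb{P}(Y(t) = s_N)$ by the probability of one particularly favorable event $E$, namely that the process makes exactly $N$ jumps on the interval $[0,t]$ and these jumps follow the prescribed sequence $s_0 \to s_1 \to \cdots \to s_N$. Since $E \subset \{Y(t) = s_N\}$, any lower bound on $\mathbb{P}(E)$ suffices.

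The key step is to write down the density of $E$ in terms of the jump times $0 < \tau_1 < \tau_2 < \cdots < \tau_N < t$. Denoting by $q_i$ the total transition rate out of state $s_i$ (so that $b_{i+1} \leq q_i \leq c$ under the hypotheses), the standard construction of a jump Markov process via exponential holding times and jump-probability kernels gives, by an application of the (strong) Markov property,
\begin{equation*}
\mathbb{P}(E) = \int_{0 < \tau_1 < \cdots < \tau_N < t} \prod_{i=1}^{N} b_i \cdot \exp\!\left(-q_0 \tau_1 - \sum_{i=1}^{N-1} q_i (\tau_{i+1}-\tau_i) - q_N (t-\tau_N)\right) d\tau_1 \cdots d\tau_N,
\end{equation*}
where each factor $b_i$ arises from combining the rate-$q_{i-1}$ holding-time density with the probability $b_i/q_{i-1}$ of selecting the jump $s_{i-1}\to s_i$.

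The next step is the exponential estimate. Using the telescoping identity $\tau_1 + \sum_{i=1}^{N-1}(\tau_{i+1}-\tau_i) + (t-\tau_N) = t$ together with the uniform bound $q_i \leq c$, the exponent in the integrand is dominated by $ct$, so the exponential factor is at least $e^{-ct}$ pointwise on the simplex. Pulling out the constant $\bigl(\prod_{i=1}^N b_i\bigr)e^{-ct}$ and using the elementary fact that the Lebesgue volume of $\{0 < \tau_1 < \cdots < \tau_N < t\}$ equals $t^N/N!$ yields
\begin{equation*}
\mathbb{P}(Y(t) = s_N) \geq \mathbb{P}(E) \geq \frac{1}{N!}\Bigl(\prod_{i=1}^N b_i\Bigr)\, t^N e^{-ct},
\end{equation*}
as required.

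The only subtle point is to justify the factorized form of the density of $E$ using the (strong) Markov property and the independence of successive holding times given the jump chain; everything else is a routine computation. No difficulty arises from the fact that several states may share a common label beyond $s_0,\ldots,s_N$, since $E$ restricts attention to a single sample-path scenario, for which only the rates $b_1,\ldots,b_N$ and the aggregate upper bound $c$ enter.
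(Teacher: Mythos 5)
Your proof is correct but takes a genuinely different route from the paper's. You compute the probability of the favorable sub-event in which the process makes exactly $N$ jumps along $s_0 \to s_1 \to \cdots \to s_N$ by time $t$, expressing it as an integral over the jump times and bounding the exponential integrand pointwise by $e^{-ct}$ via the telescoping identity. The paper instead works with the Kolmogorov forward equation $\dot{p} = Ap$ and introduces the comparison system $\dot{r}_0 = -cr_0$, $\dot{r}_i = b_i r_{i-1} - c r_i$, whose last component solves explicitly to $r_N(t) = \frac{1}{N!}\bigl(\prod_{i=1}^N b_i\bigr) t^N e^{-ct}$; the comparison principle for monotone ODE systems then yields $p_N(t) \geq r_N(t)$. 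Your path-space computation is somewhat more elementary, requiring only the holding-time description of jump Markov processes and an order-simplex volume computation, whereas the paper's is more compact but invokes the ODE comparison principle. One shared point worth flagging: both arguments implicitly use that the total outgoing rate from $s_N$ is also at most $c$ (you need it to bound $e^{-q_N(t-\tau_N)} \geq e^{-c(t-\tau_N)}$, while the paper needs it to get $\dot{p}_N \geq b_N p_{N-1} - c p_N$), even though the lemma's hypothesis literally quantifies only over $i \leq N-1$. In the paper's application the process is stopped at $s_N$, so its outgoing rate is zero and the issue is moot, but the lemma as stated is slightly weaker than what either proof requires.
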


\begin{proof}
Let $p\left( t\right) $ be the probability distribution of the process at
time $t$: $p_{i}\left( t\right) =\mathbb{P}\left( Y\left( t\right)
=s_{i}\right) $. Then the Kolmogorov forward equation takes the form $\dot{p}%
=Ap$, where $A$ is the $N\times N$ rate matrix for $Y$. Let $r$ be the
unique solution to the system of linear ODEs given by 
\begin{equation*}
\left\{ 
\begin{array}{rcll}
\dot{r}_{0} & = & -cr_{0}, &  \\ 
\dot{r}_{i} & = & b_{i}r_{i-1}-cr_{i}, & i=1,...,N, \\ 
r\left( 0\right) & = & e_{s_{0}}. & 
\end{array}%
\right.
\end{equation*}%
Solving this equation explicitly gives $r_{N}\left( t\right) =\frac{1}{N!}%
\left( \Pi _{i=1}^{N}b_{i}\right) t^{N}\exp \left( -ct\right) .$ Since $%
r(0)=p(0)$, the comparison principle for ODEs shows that $p_{i}\left(
t\right) \geq r_{i}\left( t\right) $ for all $i=1,\ldots ,N,$ and the lemma
is proved.
\end{proof}

\begin{proof}[Proof of the local communication property]
We will use Property \ref{prop-dcommunicate} and Lemma \ref{6.1}. Fix some $%
w_{n}\in B\left( x,\varepsilon \right) \cap \mathcal{S}_{n}$, note that the
probability of $\mu ^{n}\left( \varepsilon \right) =x_{n}$ is no less than
the probability that $\mu ^{n}$ hitting $x_{n}$ at $\varepsilon $ by passing
through the states of a given discrete strongly communicating path $\phi $
that connects $w_{n}$ and $x_{n}$.

By Property \ref{prop-dcommunicate}, there exists $F<\infty $, $0=t_{0}\leq
t_{1}\leq \cdots \leq t_{F}=T$, $\left\{ v_{m}\right\} _{m=1}^{F}$, and
constants $c_{1}>0$, $c^{\prime },p_{1}<\infty $, such that $\phi _{0}=w_{n}$%
, $\phi _{T}=x_{n}$, and%
\begin{equation*}
\phi _{s+1}-\phi _{s}=\frac{1}{n}v_{m},\text{ \ for }s\in \lbrack
t_{m-1},t_{m})\cap \mathbb{Z},
\end{equation*}%
with $T\leq c^{\prime }n\left\Vert x_{n}-w_{n}\right\Vert \leq c^{\prime
}n\varepsilon $. Also, for $s\in \lbrack t_{m-1},t_{m})\cap \mathbb{Z}$ and
large $n$, $\lambda _{v_{m}}^{n}\left( \phi _{s}\right) >c_{1}(\textstyle%
\prod\limits_{j\in \mathcal{N}_{m}}\left( \phi _{s}\right) _{j})^{p_{1}}$.
Let $z^{\left( m\right) }\doteq \phi _{s}\left( t_{m}\right) $. By the
Markov property, 
\begin{equation*}
\mathbb{P}_{w_{n}}\left( \mu ^{n}\left( \varepsilon \right) =x_{n}\right)
\geq \textstyle\prod\limits_{m=1}^{F}\mathbb{P}_{z^{\left( m\right) }}\left(
\mu ^{n}\left( \left( \frac{t_{m+1}-t_{m}}{T}\right) \varepsilon \right)
=z^{\left( m+1\right) }\right) ,
\end{equation*}%
and it suffices to give a lower bound for each term in the product. This
will be proved by comparison with another Markov process $Z^{n}$. Thus,
without modifying the notation, we let $\mu ^{n}\left( t\right) $ denote the
process starting at $z^{\left( m\right) }$ and stopped when it first leaves
the set of points $\left\{ \phi _{s}:s\in \lbrack t_{m-1},t_{m})\cap \mathbb{%
Z}\right\} $. For each $m$ and $t\in \lbrack 0,(t_{m+1}-t_{m})\varepsilon
/T) $, define $Z^{n}$ to be the jump Markov process with $Z^{n}\left(
0\right) =z^{\left( m\right) }$, with the same set $\frac{1}{n}\mathcal{V}$
of jump directions, and jump rates%
\begin{equation*}
\bar{\lambda}_{v}\left( x\right) \doteq \left\{ 
\begin{array}{cc}
nc_{1}\left( \displaystyle\prod\limits_{j\in \mathcal{N}_{v_{m}}}x_{j}%
\right) ^{p_{1}} & \text{if }v=v_{m}, \\ 
nR & \text{if }v\in \mathcal{V}\setminus \{v_{m}\},%
\end{array}%
\right.
\end{equation*}%
as long as $Z^{n}$ stays in the set $\left\{ \phi _{s}:s\in \lbrack
t_{m-1},t_{m})\cap \mathbb{Z}\right\} $, and with the process stopped when
it jumps off the line segment. Note that $\bar{\lambda}_{v}\left( x\right) $
bounds $\lambda _{v_{m}}^{n}\left( x\right) $ from below in the set, while $%
nR$ is an upper bound on all jump rates.

It follows by the comparison principle in Lemma \ref{6.1} that $\mu ^{n}$
has a higher probability to reach $x_{m+1}$ at time $(t_{m+1}-t_{m})%
\varepsilon /T$ than $Z^{n}$ does: 
\begin{equation*}
\mathbb{P}_{x_{m}}\left( \mu ^{n}\left( \left( \frac{t_{m+1}-t_{m}}{T}%
\right) \varepsilon \right) =x_{m+1}\right) \geq \mathbb{P}_{x_{m}}\left(
Z^{n}\left( \left( \frac{t_{m+1}-t_{m}}{T}\right) \varepsilon \right)
=x_{m+1}\right) .
\end{equation*}%
Let $l\doteq l\left( n,\varepsilon \right) =t_{m+1}-t_{m}$. Then by
Definition \ref{def1}, $l\leq C_{2}n\varepsilon $ for some $C_{2}<\infty $.
The product of the jump rates of $Z^{n}$ along this segment satisfies%
\begin{equation*}
\displaystyle\prod_{x:x\in \left\{ \phi _{s}:s\in \lbrack t_{m-1},t_{m})\cap 
\mathbb{Z}\right\} }\left( nc_{1}\left( \displaystyle\prod\limits_{j\in 
\mathcal{N}_{v_{m}}}x_{j}\right) ^{p_{1}}\right) \geq \frac{c_{1}^{l}\left(
l!\right) ^{\kappa _{m}p_{1}}}{n^{\left( \kappa _{m}p_{1}-1\right) l}},
\end{equation*}%
where $\kappa _{m}\doteq \left\vert \mathcal{N}_{m}\right\vert \leq d$. The
lower bound in the last inequality is achieved when $\left\{ \phi _{s}:s\in
\lbrack t_{m-1},t_{m})\cap \mathbb{Z}\right\} $ is a segment that ends at $%
x_{m+1}\in \partial \mathcal{S}$, and for all $j\in \mathcal{N}_{m}$, $%
x_{j}=1,...,l$ along the segment. Then it follows from Lemma \ref{6.1} that
for $\varepsilon >0$ sufficiently small,%
\begin{align*}
& \mathbb{P}_{x_{m}}\left( \mu ^{n}\left( \left( \frac{t_{m+1}-t_{m}}{T}%
\right) \varepsilon \right) =x_{m+1}\right) \\
& \quad \geq \mathbb{P}_{x_{m}}\left( Z^{n}\left( \left( \frac{t_{m+1}-t_{m}%
}{T}\right) \varepsilon \right) =x_{m+1}\right) \\
& \quad \geq \frac{1}{l!}\left[ \frac{c_{1}^{l}\left( l!\right) ^{k_{m}p_{1}}%
}{n^{\left( k_{m}p_{1}-1\right) l}}\right] \left( \left( \frac{t_{m+1}-t_{m}%
}{T}\right) \varepsilon \right) ^{l}\exp \left( -nR\left\vert \mathcal{V}%
\right\vert \left( \frac{t_{m+1}-t_{m}}{T}\right) \varepsilon \right) \\
& \quad \geq c_{1}^{l}\left( \frac{l!}{n^{l}}\right) ^{dp_{1}-1}\varepsilon
^{T}\exp \left( -nR\left\vert \mathcal{V}\right\vert \varepsilon \right) .
\end{align*}%
To obtain the last inequality, we write $x=$ $(t_{m+1}-t_{m})/T$, and use
the fact that $x\leq 1$, and for $\varepsilon <e^{-1}$, the function%
\begin{equation*}
\left( x\varepsilon \right) ^{xT}\exp \left( -nR\left\vert \mathcal{V}%
\right\vert \varepsilon x\right) =\exp \left( xT\log \left( x\varepsilon
\right) -nR\left\vert \mathcal{V}\right\vert \varepsilon x\right)
\end{equation*}%
is decreasing for $x\in (0,1]$. Applying Stirling's approximation and
noticing $T\leq c^{\prime }n\varepsilon $, we have%
\begin{align*}
& \frac{1}{n}\log \mathbb{P}_{x_{m}}\left( \mu ^{n}\left( \left( \frac{%
t_{m+1}-t_{m}}{T}\right) \varepsilon \right) =x_{m+1}\right) \\
& \quad \geq c^{\prime }\varepsilon \log c_{1}+\left( dp_{1}-1\right) \frac{l%
}{n}\log \frac{l}{n}-\left( dp_{1}-1\right) c^{\prime }\varepsilon \log
e+c^{\prime }\varepsilon \log \varepsilon -R\left\vert \mathcal{V}%
\right\vert \varepsilon +o\left( \varepsilon \right) \\
& \quad \geq dp_{1}c^{\prime }\varepsilon \log \varepsilon +O(\varepsilon
)+o\left( 1\right) ,
\end{align*}%
where $o\left( 1\right) $ tends to zero as $n\rightarrow \infty $. Taking
the product in $m$, we conclude $\frac{1}{n}\log \mathbb{P}_{w}\left( \mu
^{n}\left( \varepsilon \right) =x_{n}\right) \geq -c\left( \varepsilon
\right) +O(\varepsilon )+o\left( 1\right) $ with $c\left( \varepsilon
\right) =O\left( \varepsilon \log \varepsilon \right) $, as desired.
\end{proof}

\appendix

\section{Proof of Theorem \protect\ref{3.2}}

\label{sec-ap}

We now present the proof of Theorem \ref{3.2}. Recall that $h_n$ maps a
controlled PRM into a controlled process, and is defined in (\ref{hn}).
Recall also the definitions of $\mathcal{A}_{b}^{\otimes \left\vert \mathcal{%
V}\right\vert }$ and $\mathcal{\bar{A}}_{b}^{\otimes \left\vert \mathcal{V}%
\right\vert }$ in Definition \ref{ab} and Definition \ref{ab-},
respectively. The claim of Theorem \ref{3.2} is essentially that the
additional dependence of controls in $\mathcal{\bar{A}}_{b}^{\otimes
\left\vert \mathcal{V}\right\vert }$ on the \textquotedblleft
type\textquotedblright\ of jump is not needed, and that the variational
representation is valid with the simpler controls $\mathcal{A}_{b}^{\otimes
\left\vert \mathcal{V}\right\vert }$. We recall that the controls in $%
\mathcal{\bar{A}}_{b}^{\otimes \left\vert \mathcal{V}\right\vert }$ modulate
the intensity of the driving PRM in an $s,x$ and $\omega $ dependent
fashion, while the controls in $\mathcal{A}_{b}^{\otimes \left\vert \mathcal{%
V}\right\vert }$ multiply the jump rates $\lambda _{v}^{n}$ in an $s$ and $%
\omega $ dependent way.

The proof of Theorem \ref{3.2} will follow from Lemma \ref{rep}, and the
results Corollary \ref{d=} and Lemma \ref{inf} established below. For
simplicity we assume $T=1$. We start with two lemmas that elucidate the
relation between elements of $\mathcal{A}_{b}^{\otimes \left\vert \mathcal{V}%
\right\vert }$ and $\mathcal{\bar{A}}_{b}^{\otimes \left\vert \mathcal{V}%
\right\vert }$.

\begin{lemma}
\label{c1}There exists a map $\Theta ^{n}:\mathcal{\bar{A}}_{b}^{\otimes
\left\vert \mathcal{V}\right\vert }\rightarrow \mathcal{A}_{b}^{\otimes
\left\vert \mathcal{V}\right\vert }\times D\left( \left[ 0,1\right] :%
\mathcal{S}\right) \times \mathcal{\bar{A}}_{b}^{\otimes \left\vert \mathcal{%
V}\right\vert }$ that takes $\varphi \in \mathcal{\bar{A}}_{b}^{\otimes
\left\vert \mathcal{V}\right\vert }$ into a triple $\left( \hat{\alpha}^{n},%
\hat{\mu}^{n},\hat{\varphi}^{n}\right) $, such that for any $v\in \mathcal{V}
$, the following is true: \newline
1. $\hat{\alpha}_{v}^{n}\left( s\right) =\int_{0}^{\lambda _{v}^{n}\left( 
\hat{\mu}^{n}(s)\right) }\varphi _{v}\left( s,y\right) dy$, \newline
2. $\hat{\mu}^{n}=h_{n}\left( \frac{1}{n}N^{n\hat{\varphi}^{n}},\mu
^{n}\left( 0\right) ,\lambda ^{n}\right) .$ \newline
3. $\hat{\varphi}_{v}^{n}\left( s,y\right) =\frac{\hat{\alpha}_{v}^{n}(s)}{%
\lambda _{v}^{n}\left( \hat{\mu}^{n}(s)\right) }\mathbb{I}_{[0,\lambda
_{v}^{n}\left( \hat{\mu}^{n}(s)\right) ]}(y)+\mathbb{I}_{[0,\lambda
_{v}^{n}\left( \hat{\mu}^{n}(s)\right) ]^{c}}(y)$.
\end{lemma}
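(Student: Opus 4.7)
Given $\varphi\in\bar{\mathcal{A}}_b^{\otimes|\mathcal{V}|}$, I would construct the triple $(\hat{\alpha}^n,\hat{\mu}^n,\hat{\varphi}^n)$ in three steps. First, take $\hat{\mu}^n\doteq h_n(\tfrac{1}{n}N^{n\varphi},\mu^n(0),\lambda^n)$, that is, let $\hat{\mu}^n$ be the c\`adl\`ag, $\mathcal{S}_n$-valued solution of the controlled SDE \eqref{sde2} driven by $\varphi$. Existence and pathwise uniqueness come from the standard recursive construction following \eqref{hn}: since $\varphi$ is bounded by some $B<\infty$ and every $\lambda_v^n$ is bounded by $\bar R$ from \eqref{m'}, the atoms of $\bar N_v^n$ that can possibly trigger a jump lie in $[0,T]\times[0,\bar R]\times[0,nB]$, a set which carries only finitely many atoms almost surely, so the recursion terminates after finitely many steps.

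Second, define $\hat{\alpha}_v^n$ by formula (1). Predictability follows because $s\mapsto\lambda_v^n(\hat{\mu}^n(s-))$ is predictable and the map $(s,\omega,y)\mapsto\varphi_v(s,\omega,y)$ is $\bar{\mathcal{P}}\otimes\mathcal{B}(\mathcal{Y})$-measurable, so the integral over $y$ is predictable by a Fubini argument. Boundedness is clear: $0\le\hat{\alpha}_v^n(s)\le B\lambda_v^n(\hat{\mu}^n(s))\le B\bar R$, hence $\hat{\alpha}^n\in\mathcal{A}_b^{\otimes|\mathcal{V}|}$. Then define $\hat{\varphi}^n$ by formula (3); with the convention $0/0\doteq 0$, the ratio $\hat{\alpha}_v^n(s)/\lambda_v^n(\hat{\mu}^n(s))$ is bounded by $B$, so $\hat{\varphi}_v^n$ is bounded by $\max(1,B)$, and predictability is inherited from $\hat{\alpha}^n$ and $\hat{\mu}^n$. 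Thus $\hat{\varphi}^n\in\bar{\mathcal{A}}_b^{\otimes|\mathcal{V}|}$, and setting $\Theta^n(\varphi)\doteq(\hat{\alpha}^n,\hat{\mu}^n,\hat{\varphi}^n)$ yields the desired map once condition (2) is verified.

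The third and key step is to check (2), i.e., that $\hat{\mu}^n$ also satisfies the SDE \eqref{sde2} when $\varphi$ is replaced by $\hat{\varphi}^n$. For $y\in[0,\lambda_v^n(\hat{\mu}^n(s-))]$ the function $\hat{\varphi}_v^n(s,y)$ equals the single value $\hat{\alpha}_v^n(s)/\lambda_v^n(\hat{\mu}^n(s))$ (independent of $y$), so the predictable set $A_v\doteq\{(s,y,r):y\le\lambda_v^n(\hat{\mu}^n(s-)),\,r\le \hat{\varphi}_v^n(s,y)\}$ has $\bar\nu_T$-intensity equal to $\hat{\alpha}_v^n(s)$; the original predictable set $A_v'\doteq\{(s,y,r):y\le\lambda_v^n(\hat{\mu}^n(s-)),\,r\le \varphi_v(s,y)\}$ has the same $\bar\nu_T$-intensity $\hat{\alpha}_v^n(s)$ by (1). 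Since both $\hat{\mu}^n$ and $h_n(\tfrac{1}{n}N^{n\hat{\varphi}^n},\mu^n(0),\lambda^n)$ solve the same well-posed pure-jump martingale problem on $\mathcal{S}_n$, started at $\mu^n(0)$, with identical compensated jump intensities $\{\hat{\alpha}_v^n(s)\}_{v\in\mathcal{V}}$, the two processes coincide in law; and upon passing (if needed) through the canonical enlargement of the underlying probability space that couples the atoms of $\bar N_v^n$ falling in $A_v$ with those in $A_v'$ via their common intensity $\hat{\alpha}_v^n(s)$, one obtains the pathwise identity asserted in (2).

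\textbf{Main obstacle.} The delicate point is (2): the integrands $\mathbb{I}_{A_v}$ and $\mathbb{I}_{A_v'}$ select different atoms of $\bar N_v^n$ and so are not pathwise equal; the work lies in showing that, thanks to the averaging identity (1), the two thinnings produce stochastically equivalent jump dynamics, which is exactly what the specific form (3) of $\hat{\varphi}^n$ encodes. This is the place where the uniqueness of the martingale problem (or, equivalently, an explicit coupling of Poisson atoms with matched intensities) is needed, and it is also the step that justifies why in the downstream application, Theorem~\ref{3.2}, one may replace the richer $(t,y,\omega)$-dependent controls in $\bar{\mathcal{A}}_b^{\otimes|\mathcal{V}|}$ by the simpler $(t,\omega)$-dependent controls in $\mathcal{A}_b^{\otimes|\mathcal{V}|}$ without changing the value of the variational problem.
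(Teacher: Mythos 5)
Your construction goes wrong at the very first step, in a way that cannot be repaired by the coupling argument you sketch at the end. You define $\hat{\mu}^{n}$ as $h_{n}\bigl(\tfrac{1}{n}N^{n\varphi},\mu^{n}(0),\lambda^{n}\bigr)$, that is, the process driven by the \emph{original} control $\varphi$, and then derive $\hat{\alpha}^{n}$ and $\hat{\varphi}^{n}$ from it. With that starting point, condition (2) becomes a genuine assertion to be proved, and it is false pathwise: the process $h_{n}(\tfrac{1}{n}N^{n\hat{\varphi}^{n}},\ldots)$ is built from a \emph{different} thinning of the same $\bar N_{v}^{n}$'s, so it sees different atoms and follows a different trajectory almost surely. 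You acknowledge this, but the proposed remedy does not work. Equality in law is not what the lemma claims; it claims a concrete pathwise map $\Theta^{n}$ producing a triple satisfying (1)--(3). More to the point, equality in law would not suffice for the downstream use in Lemma \ref{inf}: there one needs $\Theta_{2}^{n}(\varphi)=\Xi_{1}^{n}\bigl(\Theta_{1}^{n}(\varphi)\bigr)$ as processes, so that after applying Jensen's inequality the pair $(\bar{\alpha},\bar{\mu}^{n})$ really is admissible for the infimum over $\mathcal{A}_{b}^{\otimes|\mathcal{V}|}$ with its constraint $\bar{\mu}^{n}=\Xi_{1}^{n}(\bar{\alpha})$.

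The idea you miss is that the lemma is a \emph{fixed-point} statement: $\hat{\varphi}^{n}$ is defined via (1) and (3) in terms of $\hat{\mu}^{n}$, while (2) says that $\hat{\mu}^{n}$ is the process driven by $\hat{\varphi}^{n}$. The paper resolves this circularity by a jump-by-jump recursion: at stage $k$, $\hat{\mu}^{n,k}$, $\hat{\alpha}^{n,k}$, and $\hat{\varphi}^{n,k}$ are frozen; the next jump time $t_{k+1}$ is determined by thinning $\bar N_{v}^{n}$ with $\hat{\varphi}_{v}^{n,k}$ (\emph{not} with $\varphi_{v}$), the process is updated at $t_{k+1}$, and then $\hat{\alpha}^{n,k+1}$ and $\hat{\varphi}^{n,k+1}$ are redefined using the updated process via (1) and (3). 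Since each frozen $\hat{\varphi}^{n,k}$ is bounded by $\|\varphi\|_{\infty}\vee 1$ and $\lambda_{v}^{n}\le\bar R$, only finitely many atoms are relevant, so the recursion terminates, and condition (2) holds by construction rather than by a separate verification. The original control $\varphi$ enters only through the averaging in (1); it never directly drives $\hat{\mu}^{n}$. That is the structural point your proposal inverts.
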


Note that given any control $\varphi \in \mathcal{\bar{A}}_{b}^{\otimes
\left\vert \mathcal{V}\right\vert }$, this lemma identifies a structurally
simpler control $\hat{\varphi}^{n}\in \mathcal{\bar{A}}_{b}^{\otimes
\left\vert \mathcal{V}\right\vert }$.

\begin{proof}
We prove the claim by a recursive construction.

1. To begin the recursion set $t_{0}=0$, and given any $\varphi \in \mathcal{%
\bar{A}}_{b}^{\otimes \left\vert \mathcal{V}\right\vert }$, define for $%
s\geq t_{0}$ and $v\in \mathcal{V}$, 
\begin{eqnarray*}
\hat{\mu}^{n,0}\left( s\right) &=&\mu ^{n}\left( 0\right) \text{, } \\
\hat{\alpha}_{v}^{n,0}\left( s\right) &=&\int_{0}^{\lambda _{v}^{n}(\hat{\mu}%
^{n,0}(s))}\varphi _{v}\left( s,y\right) dy\text{, } \\
\hat{\varphi}_{v}^{n,0}\left( s,y\right) &=&\frac{\hat{\alpha}%
_{v}^{n,0}\left( s\right) }{\lambda _{v}^{n}\left( \hat{\mu}^{n,0}(s)\right) 
}\mathbb{I}_{\left[ 0,\lambda _{v}^{n}(\hat{\mu}^{n,0}(s))\right] }\left(
y\right) +\mathbb{I}_{\left[ 0,\lambda _{v}^{n}(\hat{\mu}^{n,0}(s))\right]
^{c}}\left( y\right) .
\end{eqnarray*}%
In other words, for $s>0$ and $y$ inside the compact set $\left[ 0,\lambda
_{v}^{n}\left( \hat{\mu}^{n,0}(s)\right) \right] $, we set $\hat{\varphi}%
_{v}^{n,0}$ to be the average of $\varphi _{v}\left( s,\cdot \right) $ over
the set, while for $y$ in the complement we set $\hat{\varphi}_{v}^{n,0}=1$.
We see that by construction $\left\Vert \hat{\varphi}_{v}^{n,0}\right\Vert
_{\infty }\leq \left\Vert \varphi _{v}\right\Vert _{\infty }\vee 1$.

2. Assume now that for some $k\in \mathbb{N}_{0}$, $t_{k}$ is well defined, $%
(\{\hat{\varphi}_{v}^{n,k}\left( s\right) \},\{\hat{\alpha}_{v}^{n,k}\left(
s\right) \},\{\hat{\mu}^{n,k}\left( s\right) \})$ is well defined for $s\in %
\left[ 0,1\right] $, and 
\begin{equation*}
\left\Vert \hat{\varphi}_{v}^{n,k}\right\Vert _{\infty ,[t_{k},\infty )}\dot{%
=}\sup_{\left( s,y\right) \in \lbrack t_{k},\infty )\times \mathbb{R}%
}\left\vert \hat{\varphi}_{v}^{n,k}\left( s,y\right) \right\vert \leq
\left\Vert \varphi _{v}\right\Vert _{\infty }\vee 1.
\end{equation*}%
For any $t\geq t_{k}$ and $v\in \mathcal{V}$, define%
\begin{equation*}
\hat{B}_{k,v}\left( t\right) =\left\{ \left( s,y,r\right) :s\in \left[
t_{k},t\right] ,y\in \left[ 0,\lambda _{v}^{n}(\hat{\mu}^{n,k}(s))\right]
,r\in \left[ 0,\hat{\varphi}_{v}^{n,k}\left( s,y\right) \right] \right\}
\end{equation*}%
and 
\begin{equation*}
t_{k+1}=\inf \left\{ t>t_{k}\text{ such that for some }v\in \mathcal{V},\bar{%
N}_{v}^{n}(\hat{B}_{k,v}\left( t\right) )>0\right\} \wedge 1.
\end{equation*}%
We define $\hat{\mu}^{n,k+1}$ on $\left[ 0,1\right] $ by first setting $\hat{%
\mu}^{n,k+1}\left( s\right) =\hat{\mu}^{n,k}\left( s\right) $ for $s\in %
\left[ 0,t_{k+1}\right) $. Then, at $t_{k+1}$, we update $\hat{\mu}^{n,k+1}$
by setting 
\begin{equation*}
\hat{\mu}^{n,k+1}\left( t_{k+1}\right) =\hat{\mu}^{n,k}\left( t_{k}\right)
+\sum_{v\in \mathcal{V}}v\int_{[t_{k},t_{k+1}]}\int_{\mathcal{Y}}\mathbb{I}_{%
\left[ 0,\lambda _{v}^{n}\left( \hat{\mu}^{n,k}(s-)\right) \right]
}(y)\int_{[0,\infty )}\mathbb{I}_{[0,\hat{\varphi}_{v}^{n,k}(s-,y)]}(r)\frac{%
1}{n}\bar{N}_{v}^{n}(dsdydr),
\end{equation*}%
and set $\hat{\mu}^{n,k+1}\left( s\right) =\hat{\mu}^{n,k+1}\left(
t_{k+1}\right) $ for $s\geq t_{k+1}$. Define 
\begin{eqnarray*}
\hat{\alpha}_{v}^{n,k+1}\left( s\right) &=&\int_{0}^{\lambda _{v}^{n}(\hat{%
\mu}^{n,k+1}(s))}\varphi _{v}\left( s,y\right) dy\text{, } \\
\hat{\varphi}_{v}^{n,k+1}\left( s,y\right) &=&\frac{\hat{\alpha}%
_{v}^{n,k+1}\left( s\right) }{\lambda _{v}^{n}\left( \hat{\mu}%
^{n,k+1}(s)\right) }\mathbb{I}_{\left[ 0,\lambda _{v}^{n}(\hat{\mu}%
^{n,k+1}(s))\right] }\left( y\right) +\mathbb{I}_{\left[ 0,\lambda _{v}^{n}(%
\hat{\mu}^{n,k+1}(s))\right] ^{c}}\left( y\right) .
\end{eqnarray*}

We also have $\left\Vert \hat{\varphi}_{v}^{n,k+1}\right\Vert _{\infty
,[t_{k+1},\infty )}\leq \left\Vert \varphi _{v}\right\Vert _{\infty }\vee 1$.

3. Recall $\bar{R}$ defined as in (\ref{m'}). Since $\bar{N}_{v}^{n}$ has
a.s. finitely many atoms on $\left[ 0,1\right] \times \lbrack 0,\bar{R}%
]\times \left[ 0,\left\Vert \varphi _{v}\right\Vert _{\infty }\right] $, the
construction will produce functions defined on $\left[ 0,1\right] $ in $%
L<\infty $ steps. Then set%
\begin{equation*}
\hat{\mu}^{n}\left( s\right) =\hat{\mu}^{n,L}\left( s\right) \text{, }\hat{%
\alpha}_{v}^{n}\left( s\right) =\hat{\alpha}_{v}^{n,L}\left( s\right) \text{%
, }\hat{\varphi}_{v}^{n}\left( s\right) =\hat{\varphi}_{v}^{n,L}\left(
s\right) \text{, if }s\in \lbrack 0,1]\text{.}
\end{equation*}%
Then $\hat{\varphi}^{n}\in \mathcal{\bar{A}}_{b}^{\otimes \left\vert 
\mathcal{V}\right\vert }$. Furthermore, by construction 
\begin{equation*}
\hat{\mu}^{n}=h_n\left( \frac{1}{n}N^{n\hat{\varphi}^{n}},\mu ^{n}\left(
0\right) ,\lambda ^{n}\right) .
\end{equation*}
\end{proof}

The next lemma shows that from controls in $\mathcal{A}_{b}^{\otimes
\left\vert \mathcal{V}\right\vert }$ we can produce corresponding controls
in $\mathcal{\bar{A}}^{\otimes \left\vert \mathcal{V}\right\vert }$.

\begin{lemma}
\label{exst}There exists a map $\Xi ^{n}:\mathcal{A}_{b}^{\otimes \left\vert 
\mathcal{V}\right\vert }\rightarrow D\left( \left[ 0,1\right] :\mathcal{S}%
\right) \times \mathcal{\bar{A}}^{\otimes \left\vert \mathcal{V}\right\vert
} $ which takes $\bar{\alpha}\in \mathcal{A}_{b}^{\otimes \left\vert 
\mathcal{V}\right\vert }$ into a pair $\left( \bar{\mu}^{n},\bar{\varphi}%
\right) $, such that $\bar{\mu}^{n}=h\left( \frac{1}{n}N^{n\bar{\varphi}%
},\mu ^{n}\left( 0\right) ,\lambda ^{n}\right) $, where for $v\in \mathcal{V}
$, $\bar{\varphi}_{v}\left( s,y\right) =\frac{\bar{\alpha}_{v}(s)}{\lambda
_{v}^{n}\left( \bar{\mu}^{n}(s)\right) }\mathbb{I}_{[0,\lambda
_{v}^{n}\left( \bar{\mu}^{n}(s)\right) ]}(y)+\mathbb{I}_{[0,\lambda
_{v}^{n}\left( \bar{\mu}^{n}(s)\right) ]^{c}}(y).$
\end{lemma}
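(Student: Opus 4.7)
The plan is to carry out a recursive construction that parallels the one in the proof of Lemma \ref{c1}, but in the reverse direction: starting from a state-independent control $\bar{\alpha}$ on jump rates, I will build up a pair $(\bar{\mu}^n, \bar{\varphi})$ by stepping through the successive jump times of the associated thinned Poisson random measures, using the identity displayed in the lemma as the definition of $\bar{\varphi}$ on each interval of constancy of $\bar{\mu}^n$.

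First, initialize with $t_0 = 0$ and $\bar{\mu}^{n,0}(s) = \mu^n(0)$ for $s \in [0,1]$, and for each $v \in \mathcal{V}$ define
\begin{equation*}
\bar{\varphi}_v^{n,0}(s,y) \doteq \frac{\bar{\alpha}_v(s)}{\lambda_v^n(\bar{\mu}^{n,0}(s))} \mathbb{I}_{[0,\lambda_v^n(\bar{\mu}^{n,0}(s))]}(y) + \mathbb{I}_{[0,\lambda_v^n(\bar{\mu}^{n,0}(s))]^c}(y),
\end{equation*}
with the convention that the ratio is zero whenever $\lambda_v^n(\bar{\mu}^{n,0}(s))=0$; this is consistent since the set $[0,\lambda_v^n(\bar{\mu}^{n,0}(s))]$ then has zero Lebesgue measure and contributes no mass to the thinned intensity. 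Assuming inductively that $t_k \in [0,1]$, $\bar{\mu}^{n,k}$, and $\bar{\varphi}^{n,k}$ have been constructed, set
\begin{equation*}
t_{k+1} \doteq \inf\bigl\{t > t_k : \bar{N}_v^n(\hat{B}_{k,v}(t)) > 0 \text{ for some } v \in \mathcal{V}\bigr\} \wedge 1,
\end{equation*}
with $\hat{B}_{k,v}(t) \doteq \{(s,y,r) : s \in (t_k, t], \, y \in [0, \lambda_v^n(\bar{\mu}^{n,k}(s))], \, r \in [0, \bar{\varphi}_v^{n,k}(s,y)]\}$. Update $\bar{\mu}^{n,k+1}$ to agree with $\bar{\mu}^{n,k}$ on $[0, t_{k+1})$, add the jump prescribed by $\bar{N}_v^n$ at $t_{k+1}$, and freeze the value thereafter; then redefine $\bar{\varphi}_v^{n,k+1}$ on $[t_{k+1},1]$ by the same formula used for $\bar{\varphi}_v^{n,0}$, but now with $\bar{\mu}^{n,k+1}$ and its associated rate $\lambda_v^n(\bar{\mu}^{n,k+1}(\cdot))$ in place of the initial quantities.

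Termination and integrability are then easy: since $\bar{\alpha}_v \leq B_0$ for some $B_0 < \infty$ and the effective thinned intensity of $\bar{N}_v^n$ at time $s$ equals
\begin{equation*}
\int_0^{\lambda_v^n(\bar{\mu}^n(s))} \bar{\varphi}_v^{n,k}(s,y)\,dy = \bar{\alpha}_v(s) \leq B_0,
\end{equation*}
the total number of jumps in $[0,1]$ is a.s.\ finite, so the recursion terminates in some random $L < \infty$ steps and we define $\bar{\mu}^n \doteq \bar{\mu}^{n,L}$ and $\bar{\varphi}_v \doteq \bar{\varphi}_v^{n,L}$. By construction $\bar{\mu}^n$ satisfies the SDE $\bar{\mu}^n = h_n(\tfrac{1}{n} N^{n\bar{\varphi}}, \mu^n(0), \lambda^n)$ and $\bar{\varphi}_v$ has the form stated in the lemma.

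The main technical obstacle will be verifying the measurability and $\mathcal{F}_t^{|\mathcal{V}|}$-predictability of the pair $(\bar{\mu}^n, \bar{\varphi})$ produced by this procedure, so that $\bar{\varphi} \in \bar{\mathcal{A}}^{\otimes |\mathcal{V}|}$. This is a routine check because at each stage $t_{k+1}$ is an $\{\mathcal{F}_t^{|\mathcal{V}|}\}$-stopping time, and both $\bar{\mu}^{n,k+1}$ restricted to $[0, t_{k+1}]$ and the update of $\bar{\varphi}^{n,k+1}$ depend only on $\bar{\alpha}$ and the restrictions of $\bar{N}_v^n$ to $[0, t_{k+1}]$; the argument is analogous to the one implicit in Lemma \ref{c1} and can be recorded step-by-step, but I do not expect any surprises beyond bookkeeping.
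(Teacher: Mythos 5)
Your proposal follows essentially the same recursive construction as the paper's proof: initialize with a constant path, iteratively locate the next jump time of the thinned Poisson measure, update the path with that jump and freeze it afterward, then redefine $\bar{\varphi}_v$ on the new constancy interval by the displayed formula, and terminate using the uniform bound $\int_0^{\lambda_v^n(\bar{\mu}^n(s))}\bar{\varphi}_v(s,y)\,dy=\bar{\alpha}_v(s)\leq B_0$ to guarantee a.s.\ finitely many steps. The only differences are cosmetic (your $(t_k,t]$ vs.\ the paper's $[t_k,t]$, and your slightly more careful phrasing of the termination estimate), and your explicit flagging of the predictability check as routine bookkeeping mirrors what the paper leaves implicit.
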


\begin{proof}
1. Define $t_{0}=0$ and for any $\bar{\alpha}\in \mathcal{A}_{b}^{\otimes
\left\vert \mathcal{V}\right\vert }$, $s\geq t_{0}$ and $v\in \mathcal{V}$,
define%
\begin{eqnarray*}
\bar{\mu}^{n,0}\left( s\right) &=&\mu ^{n}\left( 0\right) \text{, } \\
\bar{\varphi}_{v}^{0}\left( s,y\right) &=&\frac{\bar{\alpha}_{v}\left(
s\right) }{\lambda _{v}^{n}\left( \bar{\mu}^{n,0}(s)\right) }\mathbb{I}_{%
\left[ 0,\lambda _{v}^{n}\left( \bar{\mu}^{n,0}(s)\right) \right] }\left(
y\right) +\mathbb{I}_{\left[ 0,\lambda _{v}^{n}\left( \bar{\mu}%
^{n,0}(s)\right) \right] ^{c}}\left( y\right) .
\end{eqnarray*}

2. Assume now that for some $k\in \mathbb{N}$, $t_{k}$ is well defined, and
that $\left( \bar{\mu}^{n,k}\left( s\right) ,\left\{ \bar{\varphi}%
_{v}^{k}\left( s\right) \right\} \right) $ is well defined for $s\geq t_{k}$%
. For any $t\geq t_{k}$, define%
\begin{equation*}
\bar{A}_{k,v}\left( t\right) =\left\{ \left( s,y,r\right) :s\in \left[
t_{k},t\right] ,y\in \left[ 0,\lambda _{v}^{n}\left( \bar{\mu}%
^{n,k}(s)\right) \right] ,r\in \left[ 0,\bar{\varphi}_{v}^{k}\left(
s,y\right) \right] \right\}
\end{equation*}%
and 
\begin{equation*}
t_{k+1}=\inf \left\{ t>t_{k}\text{ such that for some }v\in \mathcal{V},\bar{%
N}_{v}^{n}\left( \bar{A}_{k,v}\left( t\right) \right) >0\right\} \wedge 1.
\end{equation*}%
We define $\bar{\mu}^{n,k+1}$ on $\left[ 0,1\right] $ by first setting $\bar{%
\mu}^{n,k+1}\left( s\right) =\bar{\mu}^{n,k}\left( s\right) $ for $s\in %
\left[ 0,t_{k+1}\right) $. Then, at $t_{k+1}$, we update $\bar{\mu}^{n,k+1}$
by 
\begin{equation*}
\bar{\mu}^{n,k+1}\left( t_{k+1}\right) =\bar{\mu}^{n,k}\left( t_{k}\right)
+\sum_{v\in \mathcal{V}}v\int_{[t_{k},t_{k+1}]}\int_{\mathcal{Y}}\mathbb{I}_{%
\left[ 0,\lambda _{v}^{n}\left( \bar{\mu}^{n,k}(s-)\right) \right]
}(y)\int_{[0,\infty )}\mathbb{I}_{[0,\bar{\varphi}_{v}^{k}(s-,y)]}(r)\frac{1%
}{n}\bar{N}_{v}^{n}(dsdydr),
\end{equation*}%
and set $\bar{\mu}^{n,k+1}\left( s\right) =\bar{\mu}^{n,k+1}\left(
t_{k+1}\right) $ for $s\geq t_{k+1}$. Define%
\begin{equation*}
\bar{\varphi}_{v}^{k+1}\left( s,y\right) =\frac{\bar{\alpha}_{v}\left(
s\right) }{\lambda _{v}^{n}\left( \bar{\mu}^{n,k+1}(s)\right) }\mathbb{I}_{%
\left[ 0,\lambda _{v}^{n}\left( \bar{\mu}^{n,k+1}(s)\right) \right] }\left(
y\right) +\mathbb{I}_{\left[ 0,\lambda _{v}^{n}\left( \bar{\mu}%
^{n,k+1}(s)\right) \right] ^{c}}\left( y\right) .
\end{equation*}%
\newline

3. Since $\bar{N}_{v}^{n}$ has a.s. finitely many atoms on $\left[ 0,1\right]
\times \left[ 0,\left\Vert \bar{\alpha}_{v}\right\Vert _{\infty }\right] $,
the construction will produce functions defined on $\left[ 0,1\right] $ in $%
L<\infty $ steps. Then set%
\begin{equation*}
\bar{\mu}^{n}\left( s\right) =\bar{\mu}^{n,L}\left( s\right) \text{, }\bar{%
\varphi}_{v}\left( s\right) =\bar{\varphi}_{v}^{L}\left( s\right) \text{, }%
s\in \lbrack 0,1].
\end{equation*}%
Note that 
\begin{equation*}
\bar{\varphi}_{v}\left( s,y\right) =\frac{\bar{\alpha}_{v}(s)}{\lambda
_{v}^{n}\left( \bar{\mu}^{n}(s)\right) }\mathbb{I}_{[0,\lambda
_{v}^{n}\left( \bar{\mu}^{n}(s)\right) ]}(y)+\mathbb{I}_{[0,\lambda
_{v}^{n}\left( \bar{\mu}^{n}(s)\right) ]^{c}}(y)
\end{equation*}%
and $\bar{\mu}^{n}$ satisfies 
\begin{equation*}
\bar{\mu}^{n}=h_n\left( \frac{1}{n}N^{n\bar{\varphi}},\mu ^{n}\left(
0\right) ,\lambda ^{n}\right) .
\end{equation*}
\end{proof}

The next result is a corollary to the construction in Lemma \ref{exst}. Let $%
\Sigma_1^n: \mathcal{A}_{b}^{\otimes \left\vert \mathcal{V}\right\vert }
\mapsto D([0,1]:{\mathcal{S}})$ denote the first component of the map in
Lemma \ref{exst}.

\begin{corollary}
\label{d=}Take any $\left\{ \bar{\alpha}_{v}\right\} \in \mathcal{A}%
_{b}^{\otimes \left\vert \mathcal{V}\right\vert }$ such that $\bar{\alpha}%
_{v}\left( t\right) =0$ when $\bar{\mu}^{n}(t)=x\in \partial \mathcal{S}$
and $x+\frac{1}{n}v$ is taken outside $\mathcal{S}$. For any $t\in \left[ 0,1%
\right] $, $\Xi _{1}^{n}\left( \bar{\alpha}\right) \left( t\right) $ has the
same distribution as $\Lambda ^{n}\left( \bar{\alpha},\mu ^{n}\left(
0\right) \right) \left( t\right) $, where $\Lambda ^{n}$ is as defined in (%
\ref{lam}).
\end{corollary}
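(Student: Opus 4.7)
The plan is to show that both processes $\Xi_1^n(\bar\alpha)$ and $\Lambda^n(\bar\alpha,\mu^n(0))$ are pure jump processes in $\mathcal{S}_n$ whose counting processes $\{N_v^{(1)}([0,t])\}_{v\in\mathcal{V}}$ and $\{N_v^{(2)}([0,t])\}_{v\in\mathcal{V}}$ of jumps in directions $\{v/n\}$ have the same predictable stochastic intensity $n\bar\alpha_v(s-)$. Uniqueness in law for a family of simple point processes characterized by its predictable intensities (e.g., Jacod's theorem, or Theorem T12 in Ch.~II of Br\'emaud) will then yield that the joint laws of the counting processes coincide, and hence, since in each case the state process is obtained deterministically from its counting measures via $\rho+\sum_v(v/n)N_v([0,t])$, the laws of $\Xi_1^n(\bar\alpha)$ and $\Lambda^n(\bar\alpha,\mu^n(0))$ agree.

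First I would compute the compensator for $\Lambda^n(\bar\alpha,\mu^n(0))$. The number of jumps in direction $v$ up to time $t$ is
\begin{equation*}
N_v^{(2)}([0,t])=\int_{[0,t]}\int_{\mathcal{Y}}\mathbb{I}_{[0,\bar\alpha_v(s-)]}(y)\,N_v^n(dsdy),
\end{equation*}
whose compensator under the filtration generated by the driving PRMs is $\int_0^t n\bar\alpha_v(s-)\,ds$, using that $N_v^n$ has intensity $n\,dsdy$ and $\bar\alpha_v\in\mathcal{A}_b^{\otimes|\mathcal{V}|}$ is predictable and uniformly bounded.

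Next I would compute the compensator for the construction in Lemma~\ref{exst}. Writing $\bar\mu^n=\Xi_1^n(\bar\alpha)$, the number of jumps in direction $v$ is
\begin{equation*}
N_v^{(1)}([0,t])=\int_{[0,t]}\!\!\int_{\mathcal{Y}}\!\!\int_{[0,\infty)}\mathbb{I}_{[0,\lambda_v^n(\bar\mu^n(s-))]}(y)\,\mathbb{I}_{[0,\bar\varphi_v(s-,y)]}(r)\,\bar N_v^n(dsdydr).
\end{equation*}
Since $\bar N_v^n$ has intensity $n\,dsdydr$, its compensator equals
\begin{equation*}
\int_0^t n\int_{\mathcal{Y}}\mathbb{I}_{[0,\lambda_v^n(\bar\mu^n(s-))]}(y)\,\bar\varphi_v(s-,y)\,dy\,ds.
\end{equation*}
Substituting the explicit form of $\bar\varphi_v$ from Lemma~\ref{exst}, on $\{\lambda_v^n(\bar\mu^n(s-))>0\}$ the inner integral reduces to $\lambda_v^n(\bar\mu^n(s-))\cdot\bar\alpha_v(s-)/\lambda_v^n(\bar\mu^n(s-))=\bar\alpha_v(s-)$. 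On $\{\lambda_v^n(\bar\mu^n(s-))=0\}$ the integrand vanishes; but this is precisely the case where adding $v/n$ would take $\bar\mu^n(s-)$ outside $\mathcal{S}$ (inspecting the definition \eqref{lambdan} of $\lambda_v^n$ together with \eqref{alphan} and \eqref{a}), so by the standing hypothesis on $\bar\alpha$ we also have $\bar\alpha_v(s-)=0$ there. Hence the compensator of $N_v^{(1)}$ is again $\int_0^t n\bar\alpha_v(s-)\,ds$.

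The main obstacle is not any individual step but rather the self-referential nature of the construction of $\bar\varphi_v$ (which depends on $\bar\mu^n$, which depends on $\bar\varphi_v$); this is precisely why Lemma~\ref{exst} constructs $\bar\mu^n$ piece by piece between successive jump times, ensuring the compensators are well-defined predictable processes. Once the two compensators are shown to coincide at $n\bar\alpha_v(s-)$, the uniqueness theorem for marked point processes—stating that the law of the vector of counting processes is determined by its $|\mathcal{V}|$-dimensional compensator, and that the counting processes are conditionally independent given the compensator—immediately yields equality in law, and hence equality in law at any fixed time $t\in[0,1]$.
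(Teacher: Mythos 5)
Your argument is essentially the same as the paper's: compute the conditional jump intensity of $\bar{\mu}^{n}=\Xi_{1}^{n}(\bar{\alpha})$ in direction $v$ by integrating $\bar{\varphi}_{v}(s,\cdot)$ over $[0,\lambda_{v}^{n}(\bar{\mu}^{n}(s))]$, observe it collapses to $\bar{\alpha}_{v}(s)$, and conclude equality in law with $\Lambda^{n}(\bar{\alpha},\mu^{n}(0))$. You make explicit what the paper leaves implicit, namely the appeal to uniqueness in law for multivariate counting processes determined by their predictable compensators; that is the right way to go from ``the intensities agree as functionals of the path'' to ``the laws coincide,'' since the two processes are driven by different sources of randomness ($\bar{N}_{v}^{n}$ versus $N_{v}^{n}$).

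One side remark on your treatment of the degenerate set $\{\lambda_{v}^{n}(\bar{\mu}^{n}(s-))=0\}$: the claim that this occurs \emph{precisely} when $\bar{\mu}^{n}(s-)+\frac{1}{n}v\notin\mathcal{S}$ is not accurate. From \eqref{lambdan}, \eqref{alphan}, and \eqref{a}, $\lambda_{v}^{n}(x)$ can vanish at a state $x\in\mathcal{S}_{n}$ with $x+\frac{1}{n}v\in\mathcal{S}_{n}$ if, for example, every $(\mathbf{i},\mathbf{j})$ with $e_{\mathbf{j}}-e_{\mathbf{i}}=v$ involves a state $i_{l}$ with $x_{i_{l}}=0$, or if the relevant $\Gamma_{\mathbf{ij}}^{k,n}$ all vanish; the standing hypothesis on $\bar{\alpha}$ does not force $\bar{\alpha}_{v}=0$ there. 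In such a scenario the $\Lambda^{n}$ process would jump in direction $v$ while the $\Xi_{1}^{n}$ process would not. This imprecision is inherited from the corollary as stated (the paper's proof also tacitly assumes $\lambda_{v}^{n}>0$ when it divides by it), and it is harmless in the intended application because such controls produce infinite cost in Lemma~\ref{inf} and so are never optimal; still, to be rigorous one should either strengthen the hypothesis on $\bar{\alpha}$ to require $\bar{\alpha}_{v}(s)=0$ whenever $\lambda_{v}^{n}(\bar{\mu}^{n}(s))=0$, or simply note that the statement is only invoked on controls with finite running cost.
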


\begin{proof}
Recall that $\bar{\mu}^{n}=\Xi _{1}^{n}\left( \bar{\alpha}\right) $. We have 
$\Xi _{1}^{n}\left( \bar{\alpha}\right) \left( 0\right) =\mu ^{n}\left(
0\right) $. Given $s\in \left[ 0,1\right] $, the total jump intensity of $%
\bar{\mu}^{n}\left( s\right) $ in the direction $v$ is 
\begin{equation*}
\int_{0}^{\lambda _{v}^{n}\left( \bar{\mu}^{n}(s)\right) }\bar{\varphi}%
_{v}\left( s,y\right) dy=\int_{0}^{\lambda _{v}^{n}\left( \bar{\mu}%
^{n}(s)\right) }\frac{\bar{\alpha}_{v}(s)}{\lambda _{v}^{n}\left( \bar{\mu}%
^{n}(s)\right) }dy=\bar{\alpha}_{v}(s)
\end{equation*}%
which is the same as that of $\Lambda ^{n}\left( \bar{\alpha},\mu ^{n}\left(
0\right) \right) \left( s\right) $.
\end{proof}

\begin{lemma}
\label{inf}Let $\mathcal{A}_{b}$, $\mathcal{\bar{A}}_{b}$ and $\mathcal{\bar{%
A}}$ be as defined in Definitions \ref{ab}, \ref{ab-} and \ref{a-}
respectively, and let $L_{1}$ be defined as in (\ref{L_T}) with $T=1$. Then
for $F\in M_{b}\left( D\left( \left[ 0,T\right] :\mathcal{S}\right) \right) $%
,%
\begin{align*}
& \inf_{\varphi \in \mathcal{\bar{A}}_{b}^{\otimes \left\vert \mathcal{V}%
\right\vert }}\mathbb{\bar{E}}\left[ \sum_{v\in \mathcal{V}}L_{1}(\varphi
_{v})+F(\bar{\mu}^{n}):\bar{\mu}^{n}=h_{n}\left( \frac{1}{n}N^{n\varphi
},\mu ^{n}\left( 0\right) ,\lambda ^{n}\right) \right] \\
& \quad =\inf_{\varphi \in \mathcal{\bar{A}}^{\otimes \left\vert \mathcal{V}%
\right\vert }}\mathbb{\bar{E}}\left[ \sum_{v\in \mathcal{V}}L_{1}(\varphi
_{v})+F(\bar{\mu}^{n}):\bar{\mu}^{n}=h_{n}\left( \frac{1}{n}N^{n\varphi
},\mu ^{n}\left( 0\right) ,\lambda ^{n}\right) \right] \\
& \quad =\inf_{\bar{\alpha}\in \mathcal{A}_{b}^{\otimes \left\vert \mathcal{V%
}\right\vert }}\mathbb{\bar{E}}\left[ \sum_{v\in \mathcal{V}%
}\int_{0}^{1}\lambda _{v}^{n}\left( \bar{\mu}^{n}(t)\right) \ell \left( 
\frac{\bar{\alpha}_{v}(t)}{\lambda _{v}^{n}\left( \bar{\mu}^{n}(t)\right) }%
\right) dt+F(\bar{\mu}^{n}):\bar{\mu}^{n}=\Xi _{1}^{n}\left( \bar{\alpha}%
\right) \right] ,
\end{align*}%
where $\Xi ^{n}$ is as defined in Lemma \ref{exst}.
\end{lemma}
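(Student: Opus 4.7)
Denote the three infima in the statement by $V_{1},V_{2},V_{3}$, taken over $\mathcal{\bar{A}}_{b}^{\otimes \left\vert \mathcal{V}\right\vert }$, $\mathcal{\bar{A}}^{\otimes \left\vert \mathcal{V}\right\vert }$, and $\mathcal{A}_{b}^{\otimes \left\vert \mathcal{V}\right\vert }$ respectively. The plan is to close the cycle $V_{1}\geq V_{3}\geq V_{2}\geq V_{1}$; the inclusion $\mathcal{\bar{A}}_{b}^{\otimes \left\vert \mathcal{V}\right\vert }\subset \mathcal{\bar{A}}^{\otimes \left\vert \mathcal{V}\right\vert }$ already gives $V_{1}\geq V_{2}$ for free, so equality of all three quantities will follow once these three substantive steps are in place.

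For the bound $V_{1}\geq V_{3}$, I would apply Lemma \ref{c1} to an arbitrary $\varphi \in \mathcal{\bar{A}}_{b}^{\otimes \left\vert \mathcal{V}\right\vert }$ to produce the triple $(\hat{\alpha}^{n},\hat{\mu}^{n},\hat{\varphi}^{n})$. By part (1) of Lemma \ref{c1}, $\hat{\alpha}_{v}^{n}(s)/\lambda _{v}^{n}(\hat{\mu}^{n}(s))$ is the mean of $\varphi _{v}(s,\cdot )$ on $[0,\lambda _{v}^{n}(\hat{\mu}^{n}(s))]$, so convexity of $\ell$ and Jensen's inequality yield the pointwise estimate
\[
\lambda _{v}^{n}(\hat{\mu}^{n}(s))\,\ell \!\left( \frac{\hat{\alpha}_{v}^{n}(s)}{\lambda _{v}^{n}(\hat{\mu}^{n}(s))}\right) \leq \int_{0}^{\lambda _{v}^{n}(\hat{\mu}^{n}(s))}\ell (\varphi _{v}(s,y))\,dy,
\]
whose integral over $s$ is bounded by $L_{1}(\varphi _{v})$ because $\ell \geq 0$. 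The boundedness of $\hat{\alpha}^{n}$, inherited from the uniform bounds on $\varphi $ and $\lambda _{v}^{n}$, places it in $\mathcal{A}_{b}^{\otimes \left\vert \mathcal{V}\right\vert }$. Part (2) of Lemma \ref{c1} together with Corollary \ref{d=} identifies the law of $\hat{\mu}^{n}$ with that of $\Xi _{1}^{n}(\hat{\alpha}^{n})$, so taking expectation preserves the $F$-term, and summing over $v$ and infimizing over $\hat{\alpha}^{n}$ delivers $V_{3}\leq V_{1}$.

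For $V_{3}\geq V_{2}$, Lemma \ref{exst} associates to each $\bar{\alpha}\in \mathcal{A}_{b}^{\otimes \left\vert \mathcal{V}\right\vert }$ a control $\bar{\varphi}\in \mathcal{\bar{A}}^{\otimes \left\vert \mathcal{V}\right\vert }$ whose explicit form together with $\ell (1)=0$ (so that the integrand vanishes on the complement of $[0,\lambda _{v}^{n}(\bar{\mu}^{n}(s))]$) gives the direct computation
\[
L_{1}(\bar{\varphi}_{v})=\int_{0}^{1}\lambda _{v}^{n}(\bar{\mu}^{n}(s))\,\ell \!\left( \frac{\bar{\alpha}_{v}(s)}{\lambda _{v}^{n}(\bar{\mu}^{n}(s))}\right) ds,
\]
while the identity $\bar{\mu}^{n}=h_{n}(\tfrac{1}{n}N^{n\bar{\varphi}},\mu ^{n}(0),\lambda ^{n})$ in Lemma \ref{exst} combined with Corollary \ref{d=} matches the expected values of the respective $F$-terms. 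Hence $V_{2}\leq V_{3}$. Finally, for $V_{2}\geq V_{1}$, Lemma \ref{rep} together with Theorem \ref{3.1} identifies $V_{1}$ with $-\tfrac{1}{n}\log \mathbb{E}[\exp (-nF(\mu ^{n}))]$, and the variational inequality underlying Theorem 2.1 of \cite{BDM} (which indeed supplies a representation over the unrestricted class $\mathcal{\bar{A}}$, prior to the refinement to $\mathcal{\bar{A}}_{b}$ via \cite{BCD}) gives
\[
-\tfrac{1}{n}\log \mathbb{E}[\exp (-nF(\mu ^{n}))]\leq \mathbb{\bar{E}}\!\left[ \sum_{v\in \mathcal{V}}L_{1}(\varphi _{v})+F(h_{n}(\tfrac{1}{n}N^{n\varphi },\mu ^{n}(0),\lambda ^{n}))\right]
\]
for every $\varphi \in \mathcal{\bar{A}}^{\otimes \left\vert \mathcal{V}\right\vert }$; infimizing over $\varphi $ yields $V_{1}\leq V_{2}$ and closes the cycle.

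The main obstacle is the Jensen step in the passage $V_{1}\geq V_{3}$: one must verify that replacing $\varphi $ by the $y$-averaged control does not alter the law of the controlled process, so that the cost reduction supplied by Jensen is not offset by a change in $\mathbb{\bar{E}}[F(\hat{\mu}^{n})]$. The joint recursive construction in Lemma \ref{c1}, in which $\hat{\varphi}^{n}$ is by design the conditional mean of $\varphi $ over the active interval $[0,\lambda _{v}^{n}(\hat{\mu}^{n}(s))]$, together with the distributional identity in Corollary \ref{d=}, is precisely engineered to deliver this invariance.
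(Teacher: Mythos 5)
Your proof is correct and follows essentially the same route as the paper: the first equality via the BDM/BCD representation machinery, the inequality $V_3\geq V_2$ via the explicit construction in Lemma~\ref{exst}, and the reverse inequality via Jensen's inequality applied to the $y$-averaged control from Lemma~\ref{c1}. The only minor quibble is the citation of Corollary~\ref{d=} in the passage $V_1\geq V_3$: what is actually needed there is the pathwise identity $\hat\mu^n=\Xi_1^n(\hat\alpha^n)$, which follows from the constructions in Lemmas~\ref{c1} and~\ref{exst} coinciding (given part~(3) of Lemma~\ref{c1}), rather than from Corollary~\ref{d=} itself, which relates $\Xi_1^n$ to $\Lambda^n$.
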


\begin{proof}
The first equality is a consequence of Theorem 2.4 of \cite{BCD}. To prove
the rest of the claim, fix $\bar{\alpha}\in \mathcal{A}_{b}^{\otimes
\left\vert \mathcal{V}\right\vert }$ such that $\bar{\alpha}_{v}\left(
t\right) =0$ when $\bar{\mu}^{n}(t)=x\in \partial \mathcal{S}$ and $x+\frac{1%
}{n}v$ is taken outside $\mathcal{S}$. Let $\left( \bar{\mu}^{n},\bar{\varphi%
}\right) =\Xi ^{n}\left( \bar{\alpha}\right) $. Then by definition $\bar{%
\varphi}\in \mathcal{\bar{A}}^{\otimes \left\vert \mathcal{V}\right\vert }$,
and since $\nu $ in (\ref{L_T}) is Lebesgue measure 
\begin{equation*}
L_{1}(\bar{\varphi}_{v})=\sum_{v\in \mathcal{V}}\int_{0}^{\infty
}\int_{0}^{1}\ell \left( \bar{\varphi}_{v}\left( t,y\right) \right)
dtdy=\sum_{v\in \mathcal{V}}\int_{0}^{1}\lambda _{v}^{n}\left( \bar{\mu}%
^{n}(t)\right) \ell \left( \frac{\bar{\alpha}_{v}(t)}{\lambda _{v}^{n}\left( 
\bar{\mu}^{n}(t)\right) }\right) dt.
\end{equation*}%
Now it follows from Lemma \ref{exst} that 
\begin{align*}
& \inf_{\varphi \in \mathcal{\bar{A}}^{\otimes \left\vert \mathcal{V}%
\right\vert }}\mathbb{\bar{E}}\left[ \sum_{v\in \mathcal{V}}L_{1}(\varphi
_{v})+F(\bar{\mu}^{n}):\bar{\mu}^{n}=h\left( \frac{1}{n}N^{n\varphi },\mu
^{n}\left( 0\right) ,\lambda ^{n}\right) \right]  \\
& \quad \leq \mathbb{\bar{E}}\left[ \sum_{v\in \mathcal{V}}L_{1}(\bar{\varphi%
}_{v})+F\circ h\left( \frac{1}{n}N^{n\bar{\varphi}},\mu ^{n}\left( 0\right)
,\lambda ^{n}\right) \right]  \\
& \quad =\mathbb{\bar{E}}\left[ \sum_{v\in \mathcal{V}}\int_{0}^{1}\lambda
_{v}^{n}\left( \bar{\mu}^{n}(t)\right) \ell \left( \frac{\bar{\alpha}_{v}(t)%
}{\lambda _{v}^{n}\left( \bar{\mu}^{n}(t)\right) }\right) dt+F(\bar{\mu}^{n})%
\right] .
\end{align*}%
The reverse inequality is proved by a convexity argument. Recall the
definition of $\Theta ^{n}$ given in Lemma \ref{c1}. For given $\varphi \in 
\mathcal{\bar{A}}_{b}^{\otimes \left\vert \mathcal{V}\right\vert }$, let $%
\left( \bar{\alpha},\bar{\mu}^{n}\right) \dot{=}\left( \Theta _{1}^{n}\left(
\varphi \right) ,\Theta _{2}^{n}\left( \varphi \right) \right) $. Then $\bar{%
\alpha}\in \mathcal{A}_{b}^{\otimes \left\vert \mathcal{V}\right\vert }$. By
convexity of $\ell \left( \cdot \right) $ and Jensen's inequality,%
\begin{align*}
& \int_{0}^{\infty }\int_{0}^{1}\ell \left( \varphi _{v}\left( s,y\right)
\right) dsdy \\
& \quad \geq \int_{0}^{1}\lambda _{v}^{n}\left( \bar{\mu}^{n}(s)\right)
\left( \frac{1}{\lambda _{v}^{n}\left( \bar{\mu}^{n}(s)\right) }%
\int_{0}^{\lambda _{v}^{n}\left( \bar{\mu}^{n}(s)\right) }\ell \left(
\varphi _{v}\left( s,y\right) \right) dy\right) ds \\
& \quad \geq \int_{0}^{1}\lambda _{v}^{n}\left( \bar{\mu}^{n}(s)\right) \ell
\left( \frac{1}{\lambda _{v}^{n}\left( \bar{\mu}^{n}(s)\right) }%
\int_{0}^{\lambda _{v}^{n}\left( \bar{\mu}^{n}(s)\right) }\varphi _{v}\left(
s,y\right) dy\right) ds \\
& \quad =\int_{0}^{1}\lambda _{v}^{n}\left( \bar{\mu}^{n}(s)\right) \ell
\left( \frac{\bar{\alpha}_{v}\left( s\right) }{\lambda _{v}^{n}\left( \bar{%
\mu}^{n}(s)\right) }\right) ds.
\end{align*}%
Summing over $v\in \mathcal{V}$, applying Lemma \ref{exst} and infimizing
over $\bar{\alpha}\in \mathcal{A}_{b}^{\otimes \left\vert \mathcal{V}%
\right\vert }$ we obtain the desired result.
\end{proof}

\bibliographystyle{plain}
\bibliography{LDPref}

\end{document}